\documentclass[pdflatex,sn-mathphys-num]{sn-jnl}

\usepackage{graphicx}%
\usepackage{multirow}%
\usepackage{amsmath,amssymb,amsfonts}%
\usepackage{amsthm}%
\usepackage{mathrsfs}%
\usepackage[title]{appendix}%
\usepackage{xcolor}%
\usepackage{textcomp}%
\usepackage{manyfoot}%
\usepackage{stmaryrd, mathtools}%
\makeatletter
\newcommand{\xmapsfrom}[2][]{%
    \ext@arrow 0359{\mapsfromfill@}{#1}{#2}%
}
\def\mapsfromfill@{%
    \arrowfill@\leftarrow\relbar{\relbar\joinrel\mapstochar}
}
\makeatother
\usepackage{booktabs}%
\usepackage{algorithm}%
\usepackage{algorithmicx}%
\usepackage{algpseudocode}%
\usepackage{listings}%
\usepackage{geometry}%

\usepackage{subcaption} 
\usepackage{comment}%

\usepackage{array}
\usepackage{tabularx}

\usepackage{extarrows}

\newcommand{\RR}{\mathbb{R}}
\newcommand{\Rpos}{\mathbb{R}_+}
\newcommand{\Rnz}{\mathbb{R}^*}

\newcommand{\ZZ}{\mathbb{Z}}
\newcommand{\Zpos}{\mathbb{Z}_+}

\newcommand{\defset}[2]{ \left\{\ #1 \ \left\lvert\ #2 \right.\ \right\} }

\newcommand{\abs}[1]{\left\lvert #1 \right\rvert}
\newcommand{\absbig}[1]{ \left| #1 \right| }

\newcommand{\rmd}{\mathrm{d}}
\newcommand{\pt}{\partial}
\newcommand{\ol}{\overline}

\DeclareMathOperator{\wt}{W}
\DeclareMathOperator{\Tree}{\mathcal{T}}
\DeclareMathOperator{\TR}{\mathcal{TR}}
\DeclareMathOperator{\Lab}{\mathfrak{L}}

\DeclareMathOperator{\triv}{Triv}

\DeclareMathOperator{\Aut}{Aut}
\DeclareMathOperator{\diam}{diam}
\DeclareMathOperator{\Pt}{Pt}
\DeclareMathOperator{\dis}{dis}

\DeclareMathOperator{\rGH}{GH}

\newcommand{\pp}{\Xi}
\newcommand{\ppf}{\Xi_F}
\newcommand{\ppt}{\Xi_T}
\newcommand{\Ptt}{\mathfrak{P}}

\newcommand{\pttp}{\mathfrak{p}}

\newcommand{\ee}{\mathrm{e}}

\newcommand{\one}{\mathbf{1}}

\newcommand{\HH}{\mathcal{H}}

\newcommand{\GH}{\mathcal{GH}}
\newcommand{\Rf}{\mathfrak{R}}

\newcommand{\MM}{\mathcal{M}}
\newcommand{\Mhcmu}{\mathcal{M}hcmu}
\newcommand{\Mhcmugn}{\mathcal{M}hcmu_{g,n}(\vec{\alpha})}

\theoremstyle{thmstyleone}%
\newtheorem{theorem}{Theorem}[section]
\newtheorem{proposition}[theorem]{Proposition}%
\newtheorem{lemma}[theorem]{Lemma}%
\newtheorem{corollary}[theorem]{Corollary}%

\newtheorem{definition}[theorem]{Definition}%
\newtheorem{notation}[theorem]{Notation}%
\newtheorem{fact}[theorem]{Fact}%

\theoremstyle{thmstyletwo}%
\newtheorem{example}[theorem]{Example}%
\newtheorem{remark}[theorem]{Remark}%

\usepackage{tikz}
\usetikzlibrary{shapes.misc}
\usetikzlibrary{decorations.markings}
\usetikzlibrary{positioning}
\usetikzlibrary{patterns}
\usetikzlibrary{calc,arrows.meta}
\tikzset{
    blk/.style = {draw, circle, fill=black, text=white, font=\bfseries\boldmath},
    wht/.style = {draw, circle, fill=white, text=black},
    blkdot/.style = {draw, circle, fill=black, inner sep=0pt, minimum size=6pt},
    whtdot/.style = {draw, circle, fill=white, inner sep=0pt, minimum size=6pt, line width=0.8pt},
    mid_auto/.style = {midway, auto},
    crossmark/.style = {cross out, draw=black, fill=none, inner sep=0pt, minimum size=5pt, line width=1.8pt},
}

\usepackage{algorithm}
\usepackage{algpseudocode}
\usepackage{setspace}
\usepackage{caption}
     
\begin{document}

\title[Counting trees and geometric applications]
{Counting Weighted Bi-Colored Plane Trees and Their Geometric Applications}

\author[1]{\fnm{Sicheng} \sur{Lu}}\email{sclu@suda.edu.cn}
\equalcont{All authors contributed equally to this work.}

\author*[2]{\fnm{Yi} \sur{Song}}\email{sif4delta0@mail.ustc.edu.cn}
\equalcont{All authors contributed equally to this work.}

\affil[1]{
    \orgdiv{School of Mathematical Sciences}, 
    \orgname{Soochow University}, 
    \orgaddress{
        \street{Shizijie 1}, \city{Suzhou}, \postcode{215006}, \state{Jiangsu}, \country{P. R. China}
}}

\affil*[2]{
    \orgdiv{School of Mathematical Sciences}, 
    \orgname{University of Science and Technology of China}, 
    \orgaddress{
        \street{Jinzhai Road 96}, \city{Hefei}, \postcode{230026}, \state{Anhui}, \country{P. R. China}
}}

\abstract{ 
This work solves the enumeration problem for weighted bi-colored plane trees with prescribed numbers of black and white vertices, together with prescribed total edge weights at each vertex. An exact closed formula for a particular case is obtained, and a unified algorithmic method for the general case is provided. 
We then apply this result to two geometric problems. Firstly, we compute the strong Hurwitz number for a special class of branch datum between Riemann spheres with three branched points. This is done by counting the dessins d'enfants that faithfully record such branched covers, and a clear correspondence between dessins and weighted trees in such case. 
Secondly, we study the geometric moduli space for a special class of extremal K\"{a}hler metrics on Riemann sphere (HCMU spheres), with a single conical singularity. We classify and enumerate the connected components of the moduli space with respect to the Gromov–Hausdorff topology. 
This is based on an efficient representation of these metric surfaces, and a study of Gromov–Hausdorff limits of such surfaces. 
}

\keywords{ plane tree, enumeration, Hurwitz number, dessin d'enfant, HCMU surface, moduli space
}

\pacs[2020 MSC Classification]{~\\Primary 05C30, 57M12, 32G15; Secondary 05C10, 57M15, 58E11.
}

\maketitle

\tableofcontents

\section{Introduction}\label{sec1}

This paper considers two seemingly distant geometric problems about Riemann surfaces. The first is the enumeration of branched covers with prescribed branch data. The second concerns classifying connected components of moduli spaces for a special class of extremal K\"{a}hler metrics. Behind both objects lies a common combinatorial structure on surfaces. By studying and completely enumerating these special yet simple structures, we are able to answer both questions. Below, we briefly describe these two problems and our results.

\subsection{Hurwitz number}

Let $X,Y$ be two compact Riemann surfaces. A non-constant holomorphic map $ f : Y \to X $ is also called a \textbf{branched cover}. By choosing suitable local complex charts, $f$ is locally conjugate to the complex function $z \mapsto z ^{m}$ around the origin, with $m \geq 1$. The integer $m$ is called the \textbf{local degree}. 
When $m>1$, the origin of the source chart on $Y$ is a \textbf{critical point}, and its image on $X$ is called a \textbf{branched point}. 

Let $B:=\{x_1,\dots,x_n\} \subset X$ be the set of branched points and $C:=f^{-1}(B) \subset Y$. Let $d$ be the \textbf{degree} of $f$, and $g_X, g_Y$ be the genus of $X,Y$. Then the restriction of $f$ is a genuine topological covering of degree $d$ from $Y \setminus C$ to $X \setminus B$. 
For each $i=1,\dots n$, the local degree of $f$ at points in $f^{-1}(x_i)$ form a partition $\pi_i$ of $d$. 
\begin{notation}\label{nota:partition}
    A \textbf{partition} $\pi$ of $d\in\ZZ_+$, denoted by $\pi \vdash d$, is recorded as \textbf{power notation} $\pi := (1^{n_1} 2^{n_2} \dots d^{n_d})$, with $n_j \in \ZZ_{\geq0}$ and $\sum_{j=1}^d j \cdot n_j = d$. 
    This means the integer $j$ appears $n_j$ times. For a concrete partition, integers with zero power are omitted. 
    
    The \textbf{length} of $\pi$ is $|\pi|:=\sum_{j=1}^d n_j$. 
\end{notation}
The \textbf{branch datum} of the branched cover $f: Y \to X$ is set to be the array 
$$ \mathcal{D} := (g_Y,g_X,d,n ;\ \pi_1 ,\dots, \pi_n) \ . $$
The Riemann-Hurwitz Formula asserts that
\begin{equation}\label{eq:Riemann-Hurwitz}    
    \left( 2-2g_Y \right) - \left( |\pi_{1}| + \cdots + |\pi_ {n}| \right) = d \left( 2-2g_X - n \right) \ .
\end{equation}
The \textbf{Hurwitz Existence Problem} \cite{Hur91} asks the converse: given an array $\mathcal{D}$ already satisfying \eqref{eq:Riemann-Hurwitz}, can it be realized as the branch datum of some branched cover? 
An array $\mathcal{D}$ satisfying the Riemann-Hurwitz Formula will be called a \textbf{compatible} branch datum. Such problem is still open today, with a wide range of results. We refer to \cite{Petr20} as a friendly introduction to this topic. 

The \textbf{Hurwitz number} of a compatible branch datum counts the branched covers that solve Hurwitz Existence Problem. 
Fix a Riemann surface $X$ of genus $g_X$, and a set $B=\{x_1,\dots,x_n\}$ on $X$ as the branched points. 
Usually, there are two optional sets of rules. 
Two branched covers $ f_1 : Y_1 \to X,\ f_2 : Y_2 \to X $ are said to be \textbf{weakly equivalent} if there exists orientation-preserving homeomorphisms $h: Y_1 \to Y_2$ and $g : X \to X$ such that $f_1 \circ h = g \circ f_2$, and \textbf{strongly equivalent} if one can take $g = \mathrm{id}_{X}$. 
The \textbf{strong/weak Hurwitz number} of a compatible branch datum is the number of strong/weak equivalence classes of branched covers realizing it. 
There is a vast body of results in this field, obtained by diversiform methods. We refer the reader to the survey articles \cite{Land10, ACEH18} for further details.

In this paper we consider a certain class of compatible branch data with $g_X = g_Y = 0, n=3$ and one special partition. 
Our first result computes the strong Hurwitz number of these branch datum. 

\begin{theorem}\label{thm:main_H.Number}
    Let $p>q>0$ be two co-prime integers. Consider three partitions of $d := pq$
    \[ \pi_1:= (q^p), \quad \pi_2:=(p^q), \quad \pi_3:=\left( 1^{(p-1)(q-1)} (p+q-1)^1 \right) \ . \]
    Define
    \begin{itemize}
        \item $g(q) := \gcd(q-1,p)$, $g(p) := \gcd(p-1,q)$;
        \item $D:=\defset{d\in\Zpos}{d \textrm{ is a factor of } g(q) \textrm{ or } g(p)}$; 
    \end{itemize}
    Then the strong Hurwitz number of the compatible branch datum
    \[ \mathcal{D}_{p,q} := \left( 0, 0, pq, 3;\ \pi_1, \pi_2, \pi_3 \right) \]
    is given by 
    \begin{equation}\label{eq:SHN_Hpq}
        \HH(p,q) := \sum_{d \in D} \varphi(d) G(d) \ ,
    \end{equation}
    where $\varphi(-)$ is the Euler's totient function, and
    \[ G(d) = 
        \begin{cases}
            \dfrac{(p+q-2)!}{p!q!} & \text{ if } d=1 \vspace{5pt} \\ 
            \dfrac{1}{p+q-1} \dbinom{({p+q-1})/d}{{p}/{d}} & \text{ if } 1<d \mid g(q) \vspace{5pt} \\ 
            \dfrac{1}{p+q-1} \dbinom{({p+q-1})/{d}}{{q}/{d}} & \text{ if } 1<d \mid g(p) 
        \end{cases}
        \ .
    \]
\end{theorem}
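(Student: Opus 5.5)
We would prove Theorem~\ref{thm:main_H.Number} by passing from branched covers to dessins d'enfants and then to weighted bi-colored plane trees, and finally counting those trees up to rotation with Burnside's lemma.

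\emph{Step 1: from covers to trees.} Fix the branched points at $0,1,\infty$, with local degrees $\pi_1$ over $0$, $\pi_2$ over $1$ and $\pi_3$ over $\infty$. Strong equivalence classes of covers realizing $\mathcal{D}_{p,q}$ correspond bijectively to isomorphism classes of dessins $f^{-1}([0,1])$ on the sphere. Such a dessin has $pq$ edges, $p$ black vertices of degree $q$ and $q$ white vertices of degree $p$. It has $(p-1)(q-1)$ faces of half-degree $1$ and one face of half-degree $p+q-1$.

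A face of half-degree $1$ is a digon bounded by two parallel edges. Collapsing every family of parallel edges into one edge, weighted by its multiplicity, leaves a plane graph with $p+q$ vertices and a single face. By Euler's formula this graph has $p+q-1$ edges, so it is a tree. Each black vertex then has total incident weight $q$, and each white vertex has total weight $p$.

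Conversely, we would check that every such weighted bi-colored plane tree re-expands to a unique dessin with the required data. The face count is automatic: the total weight is $pq$ and there are $p+q-1$ edges, giving $pq-(p+q-1)=(p-1)(q-1)$ digons. This reduces $\HH(p,q)$ to the number of weighted bi-colored plane trees with these vertex weights, counted up to orientation-preserving isomorphism.

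\emph{Step 2: Burnside over rotations.} We work with trees having a distinguished edge, so there are $n=p+q-1$ choices of root. The cyclic group $\ZZ/n$ acts by moving the root around the contour, and the number of isomorphism classes is $\frac1n\sum_{d\mid n}\varphi(d)\,\mathrm{Fix}(d)$.

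A plane tree can only have nontrivial rotational symmetry about a central vertex. A symmetry about the midpoint of an edge would swap the two colours, which is not allowed. Consider a symmetry of order $d>1$.
\begin{itemize}
\item If it fixes a white vertex, the remaining $q-1$ white vertices and all $p$ black vertices fall into free orbits of size $d$. Hence $d\mid\gcd(q-1,p)=g(q)$.
\item If it fixes a black vertex, the same argument gives $d\mid g(p)$.
\end{itemize}
Since $\gcd(p,q)=1$, we have $\gcd(g(p),g(q))=1$, so the two cases are disjoint. This explains the index set $D$ and the case split in $G(d)$.

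\emph{Step 3: computing the fixed-point counts.} For $d>1$ in the white-centre case, a $d$-symmetric tree is determined by its quotient. The quotient is a weighted tree with $(p+q-1)/d$ edges, $p/d$ black vertices of weight $q$, $(q-1)/d$ ordinary white vertices of weight $p$, and a distinguished white centre of weight $p/d$. We would count these quotient trees using the paper's closed enumeration formula for weighted bi-colored trees with prescribed vertex counts and weights, proved earlier in the paper. We expect the result to be $\frac{1}{p+q-1}\binom{(p+q-1)/d}{p/d}$, which matches $G(d)$ once $\varphi(d)$ and the $\frac1n$ Burnside factor are accounted for.

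For $d=1$, the same closed formula should give $\mathrm{Fix}(1)/n=\frac{(p+q-2)!}{p!\,q!}$.

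The main obstacle is Step 3. One must verify that the closed formula applies to the quotient trees with their one exceptional (centre) weight. One must also pin down exactly how $\varphi(d)$ and the factor $\frac{1}{p+q-1}$ combine, so that the Burnside sum collapses to $\sum_{d\in D}\varphi(d)G(d)$. We would first try a cycle-lemma/Lagrange-inversion argument that counts rooted quotient trees directly.
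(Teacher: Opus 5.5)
\textbf{Gap: Step 3 is not carried out.} Steps 1 and 2 are sound. Step 3, however, carries the entire arithmetic content of the theorem, and there you only state what you \emph{expect} $G(d)$ to be.

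Counting rooted weighted trees with prescribed vertex \emph{weights} is not a degree-sequence problem. A vertex of weight $q$ may carry any number of edges, with any composition of $q$ as edge weights. So Tutte-type or cycle-lemma formulas for plane trees with given degrees do not apply off the shelf, and you give no substitute.

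The paper gets these numbers from Kochetkov's formula. For a full passport with $N$ distinctly labelled vertices that is non-decomposable, $|\Tree(\ppf)|=(N-2)!$. Rooted trees have trivial automorphism group, so forgetting labels divides exactly by the factorials of the multiplicities. This gives $|\TR(\pp)|=(N-1)!/(\pp)!$. Concretely:
\begin{itemize}
\item for $\pp_{p,q}=(q^p\,\ol{p}^q)$ it gives $\frac{(p+q-1)!}{p!\,q!}$;
\item for the quotient passport with a white centre it gives $\frac{((p+q-1)/d)!}{(p/d)!\,((q-1)/d)!}$.
\end{itemize}

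Non-decomposability is exactly where $\gcd(p,q)=1$ enters. A sub-passport $(q^x\,\ol{p}^y)$ of $\pp_{p,q}$ needs $qx=py$, which forces $x=p$. In the quotient, $qx=py+\epsilon\, p/d$ with $\epsilon\in\{0,1\}$ forces the whole passport. Your proposal uses coprimality only to separate the two centre types. Without non-decomposability the count picks up the alternating correction terms over partitions of the passport. So any ``direct'' cycle-lemma or Lagrange-inversion argument would have to use coprimality visibly, and none is supplied. As written, the formula for $G(d)$ is asserted, not proved.

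\textbf{Step 2 and the bookkeeping in Step 3.} Your Burnside route is fine and is actually a bit more direct than the paper's. The paper first computes $|\Tree(\pp,d)|$ by M\"obius inversion and then sums over $d$.

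To close Step 2, show that $\mathrm{Fix}(d)=|\TR(\pp/d)|$:
\begin{itemize}
\item Automorphisms commute with the contour shift and act freely on edges. Hence a rooted tree is fixed by a shift of order $d$ exactly when $d \mid |\Aut(T)|$.
\item A rooted quotient tree, with its centre kept as a distinguished vertex, lifts to a unique rooted tree. All lifts of the root edge are related by the rotation.
\end{itemize}
Then $\frac1n\sum\varphi(d)\,\mathrm{Fix}(d)$ is literally $\sum_{d\in D}\varphi(d)G(d)$.

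This also shows that your accounting in Step 3 is muddled: the factor $\frac1{p+q-1}$ in $G(d)$ \emph{is} the Burnside factor $\frac1n$. What you must actually prove is
\begin{itemize}
\item $\mathrm{Fix}(d)=\binom{(p+q-1)/d}{p/d}$ for a white centre, $d\mid\gcd(q-1,p)$;
\item $\mathrm{Fix}(d)=\binom{(p+q-1)/d}{q/d}$ for a black centre;
\item $\mathrm{Fix}(1)=\frac{(p+q-1)!}{p!\,q!}$.
\end{itemize}
It is not true that the quotient trees number $\frac1{p+q-1}\binom{(p+q-1)/d}{p/d}$.
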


This is actually a particular version of a more general result in Theorem \ref{thm:st_H_number}.
We allow $\pi_1, \pi_2$ be arbitrary two partitions of a given integer $d$. In the third partition $\pi_3$, all parts are 1 except for a single part greater than 1.

\medskip
Some Hurwitz numbers of similar compatible datum with a special $\pi_3$ is also known. 
Harary-Tutte \cite{HarTut64} determined the Hurwitz numbers for $g_Y=0$ and $\pi_3 = (d^1)$. Goupil-Schaeffer \cite{GpSch98} later extended this result to arbitrary genus $g_Y\geq0$. A combinatorial proof of Goupil-Schaeffer's result was then given by Chapuy-F\'eray-Fusy \cite{CFF13}.

\subsection{HCMU surfaces}

An HCMU surface is a Riemann surface endowed with a special type of extremal K\"{a}hler metric with finitely many conical singularities. The study of such geometric objects was originally initiated by E. Calabi \cite{Calabi82extrm} and X.-X. Chen \cite{Cxx99}, continued in works like \cite{WangZhu00, CqWyy11} and many others. 
Such metric is a critical point of some energy functional over all conical conformal metric on the Riemann surface. 

Let $P:=\{ p_1, \cdots, p_n\}$ be a finite set on the compact Riemann surface $X$, and $\vec{\alpha} := (\alpha_1,\cdots,\alpha_n) \in (\RR_{+} \setminus \{1\})^n$ be an \textbf{angle vector}. 
A conformal metric $\rho$ on $X$ is said to have a \textbf{conical singularity} at $p_i$ of cone angle $2\pi\alpha_i$, if there exists a local complex coordinate $z$ with $z(p)=0$ such that 
$ \rho = \ee^{2 u(z)} \abs{\rmd z}^2 $ on the punctured neighborhood of $0$, and that 
$u(z) - (\alpha_i-1) \ln \abs{z}$ is continuous at $0$ and smooth outside $0$. 
With this coordinate, the curvature function $K$ is locally given by
$ K = - \Delta u \cdot \ee ^{2 u} $.

We call $(X, \rho)$ an \textbf{HCMU surface} if the curvature function $K$ is non-constant and satisfying the following differential equation \cite{CCW05}: 
\begin{equation*}\label{eq:HCMU}
    \frac{\pt^2 K}{\pt z^2} - 2 \frac{\pt K}{\pt z} \frac{\pt u}{\pt z} =0 \ .
\end{equation*}
The abbreviation ``HCMU'' stands for ``the Hessian of the curvature of the metric is umbilical'' in differential geometry \cite{Cxx00}.  

Let $\vec{\alpha}$ be an angle vector. The geometric moduli space $\Mhcmugn$ is the set of all isometry classes of HCMU surfaces of genus $g$ with $n$ conical singularities, whose cone angles are prescribed by $\vec{\alpha}$. 
It is usually endowed with the Gromov-Hausdorff topology. In the special case on which we will focus, this moduli space can be directly parameterized. 

For most cases, such moduli space is not connected and may locally like an orbifold. So before deeper investigation on its geometric structure, we would like to count the connected components of $\Mhcmugn$ and tell the difference between distinct components. This problem would be very hard in general. Our second result presents an answer for the simplest case.

\begin{theorem} \label{thm:main_hcmu}
    Let $m>2$ be a prime number, and $g=0, n=1$. Then the number of connected components in the moduli space $\Mhcmu_{0,1}(m-1)$ of HCMU spheres with a single conical singularity of angle $2\pi(m-1)$ is given by
    \[ \sum_{\substack{p>q>0 \\ p+q=m}} \HH(p,q) \ .\]
    Here $\HH(p,q)$ is just the strong Hurwitz number \eqref{eq:SHN_Hpq} in Theorem \ref{thm:main_H.Number}.
\end{theorem}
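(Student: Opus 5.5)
We will prove Theorem \ref{thm:main_hcmu} by reducing the HCMU sphere to combinatorial data and then invoking Theorem \ref{thm:main_H.Number}.

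\textbf{Step 1: character 1-forms and dessins.} By the structure theory of HCMU surfaces (Chen, Chen--Wu), every HCMU metric has a meromorphic character 1-form $\omega = \frac{\rmd z}{\ldots}$ whose real part integrates to a function of the curvature. The metric decomposes into finitely many \emph{footballs}, which are rotationally symmetric HCMU pieces glued along meridians joining maximum points to minimum points of $K$. Each HCMU sphere carries an underlying bi-colored graph: black vertices are the maximum points, white vertices are the minimum points, and edges are the meridian bundles, weighted by their angles. Saddle points of $K$ are exactly the conical singularities, and with a single cone point the graph is a plane tree. We would first show, using the uniqueness results of Chen--Chen--Wu, that the ratio of angles at the extremal points is forced. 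With one cone angle $2\pi(m-1)$, all maximum points have a common angle $2\pi q$ and all minimum points a common angle $2\pi p$, up to the parameter ratio, with $p+q=m$.

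\textbf{Step 2: a branched cover of the sphere.} Consider the developing map of the HCMU structure onto the model football of angle ratio $p:q$. It gives a branched cover $f$ of the sphere of degree $pq$ over three points: the max, the min, and the saddle level. Its branch profiles are $(q^p)$, $(p^q)$ and $(1^{(p-1)(q-1)}(p+q-1)^1)$. The last profile comes from the single cone point of angle $2\pi(m-1)=2\pi(p+q-1)$, and Riemann--Hurwitz forces the counts. Conversely, every such cover pulls back the model metric to an HCMU sphere with the prescribed single angle. Here primality of $m$ guarantees $\gcd(p,q)=1$ for every decomposition $p+q=m$, so the hypothesis of Theorem \ref{thm:main_H.Number} holds.

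\textbf{Step 3: components equal strong equivalence classes.} The remaining free parameters are continuous: the overall scale/curvature range and the position of the saddle level. These form a connected (interval-like) family. We expect each combinatorial type, meaning a strong equivalence class of covers labelled by $(p,q)$, to give exactly one connected component. Different $(p,q)$, or different dessins, cannot be joined within $\Mhcmu_{0,1}(m-1)$: the ratio $p/q$ is an isometry invariant (the ratio of extremal angles), and the dessin is locally constant under Gromov--Hausdorff convergence as long as the limit stays in the moduli space.

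\textbf{Main obstacle.} The hardest step is this last claim about Gromov--Hausdorff limits. We must show that degenerations where edge weights or the saddle level tend to a boundary value leave the moduli space, rather than connecting two different dessins. Our approach is to argue that any such collapse changes either the cone angle or the number of cone points. Concretely, we would show that the distance between the saddle and an extremal point tends to $0$ only at the boundary of the parameter interval, which would exhibit each component as an open cell.

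Once these steps are in place, summing $\HH(p,q)$ over $p>q>0$ with $p+q=m$ counts the components.
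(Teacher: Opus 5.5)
\textbf{Step 2 is a valid alternative route, but the argument has a genuine gap: the football family is missing.}

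Your Step 2 is a legitimate alternative to the paper's bigon/tree data-set representation (Theorem \ref{thm:hcmu_data}). There, the character 1-form, with residues scaled to $q$ at maxima and $-p$ at minima, yields a meromorphic $f$ with datum $\mathcal{D}_{p,q}$. By Theorem \ref{thm:st_H_number}, the two routes give the same count. Two corrections: all extremal points are smooth ($q$ and $p$ are local degrees of $f$, not cone angles), and $p>q$ must be derived from $R=q/p<1$ so that only one ordering of each pair contributes.

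The genuine gap is Step 1's claim that the conical singularity is always the saddle point. By Lemma \ref{lem:classify}(3), $\Mhcmu_{0,1}(m-1)$ also contains the footballs $S_{m-1}\bigl(K_0,\tfrac{3-m}{2m-3}K_0\bigr)$, $K_0\in\Rpos$, whose cone point is the maximum. These surfaces carry no cover of datum $\mathcal{D}_{p,q}$, so your decomposition of the moduli space into dessin families omits them. Until you show which component they lie in, the count could equally be $\sum\HH(p,q)+1$.

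Your strategy for the ``main obstacle'' fails at exactly this point. Take the star tree, $(p,q)=(m-1,1)$, and send the saddle to the top ($\lambda=L/\ell\to0$). The saddle merges with all $m-1$ maxima, and the Gromov--Hausdorff limit is precisely that football: a sphere with one cone point of angle $2\pi(m-1)$. So this collapse changes neither the cone angle nor the number of cone points, and it stays inside the moduli space.

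The correct statement is what the paper proves (Definition \ref{def:limit_sigma}, Theorem \ref{thm:closure_and_para}, Proposition \ref{prop:disjoint_closure}):
\begin{itemize}
\item the $\lambda\to0$ (resp.\ $\lambda\to1$) limit is a one-point union of $q$ (resp.\ $p$) footballs, which is a topological sphere only when $q=1$ at $\lambda=0$;
\item $K_0\to+\infty$ collapses the surface to a point;
\item $K_0\to0$ makes the diameter blow up.
\end{itemize}
Hence the closure of each family in the moduli space is the family itself, except for the star family, whose closure gains the footballs as its edge $\lambda=0$. That is why the footballs add no component and why distinct families stay separated.

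Finally, connectedness of each family in the Gromov--Hausdorff topology requires the continuity of $(K_0,\lambda)\mapsto\Sigma$ (Theorem \ref{thm:map_bij_continuous}). You assert this (``interval-like family'') but do not prove it.
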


Again, the actual result in Theorem \ref{thm:cnntd_comp} is stated for any integer $m>2$. The case with prime $m$ has the most direct expression. Such result can be regarded as a further classification of HCMU spheres with a unique conical singularity after Meng-Wei \cite{MyjWzq25}.

\subsection{Counting trees}

What do the enumeration of branched covers and the enumeration of connected components of moduli space for HCMU surfaces have in common? Based on the geometric and topological properties of each class of objects, both problems can ultimately be reduced to the enumeration of a certain type of combinatorial structure on the plane. This structure is called a weighted bi-colored plane tree, whose precise definition is given in Section \ref{sec:pre_tree}. Loosely speaking, it is a plane tree whose vertices are colored black and white, with additional weight assigned to the edges. The task is to count the isomorphism classes of such trees with prescribed numbers of black and white vertices, together with prescribed total weights at each vertex. 

We shall see that both Theorem \ref{thm:main_H.Number} and Theorem \ref{thm:main_hcmu} are deduced from the following enumeration of plane trees. 
\begin{theorem} \label{thm:main_tree}
    Let $ p > q > 0 $ be two co-prime integers. Then the number of weighted bi-colored plane trees with $p$ black vertices of weight $q$ and $q$ white vertices of weight $p$, is precisely given by \eqref{eq:SHN_Hpq}. 
\end{theorem}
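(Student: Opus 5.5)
The plan has three steps: reduce to unweighted trees, count rooted trees with a cycle lemma, then apply Burnside to remove the rooting.

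\emph{Step 1: weights are forced.} A weighted bi-colored plane tree of the type in the statement is determined by its underlying bi-colored plane tree. Cutting an edge $e$ splits the tree into two components. Let $S$ be the component containing the black endpoint of $e$, with $b_S$ black and $w_S$ white vertices. Each edge inside $S$ contributes once to the black sum and once to the white sum, and $e$ contributes only to the black sum. Comparing the two sums over $S$ gives
\[ \mathrm{wt}(e) = q\,b_S - p\,w_S . \]
So the weights are uniquely determined. An unweighted bi-colored plane tree with $p$ black and $q$ white vertices carries a valid weighting exactly when $q\,b_S-p\,w_S>0$ for every edge. Since $\gcd(p,q)=1$ and $0<b_S+w_S<p+q$, this quantity is never $0$; only its sign matters. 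The task becomes counting plane trees satisfying this positivity condition.

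\emph{Step 2: rooted count via a cycle lemma.} Root the tree at an edge (equivalently, a corner), giving $p+q-1$ choices. Encode the rooted tree by a depth-first (Łukasiewicz-type) word that records black vertices as steps $+q$ and white vertices as steps $-p$. Since $qp-pq=0$ the word is balanced. The positivity condition on all edges should translate into a ballot-type condition on the partial sums of this word, read with respect to the tree structure. By coprimality there are no ties, and a cycle-lemma argument (Raney/Dvoretzky–Motzkin style) shows that exactly the expected fraction of cyclic shifts is admissible. The target is that the number of edge-rooted admissible trees is $(p+q-1)\,G(1)=\frac{(p+q-1)!}{p!\,q!}$; I will fix the precise encoding by matching this number and checking small cases such as $q=1$. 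This translation is the main obstacle: making the ballot condition in the word exactly equivalent to edge positivity, and handling how the rooting interacts with it. My first attempt will be a bijection that cuts the tree at a vertex and lists the branches with their $(b_S,w_S)$ data.

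\emph{Step 3: Burnside.} A nontrivial automorphism of a plane tree is a rotation about a fixed center. The center cannot be an edge, because the rotation would swap the two endpoints, which have different colors. So the center is a vertex and the rotation has some order $d>1$. If the center is black, the remaining $p-1$ black and $q$ white vertices fall into free orbits of size $d$, so $d\mid\gcd(p-1,q)=g(p)$. If the center is white, $d\mid g(q)$ in the same way. These cannot both happen, since a common divisor of $p-1$ and $p$ is $1$.

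Trees fixed by a rotation of order $d$ correspond to quotient trees. Such a quotient is rooted at the center, which keeps its color, and has $(p-1)/d$ black and $q/d$ white other vertices, or the analogous numbers in the white case; the positivity conditions descend to the quotient. Counting these by the same cycle-lemma technique should give the binomials $\binom{(p+q-1)/d}{q/d}$ and $\binom{(p+q-1)/d}{p/d}$ respectively. Burnside's lemma for the cyclic action on the $p+q-1$ edge-roots then gives
\[ \frac{1}{p+q-1}\sum_{d}\varphi(d)\,\mathrm{Fix}_d , \]
which reproduces $\sum_{d\in D}\varphi(d)G(d)$ and hence \eqref{eq:SHN_Hpq}.
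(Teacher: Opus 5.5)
Your Step~1 is correct: the forced weight $q\,b_S-p\,w_S$ turns the problem into counting bi-coloured plane trees with a sign condition. Step~3 is a sound replacement for the paper's M\"obius inversion. Let $\mathbb{Z}_{p+q-1}$ act by advancing the root edge along the contour. An element of order $d$ fixes exactly the rooted trees that have a rotation of order $d$. These correspond to rooted quotients with a marked centre, i.e.\ to $\TR(\pp_{p,q}/d)$, and Burnside then gives Theorem~\ref{thm:number_of_Tree_Xi}.

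The gap is that the two inputs that carry all the content are never proved:
\[
\abs{\TR(\pp_{p,q})}=\frac{(p+q-1)!}{p!\,q!},\qquad \abs{\TR(\pp_{p,q}/d)}=\binom{(p+q-1)/d}{q/d}\ \text{ or }\ \binom{(p+q-1)/d}{p/d}.
\]
Step~2 says positivity ``should translate'' into a ballot condition, and that you will fix the encoding by matching the expected number. That presupposes the count rather than proving it. Step~3 likewise says the same technique ``should give'' the binomials.

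The encoding you name does not work as stated, because a preorder word recording only colours forgets adjacency. Take $(p,q)=(3,2)$. The unique tree is the path with edge weights $2,1,1,2$, and it has two non-isomorphic edge-rootings. If the depth-first search starts at the black end of the root edge, both rootings give $bwbwb$. If it starts at the white end, they give $wbbwb$ and $wbwbb$, which are cyclic shifts of each other. Either way both land in the necklace class of $bbwbw$, and the other class $[bbbww]$ is never hit. So no cycle-lemma bijection exists yet.

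In the paper these counts come from Kochetkov's formula (Theorem~\ref{thm:passport_simple}): a non-decomposable full passport with $n$ vertices has $(n-2)!$ trees. Coprimality makes $\pp_{p,q}$ and every $\pp_{p,q}/d$ non-decomposable (Corollary~\ref{cor:G(d)_qppq_nondecomposable}). Hence only the trivial partition survives in Theorem~\ref{thm:number_of_TR_trivial_passport}. Relabelling rooted trees, which have no automorphisms, then gives $\abs{\TR(\pp)}=(\abs{\pp}-1)!/(\pp)!$, which is exactly your two targets.

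To complete your route you must either invoke this formula, or actually construct a bijection between edge-rooted trees of a non-decomposable passport and necklaces of its vertex weights. Note that $(n-1)!/(\pp)!$ is precisely the number of such necklaces, all of which are aperiodic here. For $\pp_{p,q}$ this is the rational Catalan number $\frac{1}{p+q}\binom{p+q}{p}$. For $\pp_{p,q}/d$ the centre letter occurs once, so necklaces become all linear words in the remaining letters, which gives the binomials. Such a bijection would be a more elementary proof than the paper's, but it is the whole difficulty, not a detail.
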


For example, Figure \ref{fig:example_5} lists all desired weighted bi-colored plane trees with 5 vertices in Theorem \ref{thm:main_tree}. The left one is a tree with 1 black vertices of weight 4, and 4 white vertices of weight 1 (i.e., $(p,q)=(4,1)$); the right one is a tree with $(p,q)=(3,2)$. 
Therefore, the strong Hurwitz numbers of both compatible branch datum $\mathcal{D}_{4,1}$ and $\mathcal{D}_{3,2}$ in Theorem \ref{thm:main_H.Number} are $1$. And the moduli space of HCMU spheres with a single conical singularity of angle $8\pi$ in Theorem \ref{thm:main_hcmu} has $1+1=2$ connected components. 

\begin{figure}
    \centering
\begin{tikzpicture}[scale=0.7, transform shape]
  \node[blk](1) at ( 1.2, 1.2) {$1$};
  \node[blk](2) at (-1.2, 1.2) {$1$};
  \node[blk](3) at (-1.2,-1.2) {$1$};
  \node[blk](4) at ( 1.2,-1.2) {$1$};
  \node[wht](0) at (0,0) {$\overline{4}$};
  \draw (0) -- (1) node[mid_auto] {$1$};
  \draw (0) -- (2) node[mid_auto] {$1$};
  \draw (0) -- (3) node[mid_auto] {$1$};
  \draw (0) -- (4) node[mid_auto] {$1$};

\begin{scope}[shift={(7,0)}]
  \node[blk](u1) at (-3, 0) {$2$};
  \node[blk](u2) at ( 0, 0) {$2$};
  \node[blk](u3) at (+3, 0) {$2$};
  \node[wht](d1) at (-3/2, 0) {$\overline{3}$};
  \node[wht](d2) at (+3/2, 0) {$\overline{3}$};
  \draw (d1) -- (u1) node[mid_auto] {$2$};
  \draw (d1) -- (u2) node[mid_auto] {$1$};
  \draw (d2) -- (u2) node[mid_auto] {$1$};
  \draw (d2) -- (u3) node[mid_auto] {$2$};
\end{scope}
\end{tikzpicture}
\vspace{10pt}
\caption{A list of weighted bi-colored plane trees with 5 vertices in Theorem \ref{thm:main_tree}.}
\label{fig:example_5}
\end{figure}

Actually, in Section \ref{sec:count.tree}, a more general tree enumeration problem is solved. Namely, for arbitrarily specified numbers of black and white vertices together with the total weights of edges connecting to each vertex, we provide a counting method for weighted bi-colored plane trees with such prescribed vertex data (Theorem \ref{thm:number_of_Tree_Xi}). This is based on a previous enumeration formula by Kochetkov \cite{Kyy13}. 
In the most general case, however, an explicit closed-form expression is difficult to obtain; instead, the count can be computed via a unified algorithm (see \nameref{alg:cap}). In the special co-prime case, such process yields the exact formula stated in \eqref{eq:SHN_Hpq}.

\medskip
The reduction of the aforementioned two geometric problems to tree enumeration requires some preparatory work.

In Section \ref{sec:count.covering}, we explain how a branched cover corresponds to an embedded graph on surfaces via the theory of dessins d'enfants, with the branch datum encoded in the black-and-white vertex information of the graph. 
Due to the particularity of the third partition, under the assumptions of Theorem \ref{thm:main_H.Number}, such graphs become weighted bi-colored plane trees. Consequently, the strong Hurwitz numbers can be obtained by counting these trees. This procedure is in fact well-known, so we only give a brief review and summary.

In Section \ref{sec:count.cpnt}, we explain how each HCMU sphere with a single conical singularity can be represented as a weighted bi-colored plane tree, together with some additional data. Furthermore, we endow the moduli space for isometry classes of such surfaces with Gromov–Hausdorff topology, and prove that under this topology, each connected component of the moduli space corresponds precisely to one weighted bi-colored plane tree appearing in the representation. Thus, by counting these trees, we obtain the number of connected components of the moduli space.

\medskip
\noindent \textbf{Acknowledgment} 

The authors would like to express their sincere thanks to Bin Xu for providing influential support on this work. Yi Song is supported by the National Natural Science Foundation of China (Grant No. 125B1019).

\bigskip
\section{Preliminary on plane trees}\label{sec:pre_tree}
In this section, we give precise definitions of the combinatorial objects involved and clarify the notation adopted here. Finally, we state an important enumeration formula for a class of trees, together with its variants. This formula serves as one of the foundational pillars of the present work.

\subsection{LWBP-trees}

A weighted bi-colored plane tree (\textbf{WBP-tree} in short) consists of 
    \begin{itemize}
        \item an embedded tree $(V,E)$ in $\RR^2$, with vertex set $V$ and edge set $E$; 
        \item a partition of $V$ into \textbf{black vertices} $V^+$ and \textbf{white vertices} $V^-$, such that each edge connects a black vertex to a white vertex;  
        \item a \textbf{weight function} $\wt_E : E \to \Rpos$ on edges.
    \end{itemize}
The weight function $W_E$ naturally extends to a function on $V$ by defining
\[ \wt_V(v^\pm):=\pm \sum_{e \in E(v^\pm)} \wt_E(e),\quad \forall v^\pm \in V^\pm \ . \]
Here $E(v)$ is the set of edges adjacent to $v$, and the sign is used to distinguish black and white vertices. 

\begin{definition}\label{def:passport}
    A \textbf{passport} $\pp = (S,\lambda,\wt)$ consists of 
    \begin{itemize}
        \item a nonempty finite set $S$ called the \textbf{index set}; 
        \item a \textbf{multiplicity function} $\lambda: S \to \Zpos$; and  
        \item a \textbf{weight function} $\wt : S\to \Rnz$ such that
        \begin{equation}\label{eq:passport_weight}
            \sum_{s\in S} \lambda(s) \cdot \wt(s) =0 \ .
        \end{equation}
    \end{itemize}
    Also define
    \begin{equation*}
        p:= \sum_{\wt(s)>0} \lambda(s), \quad q:= \sum_{\wt(s)<0} \lambda(s),\ \quad |\pp|:=p+q, \quad \Vert \pp \Vert:= \frac12 \sum_{s\in S} \lambda(s)\cdot\absbig{\wt(s)}
    \end{equation*}
    to be the number of black vertices, white vertices, total vertices and total weights respectively. 
\end{definition}

\begin{definition} \label{def:labeled_tree}
    A \textbf{labeled weighted bi-colored plane tree} (\textbf{LWBP-tree} in short) of passport $\pp = (S,\lambda,\wt)$ is a WBP-tree $(V,E,\wt_E)$ with a surjection $\Lab: V \to S$ called \textbf{labeling}, such that
    \begin{enumerate}
        \item as weight functions on $V$, $\wt \circ \Lab = \wt_V$; 
        \item as multiplicity functions on $S$, $|\Lab^{-1} (s)|= \lambda(s)$ for all $s\in S$. 
    \end{enumerate} 
    In this manuscript, all trees are LWBP-trees and we shall use the notation $T = (V, E, \wt_E, \Lab)$. 
\end{definition}
The labeling distinguishes vertices of the same weight. Such formalization is useful in enumeration problems. 

Two LWBP-trees $T_1, T_2$ of passport $\pp$ are said to be \textbf{isomorphic} if there exists an orientation-preserving homeomorphism $I:\RR^2 \to \RR^2$ such that $I(V^{\pm}_1) = V^{\pm}_2$ , $I(E_1) = E_2$ as elements of graphs, $\wt_{E_2} \circ\ I = \wt_{E_1}$ as weight functions, and $\Lab_2 \circ\ I = \Lab_1$ as labeling. 
Denote $\Tree(\pp)$ as the set of all isomorphic classes of LWBP-trees of passport $\pp$. 

\bigskip
Here is a practical method for describing an WBP-tree, using the ribbon graph structure. We refer to \cite[\S 1.3, \S 1.5]{LsZak04}, \cite[\S 1.1]{EmM13} for details. 

\begin{definition}\label{def:cyclic_action}
    Let $(V,E,\wt_E)$ be a WBP-tree. For each $v \in V$, $E(v)$ is the set of edges adjacent to $v$. The orientation of the plane induces a \textbf{cyclic action} $\sigma_v$ of order $\deg(v)$ on $E(v)$: $\forall e \in E(v)$, $\sigma_v(e)$ is the anticlockwise next edge in $E(v)$ around $v$. 
\end{definition}

\begin{proposition}\label{prop:eq_ribbon}
    An LWBP-tree is completely determined, up to isomorphisms, by an abstract tree $(V,E)$ with a family of cyclic actions $\sigma_v$ at each vertex $v$, together with the weight function $\wt_E$ and the labeling $\Lab$.
    
    More precisely, two LWBP-trees $T_1, T_2$ 
    are isomorphic if and only if there is a graph isomorphism $I:(V_1, E_1) \to (V_2, E_2)$ such that $\wt_{E_2} \circ\ I = \wt_{E_1}$, $\Lab_2 \circ\ I = \Lab_1$ and  
    \begin{equation}\label{eq:cyclic_compatible}
        I\big(\sigma_v (e) \big)= \sigma_{I(v)}\big( I(e) \big) 
    \end{equation}
    for any $v \in V_1, e \in E(v) \subset E_1$. 
\end{proposition}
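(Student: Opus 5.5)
The plan is to prove both directions by passing between the topological embedding and the purely combinatorial data $(V,E,\{\sigma_v\},\wt_E,\Lab)$. The "only if" direction is immediate. Given an orientation-preserving homeomorphism $I:\RR^2\to\RR^2$ realizing an isomorphism $T_1\cong T_2$, its restriction to vertices and edges is a graph isomorphism. This restriction preserves the coloring, the weights and the labels by the definition of isomorphism. Because $I$ preserves orientation, it sends the anticlockwise cyclic order of the edges around $v$ to the anticlockwise cyclic order around $I(v)$, which is exactly \eqref{eq:cyclic_compatible}. To justify the last point, I would fix a small disk around $v$ meeting only the edges of $E(v)$, each in a single arc. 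Its image under $I$ is a neighbourhood of $I(v)$, and the circular order of the crossing points is preserved because $I$ preserves orientation.

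For the converse, I would construct a homeomorphism from the combinatorial isomorphism, proceeding in three steps.

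First, I normalize both embeddings. Each embedded tree is a compact set, so we may assume edges are tame arcs (piecewise linear, say). We can then take a closed regular neighbourhood $N_i$ of the tree $T_i$. This is a closed disk, since a tree is contractible, and $\RR^2\setminus \mathrm{int}\,N_i$ is a closed annulus running out to infinity.

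Second, I build a homeomorphism $N_1\to N_2$ extending $I$ on the graph. Decompose $N_i$ into small vertex disks $D_v$ and edge bands $e\times[-1,1]$. On each $D_v$, map the star of $v$ to the star of $I(v)$ radially, sending the arc of $e$ to the arc of $I(e)$. The compatibility \eqref{eq:cyclic_compatible} guarantees that the boundary points where the edges exit $D_v$ occur in the same cyclic order. Hence the complementary boundary arcs of $\partial D_v$ can be matched orientation-preservingly, and coning over $v$ gives an orientation-preserving homeomorphism of disks. The bands are then matched as products, and they glue consistently with the vertex disks.

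Third, I extend to the complement. The homeomorphism restricts to an orientation-preserving homeomorphism $\partial N_1\to\partial N_2$ of circles. This extends over the annuli $\RR^2\setminus\mathrm{int}\,N_i$, for instance radially after identifying each annulus with $\{|z|\ge 1\}$. The resulting map $I:\RR^2\to\RR^2$ is an orientation-preserving homeomorphism carrying vertices, colours, edges, weights and labels correctly, so $T_1\cong T_2$.

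To obtain the first sentence of the proposition, that any abstract data determines an LWBP-tree, I would also show existence: every abstract tree with rotation systems embeds in the plane realizing them. This can be done by induction on the number of edges. Deleting a leaf leaves a smaller tree with a realizing embedding, and we reattach the leaf edge inside the prescribed angular sector at its neighbour; this is possible since the complement of a plane tree is connected. Uniqueness then follows from the converse direction above.

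I expect the main obstacle to be the careful local step in the second stage: showing that equal cyclic orders yield an orientation-preserving homeomorphism of vertex disks compatible with the edge bands. The standard fact that a regular neighbourhood of a planar tree is a disk also carries real weight here. For both points I would cite the ribbon-graph references \cite{LsZak04,EmM13} rather than reprove the underlying Schoenflies-type input.
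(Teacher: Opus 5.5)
The paper gives no proof of this proposition. It states it as the standard rotation-system (ribbon-graph) description of plane trees and refers to \cite[\S 1.3, \S 1.5]{LsZak04} and \cite[\S 1.1]{EmM13}. Your argument is the usual proof behind those references, and it is sound.

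\begin{itemize}
\item The ``only if'' direction is the local fact that an orientation-preserving homeomorphism preserves the anticlockwise order of edges at a vertex.
\item The ``if'' direction builds the homeomorphism on a regular neighbourhood from vertex disks and edge bands, then extends it radially over the complementary region. Your cyclic-order check on $\partial D_v$ is exactly where \eqref{eq:cyclic_compatible} is used. Orientation-preserving vertex maps also force the left and right sides of each band to match at both of its ends, so the gluing you claim does go through.
\end{itemize}

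Compared with the citation, this gives a self-contained argument whose only external input is the planar Schoenflies theorem.

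A few justifications need repair:
\begin{itemize}
\item Compactness is not why you may assume the edges are polygonal. The right reason is planar taming: a finite graph embedded in $\RR^2$ is carried onto a polygonal one by a homeomorphism of $\RR^2$. Composing with a reflection if needed makes it orientation-preserving. This follows from Schoenflies, and it is also what makes ``anticlockwise next edge'' in Definition~\ref{def:cyclic_action} meaningful in the first place.
\item In the ``if'' direction, derive colour preservation explicitly. From $\Lab_2\circ I=\Lab_1$ and $\wt\circ\Lab=\wt_V$ you get $\wt_{V_2}\circ I=\wt_{V_1}$, and the colour of a vertex is the sign of $\wt_V$.
\item In the existence step, the leaf can be reattached for a local reason, not because the complement of the tree is connected. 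A small disk about the neighbour meets the tree only in radial segments, so the prescribed angular sector is free near that vertex.
\end{itemize}
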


\begin{definition}\label{def:types_passport} 
The following two types of passports are essential. 
\begin{enumerate}
    \item A passport is called \textbf{trivial}, 
    denoted by $\pp_T$, if $\wt : S\to \Rnz$ is injective. 
    Then the index set and multiplicity function are induced by the weight function itself. 
    \item A passport is called \textbf{full}, 
    denoted by $\ppf$, if $\lambda(s) \equiv 1$ for all $s\in S$. Such multiplicity function will be denoted as $\one$. 
    Hence for a LWBP-tree of a full passport, its labeling $\Lab:V\to S$ is a bijection. 
\end{enumerate}    
\end{definition}

Enumerations arising from geometry problems in this work all involve counting objects of trivial passport. But usually the enumeration of objects with full passports is easier and important intermediate steps.

\begin{notation}
    If the index set of a passport $\pp$ is $S = \{s_1, s_2, \ldots, s_N\}$, then we shall denote $\pp$ in the \textbf{power notation} 
    \[ 
    \pp = \left(\ {s_1}^{\lambda(s_1)} \ {s_2}^{\lambda(s_2)} \ \cdots \ {s_N}^{\lambda(s_N)} \ \right) \ . 
    \] 
    
    For a concrete passport, let $w\in \wt(S) \subset \Rnz$.  We shall present $\wt^{-1}(w) \subset S$ as a set of integers with distinct subscript $\{ w_1, w_2, \ldots, w_a \}$, where $a=\lvert \wt^{-1}(w) \rvert$. 
    Also denote negative weight $-w \in \ZZ_-$ as $\ol{w}$ for compactness. 
    For example, $\pp=(1_1^2\ 1_2\ \ol{3}_1)$ means 
    \begin{itemize}
        \item $S=\{1_1, 1_2, \ol{3}_1\}$; 
        \item $\lambda(1_1)=2$, $\lambda(1_2) = \lambda(\ol{3}_1)=1$; 
        \item $\wt(1_1)= \wt(1_2) = 1$, $\wt(\ol{3}_1)=-3$. 
    \end{itemize}
\end{notation}

\begin{example} \label{eg:tree_label}
    Figure \ref{fig:tree_label} exhibits some LWBP-trees of various kinds of passports. 
    \begin{enumerate}
        \item Figure \ref{subfig:tree_label_A} is a tree of trivial passport $(1^3\ \ol{3})$. 
        \item Figure \ref{subfig:tree_label_A*} is a tree of passport $(1_1^2\ 1_2\ \ol{3})$. 
        \item Figure \ref{subfig:tree_label_B}, \ref{subfig:tree_label_C},  \ref{subfig:tree_label_D} are trees of full passport $(1_1\ 1_2\ 1_3\ \ol{3})$.
        Trees in Figure \ref{subfig:tree_label_B} and \ref{subfig:tree_label_D} are isomorphic, differing by a $2\pi/3$-rotation about the central white vertex. 
        But trees in Figure \ref{subfig:tree_label_B} and \ref{subfig:tree_label_C} are not isomorphic. This can be checked by the cyclic action at the white vertex.     
        \hfill $\square$
    \end{enumerate}
\end{example}

\begin{figure}[t]
    \begin{subfigure}{0.3\textwidth}
        \centering
        \begin{tikzpicture}[scale=0.7, transform shape]
            \node[blk] (up) at (0, 2) {1};
            \node[wht] (mid) at (0, 0) {$\ol{3}$};
            \node[blk] (left) at (-1.5, -1) {1};
            \node[blk] (right) at (1.5, -1) {1};
            \draw (up) -- (mid) node[mid_auto] {1};
            \draw (left) -- (mid) node[mid_auto] {1};
            \draw (right) -- (mid) node[mid_auto] {1};
        \end{tikzpicture}
        \caption{}
        \label{subfig:tree_label_A}
    \end{subfigure}
    \begin{subfigure}{0.3\textwidth}
        \centering
        \begin{tikzpicture}[scale=0.7, transform shape]
            \node[blk] (up) at (0, 2) {\footnotesize $1_2$};
            \node[wht] (mid) at (0, 0) {$\ol{3}$};
            \node[blk] (left) at (-1.5, -1) {\footnotesize $1_1$};
            \node[blk] (right) at (1.5, -1) {\footnotesize $1_1$};
            \draw (up) -- (mid) node[mid_auto] {1};
            \draw (left) -- (mid) node[mid_auto] {1};
            \draw (right) -- (mid) node[mid_auto] {1};
        \end{tikzpicture}
        \caption{}
        \label{subfig:tree_label_A*}
    \end{subfigure}
    
    \vspace{0.3cm}
    
    \begin{subfigure}{0.3\textwidth}
        \centering
        \begin{tikzpicture}[scale=0.7, transform shape]
            \node[blk] (up) at (0, 2) {\footnotesize $1_1$};
            \node[wht] (mid) at (0, 0) {$\ol{3}$};
            \node[blk] (left) at (-1.5, -1) {\footnotesize $1_2$};
            \node[blk] (right) at (1.5, -1) {\footnotesize $1_3$};
            \draw (up) -- (mid) node[mid_auto] {1};
            \draw (left) -- (mid) node[mid_auto] {1};
            \draw (right) -- (mid) node[mid_auto] {1};
        \end{tikzpicture}
        \caption{}
        \label{subfig:tree_label_B}
    \end{subfigure}
    \begin{subfigure}{0.3\textwidth}
        \centering
        \begin{tikzpicture}[scale=0.7, transform shape]
            \node[blk] (up) at (0, 2) {\footnotesize $1_1$};
            \node[wht] (mid) at (0, 0) {$\ol{3}$};
            \node[blk] (left) at (-1.5, -1) {\footnotesize $1_3$};
            \node[blk] (right) at (1.5, -1) {\footnotesize $1_2$};
            \draw (up) -- (mid) node[mid_auto] {1};
            \draw (left) -- (mid) node[mid_auto] {1};
            \draw (right) -- (mid) node[mid_auto] {1};
        \end{tikzpicture}
        \caption{}
        \label{subfig:tree_label_C}
    \end{subfigure}
    \begin{subfigure}{0.3\textwidth}
        \centering
        \begin{tikzpicture}[scale=0.7, transform shape]
            \node[blk] (up) at (0, 2) {\footnotesize $1_2$};
            \node[wht] (mid) at (0, 0) {$\ol{3}$};
            \node[blk] (left) at (-1.5, -1) {\footnotesize $1_3$};
            \node[blk] (right) at (1.5, -1) {\footnotesize $1_1$};
            \draw (up) -- (mid) node[mid_auto] {1};
            \draw (left) -- (mid) node[mid_auto] {1};
            \draw (right) -- (mid) node[mid_auto] {1};
        \end{tikzpicture}
        \caption{}
        \label{subfig:tree_label_D}
    \end{subfigure}
    \caption{Some LWBP-trees of various kinds of passports, all with three vertices of weight $+1$ and one vertex of weight $-3$.}
    \label{fig:tree_label}
\end{figure}

\subsection{Kochetkov's formula}

This subsection presents the enumeration formula for trees with a full passport. Our work builds upon this formula. In addition, we shall provide an modified version tailored to our problem. 

We begin with the necessary concepts, mainly subpassports and partitions.

\begin{definition}\label{def:factorial}
    The \textbf{factorial} of a passport $\pp = (S, \lambda, \wt)$ is defined as
    \begin{equation}
        (\pp)! := \prod_{s \in S} \lambda(s) ! \ . 
    \end{equation}
\end{definition}

\begin{definition}\label{def:subpp}
    Given a passport $\pp = (S, \lambda, \wt)$, let $S' \subset S$ be a non-empty subset and $\lambda' : S' \to \Zpos$ be a multiplicity function. If  
    \begin{itemize}
        \item $\lambda'(s) \leq \lambda(s)$ for all $s \in S'$,  
        \item $\wt'(s) = \wt(s)$ for all $s \in S'$, and 
        \item $\sum_{s \in S'} \lambda'(s) \wt'(s) = 0$,  
    \end{itemize}
    then $\pp(S', \lambda') := (S', \lambda', \wt')$ is a passport, called a \textbf{subpassport} of $\pp$. 
    The multiplicity function $\lambda'$ naturally extends by zero to a function $\lambda':S \to \ZZ_{\geq0}$. 
\end{definition}

\begin{definition}\label{def:partition}
    Denote an unordered sequence of subpassports with multiplicity (or a multi-set of subpassports) in power notation 
    $ \pttp := \{ \pp_1^{n_1}\ \pp_2^{n_2} \cdots \pp_k^{n_k} \} $, where each $\pp_i:=\pp(S_i, \lambda_i)$ is a subpassport. This means there are $n_i\geq0$ copies of $\pp_i$ in $\pttp$. 
    
    If the ``union'' of subpassports in $\pttp$ is actually $\pp$, i.e.
    \begin{equation*}
        \bigcup_{i=1}^k S_i = S \ , \quad \text{ and } \sum_{i=1}^k n_i \cdot \lambda_i(s) = \lambda(s) ,\ \forall s \in S \text{ , }
    \end{equation*}
    with $\lambda_i$ extended by zero, then the unordered sequence $\pttp$ is called a \textbf{partition} of $\pp$. 

    \begin{enumerate}
        \item The number of elements $\abs{\pttp} := \sum_{i=1}^k n_i$ is called the \textbf{length} of $\pttp$. 
            
        \item $\Ptt(\pp)$ is the set of all partitions of $\pp$. $\mathfrak{e}:=\{\pp\}$ is called \textbf{trivial partition}.
            
        \item Define the \textbf{X-function} of a partition as
        \begin{equation*}
            X(\pttp) := \prod_{i=1}^k \big[(\abs{\pp_i} - 1)! \big]^{n_i} \ .
        \end{equation*}

        \item Define the \textbf{factorial} of partition as
        \begin{equation*}
            (\pttp)! := \prod_{i=1}^k 
            \left( n_i ! \ [(\pp_i)!]^{n_i} \right) \ .
        \end{equation*}
        
        \item $\pp$ is called \textbf{non-decomposable} if its unique subpassport is $\pp$ itself. Otherwise, it is called \textbf{decomposable}.
    \end{enumerate}
\end{definition}

\begin{theorem}[Kochetkov's formula, \cite{Kyy13}] \label{thm:passport_simple} 
    Let $\ppf$ be a full passport, then 
    \begin{equation}\label{eq:enumeration_passport_decomposable}
        \absbig{\Tree(\ppf)} = \sum_{\pttp \in \Ptt(\ppf)} (-1)^{|\pttp|-1} (\absbig{\ppf}-1)^{|\pttp|-2} X(\pttp) \ .
    \end{equation}
    In particular, when $\ppf$ is non-decomposable, 
    \begin{equation} \label{eq:enumeration_passport_nondecomposable}
        \absbig{\Tree(\ppf)} = (\absbig{\ppf} - 2)! \ .
    \end{equation}   
\end{theorem}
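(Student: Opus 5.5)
This is Kochetkov's formula, cited from \cite{Kyy13}, so the paper may simply quote it. If I were to prove it, I would start with the non-decomposable case \eqref{eq:enumeration_passport_nondecomposable} and reach the general case by inclusion--exclusion over partitions.

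\textbf{Non-decomposable case.} For a full passport, the labeling makes all vertices distinguishable. Let $N = \absbig{\ppf}$, so there are $N$ vertices, $N-1$ edges and prescribed signed vertex weights. The plan has four steps.
\begin{enumerate}
\item Fix an abstract bipartite tree on the labeled vertex set respecting the colors. The edge weights are then uniquely determined by the vertex weights: peel off leaves one at a time, each leaf's edge weight being forced by its vertex weight.
\item Positivity of these forced weights is where non-decomposability enters. An edge has weight zero or of the wrong sign exactly when cutting it gives a sub-tree whose weight sum is $\le 0$ on the side that should be positive. Balanced sub-sums are excluded because a zero-sum proper subset would be a proper subpassport. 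So the task is to count admissible abstract trees with a cyclic order at each vertex, weighted by $\prod_v (\deg v - 1)!$.
\item Rather than analyse signs directly, I would use a cyclic-lemma or rotation argument in the spirit of Raney's lemma. For generic real weights with no zero sub-sums, among the rooted or rotated versions of a plane structure exactly a proportion $1/(N-1)$ yields all positive edges.
\item Plane trees on $N$ labeled vertices with bipartite colors number $N^{N-2}$-type quantities times cyclic orders; equivalently, count $(N-1)!$ orderings of edges via a permutation factorization. Dividing by $N-1$ gives $(N-2)!$.
\end{enumerate}
Concretely, I expect the cleanest route is this: a plane tree with $N-1$ edges corresponds to a pair of permutations (black and white rotations) whose product is an $(N-1)$-cycle. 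With weights generic and non-decomposable, each labeled-vertex arrangement admits exactly one valid weight configuration per cyclic class. This yields $(N-1)!/(N-1)$.

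\textbf{General case.} I would deform the weights. Choose a perturbation $\wt_\epsilon$ that keeps the total sum zero but makes the passport non-decomposable. The count for the perturbed passport is $(N-2)!$. As $\epsilon \to 0$, the trees that survive are those where no edge weight degenerates to zero.

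A tree whose edge weight tends to $0$ splits, upon deleting all such edges, into subtrees realizing a partition $\pttp$ of $\ppf$ into subpassports. Conversely, given trees on each block of $\pttp$, the number of ways to reconnect them into a tree of the perturbed passport can be counted by the generalized Cayley formula. With each block contributing $(|\pp_i|-1)!$-type factors, the trees joining $k=|\pttp|$ blocks number $N^{k-2}$-like terms, which here appear as $(N-1)^{k-2}$ after accounting for cyclic insertions.

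Möbius inversion over the partition lattice then produces the sign $(-1)^{|\pttp|-1}$, giving \eqref{eq:enumeration_passport_decomposable}.

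\textbf{Main obstacle.} The hardest step is the gluing count: justifying exactly the factor $(N-1)^{|\pttp|-2}X(\pttp)$ for inserting blocks into one another with plane cyclic orders. I would first attempt it via the permutation model, counting factorizations of an $(N-1)$-cycle that are compatible with a prescribed block decomposition.
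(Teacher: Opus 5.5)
The paper does not prove this theorem. It quotes Kochetkov's algebraic proof and mentions the authors' constructive proof elsewhere, so your sketch has to stand on its own, and both of its load-bearing steps have gaps.

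\textbf{Non-decomposable case.} The cyclic-lemma step cannot work as stated. By your own Step 1, the edge weights are forced by the abstract tree and the vertex weights. So re-rooting a plane tree, or changing its cyclic orders, never changes which edges are positive. Every such orbit is therefore entirely valid or entirely invalid, never a $1/(N-1)$ fraction. Raney's lemma is about cyclic sequences of numbers. To use it you would need an explicit bijection between rooted valid trees and, say, the $(N-1)!$ cyclic arrangements of the $N$ labelled weights, and building that bijection is the whole content of \eqref{eq:enumeration_passport_nondecomposable}. The permutation-pair paragraph does not supply it: once the tree is fixed there is no ``weight configuration'' left to choose, and no set of size $(N-1)!$ with a free $\ZZ_{N-1}$-action has been identified.

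\textbf{General case.} Perturbation does give a correct identity, $(N-2)!=\sum_{\pttp}\mathrm{Glue}(\pttp)\prod_i\absbig{\Tree(\pp_i)}$ over partitions $\pttp=\{\pp_1,\dots,\pp_k\}$. But your claim that $\mathrm{Glue}$ is a Cayley-type count equal to $(N-1)^{k-2}X(\pttp)$ is false. Three constraints break the Cayley count:
\begin{itemize}
\item a connecting edge must join a black vertex of one block to a white vertex of another;
\item its weight is the perturbed sum of the blocks on its black side, so its direction is forced by signs;
\item the number of insertion positions depends on current degrees.
\end{itemize}
Already for $k=3$, fixed trees on the blocks can be joined in exactly $(N-2)\prod_i(\absbig{\pp_i}-1)$ ways, for either sign pattern, not $(N-1)\prod_i(\absbig{\pp_i}-1)$.

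A concrete instance: take $\ppf=(1_1\,1_2\,1_3\,\ol{1}_1\,\ol{1}_2\,\ol{1}_3)$. The $4!=24$ perturbed trees degenerate $4$ apiece onto the six perfect-matching partitions. None degenerate onto the nine $(2,4)$-partitions, because $(1^2\,\ol{1}^2)$ admits no tree. The formula instead weights these partitions by $+5$ and $-6$. So the true bookkeeping is $0=24-0-24$, while Kochetkov's is $0=24-54+30$.

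Consequently the terms of \eqref{eq:enumeration_passport_decomposable} are not degeneration counts. Nor is the closed form obtained by Möbius inversion on the partition lattice: the kernel $\mathrm{Glue}$ depends on $N$ and $k$ rather than blockwise, and that lattice's Möbius function $(-1)^{k-1}(k-1)!$ would produce factorials absent from the formula. What is missing is an exact evaluation of $\mathrm{Glue}(\pttp)$ for every $k$, together with a proof by induction on $N$ that Kochetkov's closed form satisfies the resulting recursion. That proof must apply the closed form to every block $\pp_i$, including decomposable ones.
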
 
Besides an algebraic proof in the original paper, the authors of the present work also provide an alternative constructive proof in \cite{LuSong26}. 

\bigskip

To facilitate the computation, we refine the above formula in two respects: 
\begin{itemize}
    \item In practice, enumerating partitions of a full passport is rather cumbersome. To address this, we replace the summation on the right-hand side by a summation over partitions of the corresponding trivial passport. 
    \item The counting result for a general passport does not follow straightforwardly from Kochetkov's formula \eqref{eq:enumeration_passport_decomposable}. As an intermediate step, we introduce the enumeration of rooted trees.
\end{itemize}

The trivial passport naturally corresponding to a general passport is defined as below. 
\begin{definition} \label{def:trivialization}
    The \textbf{trivialization} of a passport $\pp = (S, \lambda, \wt)$ is a trivial passport $\triv(\pp) = (S_T, \lambda_T, \wt_T)$ defined by
    \begin{equation}\begin{split}
        S_T &:= \wt(S) \subset \RR^* \ , \\
        \lambda_T(w) &:= \sum_{s \in \wt^{-1}(w)} \lambda(s) \ , \\
        \wt_T(w) &:= w \ .
    \end{split}\end{equation}
    In other words, the index set of $\triv(\pp)$ is the image set of weight function $W$. The new multiplicity of index $w \in W(S)$ is the sum of those having weight $w$ in $\pp$.  
\end{definition}

The intermediate space of rooted trees is the following. 
\begin{definition}\label{def:rooted_tree}
A \textbf{rooted tree} is an LWBP-tree with a distinguished edge. Denote 
\[ \TR(\pp) := \defset{\left( T;e \right)}{
    \begin{array}{c}
          e \textrm{ is an edge of } T \in \Tree(\pp) 
    \end{array} 
    } . 
\]
as the set of all (isomorphic classes of) rooted trees of passport $\pp$. 
Here two rooted trees are isomorphic if the homeomorphism between two planes also keeps the root edge.
\end{definition}

Here is our modified version of Theorem \ref{thm:passport_simple}, whose proof is given in Appendix \ref{sec:modified_YYK}. 

\begin{theorem} \label{thm:number_of_TR_trivial_passport}
    Let $\pp$ be a passport and $\ppt := \triv(\pp)$ be its trivialization. Then
    \begin{equation}\label{eq:modified_YYK_formula}
          \abs{\TR(\pp)} 
        = \frac{(\ppt)!}{(\pp)!} \sum_{\pttp \in \Ptt(\ppt)} 
          (-1)^{\abs{\pttp} - 1} (\abs{\ppt} - 1)^{\abs{\pttp} - 1} \frac{X(\pttp)}{(\pttp)!} \ .
    \end{equation}
\end{theorem}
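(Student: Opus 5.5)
The plan is to route everything through the full passport obtained by giving every vertex its own label, and to use the root edge to kill all automorphisms. Let $n:=\abs{\pp}$ and let $\ppf$ be a \emph{full refinement} of $\pp$: replace each $s\in S$ by $\lambda(s)$ distinct indices $s_1,\dots,s_{\lambda(s)}$, each of weight $\wt(s)$ and multiplicity $1$. Then $\triv(\ppf)=\triv(\pp)=\ppt$, and $\abs{\ppf}=\abs{\pp}=\abs{\ppt}=n$. The proof will establish two relations and combine them:
\[
\abs{\TR(\ppf)}=(\pp)!\,\abs{\TR(\pp)}, \qquad \abs{\TR(\ppf)}=(\ppt)!\sum_{\pttp\in\Ptt(\ppt)}(-1)^{\abs{\pttp}-1}(n-1)^{\abs{\pttp}-1}\frac{X(\pttp)}{(\pttp)!}.
\]
Dividing the second by $(\pp)!$ gives \eqref{eq:modified_YYK_formula}.

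\textbf{Step 1 (rigidity).} I first show that a rooted LWBP-tree has no nontrivial automorphism, even ignoring labels. Use Proposition \ref{prop:eq_ribbon}: an automorphism $I$ fixing the root edge $e$ also fixes both endpoints of $e$, since they have different colours. By \eqref{eq:cyclic_compatible}, $I$ then fixes every edge $\sigma_v^k(e)$ around each endpoint. Propagating along the tree shows that $I$ fixes every edge and vertex.

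\textbf{Step 2 (first relation).} The group $G=\prod_{s\in S}\mathfrak S_{\lambda(s)}$ acts on $\TR(\ppf)$ by permuting the refined labels $s_1,\dots,s_{\lambda(s)}$. Forgetting subscripts gives a map $\TR(\ppf)\to\TR(\pp)$, and this map is surjective: lift the labeling arbitrarily. Its fibres are exactly the $G$-orbits. The action is free: if $g\cdot(T;e)\cong(T;e)$, the isomorphism is an automorphism of the underlying rooted tree, which is the identity by Step 1, so $g=1$. Hence $\abs{\TR(\ppf)}=\abs{G}\,\abs{\TR(\pp)}=(\pp)!\,\abs{\TR(\pp)}$.

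\textbf{Step 3 (second relation, via Kochetkov).} Since labels in $\ppf$ are all distinct, trees in $\Tree(\ppf)$ have trivial automorphism group. Each such tree has $n-1$ edges, so each gives $n-1$ non-isomorphic rooted trees and $\abs{\TR(\ppf)}=(n-1)\abs{\Tree(\ppf)}$. Theorem \ref{thm:passport_simple} then gives
\[
\abs{\TR(\ppf)}=\sum_{\pttp'\in\Ptt(\ppf)}(-1)^{\abs{\pttp'}-1}(n-1)^{\abs{\pttp'}-1}X(\pttp').
\]
A partition $\pttp'$ of the full passport is just a set partition of the $n$ distinct labels into zero-weight blocks. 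Trivializing each block gives a map $\Ptt(\ppf)\to\Ptt(\ppt)$ that preserves length and block sizes, hence preserves $X$.

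\textbf{Step 4 (fibre count).} The remaining task is to show that the fibre over $\pttp=\{\pp_1^{n_1}\cdots\pp_k^{n_k}\}$ has exactly $(\ppt)!/(\pttp)!$ elements. The plan is to list the required blocks in a fixed order: $n_i$ blocks of type $\pp_i$, each with prescribed numbers of vertices of each weight $w$. Then distribute, for each weight $w$, the $\lambda_T(w)$ labels of that weight into the slots; there are $(\ppt)!=\prod_w\lambda_T(w)!$ such orderings in total. Each set partition arises from exactly $\prod_i n_i!\,[(\pp_i)!]^{n_i}=(\pttp)!$ of these orderings. The factor $[(\pp_i)!]^{n_i}$ accounts for permuting same-weight slots inside each block, and $n_i!$ for permuting identical blocks. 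This is the multinomial/orbit-stabilizer computation, and it yields the second relation.

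I expect the only genuinely delicate points to be the freeness argument in Step 1, since it is what makes $\TR$ rather than $\Tree$ the right object, and the bookkeeping of the overcounting factor in Step 4.
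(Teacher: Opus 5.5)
Your proposal is correct and follows essentially the paper's route. You pass to a full passport, use the rigidity of rooted trees to count relabelings, apply Kochetkov's formula, and push partitions of $\ppf$ down to $\Ptt(\ppt)$ with fibres of size $(\ppt)!/(\pttp)!$ by the same balls-into-boxes count as Lemma~\ref{lem:partition_full_trivial}. The differences are minor: you compare $\TR(\ppf)$ with $\TR(\pp)$ directly (fibre $(\pp)!$, via a free group action) rather than in two steps through $\TR(\ppt)$ as in Proposition~\ref{prop:number_of_TR_from_trivialization}, and your Step~1 makes explicit the rigidity argument that Lemma~\ref{lem:cyclic_auto} only sketches.
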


The passport of trees in \eqref{eq:modified_YYK_formula} can be general ones, which better suits our needs; the RHS is also amenable to concrete numerical computation via programming, as well as to a complete listing of all objects.

\bigskip
\section{Counting general LWBP-tree}\label{sec:count.tree}

To count unrooted trees from the enumeration of rooted trees, one must be especially careful with those unrooted trees with symmetry.
In this section, we provide a method to enumerate all trees with specific symmetry. This is done by studying symmetric trees and the properties of their passports.

\subsection{Symmetric tree and divided passport}

The symmetry of plane trees is not very complicated. 
The following result can be found in \cite[\S 1.5.2]{LsZak04}. 

\begin{lemma} \label{lem:cyclic_auto}
    The automorphism group $\Aut(T)$ of an LWBP-tree $T$ is always isomorphic to a cyclic group $\ZZ_d := \ZZ / d \ZZ$ of some finite order $d\geq1$. 
    When $d>1$, one can always choose a rotationally symmetric tree in its isomorphism class, such that $\Aut(T)$ is generated by a rotation through an angle of $2\pi/d$ about some central vertex $v_0$.
    
    However, the automorphism group of a rooted tree is always trivial. 
\end{lemma}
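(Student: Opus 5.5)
The plan is to pass through the ribbon-graph description of Proposition \ref{prop:eq_ribbon} and then use the classical structure of finite groups acting on a planar tree. An automorphism of $T$ is an orientation-preserving homeomorphism $I$ of $\RR^2$ preserving colors, weights and labels. By Proposition \ref{prop:eq_ribbon}, it amounts to a graph automorphism of the finite tree $(V,E)$ satisfying the compatibility \eqref{eq:cyclic_compatible}. So $\Aut(T)$ is a subgroup of the finite group of graph automorphisms, and it is finite. Finiteness is used throughout what follows.

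\textbf{Step 1: a fixed center.} I will first show that every element of $\Aut(T)$ fixes a common vertex $v_0$. The classical Jordan fact is that the center of a finite tree is preserved by every graph automorphism. The center is obtained by repeatedly stripping leaves, and it is either one vertex or one edge. If it is an edge $\{v,w\}$, its endpoints have opposite colors, since $T$ is bi-colored. Any automorphism preserves colors, so it cannot swap $v$ and $w$, and therefore fixes both. In either case there is a vertex $v_0$ fixed by the whole group.

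\textbf{Step 2: cyclicity and faithfulness.} Restricting to $E(v_0)$ gives a homomorphism $\rho:\Aut(T)\to \mathrm{Sym}(E(v_0))$. Relation \eqref{eq:cyclic_compatible} at $v_0$ says that $\rho(I)$ commutes with $\sigma_{v_0}$. The centralizer of a $\deg(v_0)$-cycle is the cyclic group $\langle \sigma_{v_0}\rangle$, so the image of $\rho$ is cyclic, isomorphic to $\ZZ_d$ with $d\mid \deg(v_0)$.

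It remains to show $\rho$ is injective. Suppose $I$ fixes $v_0$ and every edge at $v_0$. I propagate outward by induction on the distance from $v_0$. If $I$ fixes a vertex $v$ and one edge $e\in E(v)$, then \eqref{eq:cyclic_compatible} gives $I(\sigma_v^k e)=\sigma_v^k e$ for all $k$, so $I$ fixes all of $E(v)$ and hence all neighbors of $v$. Since the tree is connected, $I=\mathrm{id}$.

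The same propagation argument gives the last claim of the lemma. An automorphism of a rooted tree fixes the root edge $e$. It cannot swap the endpoints of $e$ because their colors differ, so it fixes a vertex and an edge at that vertex. The propagation then forces it to be the identity.

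\textbf{Step 3: the symmetric representative.} Let $d=|\Aut(T)|>1$ with generator $I$, so $\rho(I)=\sigma_{v_0}^{k}$ for some $k$. I will redraw the tree in a symmetric way. Place $v_0$ at the origin and put the $\deg(v_0)$ edges at $v_0$ at equally spaced angles, in the cyclic order given by $\sigma_{v_0}$. Then draw one branch from each $I$-orbit of edges at $v_0$ inside its angular sector, and copy it into the other sectors in the orbit by the rotation through the corresponding multiple of $2\pi/d$. Because $I$ preserves the cyclic orders and the labels, this embedding realizes the same ribbon data. By Proposition \ref{prop:eq_ribbon} it is isomorphic to $T$, and in it $I$ is realized by the rotation through $2\pi/d$ about $v_0$, after replacing $I$ by a suitable power.

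The main obstacle is Step 1. There one must make sure the fixed object is a vertex rather than an edge whose endpoints are swapped, and the bi-coloring is exactly what rules that out. The other steps are routine bookkeeping with the cyclic orders.
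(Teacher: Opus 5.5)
Your proof is correct, but it takes a different route from the paper's. The paper argues topologically. It realizes an automorphism by a finite-order orientation-preserving homeomorphism of the plane, brings that map to a rotation about a point, and argues that the rotation center must be a vertex (otherwise $T$ would contain a cycle). The rooted case is settled by the bare assertion that a nontrivial automorphism of a bi-colored plane tree fixes no edge.

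You instead work entirely with the ribbon-graph data of Proposition~\ref{prop:eq_ribbon}:
\begin{itemize}
\item Jordan's center theorem together with the bi-coloring gives a common fixed vertex $v_0$.
\item Cyclicity comes from the fact that the centralizer of the full cycle $\sigma_{v_0}$ in $\mathrm{Sym}(E(v_0))$ is $\langle\sigma_{v_0}\rangle$.
\item One propagation (rigidity) argument gives both the injectivity of the restriction to $E(v_0)$ and the triviality of automorphisms of rooted trees.
\end{itemize}

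The paper's approach is short, and the symmetric representative comes for free as the conjugated picture. The cost is that it leans on an external Brouwer--Ker\'ekj\'art\'o type theorem. It also tacitly assumes that the whole group, not just one element, is realized by a finite group of homeomorphisms simultaneously brought to rotations. Finally, it leaves implicit why the center cannot be the midpoint of an edge, which is exactly where the bi-coloring is needed, and why no nontrivial automorphism fixes an edge.

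Your argument is self-contained and elementary, makes each of these points explicit, and yields the extra information $d \mid \deg(v_0)$. The price is that Step 3 must build the symmetric embedding by hand. Your sector-by-sector copying of branches works because the image of $\rho$ lies in $\langle\sigma_{v_0}\rangle$ and therefore acts freely on $E(v_0)$, so every orbit has exactly $d$ edges; it is worth stating this explicitly.
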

\begin{proof}

By the topology of plane trees, any automorphisms of a tree $T$ is induced by a finite-order orientation-preserving homeomorphisms of the plane. 
Such homeomorphism is isotopic to a rotation of the plane around a point. 
By applying this isotopy, $T$ becomes a rotationally symmetric tree, with rotation angle $2\pi/d$. 
The center of rotation must be a vertex on tree $T$. Otherwise the graph will contain a closed path, not a tree.  

On the other hand, the root edge of a rooted tree must be preserved by its automorphisms. 
But any nontrivial automorphism of a bi-colored plane tree do not preserve any edge.
\end{proof}

\begin{definition} \label{def:cyclic_auto}
    A tree $T$ is called \textbf{d-fold symmetric} if $\Aut(T) \cong \ZZ_d$. 
    The set of $d$-fold symmetric trees of passport $\pp$ is denoted by $\Tree(\pp, d) \subset \Tree(\pp)$. 
\end{definition}

\begin{proposition}\label{prop:rooted_tree_and_nonrooted_tree}
    For a general passport $\pp$,
    \begin{equation} \label{eq:rooted_tree_enum}
        \absbig{\TR(\pp)} 
        = (\absbig{\pp} - 1) \sum_{d = 1}^{\infty} \frac{\absbig{\Tree(\pp, d)}}{d} \ .
    \end{equation}
\end{proposition}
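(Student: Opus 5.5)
We prove the identity by an orbit-counting argument on the forgetful map $\TR(\pp) \to \Tree(\pp)$, $(T;e) \mapsto T$. The identity concerns a general passport $\pp$, so both sides depend only on the isomorphism classes of LWBP-trees and on how many distinct ways each one can be rooted.

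First, fix a tree $T \in \Tree(\pp)$ with vertex count $|\pp|$. Since $T$ is a tree, it has exactly $|\pp|-1$ edges. Two rooted trees $(T;e)$ and $(T;e')$ are isomorphic if and only if some isomorphism $T\to T$ carries $e$ to $e'$, that is, if and only if $e$ and $e'$ lie in the same orbit of $\Aut(T)$ acting on $E(T)$. By Proposition~\ref{prop:eq_ribbon}, isomorphisms are exactly graph isomorphisms compatible with the cyclic orders, weights and labeling, so this action is well defined on the abstract edge set. Hence the fibre of the forgetful map over $T$ has cardinality equal to the number of $\Aut(T)$-orbits on $E(T)$.

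Next, we show that this action is free. Lemma~\ref{lem:cyclic_auto} states that every rooted tree has trivial automorphism group. Applied to $(T;e)$, this means the stabilizer of $e$ in $\Aut(T)$ is trivial for every edge $e$, so $\Aut(T)\cong \ZZ_d$ acts freely on $E(T)$. Every orbit then has size exactly $d$, and the number of orbits is $(|\pp|-1)/d$. In particular, $d$ divides $|\pp|-1$ whenever $\Tree(\pp,d)\neq\emptyset$. Alternatively, one sees freeness geometrically: a nontrivial rotation of order $d$ about the central vertex $v_0$ permutes the edges at $v_0$ without fixed points, and it moves every other edge as well.

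Finally, we sum over $\Tree(\pp)=\bigsqcup_{d\ge1}\Tree(\pp,d)$:
\[
\abs{\TR(\pp)}=\sum_{d\ge1}\ \sum_{T\in\Tree(\pp,d)}\frac{\abs{\pp}-1}{d}=(\abs{\pp}-1)\sum_{d\ge1}\frac{\abs{\Tree(\pp,d)}}{d}.
\]
The sum is finite because $d \le |\pp|-1$. The only delicate step is justifying that rooted isomorphism classes over a fixed $T$ correspond exactly to $\Aut(T)$-orbits of edges. This requires choosing a fixed representative $T$ and composing any isomorphism $T\to T'$ with the identification of $T'$ with $T$. That argument is routine, and I expect no real obstacle beyond stating the freeness cleanly via Lemma~\ref{lem:cyclic_auto}.
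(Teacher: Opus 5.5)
Your proposal is correct and follows essentially the same route as the paper: count the $\Aut(T)\cong\ZZ_d$-orbits of the $\abs{\pp}-1$ edges of each $T\in\Tree(\pp,d)$ to get $(\abs{\pp}-1)/d$ rooted trees per $T$, then sum over $\Tree(\pp)=\bigsqcup_{d}\Tree(\pp,d)$. The only difference is that you spell out two points the paper leaves implicit: the action on edges is free, by the triviality of rooted-tree automorphisms in Lemma~\ref{lem:cyclic_auto}, and distinct orbits give non-isomorphic rooted trees.
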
 

\begin{proof}
With Lemma \ref{lem:cyclic_auto}, assume $T\in \Tree(\pp, d)$ is invariant under the rotation through an angle of $2\pi/d$ about vertex $v_0$. Choose an arbitrary edge and consider its orbit under the rotation. By symmetry, taking any of the $d$ edges in this orbit as the root edge yields isomorphic rooted trees. Thus, there are effectively only $(|\pp| - 1) / d$ distinct choices for the root edge. 
Obviously, $\Tree(\pp) = \bigsqcup_{d\geq1} \Tree(\pp, d)$. 
So the total number of rooted trees is given by \eqref{eq:rooted_tree_enum}.
\end{proof}

The passport of a tree with $d$-fold symmetry exhibits notable  divisibility properties. 

\begin{definition} \label{def:passport_divide}
    Let $\pp=(S,\lambda,\wt)$ be a passport and $d>1$ be an integer. We say that $\pp$ \textbf{admits division by d} if there exists $s_0 \in S$ such that 
    \begin{itemize}
        \item $d\ \big| \left(\lambda(s_0) - 1\right)$; 
        \item for $s\neq s_0$, $d \mid \lambda(s)$ . 
    \end{itemize}
    Then define the \textbf{divided passport} $\pp / d = (S^{(d)}, \lambda^{(d)}, \wt^{(d)})$ as 
    \begin{equation*}\begin{split}
        &S^{(d)} :=
        \begin{cases}
            S \sqcup\{s_*\} \ &,\text{ if } \lambda(s_0)>1 \\
            (S\setminus\{s_0\}) \sqcup\{s_*\} \ &,\text{ if } \lambda(s_0) = 1
        \end{cases} \\
        &\begin{cases}
            \lambda^{(d)}(s_*) := 1 \ ,\\
            \lambda^{(d)}(s_0) := \dfrac{\lambda(s_0) - 1}{d} \ &,\text{ if } s_0 \in S^{(d)} \\
            \lambda^{(d)}(s) \ \, := \dfrac{\lambda(s)}{d} \ &,\text{ if } s \in S\setminus\{s_0\} \subseteq S^{(d)}
        \end{cases} \\
        &\begin{cases}
            \wt^{(d)}(s_*) := \dfrac{\wt(s_0)}{d} \ &, \\
            \wt^{(d)}(s) \ \, := \wt(s) \ &,\text{ if } s \in S^{(d)}\setminus\{s_*\}
        \end{cases} \\
    \end{split}\end{equation*}
    Here $s_*$ is a new element outside $S$. 

    Besides, by convention $\pp$ admits division by $1$ and that $\pp / 1 := \pp$.
\end{definition}

\begin{lemma} \label{lem:exist_Tree_Xi_d_then_exist_passport_Xi/d}
    If $\Tree(\pp, d) \neq \varnothing$, then $\pp$ admits division by $d$. 
\end{lemma}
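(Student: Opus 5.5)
The plan is to work with the rotationally symmetric representative supplied by Lemma \ref{lem:cyclic_auto}. Let $T \in \Tree(\pp, d)$ with $d>1$, and choose $T$ in its isomorphism class so that $\Aut(T)\cong\ZZ_d$ is generated by a rotation $R$ through $2\pi/d$ about a central vertex $v_0$. The first step is to check that $R$ fixes no vertex other than $v_0$ and acts freely on $V\setminus\{v_0\}$. A fixed point of a nontrivial power $R^k$ must be the center of rotation, since a rotation of the plane fixes only its center. Hence every $\langle R\rangle$-orbit in $V\setminus\{v_0\}$ has exactly $d$ elements. The same argument, as in the proof of Lemma \ref{lem:cyclic_auto}, shows that the edges also split into free orbits of size $d$.

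Next I would use that $R$ is an automorphism of the LWBP-tree, so it preserves the labeling: $\Lab\circ R=\Lab$. Put $s_0:=\Lab(v_0)$. For any $s\in S$, the fiber $\Lab^{-1}(s)$ is a union of $\langle R\rangle$-orbits. If $s\neq s_0$, the fiber avoids $v_0$, so it is a disjoint union of orbits of size $d$, and therefore $d\mid\lambda(s)$. For $s=s_0$, the fiber is $\{v_0\}$ together with a union of free orbits, so $d\mid(\lambda(s_0)-1)$. These are exactly the two conditions of Definition \ref{def:passport_divide}, so $\pp$ admits division by $d$. For $d=1$ the statement holds by convention.

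There are two more points to settle. First, Definition \ref{def:passport_divide} is phrased with $s_0$ existing in $S$. This is automatic here, and the case $\lambda(s_0)=1$, where $v_0$ is the only vertex with label $s_0$, fits the second branch of the definition. Second, the divided passport should itself be a genuine passport; this is not strictly required by the statement, but it is worth checking. For that I would verify the weight condition \eqref{eq:passport_weight} for $\pp/d$. Its total is $\frac{1}{d}\bigl(\sum_s\lambda(s)\wt(s)-\wt(s_0)\bigr)+\wt(s_0)/d=0$. Geometrically, this corresponds to the quotient of $T$ by the rotation: a fundamental sector in which the center contributes weight $\wt(s_0)/d$.

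The main obstacle is the rigorous justification that the rotation acts freely away from $v_0$ and that it preserves labels. Both should follow from the isomorphism definition, which requires $\Lab_2\circ I=\Lab_1$, together with Lemma \ref{lem:cyclic_auto}. Everything after that is direct counting.
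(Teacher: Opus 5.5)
Your argument is correct and essentially the paper's. Both take the rotationally symmetric representative from Lemma \ref{lem:cyclic_auto} and use that every vertex other than the center $v_0$ lies in a label-preserving orbit of size $d$; the paper phrases this as reading off the passport of the quotient tree $T/d$, while you count the fibers $\Lab^{-1}(s)$ directly, which makes the two divisibility conditions a bit more explicit.
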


\begin{proof}

According to Lemma \ref{lem:cyclic_auto}, assume that $T\in \Tree(\pp, d)$ is rotationally symmetric, and that $\Aut(T)$ is generated by a rotation through an angle of $2\pi/d$ about vertex $v_0$. Let $T / d$ be the quotient tree of $T$ by the action of $\Aut(T)$. As a graph, it is isomorphic to a subtree of $T$ (see Figure \ref{fig:def_i_d}). Let $v_*$ be the image of vertex $v_0$, and label it with the distinguished index $s_*$. 
Except for $v_0$, all other vertices of $T$ lie in orbits of size $d$ under the $\Aut(T)$-action, sharing the same color and weight. The weight at $v_0$ is divided by $d$ in the quotient. Then the passport of $T / d$ is exactly $\pp / d$. Thus $\pp$ admits division by $d$. 
\end{proof}

By the lemma, if a tree has non-trivial symmetry, then the passport of its quotient tree is exactly the divided passport. Next, we describe the further correspondence between symmetric trees and quotient trees.

We first consider the criterion for a passport to admit division by $d$. 

\begin{proposition}\label{prop:basic_of_divided_passport}
    Let $\pp=(S,\lambda,W)$ be a passport. For each $s\in S$, let 
    \begin{equation} \label{eq:def_gs}
        g(s) := \mathrm{gcd} \big(\{\lambda(s')\}_{s'\neq s} \ ,\ \lambda(s)-1\big) \ .
    \end{equation}
    And define
    \begin{equation} \label{eq:def_D}
        D := \defset{d \in \Zpos}{\exists\ s \in S \text{ s.t. } d\ |\ g(s)} \ .
    \end{equation}
    Then $\pp$ admits division by $d$ if and only if $d \in D$. 

    Besides, if $\pp$ admits division by $d$, then the choice of $s_0$ in Definition \ref{def:passport_divide} is unique. Hence $\pp / d$ is well-defined and unique whenever $\pp$ admits division by $d$. 
    \qed
\end{proposition}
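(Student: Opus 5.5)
The statement is elementary number theory about the multiplicity function $\lambda$, so the plan is to unfold Definition \ref{def:passport_divide} directly.

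For the equivalence, suppose first that $\pp$ admits division by some $d>1$, with distinguished index $s_0$. Then $d \mid \lambda(s_0)-1$ and $d \mid \lambda(s')$ for every $s'\neq s_0$. Hence $d$ divides every entry in the list defining $g(s_0)$ in \eqref{eq:def_gs}, so $d \mid g(s_0)$ and $d\in D$. Conversely, if $d \mid g(s)$ for some $s$, take $s_0:=s$. Since $g(s)$ divides both $\lambda(s)-1$ and each $\lambda(s')$ with $s'\ne s$, the two conditions of Definition \ref{def:passport_divide} hold. The case $d=1$ is covered by the convention $\pp/1=\pp$: indeed $1\in D$ always, since $1$ divides any $g(s)$. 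One should also note that $g(s)$ is well defined and positive. If $S=\{s\}$ is a singleton, the weight condition \eqref{eq:passport_weight} forces $\lambda(s)\wt(s)=0$, which is impossible because $\wt(s)\neq 0$. So $|S|\ge 2$, at least one $\lambda(s')\ge1$ appears in the gcd, and the gcd is a positive integer.

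For uniqueness of $s_0$ when $d>1$, suppose $s_0\neq s_1$ both work. From $s_0$ we get $d\mid\lambda(s_1)$, and from $s_1$ we get $d\mid\lambda(s_1)-1$. Therefore $d\mid 1$, contradicting $d>1$. Once $s_0$ is unique, every ingredient of $\pp/d$ in Definition \ref{def:passport_divide} is determined: $S^{(d)}$, $\lambda^{(d)}$ and $\wt^{(d)}$, up to the name of the new symbol $s_*$. So $\pp/d$ is well defined. For $d=1$ the convention $\pp/1=\pp$ means no choice is involved.

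It is also worth checking that $\pp/d$ really is a passport. The multiplicities are positive integers, or the index $s_0$ is dropped when $\lambda(s_0)=1$. The weight sum equals $\frac1d\sum_{s}\lambda(s)\wt(s)=0$, because the contribution of $s_0$ splits as $\frac{\lambda(s_0)-1}{d}\wt(s_0)+\frac{\wt(s_0)}{d}$. I expect no real obstacle. The only subtle points are the non-degeneracy of the gcd and the fact that $d=1$ must be handled by convention rather than by uniqueness, since for $d=1$ every $s$ satisfies the divisibility conditions.
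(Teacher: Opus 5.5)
Your proof is correct and is the routine verification the paper intends. The paper states this proposition without a written proof, treating both the equivalence and the uniqueness of $s_0$ as immediate from Definition \ref{def:passport_divide}; you unfold that definition directly and obtain uniqueness for $d>1$ from $d \mid \lambda(s_1)$ together with $d \mid \lambda(s_1)-1$. Your extra remarks are sound additions that the paper leaves implicit: that $|S|\ge 2$ makes $g(s)$ a positive integer, that $d=1$ must be handled by the convention $\pp/1=\pp$ because every $s$ would qualify, and that $\pp/d$ satisfies \eqref{eq:passport_weight}.
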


\begin{remark}
    If $\forall\ s \in S, \wt(s) \in \Zpos$, then in the divided passport $\pp / d = (S^{(d)}, \lambda^{(d)}, \wt^{(d)})$, $\wt^{(d)} (r) \in \Zpos$ for any $r \in S^{(d)}$. 
    That is to say, for integral passport $\pp$, its divided passport $\pp / d$ is always integral. 
    \qed
\end{remark}

The main result of this subsection is the following bijection between symmetric trees and quotient trees. This allows us to apply Proposition \ref{prop:rooted_tree_and_nonrooted_tree} on divided passports.

Such theorem reveals why the special label $s_*$ is introduced in divided passport. 
Without $s_*$, for example, the two trees on the far left of Figure \ref{fig:def_i_d} are isomorphic, and the bijection will not hold. 
Moreover, the necessity of handling divided passports is precisely what motivates our definition of general passports.

\begin{theorem} \label{thm:passport_divide_rotation}
    Let $\pp$ be a passport that admits division by $d\in\Zpos$, and $k \in \Zpos$. 
    Then there is a bijection from $\Tree(\pp, kd)$ to $\Tree(\pp / d, k)$. 
    Therefore,   
    \begin{equation} \label{eq:passport_divide_rotation}
        \absbig{\Tree(\pp, kd)} = \absbig{\Tree(\pp / d, k)} \ .
    \end{equation}
\end{theorem}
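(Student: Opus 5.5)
We build an explicit map $\Phi:\Tree(\pp,kd)\to\Tree(\pp/d,k)$ by taking quotients, and then show that it has an inverse given by an unfolding (branched $d$-fold cover) construction. The case $d=1$ is trivial, so we take $d>1$. Given $T\in\Tree(\pp,kd)$, Lemma \ref{lem:cyclic_auto} lets us realize $T$ as a rotationally symmetric tree whose automorphism group is generated by the rotation $\rho$ through $2\pi/(kd)$ about a central vertex $v_0$. The subgroup $\langle\rho^k\rangle\cong\ZZ_d$ acts by rotation through $2\pi/d$.

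As in the proof of Lemma \ref{lem:exist_Tree_Xi_d_then_exist_passport_Xi/d}, we form the quotient $T/d$ by this subgroup. Concretely, cut the plane along a ray from $v_0$ and keep one sector of angle $2\pi/d$. Then glue its boundary rays via $z\mapsto z^d$, which gives a plane again. The resulting tree has a new central vertex $v_*$, labelled $s_*$, with weight $\wt(s_0)/d$. Every other vertex keeps its label. By the proof of Lemma \ref{lem:exist_Tree_Xi_d_then_exist_passport_Xi/d}, the passport of $T/d$ is exactly $\pp/d$; the uniqueness of $s_0$ in Proposition \ref{prop:basic_of_divided_passport} guarantees that the divided passport is the right one. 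The rotation $\rho$ descends to a rotation of $T/d$ through $2\pi/k$ about $v_*$, so $\Aut(T/d)$ contains $\ZZ_k$.

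To see that $\Aut(T/d)$ is exactly $\ZZ_k$, take any automorphism of $T/d$. It must fix $v_*$, because $v_*$ is the unique vertex labelled $s_*$ ($\lambda^{(d)}(s_*)=1$). So it is a rotation about $v_*$, and it lifts through the $d$-fold branched cover to a rotation of $T$ about $v_0$. A generator of $\Aut(T/d)$, of order $k'$, therefore lifts to an automorphism of $T$ of order $k'd$, giving $k'd\mid kd$; combined with $k\mid k'$ this gives $k'=k$. Well-definedness on isomorphism classes follows from Proposition \ref{prop:eq_ribbon}. An isomorphism between two such $T$ carries $v_0$ to $v_0$, since $v_0$ is the unique fixed vertex, and it commutes with the symmetry groups, so it descends to the quotients.

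For the inverse $\Psi$, take $T'\in\Tree(\pp/d,k)$ and use the ribbon description. Its central vertex $v_*$, the unique vertex labelled $s_*$, has cyclic order $(e_1,\dots,e_m)$ on $E(v_*)$. Form $d$ copies of $T'\setminus\{v_*\}$, identify all the copies of $v_*$ into a single vertex $v_0$ labelled $s_0$, and give $v_0$ the cyclic order $(e_1^{(1)},\dots,e_m^{(1)},e_1^{(2)},\dots,e_m^{(d)})$. Geometrically this is pulling $T'$ back under $z\mapsto z^d$ centred at $v_*$. The weight of $v_0$ becomes $d\cdot\wt(s_0)/d=\wt(s_0)$, and the multiplicities multiply by $d$, with $s_0$ gaining one extra vertex. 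So the result has passport $\pp$, and it carries a $\ZZ_d$ symmetry. By the same lifting argument as above, its full automorphism group is $\ZZ_{kd}$. One then checks $\Phi\circ\Psi=\mathrm{id}$ and $\Psi\circ\Phi=\mathrm{id}$ directly on ribbon data: the quotient of the unfolding recovers $T'$, and unfolding the quotient recovers $T$, because the $\ZZ_d$-orbit structure is determined by the cyclic order at $v_0$.

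The main obstacle is the exactness of the symmetry order, meaning that $\Phi$ lands in $\Tree(\pp/d,k)$ and not merely in $\bigcup_{k\mid k'}\Tree(\pp/d,k')$, and symmetrically for $\Psi$. This rests on two points. First, every automorphism of $T/d$ fixes $v_*$, which is exactly where the distinguished label $s_*$ is needed. Second, rotations about $v_*$ lift through $z\mapsto z^d$ to rotations about $v_0$. I would prove the lifting concretely: an automorphism of $T'$ fixing $v_*$ is a cyclic shift of $(e_1,\dots,e_m)$ by some $r$ positions. Lift it to the cyclic shift of the concatenated order at $v_0$ by $r$ positions, extended across the copies; this preserves all cyclic orders, weights and labels. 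Its order is $d$ times the order of the original shift, which gives the required equality of orders.
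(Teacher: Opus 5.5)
Your proposal is correct and follows the same route as the paper: you take the quotient by the order-$d$ subgroup $\langle\rho^k\rangle\cong\ZZ_d$ of $\Aut(T)$, and you invert it by gluing $d$ copies of $T'$ cyclically at the unique $s_*$-vertex, where the paper merely asserts as ``easy to verify'' the facts you prove (that the automorphism groups are exactly $\ZZ_k$ and $\ZZ_{kd}$, and that the maps are well defined on isomorphism classes). One small caution: the lifted shift has order $d$ times the original only when you take the minimal shift $r=m/k'$, as you do for your generator, and not for an arbitrary $r$ (e.g.\ $m=6$, $r=4$, $d=2$ gives order $3$ both before and after lifting).
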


\begin{proof}
Let $T \in \Tree(\pp, kd)$ such that $\Aut(T) \cong \ZZ_{kd}$ is generated by a rotation through ${2\pi}/{kd}$ about a vertex $v_0$. Then the rotation by angle ${2\pi}/{d}$ generates a normal subgroup $A \triangleleft \Aut(T)$, isomorphic to $\ZZ_d$. Let $T / d$ be the quotient tree of $T$ by the action of $A$, and $v_*$ be the image of $v_0$. Then the quotient group $\Aut(T)/A \cong \ZZ_k$ also acts isomorphically on $T / d$. Hence $T / d \in \Tree(\pp / d, k)$. 

On the other hand, consider a rotationally symmetric
tree $T' \in \Tree(\pp / d, k)$. Let $v_* = \Lab^{-1}(s_*)$ be the unique vertex of $T'$ labeled as $s_*$. 
If $d>1$, any automorphism of $T'$ must fix $v_*$. Hence $v_*$ must be the center of rotation. 
Now glue $d$ copies of $T'$ cyclically along the center vertex $v_*$ to form a multiplication tree $T \cdot d$ (see Figure \ref{fig:def_i_d}). 
It is easy to verify that $T \cdot d$ has passport $\pp$ and is $dk$-fold symmetric. 
Hence $T \cdot d \in \Tree(\pp, dk)$. 

In the construction, the rotation center $v_0$ of $T \in \Tree(\pp, kd)$ and the special vertex $v_* = \Lab^{-1}(s_*)$ of $T' \in \Tree(\pp / d, k)$ correspond to each other. 
Therefore, the quotient and multiplication operations above are inverse to each other. 
Thus
\begin{equation*}
\begin{split}
    i : \Tree(\pp, kd) \quad &\xrightarrow{\qquad\ } \quad\Tree(\pp / d, k) \\
    T \quad\quad\ \  &\xmapsto{\qquad\ } \quad\quad T\ /\ d \\
    T' \cdot d \quad\quad &\xmapsfrom{\qquad} \quad\quad\quad T'
\end{split}
\end{equation*} 
is a bijection. 
\end{proof}

\begin{example}

Figure \ref{fig:def_i_d} gives an example of quotient and multiplication trees. 
Here $\pp = (1^2\ 3^2\ \bar{2}^2\ \bar{4})$ and $d = 2, k = 1$.
In each row, the right-most tree is rotationally $2$-fold symmetric, and the left-most one is the quotient tree. From left to right, one glues 2 copies of quotient tree along the vertices labeled as $\ol{2}_*$, to obtain a $2$-fold symmetric multiplication tree.

Note that the vertices labeled as $\ol{2}_*$ in two quotient trees are different. Equivalently, they are distinct trees in $\Tree(\pp/2)$ for the divided passport $\pp/2=(1\ 3\ \ol{2}\ \ol{2}_*)$.
Therefore, they correspond different symmetric trees through the bijection. 
Such distinctions are lost when considering the trivial passport $\triv(\pp  / 2)=(1\ 3\ \bar{2}^2)$.
\end{example}

\begin{figure}[htbp]
    \vspace{-3pt}
    \centering
    \begin{subfigure}{\textwidth}
        \centering
        \begin{tikzpicture}
            \begin{scope} [shift = {(-4, -1.5)}, scale=0.7, transform shape]
            \node[wht] (2D) at (0, 1) {\footnotesize $\ol{2}_*$};
            \node[blk] (1) at (1, 2) {1};
            \node[blk] (3) at (-1, 2) {3};
            \node[wht] (2U) at (-2, 3) {$\ol2$};
            \draw (2D) -- (1) node[mid_auto] {1};
            \draw (2D) -- (3) node[auto, pos = 0.7] {1};
            \draw (3) -- (2U) node[mid_auto] {2};
            \draw[->, thick, color = red] (2D) ++(60:0.7) arc[start angle=60, end angle=120, radius=0.7];
            \draw[->, thick, color = red] (2D) ++(150:0.7) arc[start angle=150, end angle=390, radius=0.7];
            \end{scope}
            
            \draw[-Stealth] (-2.2, 0) -- (-1.2, 0) node[mid_auto] {copy};

            \begin{scope} [shift = {(0, 0)}, scale=0.5]
                \node[wht] (2D) at (0, 1) {};
                \node (2D') at (0, 0.85) {*};
                \node[blk] (1) at (1, 2) {};
                \node[blk] (3) at (-1, 2) {};
                \node[wht] (2U) at (-2, 3) {};
                \draw (2D) -- (1); 
                \draw (2D) -- (3); 
                \draw (3) -- (2U); 
                \draw[->, thick, color = red] (2D) ++(60:0.7) arc[start angle=60, end angle=120, radius=0.7];
            \end{scope}

            \draw [<->, thick, color = red] (0, -0.2) -- (0, 0.2);

            \begin{scope} [shift = {(0, 0)}, scale=-0.5]
                \node[wht] (2D) at (0, 1) {};
                \node (2D') at (0, 1.1) {*};
                \node[blk] (1) at (1, 2) {};
                \node[blk] (3) at (-1, 2) {};
                \node[wht] (2U) at (-2, 3) {};
                \draw (2D) -- (1); 
                \draw (2D) -- (3); 
                \draw (3) -- (2U); 
                \draw[->, thick, color = red] (2D) ++(60:0.7) arc[start angle=60, end angle=120, radius=0.7];
            \end{scope}

            \draw [-Stealth] (1, 0) -- (2, 0) node[mid_auto] {glue};
            
            \begin{scope} [shift = {(4, -1)}, scale=0.7, transform shape]
                \node[wht] (M) at (0, 1) {$\ol{4}$};
                \node[blk] (1) at (1, 2) {1};
                \node[blk] (3) at (-1, 2) {3};
                \node[wht] (2U) at (-2, 3) {$\ol{2}$};
                \draw (M) -- (1) node[mid_auto] {1};
                \draw (M) -- (3) node[mid_auto] {1};
                \draw (3) -- (2U) node[mid_auto] {2};
                \node[blk] (1) at (-1, 0) {1};
                \node[blk] (3) at (1, 0) {3};
                \node[wht] (2U) at (2, -1) {2};
                \draw (M) -- (1) node[mid_auto] {1};
                \draw (M) -- (3) node[mid_auto] {1};
                \draw (3) -- (2U) node[mid_auto] {2};
                \draw[->, thick, color = red] (M) ++(60 :0.6) arc[start angle=60 , end angle=120, radius=0.6];
                \draw[->, thick, color = red] (M) ++(150:0.6) arc[start angle=150, end angle=210, radius=0.6];
                \draw[->, thick, color = red] (M) ++(240:0.6) arc[start angle=240, end angle=300, radius=0.6];
                \draw[->, thick, color = red] (M) ++(330:0.6) arc[start angle=330, end angle=390, radius=0.6];
            \end{scope}

            \draw [-Stealth] (4, -1.5) -- (4, -2.1) -- (-4, -2.1) -- (-4, -1.5); 
            \node (Div) at (0, -1.8) {quotient}; 
        \end{tikzpicture}
        \caption{Construction of a multiplication tree} 
        \label{subfig:def_i_d_A}
    \end{subfigure}
    \vspace{0.2cm}
    \centering
    \begin{subfigure}{\textwidth}
        \centering
        \begin{tikzpicture}
            \begin{scope} [shift = {(-4, -1.5)}, scale=0.7, transform shape] 
            \node[wht] (2D) at (0, 1) {$\ol{2}$};
            \node[blk] (1) at (1, 2) {1};
            \node[blk] (3) at (-1, 2) {3};
            \node[wht] (2U) at (-2, 3) {\footnotesize $\ol{2}_*$};
            \draw (2D) -- (1) node[mid_auto] {1};
            \draw (2D) -- (3) node[mid_auto] {1};
            \draw (3) -- (2U) node[auto, pos = 0.3] {2};
            \draw[->, thick, color = red] (2U) ++(-30:0.7) arc[start angle=-30, end angle=300, radius=0.7];
            \end{scope}

            \draw [-Stealth] (-2.6, 0.2) -- (-0.8, 0.2) node [above, pos=0.5] {multiply} ;

            \draw [Stealth-] (-2.6, -0.2) -- (-0.8, -0.2) node [below, pos=0.5] {quotient};

            \begin{scope} [shift = {(1.8, 0)}, scale=0.7, transform shape] 
            \node[wht] (M) at (0, 0) {$\ol{4}$};
                \node[blk] (1) at (-3, 1) {1};
                \node[blk] (3) at (-1, 1) {3};
                \node[wht] (2D) at (-2, 2) {$\ol{2}$};
                \draw (2D) -- (1) node[mid_auto] {1};
                \draw (2D) -- (3) node[mid_auto] {1};
                \draw (3) -- (M) node[mid_auto] {2};
                \node[blk] (1) at (3, -1) {1};
                \node[blk] (3) at (1, -1) {3};
                \node[wht] (2D) at (2, -2) {$\ol{2}$};
                \draw (2D) -- (1) node[mid_auto] {1};
                \draw (2D) -- (3) node[mid_auto] {1};
                \draw (3) -- (M) node[mid_auto] {2};
                \draw[->, thick, color = red] (M) ++(-30:0.6) arc[start angle=-30, end angle=120, radius=0.6];
                \draw[->, thick, color = red] (M) ++(150:0.6) arc[start angle=150, end angle=300, radius=0.6];
            \end{scope}
        \end{tikzpicture}
        \caption{Another example} 
        \label{subfig:def_i_d_B}
    \end{subfigure}
    \caption{Examples of quotient and multiplication tree. 
    }
    \label{fig:def_i_d}
\end{figure}

\subsection{Counting trees of trivial passport}

Now we are ready to present the main result of this section (Theorem \ref{thm:number_of_Tree_Xi}), also the cornerstone of the whole work.  

We begin with some functions from number theory, used in later theorem. 
Let $\varphi(n)$ be the \textbf{Euler's totient function}, which counts the number of positive integers $\leq n$ that are relatively prime to $n$.
$\mu(n)$ is \textbf{Möbius function}, defined to be $1$ if $n=1$, $(-1)^k$  if $n$ is a product of $k$ distinct primes, and $0$ otherwise.

The following two identities are used later.
\begin{fact}
For any positive integer $n \in \Zpos$, we have
\begin{equation}\label{eq:mobius_function}
    \sum_{\substack{m \,|\, n }} \mu(m) = \delta_{1,n} = 
    \begin{cases}
        1 ,& n = 1 \\
        0 ,& n \neq 1
    \end{cases}
    \ ,
\end{equation}
and 
\begin{equation}\label{eq:mobius_to_euler}
    \varphi(n) = \sum_{m \,|\, n} \mu\left(\frac{n}{m}\right) \cdot m \ .
\end{equation}
\qed
\end{fact}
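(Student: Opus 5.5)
Both identities are classical. The plan is to prove the first directly from the definition of $\mu$, and then obtain the second by Möbius inversion of Gauss's identity $\sum_{d\mid n}\varphi(d)=n$.

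For \eqref{eq:mobius_function}, the case $n=1$ is immediate, since $\mu(1)=1$. For $n>1$, I would write $n=p_1^{a_1}\cdots p_k^{a_k}$ with $k\geq1$ distinct primes. Every divisor $m$ that is not squarefree has $\mu(m)=0$, so only the squarefree divisors contribute. These are exactly the products $\prod_{i\in I}p_i$ over subsets $I\subseteq\{1,\dots,k\}$, each with $\mu=(-1)^{|I|}$. The sum therefore equals $\sum_{j=0}^{k}\binom{k}{j}(-1)^j=(1-1)^k=0$.

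For \eqref{eq:mobius_to_euler}, the first step is Gauss's identity $\sum_{d\mid n}\varphi(d)=n$. To prove it, partition $\{1,\dots,n\}$ according to $g=\gcd(j,n)$. Writing $j=g j'$, the condition $\gcd(j,n)=g$ becomes $1\le j'\le n/g$ with $\gcd(j',n/g)=1$. Hence the class with $\gcd(j,n)=g$ has exactly $\varphi(n/g)$ elements. Summing over $g\mid n$, and reindexing by $d=n/g$, gives the identity.

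The second step inverts this relation using \eqref{eq:mobius_function}. Compute
\[
\sum_{m\mid n}\mu\!\left(\tfrac{n}{m}\right) m
=\sum_{m\mid n}\mu\!\left(\tfrac{n}{m}\right)\sum_{d\mid m}\varphi(d)
=\sum_{d\mid n}\varphi(d)\sum_{e\mid \frac{n}{d}}\mu(e).
\]
The second equality is the change of variables $m=de$: the pairs $(m,d)$ with $d\mid m\mid n$ correspond bijectively to pairs $(d,e)$ with $d\mid n$ and $e\mid n/d$, and then $n/m=(n/d)/e$. As $e$ runs over the divisors of $n/d$, so does $(n/d)/e$, so the inner sum equals $\sum_{e\mid n/d}\mu(e)$. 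By \eqref{eq:mobius_function} this inner sum is $\delta_{1,n/d}$, so only $d=n$ survives, and the total is $\varphi(n)$.

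The only point needing care is this reindexing of the double sum; everything else is routine bookkeeping.
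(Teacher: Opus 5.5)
Your proof is correct and complete. The cancellation $(1-1)^k=0$ over the squarefree divisors gives the first identity, and M\"obius-inverting Gauss's identity $\sum_{d\mid n}\varphi(d)=n$ through the reindexing $m=de$ gives the second. The paper gives no proof of this Fact and simply records both identities as classical, so your argument is the standard textbook one. The interchange over a divisor chain that you flag as the delicate step is the same manipulation the paper itself performs in the proof of Theorem~\ref{thm:number_of_Tree_Xi_d}.
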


\begin{definition} \label{def:G(d)}
    Fix a passport $\pp$ and let $D$ as in Proposition \ref{prop:basic_of_divided_passport}. Define 
    \begin{equation}\label{eq:def_G(d)}
        G(d) := \frac{\abs{\TR(\pp  / d)}}{\abs{\pp} - 1} 
    \end{equation}
    for each $d \in D$. With modified Kochetkov's formula (Theorem \ref{thm:number_of_TR_trivial_passport}), $G(d)$ is computable. 
\end{definition}

We enumerate $\Tree(\pp)$ by counting $d$-fold symmetric trees first. 

\begin{theorem} \label{thm:number_of_Tree_Xi_d}
    Let $\pp$ be a general passport and $d \in \Zpos$. Then
    \begin{equation} \label{eq:number_of_Tree_Xi_d}
        \absbig{\Tree(\pp, d)} = d \sum_{\substack{k\in D\\d\,|\,k}} \mu\left(\frac{k}{d}\right) G(k) \ .
    \end{equation}
\end{theorem}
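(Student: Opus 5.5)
The plan is to combine Proposition \ref{prop:rooted_tree_and_nonrooted_tree} with Theorem \ref{thm:passport_divide_rotation}, and then invert the resulting divisor-sum relation by Möbius inversion.

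Fix $k\in D$, so that $\pp$ admits division by $k$. First apply Proposition \ref{prop:rooted_tree_and_nonrooted_tree} to the divided passport $\pp/k$. A tree of passport $\pp/k$ with $|\pp/k|$ vertices has $|\pp/k|-1$ edges, and these satisfy $|\pp/k|-1=(|\pp|-1)/k$, since every edge of $T$ lies in a free orbit of the $\ZZ_k$-rotation. Hence
\[
 \abs{\TR(\pp/k)} = \frac{\abs{\pp}-1}{k}\sum_{j\ge1}\frac{\abs{\Tree(\pp/k,j)}}{j}.
\]
By Theorem \ref{thm:passport_divide_rotation}, $\abs{\Tree(\pp/k,j)}=\abs{\Tree(\pp,jk)}$. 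Writing $n=jk$ and using Definition \ref{def:G(d)}, this gives
\[
 G(k)=\sum_{\substack{n\ge1\\ k\mid n}}\frac{\abs{\Tree(\pp,n)}}{n}.
\]
The sum is finite. Indeed, $\Tree(\pp,n)\neq\varnothing$ forces $n\in D$ by Lemma \ref{lem:exist_Tree_Xi_d_then_exist_passport_Xi/d} and Proposition \ref{prop:basic_of_divided_passport}, and $D$ is finite. I will also check that $D$ is closed under taking divisors, which is immediate from its definition.

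Next, set $f(n):=\abs{\Tree(\pp,n)}/n$ for $n\in D$ and $f(n)=0$ otherwise. The relation above then reads $G(k)=\sum_{k\mid n}f(n)$ for all $k\in D$. I apply the dual (multiples) form of Möbius inversion:
\[
 \sum_{\substack{k\in D\\ d\mid k}}\mu(k/d)\,G(k)=\sum_{\substack{k\in D\\ d\mid k}}\mu(k/d)\sum_{k\mid n}f(n)=\sum_{n:\,d\mid n}f(n)\sum_{m\mid n/d}\mu(m)=f(d).
\]
The second equality writes $k=dm$, and the last step uses identity \eqref{eq:mobius_function}. The interchange is legitimate because every $n$ with $f(n)\neq0$ lies in $D$, so each $k$ dividing such an $n$ also lies in $D$; the restriction $k\in D$ therefore loses no terms. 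Multiplying by $d$ gives \eqref{eq:number_of_Tree_Xi_d}. When $d\notin D$, both sides vanish: the right side has no terms because multiples of $d$ cannot be in $D$.

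The main obstacle is the bookkeeping in the first step. One must verify that $\abs{\pp/k}-1=(\abs{\pp}-1)/k$ in both cases of Definition \ref{def:passport_divide}, whether $\lambda(s_0)=1$ or $\lambda(s_0)>1$. One must also confirm that Theorem \ref{thm:passport_divide_rotation} applies for every $j\ge1$, including $j=1$. The rest is the standard inversion.
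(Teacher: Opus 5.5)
Your proposal is correct and follows essentially the same route as the paper. You apply Proposition~\ref{prop:rooted_tree_and_nonrooted_tree} to $\pp/k$ and use $\abs{\pp/k}-1=(\abs{\pp}-1)/k$ together with Theorem~\ref{thm:passport_divide_rotation} to obtain $G(k)=\sum_{k\mid n}\abs{\Tree(\pp,n)}/n$, then invert over multiples via \eqref{eq:mobius_function}. Your explicit remark that $D$ is closed under divisors, so restricting to $k\in D$ loses no terms in the inner M\"obius sum, spells out a step the paper uses without comment.
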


\begin{proof}
If $d \notin D$, there is no $k\in D$ such that $d\ |\ k$, thus the RHS is zero. 
By Lemma \ref{lem:exist_Tree_Xi_d_then_exist_passport_Xi/d}, $d \notin D$ also implies $\Tree(\pp, d) = \varnothing$. 

Now assume $d \in D$. By Definition \ref{def:G(d)}, Proposition \ref{prop:rooted_tree_and_nonrooted_tree} and Theorem \ref{thm:passport_divide_rotation}, together with $\abs{\pp/d}-1 = \frac1d(\abs{\pp}-1)$, we see

\begin{equation}\label{eq:fg_dual_no_F}
\begin{split}
    G(d) := \frac{\abs{\TR(\pp / d)}}{\abs{\pp} - 1}
    &\xlongequal{\eqref{eq:rooted_tree_enum}} 
    \frac1{d(\abs{\pp/d}-1)}{\cdot (\abs{\pp / d} - 1)} \sum_{k \in \Zpos} \frac{\absbig{\Tree(\pp/d, k)}}{k} \\
    &\xlongequal{\eqref{eq:passport_divide_rotation}}
    \sum_{k \in \Zpos} \frac{\absbig{\Tree(\pp, dk)}}{dk} 
    = \sum_{\substack{l\in D \\ d\,|\,l}} \frac{\absbig{\Tree(\pp, l)}}{l} \ .\\
\end{split}\end{equation}

Each $d \in D$ gives a linear equation in the unknowns $\{\abs{\Tree(\pp,l)}\}_{l \in D}$ via \eqref{eq:fg_dual_no_F}. They form a system of $\abs{D}$ linear equations in $\abs{D}$ unknowns.
Since the coefficient matrix is upper triangular, the system has a unique solution. 

Solving such linear system will be similar to M\"{o}bius inversion.

\begin{align*}
    \sum_{\substack{k \in D \\ d\,|\,k}} \mu\left(\frac{k}{d}\right) G(k) 
    \xlongequal[]{\eqref{eq:fg_dual_no_F}}& \sum_{\substack{k \in D \\ d\,|\,k}} \mu\left(\frac{k}{d}\right) \sum_{\substack{l \in D \\ k\,|\,l}} \frac{\absbig{\Tree(\pp,l)}}{l} \\
    =& \sum_{\substack{l \in D \\ d\,|\,l}} \frac{\absbig{\Tree(\pp,l)}}{l} \sum_{\substack{k \in D \\ d\,|\,k\,|\,l}} \mu\left(\frac{k}{d}\right) \\
    =& \sum_{\substack{l \in D \\ d\,|\,l}} \frac{\absbig{\Tree(\pp,l)}}{l} \sum_{\substack{m \,|\, (l / d) }} \mu\left(m\right) 
    \xlongequal[]{\eqref{eq:mobius_function}} 
    \sum_{\substack{l \in D \\ d\,|\,l}} \frac{\absbig{\Tree(\pp,l)}}{l} \delta_{1,l/d} = \frac{\absbig{\Tree(\pp,d)}}{d} 
    \ .
\end{align*}
Then \eqref{eq:number_of_Tree_Xi_d} is obtained.
\qedhere
\end{proof}

\begin{theorem} \label{thm:number_of_Tree_Xi}
    The number of LWBP-trees of passport $\Xi$ is given by
    \begin{equation} \label{eq:number_of_Tree_Xi}
        \absbig{\Tree(\pp)} = \sum_{d\in D} \varphi(d) G(d) \ .
    \end{equation}    
\end{theorem}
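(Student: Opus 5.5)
The plan is to sum the formula of Theorem \ref{thm:number_of_Tree_Xi_d} over all possible orders of symmetry and then simplify with the identity \eqref{eq:mobius_to_euler}.

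First, by Lemma \ref{lem:cyclic_auto} every tree in $\Tree(\pp)$ has cyclic automorphism group $\ZZ_d$ for a unique $d\geq1$. Hence $\Tree(\pp)=\bigsqcup_{d\geq1}\Tree(\pp,d)$, and
\[ \absbig{\Tree(\pp)} = \sum_{d\geq 1}\absbig{\Tree(\pp,d)} . \]
By Lemma \ref{lem:exist_Tree_Xi_d_then_exist_passport_Xi/d} together with Proposition \ref{prop:basic_of_divided_passport}, only $d\in D$ contribute, so the sum is finite and may be restricted to $D$.

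Next, I substitute \eqref{eq:number_of_Tree_Xi_d} for each $d\in D$ and exchange the order of summation:
\[ \absbig{\Tree(\pp)}=\sum_{d\in D} d\sum_{\substack{k\in D\\ d\mid k}}\mu\!\left(\tfrac kd\right)G(k)=\sum_{k\in D}G(k)\sum_{\substack{d\in D\\ d\mid k}} d\,\mu\!\left(\tfrac kd\right). \]

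The only point needing care is that the inner sum runs over divisors of $k$ lying in $D$, whereas \eqref{eq:mobius_to_euler} requires all divisors of $k$. This is resolved by observing that $D$ is closed under taking divisors. Indeed, $D$ is by definition the set of integers dividing some $g(s)$, so if $k\in D$ with $k\mid g(s)$, then every $d\mid k$ also divides $g(s)$. The inner sum is therefore exactly $\sum_{d\mid k} d\,\mu(k/d)=\varphi(k)$ by \eqref{eq:mobius_to_euler}, which yields \eqref{eq:number_of_Tree_Xi}.
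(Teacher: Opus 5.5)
Your proposal is correct and follows essentially the same route as the paper. Both sum Theorem \ref{thm:number_of_Tree_Xi_d} over $d\in D$, exchange the order of summation, and use that $D$ is closed under taking divisors, so that the inner sum is $\sum_{d\mid k} d\,\mu(k/d)=\varphi(k)$ by \eqref{eq:mobius_to_euler}.
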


\begin{proof}
By Theorem \ref{thm:number_of_Tree_Xi_d}, 
\begin{equation*}
    \abs{\Tree(\pp)} 
    = \sum_{d \in D} \abs{\Tree(\pp, d)} 
    = \sum_{d \in D} d \sum_{\substack{k\in D\\d\,|\,k}} \mu\left(\frac{k}{d}\right) G(k)
    = \sum_{k \in D} G(k) \sum_{\substack{d\in D\\d\,|\,k}} \mu\left(\frac{k}{d}\right) d
\end{equation*}
By the definition of $D$, if $k \in D$ and $d \mid k$, then $d \in D$. Therefore,
\begin{equation*}
    \sum_{\substack{d\in D\\d\,|\,k}} \mu\left(\frac{k}{d}\right) d = \sum_{d \,|\, k} \mu\left(\frac{k}{d}\right) d 
    \xlongequal[]{\eqref{eq:mobius_to_euler}} \varphi(k) \ .
\end{equation*}
And the desired formula is obtained.
\qedhere
\end{proof}

We summarize the complete procedure of enumerating $\Tree(\pp)$ in \nameref{alg:cap}. 

\begin{algorithm}[t]
\setstretch{1.2}
\renewcommand{\thealgorithm}{}  
\floatname{algorithm}{Main Algorithm}
\caption[Main Algorithm]{Enumerate LWBP-trees of a given passport}\label{alg:cap}
\begin{algorithmic}[1]
\Require Passport $\pp = (S, \lambda, W)$
\Ensure The number $\abs{\Tree(\pp)}$ of LWBP-trees of passprt $\pp$
\For{each $s\in S$}
    \State determine and record $g(s)$ as \eqref{eq:def_gs} in Proposition \ref{prop:basic_of_divided_passport} 
\EndFor
\State determine the set $D$ as \eqref{eq:def_D} in Proposition \ref{prop:basic_of_divided_passport} 
\For{each $d \in D$}
    \State compute the divided passport $\pp / d$ as Definition \ref{def:passport_divide}  
    \State set the trivialization $\triv(\pp/d)$ as Definition \ref{def:trivialization}
    \State list all the partitions $\Ptt(\triv(\pp/d))$ as Definition \ref{def:partition}
    \State compute $\abs{\TR(\pp / d)}$ by the modified Kochetkov's formula \eqref{eq:modified_YYK_formula} in Theorem \ref{thm:number_of_TR_trivial_passport} 
    \State compute and record $G(d)$ as Definition \ref{def:G(d)} 
    \State compute and record Euler's totient function $\varphi(d)$ 
\EndFor
\State compute $\abs{\Tree(\pp)}$ as Theorem \ref{thm:number_of_Tree_Xi}
\end{algorithmic}
\end{algorithm}

\begin{example}
We illustrate the \nameref{alg:cap} by taking $\pp = (2^2\ 4^3\ \ol{8}\phantom{}^2)$ as a trivial passport. 

\begin{itemize}
\item $g(2) = g(\ol{8}) = 1$, $g(4) = 2$. 
Thus $D=\{1,2\}$ and all divided passports are 
\[ \pp / 1 = \pp \quad \text{ and } \quad \pp / 2 = (2_*^1\ 2^1\ 4^1\ \ol{8}\phantom{}^1) \ . \] 

\item $\triv(\pp / 2) = (2^2\ 4^1\ \ol{8}\phantom{}^1)$ is non-decomposable. The only nontrivial partition of $\pp$ is 
\[ \pttp = \{(2^2\ 4\ \ol{8}), (4^2\ \ol{8}) \} \ . \] 

\item By modified Kochetkov's formula in Theorem \ref{thm:number_of_TR_trivial_passport}, 
\begin{equation*}
    G(1) 
    = \frac{\abs{\TR(\pp)}}{7 - 1} 
    = \frac16 \left( \frac{X(\mathfrak{e})}{(\mathfrak{e})!} - 6 \cdot \frac{X(\mathfrak{p})}{(\mathfrak{p})!} \right)
    = \frac16 \left( \frac{(7-1)!}{2!3!2!} - 6 \cdot \frac{(4-1)!(3-1)!}{(2!1!1!)(2!1!)} \right)
    = 2 \ ,
\end{equation*}
\begin{equation*}
    G(2) 
    = \frac{\abs{\TR(\pp / 2)}}{7 - 1} 
    = \frac16 \cdot \frac{[\triv(\pp / 2)]!}{(\pp / 2)!} \cdot \frac{X(\mathfrak{e})}{(\mathfrak{e})!}
    = \frac16 \cdot \frac{2!1!1!}{1!1!1!1!} \cdot \frac{(4-1)!}{2!1!1!}
    = 1 \ .
\end{equation*}

\item By Theorem \ref{thm:number_of_Tree_Xi}, 
\begin{equation*}
    \abs{\Tree(\pp)} = \varphi(1)G(1) + \varphi(2)G(2) = 2 + 1 = 3 \ .
\end{equation*}
By Theorem \ref{thm:number_of_Tree_Xi_d}, we can also get
\begin{equation*}
    \abs{\Tree(\pp, 1)} = \mu(1) G(1) + \mu(2) G(2) = 2-1 = 1 \ ,\ \abs{\Tree(\pp, 2)} = 2 \mu(1) G(2) = 2 \ .
\end{equation*}
\end{itemize}

All the trees in $\Tree(\Xi)$ are listed in Figure \ref{fig:tree_2^24^38^2}.
\end{example}

\begin{figure}[htbp]
    \centering
    \begin{subfigure}{0.3\textwidth}
        \centering
        \begin{tikzpicture}
            \node[blk] (M) at (0, 0) {};
            \begin{scope}
            \node[wht] (8) at (-1, 0) {};
            \node[blk] (2) at (-1.5, 0.87) {};
            \node[blk] (4) at (-1.5, -0.87) {};
            \draw (M) -- (8) node[mid_auto] {2};
            \draw (8) -- (2) node[mid_auto, swap] {2};
            \draw (8) -- (4) node[mid_auto, color = red] {4};
            \end{scope}
            \begin{scope}[rotate = 180]
            \node[wht] (8) at (-1, 0) {};
            \node[blk] (2) at (-1.5, 0.87) {};
            \node[blk] (4) at (-1.5, -0.87) {};
            \draw (M) -- (8) node[mid_auto] {2};
            \draw (8) -- (2) node[mid_auto, swap] {2};
            \draw (8) -- (4) node[mid_auto, color = red] {4};
            \end{scope}
        \end{tikzpicture}
        \caption{A tree in $\Tree(\pp, 2)$}
        \label{subfig:tree_2^24^38^2_A}
    \end{subfigure}
    \hfill
    \centering
    \begin{subfigure}{0.3\textwidth}
        \centering
        \begin{tikzpicture}
            \node[blk] (M) at (0, 0) {};
            \begin{scope}[yscale = -1]
            \node[wht] (8) at (-1, 0) {};
            \node[blk] (2) at (-1.5, 0.87) {};
            \node[blk] (4) at (-1.5, -0.87) {};
            \draw (M) -- (8) node[mid_auto] {2};
            \draw (8) -- (2) node[mid_auto] {2};
            \draw (8) -- (4) node[mid_auto, swap, color = red] {4};
            \end{scope}
            \begin{scope}[rotate = 180, yscale = -1]
            \node[wht] (8) at (-1, 0) {};
            \node[blk] (2) at (-1.5, 0.87) {};
            \node[blk] (4) at (-1.5, -0.87) {};
            \draw (M) -- (8) node[mid_auto] {2};
            \draw (8) -- (2) node[mid_auto] {2};
            \draw (8) -- (4) node[mid_auto, swap, color = red] {4};
            \end{scope}
        \end{tikzpicture}
        \caption{Another tree in $\Tree(\pp, 2)$}
        \label{subfig:tree_2^24^38^2_B}
    \end{subfigure}
    \hfill
    \centering
    \begin{subfigure}{0.3\textwidth}
        \centering
        \begin{tikzpicture}
            \node[blk] (M) at (0, 0) {};
            \begin{scope}
            \node[wht] (8) at (-1, 0) {};
            \node[blk] (2) at (-1.5, 0.87) {};
            \node[blk] (4) at (-1.5, -0.87) {};
            \draw (M) -- (8) node[mid_auto] {2};
            \draw (8) -- (2) node[mid_auto, swap] {2};
            \draw (8) -- (4) node[mid_auto, color = red] {4};
            \end{scope}
            \begin{scope}[xscale = -1]
            \node[wht] (8) at (-1, 0) {};
            \node[blk] (2) at (-1.5, 0.87) {};
            \node[blk] (4) at (-1.5, -0.87) {};
            \draw (M) -- (8) node[mid_auto] {2};
            \draw (8) -- (2) node[mid_auto] {2};
            \draw (8) -- (4) node[mid_auto, swap, color = red] {4};
            \end{scope}
        \end{tikzpicture}
        \caption{Unique tree in $\Tree(\pp, 1)$}
        \label{subfig:tree_2^24^38^2_C}
    \end{subfigure}
    \caption{The list of all trees in $\Tree(\pp)$, $\pp = (2^2\ 4^3\ \ol{8}\phantom{}^2)$. }
    \label{fig:tree_2^24^38^2}
\end{figure}

\subsection{Computation for a special case}\label{ssec:compute}

Now fix two integers $p,q \in \Zpos$ and define the trivial passport
\[ \pp_{p, q} := \left(q^p \ \ol{p}^q \right) \ . \]

In this subsection, we shall enumerate $\Tree(\pp_{p,q})$. Set 
\begin{equation}\label{eq:g0_pq}
    p = a g_0,\ q=b g_0, \text{ with } g_0:=\gcd(p,q) \ .
\end{equation}
Following Proposition \ref{prop:basic_of_divided_passport}, set 
\begin{equation}
    g(q) = \gcd(q-1, p) \ ,\quad 
    g(p) = \gcd(p-1, q) \ .
\end{equation}

We denote the set of all partitions of $n \in \Zpos$ as $\Ptt(n)$. 
By convention, we also define the \textbf{empty partition} $\xi_0 = (1^{n_1} 2^{n_2} \cdots )$ by setting $n_1 = n_2 = \cdots = 0$, and set $\Ptt(0) = \{\xi_0\}$. 

To enumerate $\Tree(\pp_{p,q})$ by Theorem \ref{thm:number_of_Tree_Xi}, we only need to compute $\abs{\TR(\pp_{p, q} / d)}$ for all factors $d$ of $g(q)$ or $g(p)$. 
By the modified Kochetkov's formula in Theorem \ref{thm:number_of_TR_trivial_passport}, it remains to find all partitions of $\triv(\pp_{p,q} /d)$ for each $d$. 

\bigskip

We begin with $d=1$. 

\begin{lemma} \label{lem:pp_pq_subpassport}
    Any subpassport of $\pp_{p, q}$ has the form 
    \begin{equation*}
        \pp(k) := (q^{k a}\ \ol{p}^{k b}) \ ,\quad \text{ with }\quad 1 \leq k \leq g_0 \ .
    \end{equation*}
\end{lemma}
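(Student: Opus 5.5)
Since $\pp_{p,q}=(q^p\ \ol{p}^q)$ is a trivial passport with index set $S=\{q,\ol{p}\}$, a subpassport $\pp(S',\lambda')$ in the sense of Definition \ref{def:subpp} is determined by a nonempty $S'\subset S$ and multiplicities $\lambda'(q)=x$, $\lambda'(\ol p)=y$ with $0\le x\le p$, $0\le y\le q$, extended by zero. The plan is first to show that $S'$ must be all of $S$, and then to solve the balancing condition \eqref{eq:passport_weight} in integers.

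Since $\lambda'$ takes values in $\Zpos$ on $S'$, both $x$ and $y$ are positive as soon as $S'=S$. If $S'$ were a singleton, say $S'=\{q\}$, the weight condition would read $x\cdot q=0$ with $x\ge1$ and $q>0$. This is impossible, and the case $S'=\{\ol p\}$ is excluded in the same way. Hence $S'=S$ and $x,y\ge 1$.

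The weight condition then becomes $xq-yp=0$, that is $xq=yp$. Write $p=ag_0$ and $q=bg_0$ as in \eqref{eq:g0_pq}, so $\gcd(a,b)=1$. Then $xb=ya$. Since $a$ divides $xb$ and is coprime to $b$, $a$ divides $x$. Writing $x=ka$ gives $y=kb$ with $k\in\Zpos$.

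The bound $x\le p=ag_0$ gives $k\le g_0$, and the bound $y\le q$ gives the same. Conversely, every $k$ with $1\le k\le g_0$ satisfies all the conditions of Definition \ref{def:subpp}. Hence the subpassports are exactly $\pp(k)=(q^{ka}\ \ol p^{kb})$ with $1\le k\le g_0$, and $\pp(g_0)=\pp_{p,q}$ itself.

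There is no real obstacle here. The only point needing care is ruling out single-colour subsets $S'$, and this follows from the positivity of $\lambda'$ on $S'$ together with the weights being nonzero.
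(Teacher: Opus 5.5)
Your proof is correct and takes essentially the same approach as the paper: reduce to $(q^x\ \overline{p}^{\,y})$, impose $qx=py$ together with the bounds $1\le x\le p$, $1\le y\le q$, and solve using $p=ag_0$, $q=bg_0$ with $\gcd(a,b)=1$. The only difference is that you spell out two steps the paper calls obvious, namely excluding single-colour index sets $S'$ and the coprimality argument giving $a\mid x$.
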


\begin{proof}
Obviously, a subpassport of $\pp_{p, q}$ must have the form $\pp_1 = (q^x\ \ol{p}^y)$ with $x, y\in \Zpos$. 
$x,y$ only need to satisfy the constrains
\begin{equation*}
    q x - p y = 0 \ ,\quad 1 \leq x \leq p \ ,\quad 1 \leq y \leq q \ .
\end{equation*}
With \eqref{eq:g0_pq}, the solutions are 
\begin{equation*}
    x = ka \ ,\quad y = kb \ ,\quad \text{ where } \quad 1 \leq k \leq g_0 \ .
\end{equation*}
Thus all subpassports take the form $(q^{ka}\ \ol{p}^{kb})$. 
\end{proof}

\begin{lemma} \label{lem:pp_pq_partition}
    There is a one-to-one correspondence between partitions of passport $\pp_{p, q}$ and partitions of integer $g_0$
    \begin{equation*}\begin{split}
        \Ptt(\pp_{p, q}) \quad\qquad\qquad\qquad &\xrightarrow{\qquad\ } \quad\qquad\qquad \Ptt(g_0) \\
        \pttp = \left\{ \pp(1)^{n_1} \ \pp(2)^{n_2} \ \pp(3)^{n_3} \cdots \right\} \qquad &\xmapsto{\qquad\ }\qquad \xi = (1^{n_1}\ 2^{n_2}\ 3^{n_3} \cdots )
    \end{split}
    \quad .
    \end{equation*}
\end{lemma}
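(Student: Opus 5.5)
The plan is to use Lemma \ref{lem:pp_pq_subpassport} to identify each subpassport with a single integer $k$. Once that is done, the conditions defining a partition of $\pp_{p,q}$ in Definition \ref{def:partition} reduce to the condition that the integers $k$ sum to $g_0$.

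First, by Lemma \ref{lem:pp_pq_subpassport} every subpassport of $\pp_{p,q}$ equals $\pp(k)=(q^{ka}\ \ol{p}^{kb})$ for a unique $k$ with $1\le k\le g_0$. Uniqueness holds because $k$ is recovered as the multiplicity of $q$ divided by $a$. So any multiset of subpassports can be written uniquely as $\pttp=\{\pp(1)^{n_1}\ \pp(2)^{n_2}\cdots\pp(g_0)^{n_{g_0}}\}$ with $n_k\ge0$.

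Next, I translate the partition conditions of Definition \ref{def:partition}. The index set of $\pp_{p,q}$ is $S=\{q,\ol{p}\}$, and every $\pp(k)$ has the full index set $S$. Hence the union condition $\bigcup S_i=S$ holds automatically as soon as $\pttp$ is nonempty. The multiplicity condition at the index $q$ reads $\sum_k n_k\, ka=p=ag_0$, i.e.\ $\sum_k k\,n_k=g_0$. The condition at the index $\ol{p}$ reads $\sum_k n_k\, kb=q=bg_0$, which is the same equation. Also, $\sum_k k n_k = g_0\ge1$ forces $\pttp$ to be nonempty. Therefore $\pttp$ is a partition of $\pp_{p,q}$ if and only if $(1^{n_1}2^{n_2}\cdots)$ is a partition of $g_0$ in the sense of Notation \ref{nota:partition}.

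Finally, the map $\pttp\mapsto\xi=(1^{n_1}\,2^{n_2}\cdots)$ is well defined into $\Ptt(g_0)$ by the previous step. It is injective because the exponents $n_k$ determine $\pttp$. It is surjective because any $\xi\vdash g_0$ has all parts $k\le g_0$, so $\pp(k)$ exists and the corresponding multiset satisfies the conditions above.

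There is no serious obstacle. The only point needing care is that the two multiplicity equations coincide, because $\pp(k)$ scales $a$ and $b$ by the same $k$; this relies on the exact form given by Lemma \ref{lem:pp_pq_subpassport}.
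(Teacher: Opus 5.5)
Your proposal is correct and follows the same route as the paper: you use Lemma \ref{lem:pp_pq_subpassport} to write every partition as $\{\pp(1)^{n_1}\,\pp(2)^{n_2}\cdots\}$, and you reduce the multiplicity condition at the index $q$ to $\sum_k k\,n_k=g_0$, observing that the condition at $\ol{p}$ gives the same equation. You also check explicitly the union condition, injectivity and surjectivity, including that each $\pp(k)$ with $k\le g_0$ really is a subpassport; the paper leaves these points implicit.
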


\begin{proof}

The partition of $\pp_{p, q}$ can only take the form $\pttp = \left\{ \pp(1)^{n_1} \ \pp(2)^{n_2} \ \pp(3)^{n_3} \cdots \right\}$. 
The total number of $q$ in $\pttp$ is $p$, i.e., $\sum_{k=1}^{g_0} (k a) \cdot n_k = p = a g_0$.
This yields $\sum_{k=1}^{g_0} k \cdot n_k = g_0$, thus $\xi = 1^{n_1}\ 2^{n_2}\ 3^{n_3} \cdots$ is a partition of $g_0$.
Considering the total number of $p$ in $\pttp$ yields the same result. 
\end{proof}

\begin{proposition}\label{prop:G(1)_qppq}
    For any $m\in\Zpos$ and its partition $\xi = ( 1^{n_1} \cdots m^{n_m}) \vdash m$, define
        \begin{equation}
            Y(\xi) := \prod_{k=1}^{\absbig{\xi}} {\frac{1}{n_k!}\left(\frac{(k (a+b)-1)!}{(k a)!(k b)!}\right)^{n_k}} \ .
        \end{equation}
    By convention, set $Y(\xi_0) = 1$ when $m=0$. 
    Then we have
        \begin{equation}
            G(1) = \sum_{\xi \vdash g_0}  (-1)^{\abs{\xi}-1} (p+q-1)^{\abs{\xi}-2} Y(\xi)
        \end{equation}
\end{proposition}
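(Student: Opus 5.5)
We compute $G(1)=\abs{\TR(\pp_{p,q})}/(p+q-1)$ directly from the modified Kochetkov formula, Theorem \ref{thm:number_of_TR_trivial_passport}, applied with $\pp=\pp_{p,q}$. Since $\pp_{p,q}$ is already trivial, $\triv(\pp_{p,q})=\pp_{p,q}$ and the prefactor $(\ppt)!/(\pp)!$ equals $1$. Also $\abs{\ppt}-1=p+q-1$. Dividing by $p+q-1$ gives
\[
G(1)=\sum_{\pttp\in\Ptt(\pp_{p,q})}(-1)^{\abs{\pttp}-1}(p+q-1)^{\abs{\pttp}-2}\,\frac{X(\pttp)}{(\pttp)!}\ .
\]
So the task reduces to reindexing this sum over integer partitions of $g_0$ and identifying the summand.

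By Lemma \ref{lem:pp_pq_partition}, partitions $\pttp=\{\pp(1)^{n_1}\pp(2)^{n_2}\cdots\}$ of $\pp_{p,q}$ correspond bijectively to partitions $\xi=(1^{n_1}2^{n_2}\cdots)\vdash g_0$. This bijection preserves length, $\abs{\pttp}=\sum_k n_k=\abs{\xi}$, so the sign and the power of $(p+q-1)$ match the claimed formula.

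It then remains to check that $X(\pttp)/(\pttp)!=Y(\xi)$. By Lemma \ref{lem:pp_pq_subpassport}, $\pp(k)=(q^{ka}\ \ol{p}^{kb})$, so $\abs{\pp(k)}=k(a+b)$ and $(\pp(k))!=(ka)!\,(kb)!$. Therefore
\[
X(\pttp)=\prod_k\big[(k(a+b)-1)!\big]^{n_k},\qquad (\pttp)!=\prod_k n_k!\,\big[(ka)!(kb)!\big]^{n_k}.
\]
Their quotient is exactly $\prod_k \frac{1}{n_k!}\big(\frac{(k(a+b)-1)!}{(ka)!(kb)!}\big)^{n_k}=Y(\xi)$. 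In this product, factors with $n_k=0$ contribute $1$, so the range of $k$ in the definition of $Y$ is immaterial.

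There is no real obstacle. The only points needing care are bookkeeping. First, the exponent shifts from $\abs{\pttp}-1$ to $\abs{\xi}-2$ because of the division by $\abs{\pp}-1$ in Definition \ref{def:G(d)}. Second, the multiplicities $n_k$ in $(\pttp)!$ must be those of identical subpassports, which holds because distinct $k$ give distinct subpassports $\pp(k)$. The empty-partition convention is irrelevant here since $g_0\ge 1$.
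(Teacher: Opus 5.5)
Your proposal is correct and follows the paper's proof essentially verbatim: apply Theorem \ref{thm:number_of_TR_trivial_passport} to the trivial passport $\pp_{p,q}$, where the prefactor is $1$; reindex via Lemma \ref{lem:pp_pq_partition}; then check $\abs{\pttp}=\abs{\xi}$ and $X(\pttp)/(\pttp)!=Y(\xi)$. One small caveat is that the range of $k$ in $Y$ is immaterial only if it covers every part of $\xi$, so the upper limit $\abs{\xi}$ in the stated definition must be read as $m$; otherwise, for $\xi=(g_0^1)$ with $g_0\ge 2$, the only nontrivial factor would be dropped. Your identification $X(\pttp)/(\pttp)!=Y(\xi)$, like the paper's, implicitly uses this corrected reading.
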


\begin{proof}

Given $\pttp \in \Ptt(\pp_{p, q})$, let $\xi \vdash g_0$ be the corresponding partition by Lemma \ref{lem:pp_pq_partition}.
It is easy to verify that $\abs{\xi} = \abs{\pttp}$ and $Y(\xi) = X(\pttp) / (\pttp)!$.
Thus, 
\begin{equation*}
    G(1)  
    = \frac1{\abs{\pp_{p, q}} - 1} \sum_{\pttp \in \Ptt(\pp_{p, q})} (-1)^{\abs{\pttp} - 1} (\abs{\pp_{p, q}} - 1)^{\abs{\pttp} - 1} \frac{X(\pttp)}{(\pttp)!} 
    = \sum_{\xi \vdash g_0} (-1)^{\abs{\xi} - 1} (p+q-1)^{\abs{\xi} - 2} Y(\xi) \ . 
    \tag*{\qedhere}
\end{equation*}

\end{proof}

\bigskip
Next assume $d>1$. 
We only consider the case $d \mid g(q)$. Arguments and results are similar when $d \mid g(q)$. 

When $d \mid g(q)=\gcd(q-1, p)$,
\begin{equation*}
    \pp_{p, q} / d = \left( \frac{q}{d} \quad q^\frac{p-1}{d} \quad \ol{p}^\frac{q}{d}  \right) 
\end{equation*}
and is automatically trivial.

\begin{lemma} \label{lem:pp_pq/d_subpassport}
    Assume $1 < d \mid g(q)$. 
    The subpassport of $\pp_{p, q} / d$ is either $\pp(k)$ defined in Lemma \ref{lem:pp_pq_subpassport} with $ 1 \leq k \leq \left\lfloor \frac{g_0}{d} \right\rfloor$, or
    \begin{equation*}
        \pp_d(l) := \left( \frac{q}{d} \quad q^{\frac{p-1}{d} - l a} \quad \ol{p}^{\frac{q}{d} - l b}  \right) 
        \ ,\quad \text{ with }\quad 0 \leq l \leq \left\lfloor \frac{g_0}{d} \right\rfloor
        \ .
    \end{equation*}
\end{lemma}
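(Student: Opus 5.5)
Since $\pp_{p,q}/d=\left(\tfrac{q}{d}\ \ q^{(p-1)/d}\ \ \ol{p}^{\,q/d}\right)$ is trivial with index set $\{q/d,\ q,\ \ol{p}\}$, any subpassport is determined by a triple of multiplicities. We write it as $(\tfrac{q}{d}^{\,\epsilon}\ q^{x}\ \ol{p}^{\,y})$ with $\epsilon\in\{0,1\}$, $0\le x\le \frac{p-1}{d}$, $0\le y\le \frac qd$, not all zero. The weight condition \eqref{eq:passport_weight} reads
\[ \epsilon\,\frac qd + q x - p y = 0 \ . \]
We split into the cases $\epsilon=0$ and $\epsilon=1$.

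For $\epsilon=0$ we have $qx=py$ with $x,y\ge 1$. Exactly as in Lemma \ref{lem:pp_pq_subpassport}, using $p=ag_0$, $q=bg_0$ with $\gcd(a,b)=1$, this forces $x=ka$, $y=kb$ for some $k\ge1$. The constraint $y=kb\le q/d=bg_0/d$ gives $k\le g_0/d$, hence $1\le k\le\lfloor g_0/d\rfloor$. The constraint $x=ka\le (p-1)/d$ is then automatic, because $ka\le ag_0/d=p/d$ and $ka=p/d$ is impossible: it would force $d\,ka=p$ while $d\mid p-1$. Indeed $d\mid p$ and $d\mid p-1$ cannot both hold since $d>1$. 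So these subpassports are exactly the $\pp(k)$.

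For $\epsilon=1$ the main step is the substitution $x=\frac{p-1}{d}-x'$, $y=\frac qd-y'$ with $x',y'\ge0$. The total weight of $\pp_{p,q}/d$ vanishes, so
\[ \frac qd+q\,\frac{p-1}{d}-p\,\frac qd=0 \ , \]
and the equation becomes $qx'=py'$. Hence the complement $(q^{x'}\ \ol{p}^{\,y'})$ either is empty or has the same form as in the $\epsilon=0$ case. Therefore $x'=la$, $y'=lb$ with $l\ge0$, and $l=0$ gives the whole passport. The bound $y\ge0$ gives $lb\le q/d$, i.e. $l\le\lfloor g_0/d\rfloor$. Again $x\ge0$ follows automatically, since $la\le p/d$ and $la\ne p/d$ by the same argument. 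This yields exactly $\pp_d(l)$ with $0\le l\le\lfloor g_0/d\rfloor$.

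The only delicate points are these boundary checks, which use $d\mid p-1$ and $d>1$, together with confirming that $\frac qd$ is a genuine integer multiplicity, which holds since $d\mid p$ and $\gcd(p,q)\dots$. More precisely, $d\mid q-1$ is not needed here; integrality of $\frac qd$ comes directly from the shape of the divided passport given above. Everything else is the same coprimality argument as in Lemma \ref{lem:pp_pq_subpassport}.
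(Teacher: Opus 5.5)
Your proof is correct and is essentially the paper's: you split according to whether the index $\frac qd$ occurs, reduce the first case to the coprimality argument of Lemma~\ref{lem:pp_pq_subpassport}, and in the second case subtract the vanishing total weight of $\pp_{p,q}/d$ to get $qx'=py'$ for the complementary multiplicities. You are in fact slightly more careful than the paper, since you check that $x\le\frac{p-1}{d}$ (resp.\ $x\ge 0$) holds automatically and you allow $x=0$, which genuinely occurs (e.g.\ $p=3$, $q=6$, $d=2$, $l=1$ gives $\pp_2(1)=(3\ \ol{3})$).

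The one thing to fix is your closing aside. The divisibility actually in force, as dictated by the shape of $\pp_{p,q}/d$, is $d\mid\gcd(p-1,q)$; the paper's labelling $g(q)=\gcd(q-1,p)$ is a slip. So your remark ``$d\mid p$'' contradicts the step where you use $d\mid p-1$, and it should simply be deleted.
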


\begin{proof}
Let $\pp_1$ be a subpassport of $\pp$. There are two cases.

\medskip
If $\pp_1$ does not contain $\frac{q}{d}$, $\pp_1$ must take the form of $\pp(k)$ in Lemma \ref{lem:pp_pq_subpassport}. 
Then $kb\leq \frac qd$ implies $k\leq \frac{q}{db} = \frac{g_0}{d}$.

\medskip
If $\pp_1$ contains $\frac{q}{d}$, then set $\pp_1 = (\frac{q}{d}\ q^x\ \ol{p}^y)$, where $x, y \in \Zpos$ satisfying 
\begin{equation*}
    \frac{q}{d} + q x - p y = 0 \ ,\quad 1 \leq x \leq \frac{p-1}{d} \ ,\quad 1 \leq y \leq \frac{q}{d} \ .
\end{equation*}
The weight function of divided passport $\pp_{p, q} / d$ requires that 
$\frac{q}{d} + q \cdot \frac{p-1}{d} - p \cdot \frac{q}{d} = 0$. 
Subtracting the two equations yields
\begin{equation*}
    q \left(\frac{p-1}{d} - x\right) - p \left(\frac{q}{d} - y\right) = 0 \ .
\end{equation*}
Similar to Lemma \ref{lem:pp_pq_subpassport}, the solutions are 
\begin{equation*}
    x = \frac{p-1}{d} - l a \ ,\quad y = \frac{q}{d} - l b \ ,\quad 0 \leq l \leq \left\lfloor \frac{g_0}{d} \right\rfloor \ . 
    \tag*{\qedhere}
\end{equation*}

\end{proof}

\begin{lemma} \label{lem:pp_pq/d_partition}
    Assume $1 < d \mid g(q)$. 
    There is a one-to-one correspondence between partitions of passport $\pp_{p, q} / d$ and partitions of integers from $0$ to $\left\lfloor \frac{g_0}{d} \right\rfloor$. 
    \begin{equation*}\begin{split}
        \Ptt(\pp_{p, q} / d) 
        \qquad\qquad\qquad\qquad &\xrightarrow{\qquad\ } \quad\qquad 
        \bigsqcup_{0 \leq l \leq \left\lfloor \frac{g_0}{d} \right\rfloor} \Ptt(l) \\
        \pttp = \left\{\pp_d(l)\ \ \pp(1)^{n_1} \ \pp(2)^{n_2} \ \pp(3)^{n_3} \cdots \right\} 
        \qquad &\xmapsto{\qquad\ } \qquad 
        \xi = (1^{n_1}\ 2^{n_2}\ 3^{n_3} \cdots)
    \end{split}
    \qquad .
    \end{equation*}
\end{lemma}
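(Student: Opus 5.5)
The plan is to mirror the proof of Lemma \ref{lem:pp_pq_partition}, now using Lemma \ref{lem:pp_pq/d_subpassport} as the list of available subpassports of $\pp_{p,q}/d = \left(\frac qd\ q^{\frac{p-1}{d}}\ \ol p^{\frac qd}\right)$.

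The key observation is that the index $\frac qd$ has multiplicity exactly $1$ in $\pp_{p,q}/d$. Since $d>1$ and $d \mid q-1$, $d$ does not divide $q$, so $\frac qd \neq q$ and this index is genuinely distinct from the index $q$. In any partition $\pttp$, the multiplicities of $\frac qd$ across its members must sum to $1$. Hence exactly one member of $\pttp$ contains $\frac qd$, with multiplicity one and with $n=1$ copies. By Lemma \ref{lem:pp_pq/d_subpassport}, this member is $\pp_d(l)$ for a unique $0\le l\le\lfloor g_0/d\rfloor$. Every other member avoids $\frac qd$, so it is some $\pp(k)$. This gives the form $\pttp=\{\pp_d(l)\ \pp(1)^{n_1}\pp(2)^{n_2}\cdots\}$, and the map to $\xi=(1^{n_1}2^{n_2}\cdots)$ is well defined.

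Next I count multiplicities to locate $\xi$. Counting copies of the index $q$ gives
\[ \left(\frac{p-1}{d}-la\right)+\sum_k ka\,n_k=\frac{p-1}{d}, \quad\text{that is}\quad \sum_k k\,n_k=l . \]
So $\xi\vdash l$, with $\xi=\xi_0$ when $l=0$. Counting copies of $\ol p$ gives the same condition, $\sum_k kb\,n_k=lb$, so it is automatically consistent. Thus $\xi$ lies in $\Ptt(l)$.

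For bijectivity, I define the inverse map: given $\xi=(1^{n_1}\cdots)\vdash l$ with $0\le l\le\lfloor g_0/d\rfloor$, send it to $\{\pp_d(l)\ \pp(1)^{n_1}\cdots\}$. I will check three things.
\begin{itemize}
\item Each $\pp(k)$ that appears has $k\le l\le\lfloor g_0/d\rfloor$, so it is a legitimate subpassport by Lemma \ref{lem:pp_pq/d_subpassport}.
\item The multiplicities add up correctly by the two counts above.
\item Every index in $S$ is covered: $\frac qd$ by $\pp_d(l)$, and $q$ and $\ol p$ by $\pp_d(l)$ when its exponents are positive. If an exponent of $\pp_d(l)$ vanishes, then $\frac{p-1}{d}=la$, which forces $l\ge1$, so some $\pp(k)$ is present and covers $q$ and $\ol p$.
\end{itemize}
The index $l$ is recovered from $\xi$ as $|\xi|$-weighted size $\sum_k k n_k$. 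This pins down the $\pp_d(l)$ member, so the two maps are mutually inverse and the union over $l$ is disjoint.

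The only delicate point is the uniqueness claim in the first paragraph, namely that exactly one member of $\pttp$ carries $\frac qd$. That rests on $\lambda^{(d)}(s_*)=1$ and on $\frac qd\ne q$. The second fact is where the hypothesis $1<d\mid g(q)$ is used, since it gives $d\nmid q$. Everything else is the same bookkeeping as in Lemma \ref{lem:pp_pq_partition}.
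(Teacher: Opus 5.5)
Your proof is correct and follows the paper's argument. The unique member of $\pttp$ carrying the multiplicity-one index $\frac qd$ is some $\pp_d(l)$ by Lemma \ref{lem:pp_pq/d_subpassport}, the remaining members are $\pp(k)$'s, and counting copies of $q$ forces $\sum_k k\,n_k=l$. Your explicit inverse map, including the covering check when an exponent of $\pp_d(l)$ vanishes, fills in the converse that the paper only calls ``similar''.

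Your justification of $\frac qd\neq q$ needs correcting. For the displayed $\pp_{p,q}/d=\bigl(\frac qd\ q^{\frac{p-1}{d}}\ \ol p^{\frac qd}\bigr)$ to be a passport at all, its multiplicities must be integers, so $d\mid q$ and $d\mid p-1$. The printed hypothesis $d\mid\gcd(q-1,p)$ does not match this divided passport, which corresponds to $d\mid\gcd(p-1,q)$. Consequently your claim ``$d\nmid q$'' is actually false. The inequality $\frac qd\neq q$ holds simply because $d>1$. The divisibility hypothesis is used only to make the divided passport well defined.
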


\begin{proof}

For any $\pttp \in \Ptt(\pp_{p, q} / d)$, there is exactly one subpassport in $\pttp$ containing $q / d$. 
By Lemma \ref{lem:pp_pq/d_subpassport}, $\pttp$ must take the form $\left\{\pp_d(l)\ \ \pp(1)^{n_1} \ \pp(2)^{n_2} \ \pp(3)^{n_3} \cdots \right\}$ for some $0 \leq l \leq \lfloor \frac{g_0}{d} \rfloor$. 
The total number of $q$ in $\pttp$ is $\frac{p-1}{d}$. Then 
\begin{equation*}
    \left(\frac{p-1}{d} - l a\right) + \sum_{k \in \Zpos} (ka) \cdot n_k = \frac{p-1}{d} \ .
\end{equation*}
It follows that $\sum_{k \in \Zpos} k \cdot n_k = l$. 
Hence $\xi = (1^{n_1}\ 2^{n_2}\ 3^{n_3} \cdots)$ 
is a partition of $l$, with $0 \leq l \leq \left\lfloor \frac{g_0}{d} \right\rfloor$. 
The converse can be shown similarly. 

\end{proof}

\begin{proposition}\label{prop:G(d)_qppq}
    Assume $d > 1$, $d \mid g(q)$ or $d \mid g(p)$.
    For $d \mid g(q)$, 
    \begin{equation}
        G(d) =
        \sum_{\substack{0 \leq l \leq \left\lfloor \frac{g_0}{d} \right\rfloor \\ \xi \vdash l}} 
        (-1)^{\abs{\xi}} \frac{(p+q-1)^{\abs{\xi}-1}}{d^{\abs{\xi}}} \binom{\frac{p+q-1}{d} - l (a+b)}{\frac{p}{d} - lb} 
        Y(\xi) \ .
    \end{equation}
    
    For $d \mid g(p)$, 
    \begin{equation}
        G(d) =
        \sum_{\substack{0 \leq l \leq \left\lfloor \frac{g_0}{d} \right\rfloor \\ \xi \vdash l}} 
        (-1)^{\abs{\xi}} \frac{(p+q-1)^{\abs{\xi}-1}}{d^{\abs{\xi}}} \binom{\frac{p+q-1}{d} - l (a+b)}{\frac{q}{d} - la} \ Y(\xi) \ .
    \end{equation}
\end{proposition}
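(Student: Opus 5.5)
The plan is to evaluate the modified Kochetkov formula (Theorem \ref{thm:number_of_TR_trivial_passport}) on $\pp_{p,q}/d$ directly, using the explicit list of partitions from Lemma \ref{lem:pp_pq/d_partition}, and then divide by $\abs{\pp_{p,q}}-1 = p+q-1$ as in Definition \ref{def:G(d)}. I treat the case $d \mid g(q)$ in full. The case $d\mid g(p)$ is identical after exchanging the roles of the two colours: there $\pp_{p,q}/d = (\ol{p/d}\ \ol{p}^{(q-1)/d}\ q^{p/d})$, and $a,b$ are swapped.

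\textbf{Step 1: the prefactors.} For $1<d\mid g(q)$ the divided passport $\pp_{p,q}/d=(\tfrac qd\ q^{(p-1)/d}\ \ol p^{\,q/d})$ is already trivial, since $q/d\neq q$. Hence $\triv(\pp_{p,q}/d)=\pp_{p,q}/d$ and the ratio $(\ppt)!/(\pp)!$ equals $1$. Its size is
\[ \abs{\pp_{p,q}/d}=1+\tfrac{p-1}{d}+\tfrac qd, \]
so that $\abs{\ppt}-1=\tfrac{p+q-1}{d}$.

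\textbf{Step 2: one partition at a time.} By Lemma \ref{lem:pp_pq/d_partition}, each partition has the form $\pttp=\{\pp_d(l)\ \pp(1)^{n_1}\pp(2)^{n_2}\cdots\}$ with $\xi=(1^{n_1}2^{n_2}\cdots)\vdash l$, and $0\le l\le\lfloor g_0/d\rfloor$. The empty partition $\xi_0$ corresponds to $l=0$, i.e.\ to $\pttp=\mathfrak e$. The length is $\abs{\pttp}=\abs{\xi}+1$.

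The part $\pp_d(l)$ appears exactly once. Its index $q/d$ has multiplicity $1$, so it contributes $1$ to $(\pttp)!$ apart from the factorials of the multiplicities of $q$ and $\ol p$. Set $x=\tfrac{p-1}{d}-la$ and $y=\tfrac qd-lb$. Then $\abs{\pp_d(l)}-1=x+y$, and this part contributes
\[ \frac{(x+y)!}{x!\,y!}=\binom{\frac{p+q-1}{d}-l(a+b)}{\frac qd-lb}. \]

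The remaining parts $\pp(k)^{n_k}$ contribute exactly as in Proposition \ref{prop:G(1)_qppq}. Each copy of $\pp(k)$ gives $(k(a+b)-1)!/((ka)!(kb)!)$, and there is an extra $1/n_k!$. Together these reproduce $Y(\xi)$. Hence
\[ \frac{X(\pttp)}{(\pttp)!}=\binom{x+y}{y}\,Y(\xi). \]

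\textbf{Step 3: assemble.} The sign is $(-1)^{\abs{\pttp}-1}=(-1)^{\abs\xi}$, and the power factor is $\big(\tfrac{p+q-1}{d}\big)^{\abs\xi}$. Dividing by $p+q-1$ gives the coefficient
\[ (-1)^{\abs\xi}\,\frac{(p+q-1)^{\abs\xi-1}}{d^{\abs\xi}}, \]
summed over $0\le l\le\lfloor g_0/d\rfloor$ and $\xi\vdash l$.

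The only point needing care is the binomial term. Since $\binom{x+y}{y}=\binom{x+y}{x}$, the lower index can be written either as $\tfrac qd-lb$ or as $\tfrac{p-1}{d}-la$. I would check the form of the lower index displayed in the statement against this by hand, for instance using the $(p,q)=(3,2)$ or $(4,1)$ examples, and record the matching form. Apart from this bookkeeping, and the convention $Y(\xi_0)=1$ for the $l=0$ term, no obstacle is expected: the proof is a direct substitution.
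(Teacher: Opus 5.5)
Your method is the paper's: substitute the parametrization of $\Ptt(\pp_{p,q}/d)$ from Lemma~\ref{lem:pp_pq/d_partition} into Theorem~\ref{thm:number_of_TR_trivial_passport}, use $\abs{\pttp}=\abs{\xi}+1$ and $X(\pttp)/(\pttp)!=\binom{x+y}{y}Y(\xi)$, and divide by $p+q-1$. Steps 1--3 are correct for the passport you wrote down. The gap is the step you postponed (``check the lower index against the statement and record the matching form''). It cannot be carried out, because the displayed lower indices are not what this computation gives.

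First, the case labels are swapped. With $g(q)=\gcd(q-1,p)$, the hypothesis $d\mid g(q)$ forces $s_0=\ol{p}$, so $\pp_{p,q}/d=(\ol{p/d}\ \ol{p}^{(q-1)/d}\ q^{p/d})$. The passport $(\frac qd\ q^{(p-1)/d}\ \ol{p}^{q/d})$, which you attach to $g(q)$ following the paper's text, is really the one for $d\mid g(p)=\gcd(p-1,q)$; under $d\mid g(q)$ the number $q/d$ is not even an integer.

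Second, once the labels are fixed, set $N_l:=\frac{p+q-1}{d}-l(a+b)$. Your Step 2 then gives $\binom{N_l}{p/d-la}$ for $d\mid g(q)$ and $\binom{N_l}{q/d-lb}$ for $d\mid g(p)$. The statement instead displays $\binom{N_l}{p/d-lb}$ and $\binom{N_l}{q/d-la}$, so $a$ and $b$ are interchanged. This is invisible at $l=0$ but not for $l\ge 1$.

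Take $p=6$, $q=3$, $d=2$. Then $g_0=3$, $a=2$, $b=1$, $d\mid g(q)=2$, and $\pp_{6,3}/2=(\ol{3}\ \ol{6}\ 3^3)$. This passport admits no tree, so $G(2)=0$: the black vertex adjacent to both white vertices sends weight $1$ or $2$ to $\ol{3}$, and any other black neighbour of $\ol{3}$ is a leaf sending $3$. Your formula gives $\frac18\binom43-\frac12\binom11 Y(1^1)=\frac12-\frac12=0$, which is correct. The displayed one gives $\frac18\binom43-\frac12\binom12=\frac12$.

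Your proposed test cases $(3,2)$ and $(4,1)$ have $g_0=1$, hence only $l=0$, and cannot detect this. The paper's own proof has the same slip: it writes $\frac qd-la$ although the $\ol{p}$-multiplicity of $\pp_d(l)$ is $\frac qd-lb$, and its last line does not match its displayed $g(q)$ case.

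So your argument proves the corrected identity, not the displayed one. Nothing downstream is affected, because Corollary~\ref{cor:G(d)_qppq_nondecomposable} and Theorem~\ref{thm:main_H.Number} only use $g_0=1$, where $\lfloor g_0/d\rfloor=0$.
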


\begin{proof}

We only prove the case $d \mid g(q)$. The other is similar.

Given $\pttp \in \Ptt(\pp_{p, q} / d)$, let $\xi$ be the integer partition of $l$ corresponding to $\pttp$ by Lemma \ref{lem:pp_pq/d_partition}, with $0 \leq l \leq \left\lfloor \frac{g_0}{d} \right\rfloor$. 
It is easy to verify that $\abs{\pttp} = \abs{\xi} + 1$ and 
\begin{equation*}
    \frac{X(\pttp)}{(\pttp)!} 
    = \binom{\frac{p+q-1}{d} - l (a+b)}{\frac{q}{d} - la} \ Y(\xi) \ .
\end{equation*}
Thus, 
\begin{align*}
       G(d) 
    &= \frac1{\abs{\pp_{p, q}} - 1} 
       \sum_{\pttp \in \Ptt(\pp_{p, q} / d)} 
       (-1)^{\abs{\pttp} - 1} (\abs{\pp_{p, q} / d} - 1)^{\abs{\pttp} - 1} \frac{X(\pttp)}{(\pttp)!} \\
    &= \frac1{p+q-1} 
       \sum_{\substack{0 \leq l \leq \left\lfloor \frac{g_0}{d} \right\rfloor \\ \xi \vdash l}} 
       (-1)^{\abs{\xi}} \left( \frac{p+q-1}{d} \right)^{\abs{\xi}} \binom{\frac{p+q-1}{d} - l (a+b)}{\frac{q}{d} - la} \ Y(\xi) \\
    &= \sum_{\substack{0 \leq l \leq \left\lfloor \frac{g_0}{d} \right\rfloor \\ \xi \vdash l}} 
       (-1)^{\abs{\xi}} \frac{(p+q-1)^{\abs{\xi}-1}}{d^{\abs{\xi}}} \binom{\frac{p+q-1}{d} - l (a+b)}{\frac{q}{d} - la} \ Y(\xi) \ . 
    \tag*{\qedhere}
\end{align*}
\end{proof}

\begin{corollary} \label{cor:G(d)_qppq_nondecomposable} 
    $\pp_{p, q}$ and $\pp_{p, q}/d$ are non-decomposable if and only if $p, q$ are co-prime (or $g_0 = 1$). 
    In this case,
    \begin{equation} \label{eq:G(d)_qppq_nondecomposable}
        G(d) = 
        \begin{cases}
            \dfrac{(p+q-2)!}{p!q!} & \text{ if } d=1 \vspace{5pt} \\ 
            \dfrac{1}{p+q-1} \dbinom{({p+q-1})/d}{{p}/{d}} & \text{ if } 1<d \mid g(q) \vspace{5pt} \\ 
            \dfrac{1}{p+q-1} \dbinom{({p+q-1})/{d}}{{q}/{d}} & \text{ if } 1<d \mid g(p) 
        \end{cases}
        \ .
    \end{equation}
\end{corollary}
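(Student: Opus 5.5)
The plan is to read the corollary off Lemmas \ref{lem:pp_pq_subpassport} and \ref{lem:pp_pq/d_subpassport}, and then specialize Propositions \ref{prop:G(1)_qppq} and \ref{prop:G(d)_qppq} to $g_0=1$.

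For the equivalence, Lemma \ref{lem:pp_pq_subpassport} lists the subpassports of $\pp_{p,q}$ as $\pp(k)$ with $1\le k\le g_0$. There is a proper subpassport exactly when $g_0>1$: for $k=1<g_0$, $\pp(1)$ is proper. Hence $\pp_{p,q}$ is non-decomposable if and only if $g_0=1$.

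For $\pp_{p,q}/d$ with $1<d\mid g(q)$, Lemma \ref{lem:pp_pq/d_subpassport} says the subpassports are $\pp(k)$ with $1\le k\le\lfloor g_0/d\rfloor$ and $\pp_d(l)$ with $0\le l\le\lfloor g_0/d\rfloor$. Here $\pp_d(0)$ is the whole passport. If $g_0=1$, then $\lfloor g_0/d\rfloor=0$, so only $\pp_d(0)$ survives and the passport is non-decomposable.

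The converse for divided passports is the delicate point, since $\lfloor g_0/d\rfloor$ could vanish even when $g_0>1$. But $d\mid p$ and $d\mid q-1$ force $\gcd(d,q)=1$, so $d\nmid g_0$. This does not by itself give $g_0<d$, so the cleanest reading is conjunctive. If $g_0>1$, then $\pp_{p,q}$ is already decomposable, and the conjunction ``$\pp_{p,q}$ and $\pp_{p,q}/d$ are non-decomposable'' fails. The case $d\mid g(p)$ is symmetric.

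For the formula, take $g_0=1$, so $a=p$ and $b=q$.

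\emph{Case $d=1$.} The only partition of $g_0=1$ is $\xi=(1^1)$, with $|\xi|=1$. Then Proposition \ref{prop:G(1)_qppq} gives
\[
G(1)=(p+q-1)^{-1}\frac{(p+q-1)!}{p!\,q!}=\frac{(p+q-2)!}{p!\,q!}.
\]

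\emph{Case $1<d\mid g(q)$.} Here only $l=0$ contributes, with $\xi=\xi_0$, $|\xi|=0$ and $Y(\xi_0)=1$. Proposition \ref{prop:G(d)_qppq} then gives
\[
G(d)=\frac{1}{p+q-1}\binom{(p+q-1)/d}{p/d}.
\]
There is an apparent mismatch: the proof of Proposition \ref{prop:G(d)_qppq} writes the lower index as $q/d-la$, while the statement writes $p/d-lb$. At $l=0$ these agree up to the symmetry $\binom{N}{k}=\binom{N}{N-k}$ only if $(p+q-1)/d-p/d=(q-1)/d$, which is not $q/d$.

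I would check this directly. For $d\mid g(q)$, the divided passport is $(q/d,\ q^{(p-1)/d},\ \ol{p}^{q/d})$, so its trivialization has multiplicities $1$, $(p-1)/d$ and $q/d$. The factorial ratio in Theorem \ref{thm:number_of_TR_trivial_passport} combined with $X(\mathfrak e)/(\mathfrak e)!$ gives
\[
\frac{((p+q-1)/d-1)!}{((p-1)/d)!\,(q/d)!}\cdot\frac{((p-1)/d)!}{((p-1)/d)!}\cdot\frac{((p-1)/d)!}{1}.
\]
I would recompute this carefully; the expected outcome is the binomial $\binom{(p+q-1)/d-1}{q/d}$. One may use that $(p+q-1)/d=(p-1)/d+q/d$, so this equals $\binom{(p+q-1)/d}{p/d}$ after the appropriate reinterpretation.

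This bookkeeping step is where I expect the only real obstacle. The work is reconciling the factorials $(\ppt)!/(\pp)!$ and $X(\mathfrak e)/(\mathfrak e)!$ with the stated binomial. I would settle it by evaluating the trivial-partition term of Theorem \ref{thm:number_of_TR_trivial_passport} for $\pp_{p,q}/d$ explicitly, then dividing by $|\pp_{p,q}|-1=p+q-1$. The case $d\mid g(p)$ is the same computation with the roles of $p$ and $q$ swapped.
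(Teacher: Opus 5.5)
Your treatment of the equivalence and of the case $d=1$ is fine. Your conjunctive reading of ``$\pp_{p,q}$ and $\pp_{p,q}/d$ are non-decomposable'' is the correct one. The paper's claim that $\pp_{p,q}/d$ is decomposable whenever $g_0>1$ is false in general: for $p=6$, $q=4$, $d=3\mid\gcd(q-1,p)$ one gets $\pp_{6,4}/3=(\ol{2}_*\ \ol{6}\ 4^2)$, which is non-decomposable.

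The gap is in the case $d>1$, at exactly the step you flagged. The paper simply sets $l=0$, $\xi=\xi_0$ in the \emph{stated} form of Proposition~\ref{prop:G(d)_qppq}, which is correct at $l=0$. Once you doubt that statement, however, your direct check does not go through, and no ``reinterpretation'' can rescue it.
\begin{itemize}
\item You took $\pp_{p,q}/d=(q/d\ \ q^{(p-1)/d}\ \ \ol{p}^{q/d})$ for $d\mid g(q)=\gcd(q-1,p)$. That passport requires $d\mid p-1$ and $d\mid q$, so it is the divided passport for $d\mid g(p)=\gcd(p-1,q)$.
\item For $d>1$ these divisibility conditions are incompatible with $d\mid p$, $d\mid q-1$. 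Under them $p/d\notin\ZZ$, so nothing can turn your binomial into $\binom{(p+q-1)/d}{p/d}$.
\item Your factorials are also off by one. Since $\abs{\pp_{p,q}/d}-1=(p+q-1)/d$, one has $X(\mathfrak e)=((p+q-1)/d)!$, so $\binom{(p+q-1)/d-1}{q/d}$ is not the trivial-partition term.
\end{itemize}

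The missing step is to identify $s_0$ from Definition~\ref{def:passport_divide}. If $1<d\mid\gcd(q-1,p)$, the black index has multiplicity $p\equiv 0$ and the white index $\ol{p}$ has multiplicity $q\equiv 1\pmod d$. Hence $s_0=\ol{p}$ and
\[ \pp_{p,q}/d=\bigl(\ol{p/d}_*\ \ \ol{p}^{(q-1)/d}\ \ q^{p/d}\bigr). \]
Its three weights $-p/d,\,-p,\,q$ are distinct, so it is its own trivialization and $(\triv(\pp_{p,q}/d))!/(\pp_{p,q}/d)!=1$. For $g_0=1$ it is non-decomposable, so only $\mathfrak e$ contributes in Theorem~\ref{thm:number_of_TR_trivial_passport}, and
\[ \abs{\TR(\pp_{p,q}/d)}=\frac{((p+q-1)/d)!}{(p/d)!\,((q-1)/d)!}=\binom{(p+q-1)/d}{p/d}. \]
After division by $p+q-1$ this is the second case. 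The passport you wrote down, treated the same way, gives $\binom{(p+q-1)/d}{q/d}$, which is the third case ($d\mid g(p)$). The mismatch you spotted in Proposition~\ref{prop:G(d)_qppq} arises because the paper itself uses this $g(p)$-shaped passport in its $g(q)$ derivation.
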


\begin{proof}
If $g_0 = 1$, 
by Lemma \ref{lem:pp_pq_subpassport} and \ref{lem:pp_pq/d_subpassport}, 
the only subpassport of $\pp_{p, q}$ is $\pp(1)$ and that of $\pp_{p, q} / d$ is $\pp_d(0)$. 
Both are trivial subpassports. 
Thus $\pp_{p, q}$ and $\pp_{p, q} / d$ are non-decomposable. 
If $g_0 > 1$, $\pp(1)\neq \pp_{p,q}$ is a subpassport, thus $\pp_{p, q}$ and $\pp_{p, q} / d$ will be decomposable. 

\medskip
Now assume $g_0 = 1$, then $a = p$ and $b = q$. 
Since the only partition of $g_0 = 1$ is $\xi = (1^1)$, by Proposition \ref{prop:G(1)_qppq}, 
\begin{equation*}
    G(1) 
    = (-1)^{1-1} (p+q-1)^{1-2} Y(1^1) 
    = \frac1{p+q-1} \cdot \frac{(p+q-1)!}{p! q!} \ .
\end{equation*}

Assume $1 < d \mid g(q)$. The only choice for $0 \leq l \leq \lfloor \frac{g_0}{d} \rfloor$ is $l=0$ and $\xi$ must be the empty partition $\xi_0$. Then by Proposition \ref{prop:G(d)_qppq}, 
\begin{equation*}
    G(d) 
    = (-1)^{0} \frac{(p+q-1)^{0-1}}{d^{0}} \binom{{(p+q-1)}/{d}}{{p}/{d}} 
    Y(\xi_0)
    = \frac1{p+q-1} \binom{{(p+q-1)}/{d}}{{p}/{d}} 
    \ .
\end{equation*}

The case where $1 < d \mid g(p)$ is similar. 
\end{proof}

\begin{example}
    Set $p = 7, q = 3$, i.e. $\pp_{7, 3} = (3^7\ \ol{7}^3)$. 
    Then $g_0 = g(q) = 1, g(p) = 3$. 
    By Corollary \ref{cor:G(d)_qppq_nondecomposable}, 
    \begin{equation*}
        G(1) = \frac{(7+3-2)!}{7!3!} = \frac{4}{3} ,
    \end{equation*}
    \begin{equation*}
        G(3) = \frac{1}{7+3-1} \binom{{7+3-1} / {3}}{{3} / {3}} = \frac{1}{3} .
    \end{equation*}
    Then by Theorem \ref{thm:number_of_Tree_Xi}, the total number of trees in $\Tree(\Xi_{7,3})$ is 
    \begin{equation*}
        \abs{\Tree(\Xi_{7,3})} = \varphi(1) G(1) + \varphi(3) G(3) = \frac{4}{3} + 2 \times\frac{1}{3} = 2 .
    \end{equation*}
    By Theorem \ref{thm:number_of_Tree_Xi_d}, the number of trees in $\Tree(\Xi_{7,3}, 1)$ and $\Tree(\Xi_{7,3}, 3)$ respectively are
    \begin{equation*}
        \abs{\Tree(\Xi_{7,3}, 1)} = \mu\left(1\right) G(1) + \mu\left(3\right) G(3) = 1 ,\quad \abs{\Tree(\Xi_{7,3}, 3)} = 3 \mu\left(1\right) G(3) = 1 .
    \end{equation*}
    
    All these trees are shown in Figure \ref{fig:tree_p7q3}.
\end{example}

\begin{figure}[htbp]
    \centering
    \begin{subfigure}{0.59\textwidth}
        \centering
        \begin{tikzpicture}
            \node[wht] (M) at (0, 0) {};
            \node[blk] (U) at (0, 1) {};
            \node[blk] (L) at (-1, 0) {};
            \node[wht] (LL) at (-2, 0) {};
            \node[blk] (LLU) at (-2, 1) {};
            \node[blk] (LLL) at (-3, 0) {};
            \node[blk] (R) at (1, 0) {};
            \node[wht] (RR) at (2, 0) {};
            \node[blk] (RRU) at (2, 1) {};
            \node[blk] (RRR) at (3, 0) {};

            \draw (M) -- (U) node[mid_auto] {3};
            \draw (L) -- (M) node[mid_auto] {2};
            \draw (LL) -- (L) node[mid_auto] {1};
            \draw (LL) -- (LLU) node[mid_auto] {3};
            \draw (LLL) -- (LL) node[mid_auto] {3};
            \draw (M) -- (R) node[mid_auto] {2};
            \draw (R) -- (RR) node[mid_auto] {1};
            \draw (RR) -- (RRU) node[mid_auto] {3};
            \draw (RR) -- (RRR) node[mid_auto] {3};
        \end{tikzpicture}
        \caption{The tree in $\Tree(\Xi_{7,3}, 1)$}
        \label{subfig:tree_p7q3_rot1}
    \end{subfigure}
    \hfill
    \centering
    \begin{subfigure}{0.39\textwidth}
        \centering
        \begin{tikzpicture}
            \node[blk] (M) at (0, 0) {};
            \begin{scope}[rotate =0]
                \node[wht] (D) at (0, -1) {};
                \node[blk] (DL) at (-0.87, -1.5) {};
                \node[blk] (DR) at (0.87, -1.5) {};
                \draw (M) -- (D) node[mid_auto] {1};
                \draw (DL) -- (D) node[mid_auto] {3};
                \draw (D) -- (DR) node[mid_auto] {3};
            \end{scope}
            \begin{scope}[rotate =120]
                \node[wht] (D) at (0, -1) {};
                \node[blk] (DL) at (-0.87, -1.5) {};
                \node[blk] (DR) at (0.87, -1.5) {};
                \draw (M) -- (D) node[mid_auto] {1};
                \draw (DL) -- (D) node[mid_auto] {3};
                \draw (D) -- (DR) node[mid_auto] {3};
            \end{scope}
            \begin{scope}[rotate =240]
                \node[wht] (D) at (0, -1) {};
                \node[blk] (DL) at (-0.87, -1.5) {};
                \node[blk] (DR) at (0.87, -1.5) {};
                \draw (M) -- (D) node[mid_auto] {1};
                \draw (DL) -- (D) node[mid_auto] {3};
                \draw (D) -- (DR) node[mid_auto] {3};
            \end{scope}

        \end{tikzpicture}
        \caption{The tree in $\Tree(\Xi_{7,3}, 3)$}
        \label{subfig:tree_p7q3_rot3}
    \end{subfigure}
    \caption{Two trees in $\pp_{7, 3}$.}
    \label{fig:tree_p7q3}
\end{figure}

\bigskip
\section{Branched covers and dessins d'enfants}\label{sec:count.covering}

We are considering compatible branch datum with $g_X=0$ and $n=3$ : 
\[ \mathcal{D} = (g_Y,0,d,3;\ \pi_1,\pi_2,\pi_3) \ . \]
Since we are considering the strong Hurwitz number, the three partitions in $\mathcal{D}$ are regarded as an ordered triple. 
Up to a M\"{o}bius transformation on target surface $X=\overline{\mathbb{C}}$, we always assume the 3 branched points are $\{0,1,\infty\}$. Besides, we appoint that the local degrees at pre-images of $0,1,\infty$ are prescribed by $\pi_1, \pi_2, \pi_3$ respectively.

\subsection{From coverings to graphs}\label{ssec:d_d}

We first briefly review how to turn a branched cover $f: Y\to X$ with $g_X=0$ and $n=3$ into an embedded bi-colored graph on $Y$. This is essentially the dessin d'enfant originally introduced by Grothendieck \cite{Gro97}. 
The following procedures can be found in literatures like \cite[\S 1.5.2]{LsZak04} and \cite[\S 1]{Petr19}.

On the target surface $X=\overline{\mathbb{C}}$, we regard the point $0$ as a black vertex, the point $1$ as a white vertex, the closed interval $[0,1]$ as an edge between vertices. This is a bi-colored embedded graph on $\overline{\mathbb{C}}$. Its complementary is a topological polygon of 2 sides, containing $\infty$. 

Now pull back this graph on $Y$ through the covering $f$. We have an embedded bi-colored graph $H$ on $Y$ with the following properties. 
\begin{enumerate}
    \item Every point in $f^{-1}(0)$ ($f^{-1}(1)$ resp.) is a black (white resp.) vertex. 
    \item For each vertex of $H$, its degree equals to the local degree of $f$ at that vertex.
    \item $H$ contains $d$ edges. Each connected component of $Y \setminus H$ is a topological polygon. 
    \item Each component above exactly contains one point of $f^{-1}(\infty)$. Furthermore, if the local degree at that point is $m$, then the component containing it has $2m$ sides. 
\end{enumerate}

In the theory of Riemann surface, such a graph (usually together with $f$ as a meromorphic function on $Y$) is named \textbf{dessin d'enfant}, whose literal meaning is ``doodle by children''. 
The key is that, there is a canonical degree $m$ branched cover from a complementary $2m$-gon to $\overline{\mathbb{C}} \setminus [0,1]$, with a unique critical point inside it (see Figure \ref{fig:cano_poly}).

\begin{figure}[t]
\centering
\begin{tikzpicture}[scale=0.9, transform shape]
    \def\radius{1.5}
    \def\angle{60}

    \begin{scope}[xshift=-3cm]
    \fill[gray, opacity=0.3] (\angle*0:\radius) -- (\angle*1:\radius) -- (0,0) -- cycle;
    \fill[gray, opacity=0.3] (\angle*2:\radius) -- (\angle*3:\radius) -- (0,0) -- cycle;
    \fill[gray, opacity=0.3] (\angle*4:\radius) -- (\angle*5:\radius) -- (0,0) -- cycle;
    
    \foreach \i in {0,...,5}
    {
        \ifodd\i
            \node[wht](v\i) at (\i*\angle:\radius) {}; 
        \else
            \node[blk](v\i) at (\i*\angle:\radius) {}; 
        \fi
    }

    \draw[line width=1.5pt] (v0) -- (v1) -- (v2) -- (v3) -- (v4) -- (v5) -- (v0);
    \node[crossmark] at (0,0) {};
    \end{scope}

    \begin{scope}[xshift=4cm, yshift=-0.2cm]
        \coordinate (B) at (-0.8,-0.3);
        \coordinate (W) at (+0.8,-0.3);
        \coordinate (F) at (0,1.8);
        \fill[gray!30] (W) to[out=90,in=-45] (F) to[out=-135,in=90] (B) -- cycle;
        \draw[line width=1.5pt] (B) -- (W);
        \node[blk] at (B) {};
        \node[wht] at (W) {};
        \node[crossmark] at (F) {};
        \draw (0,0) circle (1.8);
        \node at (-1.1, -0.6) {0};
        \node at (+1.1, -0.6) {1};
        \node at (0, +2.2) {$\infty$};
        \node at (-1.8,+1.6) {$\overline{\mathbb{C}} \cong $};
    \end{scope}

    \draw[-stealth] (-0.7,0) -- (1.5,0) node[midway, above] {\rm $3$ to $1$};

\end{tikzpicture}    
\caption{The canonical branched cover from a complementary polygon.}
\label{fig:cano_poly}
\end{figure}

\bigskip
Conversely, the existence of such embedded graph also implies the existence of a branched cover. Given a compatible branch datum 
$ \mathcal{D} = (g_Y,0,d,3;\ \pi_1,\pi_2,\pi_3) $. 
If there exists an embedded bi-colored graph $H$ on the topological closed surface $Y$ of genus $g_Y$ satisfying all the following conditions, then there is a branched cover of branch datum $\mathcal{D}$.
\begin{enumerate}
    \item Every edge in $H$ connects a black vertex to a white vertex.
    \item The distribution of degrees at black (white resp.) vertices are exactly given by $\pi_1$ ($\pi_2$ resp.).
    \item $H$ contains $d$ edges. Each connected component of $Y \setminus H$ is a topological polygon. 
    \item The distribution of side numbers of the components above, when divided by 2, are exactly given by $\pi_3$. 
\end{enumerate}
Furthermore, the homeomorphism classes of such embedded bi-colored graphs are in 1-1 correspondence to the strong equivalence classes of branched cover of branch datum $\mathcal{D}$. 
Hence the strong Hurwitz number equals to the number of certain embedded graphs on surface. This is the key perspective of the current work.

\subsection{The special case and weighted graph}

Given two arbitrary partitions $\pi_1, \pi_2$ of $d$, we now focus on the specific branch datum 
\[ \mathcal{D} = (0,0,d,3;\ \pi_1,\pi_2,\pi_3) \qquad \text{ with }\quad  \pi_3 := ( 1^{d-M}\ M^1 ) \ . \] 
The Riemann-Hurwitz Formula \eqref{eq:Riemann-Hurwitz} requires that 
\[ M= |\pi_1|+|\pi_2| -1 \leq d \ .\]
For any branched cover $f:Y \to X=\overline{\mathbb{C}}$ of datum $\mathcal{D}$, there is a unique critical point in $f^{-1}(\infty)$, whose degree is just $M$. 
Applying a M\"{o}bius transformation on the source surface $Y=\overline{\mathbb{C}}$, we may always set this critical point at $\infty \in Y$. Then the previously defined embedded graph $H$ on $Y$ becomes an embedded plane graph, whose outer boundary has $2M$ sides. All the other components in $Y \setminus H$ are topological polygons with 2 sides. 

To have a more condensed object that encodes a branched cover, we apply the following operations, turning the above graph $H$ into a weighted one \cite[\S 1.3]{APZak20}.

If a pair of black and white vertices of $H$ are connected by $w$ contiguous edges, and if they bounded $w-1$ successive topological polygons with 2 sides, then keep one of them and eliminate all the other $(w-1)$ edges. By the assumption, all these $w$ edges are homotopic relative to their vertices, so the result is independent of the choice of a remained edge.
Next, assign weight $w$ to the remained edge. 
Finally, assign weight $1$ to all the other edges not involved in the previous steps. 
The weights on edges naturally induce weights on vertices. 

Since all components in $Y \setminus H$ are topological polygons with 2 sides, the result weighted graph does not bound any closed region on the plane. So it must be a weighted bi-colored plane tree. 
Besides, the weights on black and white vertices are exactly given by the partitions $\pi_1$ and $\pi_2$, respectively.

\begin{figure}[t]
\makebox[\textwidth]{
\centering
\begin{tikzpicture}[scale = 1.1]
\tikzset{
    cross/.style = {cross out, draw=gray, fill=none, inner sep=0pt, minimum size=4pt, line width=0.8pt},
}
    \node[blkdot] (M) at (0, 0) {};
    \node[whtdot] (a) at (+1, +0.6) {};
    \node[blkdot] (b) at (+2, -0.5) {};
    \node[blkdot] (c) at (+1, +1.8) {};
    \node[whtdot] (A) at (-1, -0.6) {};
    \node[blkdot] (B) at (-2, +0.5) {};
    \node[blkdot] (C) at (-1, -1.8) {};
    \draw (A) -- (M) -- (a);
    \draw (A) to[out= 90, in=120] (M) to[out=-60, in=-90] (a);
    \draw (A) to[out= 90, in= 45] (B) ;
    \draw (A) to[out=  0, in= 90] (0,-1) to[out=-90, in=20] (C) to[out=160, in= -90] (-2,-1) to[out=90, in=180] (A) ;
    \draw (A) to[out=-30, in=45] (C) to[out=135, in=-150] (A);
    \draw (A) to[out=150, in=-90] (B);
    \draw (a) to[out=-90, in=225] (b) ;
    \draw (a) to[out=180, in=-90] (0,+1) to[out=+90, in=200] (c) to[out=-20, in= 90] (+2,+1) to[out=-90, in=  0] (a) ;
    \draw (a) to[out=150, in=225] (c) to[out=-45, in=+30] (a);
    \draw (a) to[out=-30, in=+90] (b);
    
    \node[cross] at (-0.5,-0.1) {};
    \node[cross] at (-1.5,+0.1) {};
    \node[cross] at (-1.0,-1.2) {};
    \node[cross] at (-1.6,-1.1) {};
    \node[cross] at (-0.4,-1.1) {};
    \node[cross] at (+0.5,+0.1) {};
    \node[cross] at (+1.5,-0.1) {};
    \node[cross] at (+1.0,+1.2) {};
    \node[cross] at (+1.6,+1.1) {};
    \node[cross] at (+0.4,+1.1) {};

    \draw[-Latex, double, line width=0.6] (3,0) -- (4,0) {};
    
\begin{scope}[shift = {(7,0)}]
    \node[blk] (M) at (0, 0) {4};
    \node[wht] (a) at (+1, +0.6) {$\overline{8}$};
    \node[blk] (b) at (+2, -0.5) {2};
    \node[blk] (c) at (+1, +1.8) {4};
    \node[wht] (A) at (-1, -0.6) {$\overline{8}$};
    \node[blk] (B) at (-2, +0.5) {2};
    \node[blk] (C) at (-1, -1.8) {4};
    \draw (M) -- (a) node[mid_auto] {2};
    \draw (a) -- (b) node[mid_auto] {2};
    \draw (a) -- (c) node[mid_auto, right] {4};
    \draw (M) -- (A) node[mid_auto] {2};
    \draw (A) -- (B) node[mid_auto] {2};
    \draw (A) -- (C) node[mid_auto, left] {4};
\end{scope}

\end{tikzpicture}
}
\caption{From a special dessin d'enfant to a WBP-tree.}
\label{fig:weighted_graph}
\end{figure}

Figure \ref{fig:weighted_graph} shows an example where $ \pi_1 = (2^2\ 4^3),\ \pi_2 = (8^2),\ \pi_3= (1^{10}\ 6^1) $ are three partitions of $d=16$. The left is a dessin d'enfant from a branched cover, with $\infty \in Y$ a critical point of local degree $6$. All the other cross marks are the preimage of $\infty \in X$ but not critical points. The right is the WBP-tree obtained by previous operations, exactly the one in Figure \ref{subfig:tree_2^24^38^2_A}. 

\begin{definition}
    Let $\pi_1, \pi_2$ be two partitions of $d$ as 
    \[ \pi_1 = ( 1^{\lambda_1}\dots\ d^{\lambda_d} ), \quad \pi_2 = ( 1^{\mu_1} \dots\ d^{\mu_d} ) \ .\]
    Then define a trivial passport
    \[ 
    \pp(\pi_1, \pi_2) := 
    \left( 1^{\lambda_1} \cdots\ d^{\lambda_d} \ \ol{1}^{\mu_1} \cdots\ \ol{d}^{\mu_d} \right)
    \ . 
    \]
\end{definition}

\begin{theorem}\label{thm:st_H_number}
    Let $\pi_1, \pi_2$ be two partitions of $d>2$, $\pi_3 := ( 1^{d-M}\ M^1 )$ with $M= |\pi_1|+|\pi_2| -1$. Define the compatible branch datum 
    \[ \mathcal{D}_{(\pi_1,\pi_2)} := (0,0,d,3;\ \pi_1,\pi_2,\pi_3) . \]
    Then there is a bijection between the strong equivalence classes of branched cover of branch datum $\mathcal{D}_{(\pi_1,\pi_2)}$, and LWBP-trees of trivial passport $\pp(\pi_1, \pi_2)$. 
    
    In particular, the strong Hurwitz number of branch datum $\mathcal{D}_{(\pi_1,\pi_2)}$ is $\abs{\Tree(\pp(\pi_1, \pi_2))}$. 
\end{theorem}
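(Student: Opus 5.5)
The plan is to build the bijection in two stages, composing the dessin correspondence recalled in Section~\ref{ssec:d_d} with the edge-merging construction just described. By Section~\ref{ssec:d_d}, strong equivalence classes of branched covers of datum $\mathcal{D}_{(\pi_1,\pi_2)}$ correspond bijectively to homeomorphism classes of bi-colored graphs $H$ embedded in the sphere $Y$, with black/white degree distributions $\pi_1,\pi_2$, $d$ edges, and faces whose half-perimeters are distributed as $\pi_3=(1^{d-M}M^1)$. So it suffices to exhibit a bijection between such embedded graphs (up to orientation-preserving homeomorphism of $Y$) and $\Tree(\pp(\pi_1,\pi_2))$, the labeling being forced by the vertex weights since the passport is trivial.

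\emph{Forward map.} The unique face of $H$ with $2M$ sides contains the critical point over $\infty$. Since $d>2$, I expect $M\neq 1$, so this face is distinguished; in the degenerate case $M=1$ all faces are bigons and I would check by hand that $H$ is then a single black--white pair of degree $d$. Take this face as the outer face, so $H$ is a plane graph. Every other face is a bigon bounded by two parallel edges between one black and one white vertex. I would show that the relation ``two edges bound a bigon'' generates classes of cyclically consecutive parallel edges, and that collapsing each class of $w$ edges to one edge of weight $w$ yields a plane graph with a single face, hence a tree. The total weight at a vertex equals its degree in $H$, so the vertex weights are $\pi_1$ and $\pi_2$ and the resulting WBP-tree has passport $\pp(\pi_1,\pi_2)$. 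Homeomorphisms of $Y$ fixing the outer face carry merged classes to merged classes, so the map is well defined on isomorphism classes; by Proposition~\ref{prop:eq_ribbon} it suffices to track the cyclic orders at the vertices.

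\emph{Inverse map.} Given an LWBP-tree, replace each edge of weight $w$ by $w$ parallel edges that bound $w-1$ bigons, and insert them consecutively in the cyclic order at both endpoints. The resulting plane graph has vertex degrees equal to the weights, so the degree distributions are $\pi_1,\pi_2$, and it has $\sum w=d$ edges. Its faces are $d-(|\pi_1|+|\pi_2|-1)=d-M$ bigons together with one outer face. By Euler's formula the outer face has $2d-2(d-M)=2M$ sides, so the face distribution is $\pi_3$. The two constructions are mutually inverse on ribbon structures, which gives the claimed bijection.

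The main obstacle is the forward step: showing that the bigons really group into consecutive parallel classes, and that distinct classes are never parallel in a way that would create a cycle after collapsing. I would argue by counting. After collapsing, the graph has $|\pi_1|+|\pi_2|$ vertices and a single face, since all bounded faces were bigons that have been absorbed. Euler's formula then forces it to have exactly $|\pi_1|+|\pi_2|-1$ edges, so it is a tree. I would also argue that the outer $2M$-gon is never one of the faces eliminated by merging, which holds because its side number is $2M>2$ when $M>1$.
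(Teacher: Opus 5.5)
Your proposal is correct and follows the paper's route. You use the dessin correspondence of Section~\ref{ssec:d_d} with the $2M$-gon placed at $\infty$, then merge each block of $w$ contiguous parallel edges bounding $w-1$ bigons into one edge of weight $w$. Your explicit inverse map, the side count $2d-2(d-M)=2M$ for the outer face, and the Euler-characteristic argument that the collapsed graph is a tree fill in details the paper leaves implicit; it only asserts that the merged graph bounds no region.

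One small slip: $d>2$ does not force $M\neq 1$ (take $\pi_1=\pi_2=(d)$, giving $\pi_3=(1^d)$). Your separate treatment of that case, where the tree is a single edge of weight $d$, already covers it.
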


By taking $d = pq$ for some coprime $p,q$, and $\pi_1:=(q^p),\ \pi_2:=(p^q)$, we see $\pp(\pi_1,\pi_2) = ( q^p\ \bar{p}^q ) = \pp_{p,q}$, which already studied in Section \ref{ssec:compute}. 

\begin{proof}[Proof of Theorem \ref{thm:main_H.Number}]
By Theorem \ref{thm:st_H_number} and Theorem \ref{thm:number_of_Tree_Xi}, 
\begin{equation*}
    \HH(p, q) = \abs{\Tree(\pp_{p, q})} = \sum_{d \in D} \varphi(d) G(d) \ .
\end{equation*}
And $G(d)$ is shown in Corollary \ref{cor:G(d)_qppq_nondecomposable}. 
\end{proof}

\bigskip
\section{Moduli space of HCMU spheres}\label{sec:count.cpnt}

To study HCMU surfaces and their moduli space, we will introduce a surface representation method utilizing embedded weighted bi-colored graphs. This is based on the football decomposition in \cite[\S 2]{CCW05}. To make a short cut for description and application, we use the modified version in \cite[\S 2]{LuXu25}. 

Such representation serves as a useful tool to study the moduli space $\Mhcmu_{0,1}(\alpha)$ of HCMU spheres with a single conical singularity. The main result of this section (Theorem \ref{thm:cnntd_comp}) asserts that, when endowed with Gromov-Hausdorff topology, the connected components of $\Mhcmu_{0,1}(\alpha)$ are precisely classified by trees of trivial passports in Section \ref{ssec:compute}. To achieve this, we study the Gromov-Hausdorff limits of HCMU surfaces in $\Mhcmu_{0,1}(\alpha)$.

\subsection{Character line element, bigon and football}

We first review the building blocks for arbitrary HCMU surfaces. Most details can be found in \cite{CCW05,LuXu25} and reference within them. We only include the description of results. 

\begin{definition}
\label{defn:line_element}
    Given two constant $K_0, K_1$ satisfying 
    \begin{equation}\label{eq:K0K1_extd}
        K_0 >0 ,\quad K_0 > K_1 > -\frac{K_0}{2}  \ .
    \end{equation}
    An HCMU \textbf{character line element} of extremal curvatures $K_0, K_1$ is a closed interval $[0,l]$ endowed with a $C^2$-function $K(v)$, satisfying the following: 
    \begin{enumerate}[(1). ]
        \item $K(v)$ is decreasing, with initial value $K(0)=K_0$.
        \item $K(v)$ satisfies the differential equation
        \begin{equation}\label{eq:curv_along_v}
            3 {K'}^2 = -(K-K_0)(K-K_1)(K+K_0+K_1) \ .
        \end{equation}
        \item $\ell=\ell(K_0,K_1)\in \RR_+$ is the unique solution of $K(v)=K_1$, called the \textbf{length} of this character line element. 
        \item The \textbf{warped function} {$h(v)$} on the interval $[0,\ell]$ is defined to be 
        \begin{equation}\label{eq:density_h}
            h(v) := \frac{K'(v)}{\overline{c}}, \quad \textrm{ where }
        \overline{c} := -\frac16 (K_0-K_1)(2K_0+K_1) \ . 
        \end{equation}
        \item The \textbf{ratio} of the character line element is 
        \begin{equation}\label{eq:Ratio}
        R = R(K_0,K_1) := \frac{K_0+2K_1}{2K_0+K_1} < 1 \ .
           \end{equation}
    \end{enumerate} 
\end{definition}

\begin{definition}\label{defn:hcmu_bigon}
    An \textbf{HCMU bigon} of extremal curvatures $K_0, K_1$ and \textbf{top angle} $2\pi\alpha$, denoted by {$B_{\alpha}(K_0, K_1)$}, is a topological disk parameterized by a rectangle
    \[ (v, \phi) \in [0,\ell] \times [0, 2\pi\alpha] \]
    endowed with the metric 
    \begin{equation}\label{eq:hcmu_bigon_metric}
        \rho = \rmd v^2 + h^2(v) \rmd \phi^2 \ .
    \end{equation} 
    Here $h(v)$ is the warped function of character line element of extremal curvatures $K_0, K_1$ in Definition \ref{defn:line_element}, and $K_0,K_1$ satisfies \eqref{eq:K0K1_extd}. 
    See Figure \ref{fig:hcmu_football}.
    
    An \textbf{HCMU football} of extremal curvatures $K_0, K_1$ and \textbf{top angle} $2\pi\alpha$, denoted by {$S_{\alpha}(K_0, K_1)$}, is obtained by gluing the two boundaries $\{\phi=0\}, \{\phi=2\pi\alpha\}$ of bigon $B_{\alpha}(K_0, K_1)$ according to the $v$-parameter. 
\end{definition}

\begin{figure}[t]
    \centering
    \includegraphics[width=\linewidth]{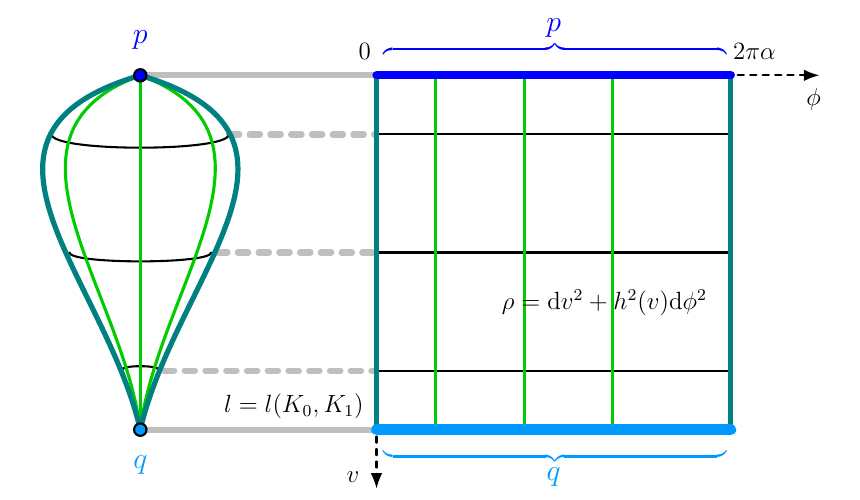}
    \caption{A typical HCMU bigon with its $(v,\phi)$ rectangular coordinates.}
    \label{fig:hcmu_football}
\end{figure}

Here are some facts about HCMU bigons and football. 

Geometrically, $B_{\alpha}(K_0, K_1)$ is a bordered surface with two geodesic boundaries. The line $\{v=0\}$ collapses to one corner point on the boundary, called the \textbf{top vertex}; and $\{v=\ell\}$ collapses to another corner point called \textbf{bottom vertex}. The inner angle at the bottom vertex, called \textbf{bottom angle}, is shown to be $2\pi\beta := 2\pi\alpha R$, where $R$ is the ratio of character line element. 

Similarly, $S_{\alpha}(K_0, K_1)$ is a rotationally symmetric genus zero HCMU surface with at most two conical singularities. The line $\{v=0\}$ collapses to a cone point, also called the \textbf{top vertex}, of angle $2\pi\alpha$. The \textbf{bottom vertex} is defined similarly, with \textbf{bottom angle} $2\pi\beta := 2\pi\alpha R$. 
In literature like \cite{CCW05, WzqWyy19, MyjWzq25}, such football is denoted by $S^2_{\{\alpha, \beta\}}$ when the ratio is $R = \beta / \alpha$. 

Intuitively, an HCMU bigon or football is weaved by some width of character line element. 
In the language of differential geometry, an HCMU bigon or football is actually a \emph{warped product} of two 1-dimensional Riemannian manifolds. 

A line or circle of constant $v$-coordinate is called a \textbf{waist line}, along whom the curvature $K$ is constant. 
By contrast, each line of constant $\phi$-coordinate is a geodesic of length $l$, called a \textbf{meridian}. 
$K$ is strictly decreasing along each meridian, and satisfies the differential equation \eqref{eq:curv_along_v}. 

The curvature function $K$ extends continuously to the two conical singularities. It attains its maximum $K_0$ (minimum $K_1$) at the top (bottom) vertex with larger angle $2\pi\alpha$ (smaller angle $2\pi\beta$).  

\medskip
Here are some asymptotic behavior about above objects used later. 

\begin{lemma} \label{lem:line_element_estimate}
	Fix $0<R <1$, there exits a constant $I(R)$ such that the length of character line element of extremal curvature $K_0>0,\ K_1 := \frac{2R-1}{2-R} K_0$ is 
	\[ \ell(K_0) := \frac{I(R)}{\sqrt{K_0}} \ . \]
	In particular, when $R$ is fixed, 
	\[ 
        \lim_{K_0 \to +\infty} \ell \left( K_0 \right) = 0 \ ,\ 
        \lim_{K_0 \to 0^+} \ell \left( K_0 \right) = + \infty \ .
    \]
\end{lemma}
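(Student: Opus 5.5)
The plan is a scaling argument on the ODE \eqref{eq:curv_along_v}, which reduces the length to an explicit integral. Fix $0<R<1$ and set $c:=\frac{2R-1}{2-R}$, so that $K_1=cK_0$. First I will confirm that this choice is the inverse of \eqref{eq:Ratio}: solving $R=\frac{K_0+2K_1}{2K_0+K_1}$ for $K_1$ gives exactly $K_1=\frac{2R-1}{2-R}K_0$. Next I will check that $-\tfrac12<c<1$ for $R\in(0,1)$. The map $R\mapsto c$ is a Möbius map that is increasing on $(0,1)$, with $c(0)=-\tfrac12$ and $c(1)=1$. Hence the constraint \eqref{eq:K0K1_extd} holds for every $K_0>0$, and the character line element exists.

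The key step is to rewrite the length as an integral in the variable $K$. Since $K$ is decreasing, \eqref{eq:curv_along_v} gives
\[ K'=-\sqrt{-\tfrac13 (K-K_0)(K-K_1)(K+K_0+K_1)} \]
on $(K_1,K_0)$. Because $v\mapsto K(v)$ is a monotone bijection $[0,\ell]\to[K_1,K_0]$, we obtain
\[ \ell=\int_{K_1}^{K_0}\frac{\sqrt3\,\rmd K}{\sqrt{(K_0-K)(K-K_1)(K+K_0+K_1)}} \ . \]
The cubic is positive on $(K_1,K_0)$: the third factor satisfies $K+K_0+K_1>2K_1+K_0>0$, using $K_1>-K_0/2$.

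Now I substitute $K=K_0 t$ and $K_1=cK_0$. The numerator contributes a factor $K_0$, and the square root of the cubic contributes $K_0^{3/2}$. This yields
\[ \ell=\frac{1}{\sqrt{K_0}}\,I(R),\qquad I(R):=\int_{c}^{1}\frac{\sqrt3\,\rmd t}{\sqrt{(1-t)(t-c)(t+1+c)}} \ . \]

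It remains to show that $I(R)$ is finite and positive, which I expect to be the only point needing care. The zeros at $t=1$ and $t=c$ are simple, because $1\neq c$ and the third factor is bounded below by $1+2c>0$ on $[c,1]$. The integrand therefore has only inverse square-root singularities at the endpoints, which are integrable, and it is positive on the interior, so $0<I(R)<\infty$.

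Both limits then follow immediately from $\ell=I(R)/\sqrt{K_0}$ with $I(R)$ a fixed positive constant. As $K_0\to+\infty$ we get $\ell\to0$, and as $K_0\to0^+$ we get $\ell\to+\infty$.
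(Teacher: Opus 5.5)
Your proof is correct and follows the paper's argument: separate variables in \eqref{eq:curv_along_v} to write $\ell$ as an integral over $K\in[K_1,K_0]$, then substitute $K=K_0t$ to extract the factor $K_0^{-1/2}$, with $I(R)=\sqrt3\int_c^1\frac{\rmd t}{\sqrt{(1-t)(t-c)(t+1+c)}}$. You also check that $-\tfrac12<c<1$ and that $0<I(R)<\infty$, which fills in details the paper leaves implicit.
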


\begin{proof}

Let $r := \dfrac{2R-1}{2-R}$, which is also fixed. Since $K(v)$ is decreasing and satisfies \eqref{eq:curv_along_v},  
$$
\frac{\rmd K}{\rmd v} = \frac{-1}{\sqrt3}  \sqrt{(K_0 - K)(K - K_1)(K + K_0 + K_1)}. 
$$
By the definition of $\ell = \ell(K_0, K_1)$, separating variables in this differential equation yields
\[
\ell = \int_{0}^{\ell} \rmd v 
=  \int_{K_1}^{K_0} \frac{\sqrt{3}\ \rmd K}{\sqrt{(K_0-K)(K-K_1)(K+K_0+K_1)}}
= \frac{\sqrt{3}}{\sqrt{K_0}} \int_{r}^{1} \frac{dx}{\sqrt{(1-x)(x-r)(x+1+r)}}.
\]
The last step applies substitution $K = x K_0$. Hence we can define
\[
I(R) := \sqrt{3} \cdot \int_{r}^{1} \frac{dx}{\sqrt{(1-x)(x-r)(x+1+r)}}, \quad r = \frac{2R-1}{2-R} < 1\ .
\]
This yields the desired result.
\end{proof}

\begin{lemma} \label{lem:waist_estimate}
    Fix $0 < R < 1$, let $\gamma_v$ denote the waist line whose first component is constantly $v$ in rectangular coordinates, and $L(\gamma_v)$ be its length. 
    Then as $K_0 \to + \infty$, the maximum length $\max\{ L(\gamma_v)\ |\ 0 \leq v \leq \ell(K_0) \}$ of all waist lines tends to $0$. 
\end{lemma}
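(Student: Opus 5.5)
The waist line $\gamma_v$ is the circle (or arc) $\{v\}\times[0,2\pi\alpha]$ with metric $h^2(v)\,\rmd\phi^2$, so its length is $L(\gamma_v)=2\pi\alpha\,|h(v)|$. The plan is to bound $\max_v |h(v)|$ by an explicit power of $K_0$ using the same scaling substitution as in the proof of Lemma \ref{lem:line_element_estimate}, and then let $K_0\to+\infty$ with $R$ (hence $r=\frac{2R-1}{2-R}$) fixed. The top angle $\alpha$ is regarded as fixed throughout.

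By \eqref{eq:density_h}, $h=K'/\ol{c}$ with $\ol{c}=-\frac16(K_0-K_1)(2K_0+K_1)$. Substituting $K_1=rK_0$ gives $\ol{c}=-\frac16(1-r)(2+r)K_0^2$. Since $r<1$ and $r>-1/2$ (the condition \eqref{eq:K0K1_extd}), the constant $c_r:=\frac16(1-r)(2+r)$ is strictly positive, so $|\ol{c}|=c_rK_0^2$.

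For the numerator, \eqref{eq:curv_along_v} gives
\[
|K'|=\tfrac{1}{\sqrt3}\sqrt{(K_0-K)(K-K_1)(K+K_0+K_1)} .
\]
Along the element, $K$ ranges over $[K_1,K_0]$. Writing $K=xK_0$ with $x\in[r,1]$, this becomes
\[
|K'|=\tfrac{1}{\sqrt3}K_0^{3/2}\sqrt{(1-x)(x-r)(x+1+r)} \le M_r\,K_0^{3/2},
\]
where $M_r:=\frac{1}{\sqrt3}\max_{x\in[r,1]}\sqrt{(1-x)(x-r)(x+1+r)}$ is finite because the function is continuous on a compact interval. Hence
\[
\max_{0\le v\le \ell(K_0)} L(\gamma_v)\le 2\pi\alpha\,\frac{M_r}{c_r}\,K_0^{-1/2},
\]
which tends to $0$ as $K_0\to+\infty$.

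There is no serious obstacle. The only points needing care are that $c_r>0$, which is exactly where \eqref{eq:K0K1_extd} is used, and that the maximum of $|K'|$ is taken over the compact range of $K$ rather than over $v$, so the length $\ell(K_0)$ plays no role. In passing, this shows $L(\gamma_v)$ is a fixed profile in the rescaled variable $x$, scaled by $K_0^{-1/2}$, which parallels Lemma \ref{lem:line_element_estimate}.
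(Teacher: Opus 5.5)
Your proposal is correct and follows essentially the same route as the paper: rescale $K = xK_0$, $K_1 = rK_0$, so that $|\ol{c}|$ scales like $K_0^2$ and $|K'|$ like $K_0^{3/2}$, which gives $\max_v L(\gamma_v) = O(K_0^{-1/2})$. The only cosmetic difference is that the paper bounds $(1-x)(x-r)(x+1+r)$ explicitly by $(1-r)^2(2+r)$ to get an explicit constant, where you use a compactness maximum $M_r$. Your factor $1/\sqrt{3}$ in $|K'|$ is the one that follows from \eqref{eq:curv_along_v}; the paper writes $1/3$ there, but this does not affect the conclusion.
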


\begin{proof}

By \eqref{eq:hcmu_bigon_metric} and \eqref{eq:density_h}, 
$L(\gamma_v) = 2 \pi \alpha h(v) = 2 \pi \alpha K'(v) \,/\, \bar{c}$. 
Set $r := (2R-1) \,/\, (2-R) = K_1 \,/\, K_0$ and $x(v) := K(v) / K_0$, with $x(v) \in [r, 1]$.
Then
$$
    \bar{c} = - \frac16 K_0^2 (1-r)(2+r) \ ,\quad 
    K' = - \frac13 K_0^{3/2} \sqrt{(1-x)(x - r)(x + 1 + r)} \ .
$$
The length of waist line satisfies
\begin{equation*}\begin{split}
    L(\gamma_v) 
    = 2 \pi \alpha \cdot \frac{K'}{\bar{c}} 
    &= 2 \pi \alpha \cdot \frac{2 \sqrt{(1-x)(x - r)(x + 1 + r)}}{(1-r)(2+r)} \cdot K_0^{-1/2} \\
    &\leq 2 \pi \alpha \cdot \frac{2 \sqrt{(1-r)(1 - r)(2 + r)}}{(1-r)(2+r)} \cdot K_0^{-1/2} = \frac{4 \pi \alpha}{\sqrt{2+r}} \cdot K_0^{-1/2} \ .
\end{split}\end{equation*}
Hence, as $K_0 \to \infty$, we have $\max \{L(\gamma_v)\} \to 0$.
\end{proof}

\medskip
\subsection{Data set representation}
Essentially, all concerned HCMU surfaces are built from gluing finitely many HCMU bigons along their boundaries. For a given HCMU surface, all these bigons are woven by the same character line element. 
So any HCMU surface is foliated by character line elements of the same extremal curvature. Our embedded graphs are basically used to recording the foliation. 

Here we shall adopt the statement in \cite[\S 2.4]{LuXu25}. It is equivalent to the original ``football decomposition'' version in \cite[Theorem 1.3]{CCW05}. 
With such expression, the appearance of embedded graphs is more natural. Furthermore, such decomposition is closely related to the structure of meromorphic 1-forms on Riemann surfaces. In fact, an HCMU surface can also be completely determined by a special class of meromorphic 1-forms \cite{CqWyy11, CWX15, MyjWzq25}.

\begin{proposition}[]\label{prop:hcmu_strip_decomp}
    Every HCMU surface is canonically decomposed into finitely many HCMU bigons by cutting along finite number of meridian geodesic segments, and all these HCMU bigons have the same extremal curvatures. 
\end{proposition}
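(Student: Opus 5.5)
The plan is to follow the football decomposition of Chen--Chen--Wu \cite[Theorem 1.3]{CCW05}, recast as in \cite[\S 2.4]{LuXu25}, and to work with the gradient flow of the curvature function $K$.

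\textbf{Step 1: the local form.} Let $(X,\rho)$ be an HCMU surface. The HCMU equation says that the vector field $\nabla K$ is holomorphic; more precisely, $J\nabla K$ is a Killing field. Away from the critical points of $K$ and the conical points, we use coordinates $(v,\phi)$ with $v$ the arclength along the integral curves of $\nabla K$ (the meridians) and $\phi$ the flow parameter of the Killing field. In these coordinates the metric takes the warped form $\rmd v^2+h(v)^2\rmd\phi^2$.

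\textbf{Step 2: the ODE and the extremal curvatures.} Next we show that $K$, as a function of $v$, satisfies the first integral \eqref{eq:curv_along_v}. The integration constants $K_0, K_1$ should be the same global constants on all of $X$, since they are determined by the relation $|\nabla K|^2=-\frac13(K-K_0)(K-K_1)(K+K_0+K_1)$, which holds globally by analytic continuation on the connected surface. Consequently the critical points of $K$ are exactly the local maxima, with value $K_0$, and the local minima, with value $K_1$, together with saddle-type points. The saddle-type points are necessarily conical points lying at levels strictly between $K_1$ and $K_0$.

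\textbf{Step 3: the meridian structure.} Each meridian runs from a maximum point to a minimum point, except for finitely many meridians that hit saddle points. The singular set is finite and the surface is compact, so there are only finitely many such exceptional meridian segments.

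\textbf{Step 4: cutting into bigons.} We cut $X$ along all exceptional meridian segments, and additionally along one meridian inside each complementary region that would otherwise be a full football. Each resulting piece is swept by the Killing flow between two consecutive cut meridians. Hence each piece is isometric to $[0,\ell]\times[0,2\pi\alpha]$ with the metric $\rmd v^2+h^2\rmd\phi^2$, that is, to a bigon $B_\alpha(K_0,K_1)$ for a common pair $(K_0,K_1)$. Canonicity follows because the cuts are intrinsic: they are the meridians through the non-extremal singular points.

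The main obstacle is Step 2: showing that the constants $K_0,K_1$ are global, and that the level sets and flow lines behave near the conical points so as to give genuine bigon corners. Here I would invoke the local analysis of conical points in \cite{CCW05}: near a conical point, $\nabla K$ has a zero or a pole of controlled order. I would simply cite that analysis rather than redo it.
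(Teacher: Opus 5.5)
Your sketch is correct and follows the route the paper relies on. The paper gives no independent proof; it quotes the football decomposition of \cite[Theorem 1.3]{CCW05} in the form of \cite[\S 2.4]{LuXu25}. That argument is the one you outline: holomorphic gradient and Killing field, a global cubic first integral for $|\nabla K|^2$, meridians running from maxima to minima, and cuts along meridians through saddle points, with the same local analysis at cone points deferred to \cite{CCW05}.

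One thing to tidy: the extra meridian you cut in the saddle-free (football) case is a choice, canonical only up to the rotational isometry of the football. Your remark that all cuts pass through non-extremal singular points therefore holds only when a saddle point exists. This matches the paper's convention in Definition \ref{def:critical_meridian} that critical meridians always meet a saddle point.
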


When gluing bigons back to a surface, there is no conical singularity inside each bigon. Cone points in the result surface must come from the following two cases:
\begin{itemize}
    \item a point glued from top or bottom vertices of several bigons; 
    \item a point glued from several points on geodesic boundaries of bigons. 
\end{itemize}

Any point glued from top (or bottom) vertices of bigons must be a maximizer (or minimizer) of curvature function. Therefore, these points are called the \textbf{maximal (or minimal) point} of the HCMU surface. Such a point may either be a cone point, or a smooth point (regarded as a cone point of angle $2\pi$). 

Any point glued from boundary points of bigons must be a cone point on the result surface. We call it a \textbf{saddle point} because it is a saddle point of the curvature function. 
Since one geodesic boundary contributes angle $\pi$, if the cone angle at a saddle point is glued from $m$ boundary points, then $m$ must be even and the cone angle is $m\pi$ ($m\neq2$, otherwise it is a smooth point) .

\begin{definition}\label{def:critical_meridian}
    The segments that decompose an HCMU surface into bigons as Proposition \ref{prop:hcmu_strip_decomp} are called \textbf{critical meridians}. Such a segment must connect to at least one saddle point. 

    The \textbf{critical graph} of an HCMU surface is the union of all critical meridians as a ribbon graph. Equivalently, when a surface is glued from bigons, it is the image of all geodesic boundaries.
\end{definition}

With the decomposition result in Proposition \ref{prop:hcmu_strip_decomp}, we have a coarse classification of HCMU spheres with a unique conical singularity. 

\begin{lemma}[Also see \cite{Cxx00}, Theorem 2]\label{lem:classify}
    Let $g=0, n=1$ and $\alpha\in \RR_+\setminus\{1\}$. Assume $\Sigma \in \Mhcmu_{0,1}(\alpha)$. 
    \begin{enumerate}
        \item When $0 < \alpha < 1$, any $\Sigma$ is a football $S_1(K_0, \frac{2\alpha-1}{2-\alpha}K_0)$ for some $K_0\in\RR_+$.
        
        \item\label{lem:ftb_case_2} When $\alpha>1$ and $\alpha \notin \ZZ$, any $\Sigma$ is a football $S_\alpha(K_0, \frac{2-\alpha}{2\alpha-1}K_0)$ for some $K_0\in\RR_+$.
        
        \item When $\alpha>1$ and $\alpha \in \ZZ$, if the unique cone point of $\Sigma$ is not a saddle point, then it is a football as case \ref{lem:ftb_case_2} again.  
    \end{enumerate}
\end{lemma}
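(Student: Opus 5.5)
We argue from the bigon decomposition of Proposition \ref{prop:hcmu_strip_decomp}, together with a Gauss--Bonnet count of cone angles. Let $\Sigma\in\Mhcmu_{0,1}(\alpha)$ and let $P$ be its unique cone point. Every maximal or minimal point that is not $P$ is a smooth point, of angle $2\pi$. Every saddle point is a genuine cone point of angle $m\pi$ with $m\neq 2$, so it must be $P$.

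First suppose $P$ is not a saddle point. Then $\Sigma$ has no saddle points, so the critical graph is empty. Hence $\Sigma$ consists of a single bigon whose two geodesic boundaries are glued along the $v$-parameter, i.e.\ $\Sigma$ is a football $S_{\alpha'}(K_0,K_1)$. A football has two vertices, the top vertex of angle $2\pi\alpha'$ and the bottom vertex of angle $2\pi\alpha' R$ with $R<1$. Exactly one of them is $P$, and the other has angle $2\pi$.

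\begin{itemize}
\item If $P$ is the bottom vertex, then $\alpha'=1$ and $\alpha=R<1$.
\item If $P$ is the top vertex, then $\alpha' R=1$ and $\alpha=\alpha'=1/R>1$.
\end{itemize}

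Since $R=\frac{K_0+2K_1}{2K_0+K_1}$, we can solve for $K_1$ in terms of $K_0$:
\begin{itemize}
\item $R=\alpha$ gives $K_1=\frac{2\alpha-1}{2-\alpha}K_0$;
\item $R=1/\alpha$ gives $K_1=\frac{2-\alpha}{2\alpha-1}K_0$.
\end{itemize}
One checks that both choices satisfy \eqref{eq:K0K1_extd}. This proves item 3, and proves items 1 and 2 once we know that $P$ cannot be a saddle point in those cases.

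So it remains to show that if $P$ is a saddle point, then $\alpha\in\ZZ$ with $\alpha>1$. In that case the angle of $P$ is $m\pi$ with $m$ even and $m\neq 2$, so $\alpha=m/2$ is an integer. Moreover $\alpha\neq 1$ because $m\neq 2$, and $m\ge 4$ gives $\alpha>1$. Consequently:
\begin{itemize}
\item when $0<\alpha<1$, $P$ cannot be a saddle point, which gives item 1;
\item when $\alpha>1$ and $\alpha\notin\ZZ$, $P$ cannot be a saddle point, which gives item 2.
\end{itemize}

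The step that needs the most care is the claim that with no saddle points there is exactly one bigon forming a football. Two bigons could a priori be glued along their boundaries to form a closed surface with all vertices at maxima or minima. I would rule this out by noting that the critical meridians are defined as segments touching at least one saddle point. With no saddle points the cut set is empty, so the canonical decomposition is a single bigon whose two boundary meridians are identified. An alternative route is a Gauss--Bonnet or Euler characteristic count: a sphere foliated by meridians with only two singular points of the foliation is rotationally a football.
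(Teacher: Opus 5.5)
Your overall route is the one the paper intends: cone-angle bookkeeping on top of the bigon decomposition of Proposition~\ref{prop:hcmu_strip_decomp}. A saddle point has angle $m\pi$ with $m$ even and $m\neq 2$, and otherwise $\Sigma$ is a football whose ratio you read off. The paper itself gives no proof beyond pointing to that decomposition and citing Chen. Your case analysis and the formulas for $K_1$ are correct.

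The one step whose justification does not work as written is ``no saddle point $\Rightarrow$ a single bigon glued to itself''. If the cut set were literally empty, nothing would be cut, and a closed sphere is not a bigon. In the paper the critical graph is the image of all bigon boundaries, so an empty critical graph is incompatible with any bigon decomposition. The clause that critical meridians touch a saddle point is tailored to generic surfaces. You also do not need to exclude several bigons: $k$ bigons glued cyclically around a common top and bottom vertex form the single football $S_{\alpha_1+\cdots+\alpha_k}(K_0,K_1)$. What must be excluded is more than one maximal or minimal point. Your alternative phrase ``only two singular points of the foliation'' assumes exactly this.

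The repair is the Euler-characteristic count you allude to, made precise.
\begin{enumerate}
\item Take any decomposition as in Proposition~\ref{prop:hcmu_strip_decomp}, say into $k$ bigons.
\item With no saddle point, each of the $2k$ boundary meridians is glued along its whole length, matching $v$ and hence top to top, to exactly one other. A change of gluing partner at an interior point of a meridian would identify at least three boundary points there, which makes that point a saddle.
\item The bigons, the $k$ glued meridians, and the $V_+$ maximal and $V_-$ minimal points therefore form a cell decomposition of the sphere. This gives $2=(V_++V_-)-k+k$, and since $V_\pm\ge 1$ we get $V_+=V_-=1$.
\item All bigons now share one top vertex. Its link is a single circle made of their top corners, so the bigons are glued in one cyclic order.
\item Hence $\Sigma=S_{\alpha'}(K_0,K_1)$, where $\alpha'$ is the sum of the bigons' top-angle parameters.
\end{enumerate}
With this in place, the rest of your argument goes through unchanged.
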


Therefore, we only need to consider $\alpha \in \ZZ_{>1}$ and surfaces in $\Mhcmu_{0,1}(\alpha)$ whose unique cone point is a saddle point. For convenience, we shall call such a HCMU surface \textbf{generic}. 
This is compatible with the original definition in \cite[\S 3.1]{LuXu25}.

The following is a specific and simplified version of the data set representation theorem of \cite[\S 3.4]{LuXu25}, enough and complete for our study of $\Mhcmu_{0,1}(\alpha)$. 
To avoid introducing excessive detail, the result is directly stated without proof. Details of the procedure in general case is basically the same as our next example. 

\begin{theorem}[Data set representation, for $g=0, n=1$]
\label{thm:hcmu_data} ~\\
    Let $\alpha \in \ZZ_{>1}$. 
    There is a 1-1 correspondence between all generic 
    HCMU spheres with a unique conical singularity of angle $2\pi\alpha$, and the data sets of the following forms: 
    \[ \left( p,q,T; K_0, L \right) \]
    where 
    \begin{enumerate}[1. ]
        \item $p>q>0$ are positive integers with $p+q=\alpha+1$; 
        \item if $q>1$, then $q \nmid p$;
        \item $T$ is an LWBP-tree of trivial passport $\pp_{p,q} := (q^p\ \bar{p}^q)$;
        \item $K_0\in\RR_+$ is arbitrary, and 
        \[ R:=\frac{q}{p}\ ,\quad K_1 := \frac{2R-1}{2-R}K_0 = \frac{2q-p}{2p-q}K_0 \ ; \] 
        \item $L \in (0,\ell)$, where $\ell=\ell(K_0,K_1)$ is the length of character line element of extremal curvatures $K_0, K_1$.
    \end{enumerate}
\end{theorem}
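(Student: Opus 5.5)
The plan is to build the correspondence in both directions, using the bigon decomposition of Proposition \ref{prop:hcmu_strip_decomp} and the classification in Lemma \ref{lem:classify}.

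\emph{From surfaces to data sets.} Let $\Sigma$ be generic, with saddle cone point $x_0$ of angle $2\pi\alpha$. Cut $\Sigma$ along its critical graph into bigons $B_{\alpha_i}(K_0,K_1)$, all with the same extremal curvatures. The smooth maximal points become black vertices and the smooth minimal points become white vertices. Each bigon contributes one edge joining its top vertex to its bottom vertex, with weight equal to its top angle $\alpha_i$. At a smooth maximal point the top angles sum to $1$, and at a minimal point the bottom angles $\alpha_i R$ sum to $1$.

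The key topological step is to show that this bi-colored graph, drawn on $\Sigma\setminus\{x_0\}\cong\RR^2$, is a plane tree. Gluing bigons along meridians adjacent to the single saddle point produces one face. Euler's formula then gives $V-E=1$, so with $p$ black and $q$ white vertices there are $p+q-1$ bigons. Rescaling weights by $1/\alpha_{\min}$ is unnecessary: black vertices carry total weight $1$ and white vertices carry total weight $1/R$ in top-angle units. Multiplying all edge weights by $q$ and setting $R=q/p$ gives black weights $q$ and white weights $p$, so the passport is $(q^p\,\bar p^q)$.

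To determine $R$, count angles at $x_0$. Each bigon contributes $2\pi$ of boundary angle distributed between its two sides. The angle at $x_0$ equals the total angle of all bigons minus the angles spent at the vertices, which is $2\pi(p+q-1)=2\pi\alpha$. This gives $p+q=\alpha+1$. Combined with total weight balance $p\cdot 1 = q\cdot(1/R)$, we get $R=q/p$, and $p>q$ because $R<1$.

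\emph{Remaining parameters.} $K_0$ is free, and $K_1$ is determined by $R$ through \eqref{eq:Ratio}. The parameter $L\in(0,\ell)$ records the position along the meridians at which the critical meridians meet the saddle point. All critical meridians end at the same $v$-level $L$ because the curvature is constant at $x_0$, and $L$ cannot be $0$ or $\ell$ since $x_0$ is a saddle point. The condition $q\nmid p$ for $q>1$ needs separate justification. If $q\mid p$ with $q>1$, then $p/q$ is an integer and the white weights force a degenerate configuration in which the cone point fails to be a saddle. I expect to verify this directly from Lemma \ref{lem:classify} or to defer to \cite{LuXu25}.

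\emph{From data sets to surfaces.} Conversely, given $(p,q,T;K_0,L)$, replace each edge of weight $w$ with the bigon $B_{w/q}(K_0,K_1)$. Glue the bigons cyclically around each vertex according to the ribbon structure of $T$, and identify the boundary points at level $L$ along the outer face into one point. The result is a sphere with one cone point of angle $2\pi\alpha$.

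The two constructions are mutually inverse up to isometry, and tree isomorphism corresponds to isometry. The main obstacle I expect is the rigorous identification of the critical graph's face structure: showing the gluing pattern is exactly encoded by the plane tree with no extra identifications. For this I would rely on the detailed data set representation of \cite[\S 3.4]{LuXu25}.
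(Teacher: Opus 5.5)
Your outline matches the paper's treatment. The paper does not prove Theorem~\ref{thm:hcmu_data}; it cites \cite[\S 3.4]{LuXu25}, illustrates the data-to-surface gluing in Example~\ref{eg:hcmu_31}, and calls the converse ``just the bigon decomposition''. Your extra bookkeeping in the surface-to-data direction is correct:
\begin{itemize}
\item Every half-bigon (a bigon cut along its central meridian) has $x_0$ on its outer side. So the complement of the graph is connected, and the graph is a plane tree.
\item Each bigon contributes two boundary angles $\pi$ at $x_0$, which gives $\alpha=p+q-1$.
\item Weight balance gives $R=q/p<1$, hence $p>q$.
\end{itemize}

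The genuine gap is condition~2. You do not prove it, and the route you suggest would not work. Lemma~\ref{lem:classify} says nothing about $q\mid p$, and no ``degenerate configuration in which the cone point fails to be a saddle'' occurs. What actually happens is that no tree of passport $\pp_{p,q}$ exists at all. Take an edge $e$ of $T$, and let $C$ be the component of $T\setminus e$ containing its black endpoint. Sum the vertex weights over $C$: each internal edge contributes $+w$ at its black end and $-w$ at its white end, so it cancels. Only $e$ survives, giving
\[ \wt_E(e) = q\,\abs{C\cap V^+} - p\,\abs{C\cap V^-} , \]
a positive multiple of $\gcd(p,q)$. If $q\mid p$, every edge has weight at least $q$, so every black vertex (of total weight $q$) is a leaf. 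But when $q\geq 2$, the path between two white vertices must pass through a black vertex of degree at least $2$, a contradiction. Hence $\Tree(\pp_{p,q})=\varnothing$ in that case. So condition~2 holds automatically for the data set of any generic surface; it is in fact implied by condition~3. This short combinatorial argument is what should replace your appeal to Lemma~\ref{lem:classify} or to \cite{LuXu25}.
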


We shall explain how to build a generic HCMU surface from a data set through the following example. The reverse process is just the bigon decomposition. 
Note that the tree $T$ here is not the critical graph in Definition \ref{def:critical_meridian}.

\begin{example}\label{eg:hcmu_31}
    Let $\alpha=3, p=3, q=1$ and $\pp_{3,1}:= (1^3\ \bar{3}^1)$. The unique tree $T_0 \in \Tree(\pp_{3,1})$ is shown in Figure \ref{subfig:tree_label_A} before. 
    Take $K_0 \in \RR_+$ and let 
    $ R=\frac13, K_1= \frac{2q-p}{2p-q}K_0 = -\frac{K_0}{5} $. 
    Let $\ell$ be the length of character line element of extremal curvatures $K_0, K_1$, and $L \in (0,\ell)$. 
    Here are the steps to recover the generic HCMU surface whose data set representation is $(3,1,T_0; K_0, L)$.

    \medskip
    \noindent (Step 1). For each edge $e$ of $T_0$, build an HCMU bigon $B_e := B_{\alpha(e)}(K_0,K_1)$ with top angle 
    \[ 2\pi\alpha(e) := 2\pi\wt_E(e)/q \ . \] 
    Recall that $\wt_E$ is the weight function on edges of $T_0$.
    
    \medskip
    \noindent (Step 2). Mark the point at distance $L$ from the top vertex on every geodesic boundary in each HCMU bigon. 
    Since all these geodesic boundaries have the same length $\ell$ and $0<L<\ell$, each marked point must lie on the interior of boundary segment. 

    \medskip
    \noindent (Step 3). Each geodesic boundary is divided into two meridian segments by the marked point. Now glue the bigons along these segments by isometries, according to cyclic action at each vertex of $T_0$. 
    If $v$ is a black vertex, $e,e' \in E(v)$ and $\sigma_{v}(e) = e'$, then isometrically glue $B_{e}, B_{e'}$ along the boundary meridian segments bounded by top vertex and marked point, placing $B_{e'}$ anticlockwise from $B_{e}$ about the common top vertex. $e, e'$ may be the same edge. Apply the same procedure to white vertices, with `top vertex' replaced by `bottom vertex'.
    See the right of Figure \ref{fig:gluing_bigons}. 

    \medskip
    \noindent (Step 4). It is easy to check that the result surface is a topological sphere. 
    The top vertex of each bigon is glued to a cone of angle $2\pi$. So we have 3 smooth maximal points. Three bottom vertices are glued to one cone of angle $\frac{2\pi}{3} \cdot 3 = 2\pi$, hence a smooth minimal points. 
    The six marked boundary points are glued to a saddle point of angle $6\pi$. 
    Therefore, the result HCMU surface belongs to $\Mhcmu_{0,1}(3)$, whose data set representation is just $(3,1,T_0;K_0,L)$. 
    All geodesic boundaries of the bigons form the critical graph (see Figure \ref{fig:critical_graph}).  
    \qed
\end{example}

\begin{figure}[t]
    \centering
    \begin{tikzpicture}[scale=0.7, transform shape]
        \tikzset{
        bigon2/.pic={
        \node[blk] (u) at (0, 2) {};
        \node[wht] (d) at (0,-2) {};
        \node(ml) at (-1,0) {};
        \node(mr) at ( 1,0) {};            
        \draw (u) to[out=-160,in=90] (-1,0.3) to[out=-90,in=100] (d);
        \draw (u) to[out=-20,in=90] ( 1,0.3) to[out=-90,in=80] (d);
        \node[crossmark] at (-1,0.3) {};
        \node[crossmark] at ( 1,0.3) {};
        }}

        \tikzset{
        bigon2n/.pic={
        \pic{bigon2};
        \draw[<->] (-0.6,1.5) to[out=150, in =-150](-0.5, 2.25) to[out=30, in =150](0.5, 2.25) to[out=-30, in =60] (0.6,1.5);
        \draw[<->] (0.75,-0.84) to[out=30, in=30](1.33, -1.84);
        }}
        
        \node[blk] (up) at (0, 1) { };
        \node[wht] (dn) at (0,-1) { };
        \draw[line width=1pt] (up) -- (dn) node[midway, left] {$\alpha$};

        \node() at (0.7, 0) {\Large $=$}; 
            
        \begin{scope}[xshift=2.5cm]
        \pic{bigon2};
        \node() at (0, 1.6) {$2\pi\frac{\alpha}{q}$};
        \node() at (0,-0.8) {$2\pi\frac{\alpha}{p}$};
        \draw[line width=0.2] (0.3,2) to[out=45, in=-135] (1.2, 1.6) to[out=-135, in=45] (1.2, 0.5) ;
        \node at (1.4,1.7) {$L$};
        \end{scope}
        
        \begin{scope}[xshift=7.5cm, yshift=0cm]
        \node[blk] (up) at (0, 2) {1};
        \node[wht] (mid) at (0, 0) {$\overline{3}$};
        \node[blk] (left) at (-1.5, -1) {1};
        \node[blk] (right) at (1.5, -1) {1};
        \draw[line width=1pt] (up) -- (mid) node[mid_auto] {1};
        \draw[line width=1pt] (left) -- (mid) node[mid_auto] {1};
        \draw[line width=1pt] (right) -- (mid) node[mid_auto] {1};
        
        \node() at (2.5, 0) {\Large $=$};
        \end{scope}
        
        \begin{scope}[xshift=14.5cm]
        \pic[yshift=2cm]{bigon2n};
        \pic[rotate=120, yshift=2cm]{bigon2n};
        \pic[rotate=240, yshift=2cm]{bigon2n};
        \end{scope}

        \node() at (14.5,3.6) {$2\pi$};
        \node() at (14.5,0.8) {$\frac{2\pi}{3}$};
        \node() at (11.4,-1.7) {$2\pi$};
        \node() at (13.7,-0.5) {$\frac{2\pi}{3}$};
        \node() at (17.5, -1.7) {$2\pi$};
        \node() at (15.3,-0.5) {$\frac{2\pi}{3}$};
        
    \end{tikzpicture}
    
\caption{Building an HCMU sphere from a data set that includes an LWBP-tree.}
\label{fig:gluing_bigons}
\end{figure}

\begin{figure}[h]
\centering
\begin{tikzpicture}[line join=round, scale=0.7, transform shape]
    \clip (-2.5,-2.5) rectangle (2.5,1.6);
    \node[crossmark](x) at (0,0) {};
    \node[blk](a) at (0,1.4) {};
    \node[blk](b) at (-1,-0.8) {};    
    \node[blk](c) at (+1,-0.8) {};
    \node[wht](w) at (0,-1.4) {};
    \draw (a) -- (x);
    \draw (b) -- (x);
    \draw (c) -- (x);
    \draw (w) -- (x);
    \draw (x) .. controls (-2.5,4) and (-3,-4) .. (w);
    \draw (x) .. controls (+2.5,4) and (+3,-4) .. (w);
\end{tikzpicture}
\caption{The critical graph of the surface in Example \ref{eg:hcmu_31}.}
\label{fig:critical_graph}
\end{figure}

In summary, if a generic HCMU surface $\Sigma\in \Mhcmu_{0,1}(\alpha)$ is represented by the data set $(p,q,T;K_0,L)$, then there are $p$ maximal points and $q$ minimal points on $X$, all smooth. 
The whole surface is woven by character line elements of extremal curvature $K_0, K_1=\frac{2q-p}{2p-q}K_0$. Its bigon decomposition is faithfully recorded by the LWBP-tree $T$ of trivial passport $(q^p\ \bar{p}^q)$. 
The unique conical singularity is a saddle point of cone angle $2\pi\alpha$, whose position is controlled by the parameter $L$. 

\begin{definition}\label{def:M_T}
	A generic $\Sigma \in \Mhcmu_{0,1}(\alpha)$ represented by data set $(p,q,T;K_0,L)$ will be denoted by $\Sigma(T,K_0,L/\ell)$ in short, with $K_0 \in \Rpos,\ 0 < L/\ell < 1$. 
	
	Assume $p,q$ satisfy the conditions in Theorem \ref{thm:hcmu_data}. For an LWBP-tree $T \in \Tree(\pp_{p,q})$, 
	let $\mathcal{M}(T) \subset \Mhcmu_{0,1}(\alpha)$ be the subset of all generic surfaces that contain $T$ in their data set representation. Equivalently, $\mathcal{M}(T):=\defset{\Sigma(T,K_0,\lambda)}{K_0>0,\ 0<\lambda<1}$.  
\end{definition}
The ratio is determined by $p,q$ and encoded in the tree $T$. Hence the notation $\Sigma(T,K_0,\lambda)$ is well-defined. 
By Theorem \ref{thm:hcmu_data}, the subset of all generic HCMU surfaces in $\Mhcmu_{0,1}(\alpha)$ has a natural decomposition 
\[ \bigsqcup_{p,q}\ \bigsqcup_{T \in \Tree(\pp_{p,q}) } \mathcal{M}(T) \ . \]
This is almost the connected component decomposition of the geometric moduli space. The remainder of this paper explains how to adjust it to obtain the genuine decomposition of the whole space $\Mhcmu_{0,1}(\alpha)$.

\subsection{The moduli space and Gromov-Hausdorff limit}

To show that the connected components of $\Mhcmu_{0,1}(\alpha)$ are classified by the trees appearing in the data set representation, we will consider the finite Gromov-Hausdorff limits of these surfaces at the boundary of moduli space.  
Though more objects are involved, such approach has two main advantages. 
\begin{itemize}
    \item All footballs, which are not represented by the data sets, are now included.
    \item Connectedness can be characterized using an intrinsic geometric quantity, thereby avoiding the influence of different parameterization. 
\end{itemize}

\bigskip
Let $\GH$ be the space of all compact metric space, endowed with the \textbf{Gromov-Hausdorff distance} $d_{\rGH}$ (G-H in short). 
In general, the G-H distance between two metric space is difficult to compute directly. We refer to \cite[\S 7.3]{BurIva01} for an expository literature. Here is a convenient method to estimate G-H distance. 

\begin{definition} \label{def:correspondence}
    A \textbf{correspondence} between two metric spaces $A, B$ is a set $\Rf \subset A \times B$ such that $\forall\ a \in A, \exists\ b \in B: (a, b) \in \Rf$ and $\forall\ b \in B, \exists\ a \in A: (a, b) \in \Rf$. 

    The \textbf{distortion} of $\Rf$ is defined as 
    \begin{equation}
        \dis \Rf := \sup \left\{\ |d_A(x, y) - d_B(x', y')| \ \big|\ (x, x'), (y, y') \in \Rf \ \right\} \ .
    \end{equation}
    A homeomorphism $f:A\to B$ induces a correspondence $\Rf_f := \defset{(a,f(a))}{a \in A}$. The distortion of $\Rf_f$ will be denoted as $\dis f$ in short.
\end{definition}

\begin{fact}
\label{fac:correspondence_estimate}
    For any two metric spaces $A, B$, 
    \begin{equation}
        d_{\rGH}(A, B) = \frac12 \inf_{\Rf} \dis \Rf \ .
    \end{equation}
    That is to say, if $\Rf$ is a correspondence between $A,B$, then $d_{\rGH}(A, B) \leq \frac12 \dis \Rf$. 
    \qed
\end{fact}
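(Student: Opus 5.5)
The plan is to prove the two inequalities $d_{\rGH}(A,B) \leq \frac12 \inf_{\Rf} \dis \Rf$ and $\frac12 \inf_{\Rf}\dis\Rf \leq d_{\rGH}(A,B)$ separately. Throughout, $d_{\rGH}(A,B)$ is taken as the infimum of Hausdorff distances $d_H^Z(\iota_A(A),\iota_B(B))$ over all metric spaces $Z$ and isometric embeddings $\iota_A, \iota_B$ into $Z$. It suffices to let $Z$ range over metrics on the disjoint union $A \sqcup B$ that extend $d_A$ and $d_B$. This is the standard argument (cf.\ \cite[\S 7.3]{BurIva01}).

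For the first inequality, fix a correspondence $\Rf$ with $r := \dis \Rf < \infty$ and any $\varepsilon>0$. I will define a function on $A \sqcup B$ that restricts to $d_A$ on $A$ and to $d_B$ on $B$, and for $a\in A$, $b \in B$ is given by
\[ d(a,b) := \inf_{(a',b')\in\Rf} \big( d_A(a,a') + \tfrac{r}{2} + \varepsilon + d_B(b',b) \big) \ . \]
The $\varepsilon$ guarantees $d(a,b)>0$, so $d$ is a genuine metric and not merely a pseudometric. The real work is checking the triangle inequality on mixed triples.
\begin{itemize}
\item The triple $(a,b,b_1)$ is immediate from the triangle inequality in $B$.
\item The triple $(a,a_1,b)$, i.e.\ the inequality $d_A(a,a_1) \leq d(a,b)+d(b,a_1)$, is the one that uses the distortion. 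Pick near-optimal pairs $(a',b'),(a_1',b_1')\in\Rf$. Then
\[ d_A(a,a_1) \leq d_A(a,a') + d_A(a',a_1') + d_A(a_1',a_1), \qquad d_A(a',a_1') \leq d_B(b',b_1') + r , \]
and $d_B(b',b_1') \leq d_B(b',b)+d_B(b,b_1')$. Combining these, the $r$ is exactly absorbed by the two copies of $\frac r2$.
\end{itemize}
Once $d$ is a metric, every $a$ has some $b$ with $(a,b)\in\Rf$, hence $d(a,b) = \frac r2+\varepsilon$ (taking $a'=a$, $b'=b$), and symmetrically for every $b$. So the Hausdorff distance is at most $\frac r2+\varepsilon$. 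Letting $\varepsilon\to0$ and taking the infimum over $\Rf$ gives the first inequality.

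For the reverse inequality, suppose $d_{\rGH}(A,B) < s$. Then there exist $Z$ and isometric copies of $A,B$ in $Z$ with Hausdorff distance $<s$. I will set
\[ \Rf := \defset{(a,b)\in A\times B}{d_Z(a,b) < s} \ . \]
This is a correspondence precisely because the Hausdorff distance is $<s$. For $(x,x'),(y,y')\in\Rf$, the triangle inequality in $Z$ gives $|d_A(x,y) - d_B(x',y')| \leq d_Z(x,x') + d_Z(y,y') < 2s$. Hence $\dis\Rf \leq 2s$, and letting $s \downarrow d_{\rGH}(A,B)$ finishes the proof.

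I expect the main obstacle to be the mixed-triangle verification in the first step. There one has to choose the constant $\frac r2$ correctly and handle the infima with $\varepsilon$-optimal pairs, together with the small technical point that $\varepsilon>0$ is needed to keep $d$ a metric when $r=0$. The second inequality is routine.
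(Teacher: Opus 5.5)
Your proof is correct. It is the standard argument from \cite[\S 7.3]{BurIva01}, which the paper cites without reproducing, since it records this statement as a Fact with no proof. The only deviation is your $+\varepsilon$ in the cross-distance on $A \sqcup B$: it yields a genuine metric rather than the semi-metric used there, at no cost since $\varepsilon \to 0$.
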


\begin{fact}
\label{fac:diameter_estimate}
    Let $\diam A < +\infty$ be the diameter of $A \in \GH$. Then 
    $$ d_{\rGH}(A, B) \geq \frac12 \abs{\diam A - \diam B} \ . $$
    In particular, if $\Pt$ is the single-point space in $\GH$, then $d_{\rGH}(A, \Pt) = \frac12 \diam A$.
    \qed
\end{fact}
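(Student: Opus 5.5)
We will derive both assertions from the correspondence description of $d_{\rGH}$ in Fact \ref{fac:correspondence_estimate}. By that fact, it suffices to show that every correspondence $\Rf$ between $A$ and $B$ satisfies
\[ \dis \Rf \geq \abs{\diam A - \diam B} \ . \]
Taking the infimum over $\Rf$ and multiplying by $\frac12$ then gives the inequality.

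To prove this bound, fix a correspondence $\Rf$ and assume $\dis \Rf < \infty$; otherwise there is nothing to prove. Choose $x,y \in A$ with $d_A(x,y) = \diam A$. Such points exist because $A$ is compact; in general one takes $d_A(x,y) > \diam A - \varepsilon$. Since $\Rf$ is a correspondence, there are $x', y' \in B$ with $(x,x'), (y,y') \in \Rf$. The definition of distortion gives
\[ \diam B \geq d_B(x',y') \geq d_A(x,y) - \dis \Rf \ , \]
so $\diam A - \diam B \leq \dis \Rf$ (up to $\varepsilon$, which we then let tend to $0$). Swapping the roles of $A$ and $B$ uses the other half of the correspondence property, namely that every point of $B$ has a partner in $A$. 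This gives $\diam B - \diam A \leq \dis \Rf$, and together the two inequalities give the claimed bound.

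For the single-point space $\Pt = \{*\}$, the lower bound just proved gives $d_{\rGH}(A,\Pt) \geq \frac12 \diam A$, since $\diam \Pt = 0$. For the reverse inequality, note that the only correspondence between $A$ and $\Pt$ is $\Rf = A \times \{*\}$. Its distortion is
\[ \dis \Rf = \sup_{x,y \in A} \abs{d_A(x,y) - 0} = \diam A \ . \]
Fact \ref{fac:correspondence_estimate} therefore gives $d_{\rGH}(A,\Pt) \leq \frac12 \diam A$, and hence equality.

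There is no real obstacle here. The only point needing care is that the supremum defining the diameter may not be attained, which is handled by the $\varepsilon$-approximation above (or avoided entirely by compactness of spaces in $\GH$). One should also note that the argument uses both halves of the definition of a correspondence, one for each direction of the absolute value.
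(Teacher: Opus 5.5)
Your proof is correct and complete. The paper gives no proof of this Fact: it is quoted as standard, next to the reference \cite[\S 7.3]{BurIva01}, so there is no argument of the paper's to diverge from. Your route through Fact~\ref{fac:correspondence_estimate} is the natural one given how the paper sets things up.

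For comparison, the textbook route runs in the opposite order. It first shows $d_{\rGH}(A,\Pt)=\frac12\diam A$. In your framework this step is immediate: $A\times\{*\}$ is the only correspondence, so the infimum is over a single element and you do not even need the lower bound. The general inequality then follows from the triangle inequality for $d_{\rGH}$:
\[ \tfrac12\abs{\diam A-\diam B}=\abs{d_{\rGH}(A,\Pt)-d_{\rGH}(B,\Pt)}\le d_{\rGH}(A,B) \ . \]
That version avoids your $\varepsilon$-approximation but relies on the triangle inequality for $d_{\rGH}$. Yours needs only the correspondence formula, and you correctly use both of its directions: the `$\geq$' direction for the lower bound and the `$\leq$' direction for the single-point case.
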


In addition, we will use the following facts about G-H distance. 

\begin{fact}[{\cite[\S 2.1]{AllSor20}}] \label{fac:Riemann_matric_estimate}
    Let $(A, g_A),\ (B, g_B)$ be two compact Riemannian metric spaces, and $f : A \to B$ a diffeomorphism. 
    If there exist $0 < a < 1 < b$ such that 
    \[ a^2 \cdot f^* g_B \leq g_A \leq b^2 \cdot f^* g_B \ , \] 
    then for any points $x, y \in A$, $a \cdot d_B(f(x), f(y)) \leq d_A(x, y) \leq b \cdot d_B(f(x), f(y))$, and 
    \begin{equation*}
        d_{\rGH}(A, B) \leq \frac12 \dis f  \leq \frac12 \max\{1-a, b-1\} \cdot \diam(A) \ .
    \end{equation*}
    \qed
\end{fact}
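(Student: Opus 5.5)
The statement to prove is Fact \ref{fac:Riemann_matric_estimate}: a bi-Lipschitz comparison of two Riemannian metrics gives a Gromov--Hausdorff bound. The plan has two steps. First, compare path lengths pointwise. Then feed the resulting distance comparison into the distortion estimate of Fact \ref{fac:correspondence_estimate}.

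For the first step, let $\gamma:[0,1]\to A$ be any piecewise $C^1$ curve. Then $f\circ\gamma$ is a piecewise $C^1$ curve in $B$, since $f$ is a diffeomorphism. The hypothesis $a^2 f^*g_B \le g_A \le b^2 f^*g_B$ evaluated on $\dot\gamma(t)$ gives
\[ a\,\abs{(f\circ\gamma)'(t)}_{g_B} \le \abs{\dot\gamma(t)}_{g_A} \le b\,\abs{(f\circ\gamma)'(t)}_{g_B} . \]
Integrating yields $a\,L_B(f\circ\gamma)\le L_A(\gamma)\le b\,L_B(f\circ\gamma)$. The correspondence $\gamma\mapsto f\circ\gamma$ is a bijection between curves joining $x,y$ and curves joining $f(x),f(y)$, with inverse induced by $f^{-1}$. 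Taking infima of lengths over these curve classes therefore gives $a\, d_B(f(x),f(y)) \le d_A(x,y) \le b\, d_B(f(x),f(y))$. This is the first assertion. The only care needed is that the distance is the infimum over piecewise smooth curves, which is standard for compact Riemannian manifolds, possibly with boundary or with cone points if one is applying this to the HCMU setting.

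For the second step, use the correspondence $\Rf_f$ induced by the homeomorphism $f$. Fact \ref{fac:correspondence_estimate} gives $d_{\rGH}(A,B)\le \frac12 \dis f$. To bound $\dis f$, fix $x,y$ and put $t=d_A(x,y)$. The first step gives
\[ t/b \le d_B(f(x),f(y)) \le t/a . \]
Hence $\abs{d_A(x,y)-d_B(f(x),f(y))}\le \max\{1/a-1,\ 1-1/b\}\, t$.

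This is where the main obstacle lies: the stated constant $\max\{1-a,b-1\}\cdot\diam(A)$ does not literally come out. The factor $1-1/b$ is at most $b-1$, which is fine. However $1/a-1 = (1-a)/a$ is larger than $1-a$. One option is to measure the error against $d_B$ instead. Writing $s=d_B(f(x),f(y))$, we have $as\le t\le bs$, so $\abs{t-s}\le \max\{1-a,b-1\}\, s$. This gives the bound with $\diam(B)$ in place of $\diam(A)$.

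To recover exactly the stated form, I would either invoke the symmetry of the hypothesis (swap the roles of $A$ and $B$ via $f^{-1}$, which reverses the inequalities with $1/b<1<1/a$), or note that $\diam B\le \diam A / a$ and absorb the discrepancy. Failing both, I would simply cite \cite[\S 2.1]{AllSor20} for the precise constant, since it is only used qualitatively as $a,b\to1$. In every version one has $\dis f\le C(a,b)\,\diam$ with $C(a,b)\to 0$ as $a,b\to1$, and that is what the later Gromov--Hausdorff limit arguments need.
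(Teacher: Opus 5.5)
Your first step (compare speeds pointwise, integrate, and take infima over the curve families that $\gamma\mapsto f\circ\gamma$ matches bijectively) is correct, and so is the bound $d_{\rGH}(A,B)\le\frac12\dis f$ via the correspondence $\Rf_f$. The paper itself gives no argument for this Fact, only the citation to \cite{AllSor20}.

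The problem is your last paragraph. Neither proposed repair yields $\max\{1-a,b-1\}\cdot\diam(A)$. Applying your target-diameter bound to $f^{-1}$, whose constants are $1/b<1<1/a$, just returns $\max\{1/a-1,\,1-1/b\}\cdot\diam(A)$, which is the bound you already had. Inserting $\diam(B)\le\diam(A)/a$ gives $\max\{1-a,b-1\}\cdot\diam(A)/a$.

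In fact no argument can give the printed constant, because the last inequality of the Fact is false. Let $B$ be any compact Riemannian manifold with $\diam(B)>0$. Let $A$ be the same manifold with $g_A=a^2g_B$, take $f=\mathrm{id}$, and pick $b\in(1,2-a)$. The hypothesis holds and $d_A=a\,d_B$, so $\dis f=(1/a-1)\diam(A)$. Fact \ref{fac:diameter_estimate} even gives $d_{\rGH}(A,B)\ge\frac12\bigl(\diam(B)-\diam(A)\bigr)=\frac12(1/a-1)\diam(A)$. This is strictly larger than $\frac12\max\{1-a,b-1\}\diam(A)=\frac12(1-a)\diam(A)$. So falling back on \cite{AllSor20} ``for the precise constant'' does not help either: the step you could not complete is not provable as stated.

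What you should do instead is commit to what your computation actually proves: $\frac12\dis f\le\frac12\max\{1-a,b-1\}\cdot\diam(B)$, or equivalently $\frac12\dis f\le\frac12\max\{1/a-1,\,1-1/b\}\cdot\diam(A)$. State explicitly that this, not the printed version, is the correct form of the Fact.

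This corrected form is exactly what the paper's only application needs. In the proof of Lemma \ref{lem:bigon_estimate_K}, the given bigon $B$ plays the role of your $A$, and the ratios $\ell'/\ell$ and $h'(\frac{\ell'}{\ell}v)/h(v)$ are placed in $[1-\eta,1+\eta]$. One may therefore take $a=1/(1+\eta)$ and $b=1/(1-\eta)$, which gives $\max\{1/a-1,\,1-1/b\}=\eta$. With the choice $\eta=2\varepsilon/\diam(B)$ made there, this yields $\frac12\dis f\le\varepsilon$ exactly.
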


To study the G-H limit of surfaces in $ \Mhcmu_{0, 1}(\alpha) $, we will add boundaries to each subset $\MM(T)$ in Definition \ref{def:M_T}. Each $\MM(T)$ is automatically parameterized as $\Rpos \times (0,1)$.
We will next investigate the G-H limits along three directions: 
\[ \lambda \to 0^+, \quad \lambda \to 1^-, \quad K \to + \infty \ . \]
The main result of this subsection (Theorem \ref{thm:map_bij_continuous}) will assert that these 3 directions are enough to construct the closure of $\MM(T)$ in $\GH$. 

\bigskip
Now we always assume that $p,q$ satisfy the conditions in Theorem \ref{thm:hcmu_data}. Then for any $T \in \Tree(\pp_{p,q})$, the subset $\mathcal{M}(T) \subset \Mhcmu_{0,1}(\alpha)$ in Definition \ref{def:M_T} is non-empty. We begin by describing the limit when $\lambda \to 0^+$ or $1^-$. 

\begin{definition}\label{def:limit_sigma}    
    We directly construct geometric objects $\Sigma(T, K_0, 0)$ when $K_0 > 0$. 
\begin{enumerate}
    \item For each white vertex $v^- \in V^-$ of $T$, construct a football $S_{p / q}(K_0, \frac{2q-p}{2p-q} K_0)$ whose bottom angle is $2\pi$ and ratio is $q/p$. 
    \item Define $\Sigma(T, K_0, 0)$ to be the one-point union of the above $q$ footballs along their top vertex.  
\end{enumerate}
    For $\lambda=1$, apply the same procedure for each black vertex, and swap the roles between bottom angle or vertex and top ones. 
    This yields $\Sigma(T, K_0, 1)$. 

    Note that when $p=\alpha,q=1$ and $T_0$ is the unique tree in $\Tree(\pp_{\alpha,1})$, the constructed object $\Sigma(T_0, K_0, 0)$ is just the football $S_\alpha(K_0, \frac{2-\alpha}{2\alpha-1}K_0)$. Such $T_0$ will be called the \textbf{star-shape tree}.
    In other cases, $\Sigma(T, K_0, 0)$ or $\Sigma(T, K_0, 1)$ is merely a CW-complex, not a topological sphere. 
\end{definition}	

The geometric object defined above is also the result of gluing bigons in the same manner when $\lambda \in (0,1)$. 
In Example \ref{eg:hcmu_31} and Figure \ref{fig:gluing_bigons}, when $\lambda=0$, all marked boundary points coincide with top vertices. So the bigons around a common minimal point merge to a football of bottom angle $2\pi$. 
With this geometric intuition in mind, we will show that these objects are exactly the G-H limits of $\Sigma(T, K_0, \lambda)$ as $\lambda \to 0$ or $\lambda \to 1$.

In view of the asymptotic behavior in Lemma \ref{lem:line_element_estimate}, the single-point space $\Pt$ is also taken into consideration. 
\begin{definition}\label{def:M_T_star}
    Fix a subset $\MM(T) \subset \Mhcmu_{0,1}(\alpha)$. 
	For all $\lambda\in[0,1]$, define $\Sigma(T, +\infty, \lambda)$ to be the single-point space $\Pt \in \mathcal{GH}$. 
	Together with Definition \ref{def:limit_sigma}, set  
	\[ \MM(T)^* := \{\ \Sigma(T, K_0, \lambda) \ |\  0 < K_0 \leq +\infty,\ 0 \leq \lambda \leq 1 \ \} \subset \mathcal{GH}\]
    and 
    \[     \varTheta := (0, +\infty] \times[0,1] \ \big{/} \sim \quad     \]
    where $(+\infty,\lambda) \sim (+\infty,\lambda')$ for any $\lambda,\lambda' \in [0,1]$. $\varTheta$ is homeomorphic to an Euclidean triangle deleting a closed edge. 
\end{definition}

Although many technical details remain to be discussed, the earlier geometric intuition suggests that $\MM(T)^*$ is indeed parameterized by $\varTheta$, therefore containing all G-H limits of HCMU spheres. In this subsection, we first prove that $K_0, \lambda$ are continuous parameters indeed.  

\begin{theorem} \label{thm:map_bij_continuous}
	Fix a subset $\MM(T) \subset \Mhcmu_{0,1}(\alpha)$ as Definition \ref{def:M_T}. 
    The mapping $(K_0,\lambda) \mapsto \Sigma(T,K_0,\lambda)$ induces a continuous bijection 
    \begin{equation}\label{eq:MT_para}
    F : \varTheta \longrightarrow \MM(T)^* \ . 
    \end{equation}
	In other words, we have the following two cases.
    \begin{enumerate}
    \item 
    If $K_n \to K_0 \in \Rpos$ and $\lambda_n \to \lambda \in [0, 1]$, then $\Sigma(T, K_n, \lambda_n) \xrightarrow{\rGH} \Sigma(T, K_0, \lambda)$.

    \item  When $K_n \to +\infty$, the G-H limit of $\Sigma(T, K_n, \lambda)$ is the single-point space $\Pt$.
\end{enumerate}
\end{theorem}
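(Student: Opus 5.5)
We need to prove that $F$ is well defined, that it is a bijection, and that it is continuous in the two cases listed in the statement.

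\emph{Well-definedness and injectivity.} By Definition \ref{def:M_T_star}, $F$ is constant on the collapsed edge $\{+\infty\}\times[0,1]$, so it descends to $\varTheta$. For injectivity we separate points by intrinsic invariants.
\begin{itemize}
\item $\Pt$ is the only element of $\MM(T)^*$ with zero diameter, so the collapsed point is distinguished from everything else.
\item For $K_0<+\infty$, $\Sigma(T,K_0,\lambda)$ is a topological sphere when $0<\lambda<1$. When $\lambda\in\{0,1\}$ it is a one-point union of footballs; by Definition \ref{def:limit_sigma} it is not a sphere unless $T$ is the star-shape tree.
\item Within the spheres, Theorem \ref{thm:hcmu_data} says that distinct data sets give non-isometric surfaces. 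Since $T$ is fixed, distinct $(K_0,\lambda)$ give distinct points.
\item For $\lambda\in\{0,1\}$, the football metric is determined by $K_0$: the maximal curvature, or equivalently the meridian length $\ell(K_0)=I(R)/\sqrt{K_0}$ from Lemma \ref{lem:line_element_estimate}, is a metric invariant.
\item $\lambda=0$ and $\lambda=1$ are distinguished by counting football summands, $q$ versus $p$ with $p>q$. In the star-shape case the football at $\lambda=0$ has a cone point of angle $2\pi\alpha$ and is not generic, so it is distinct from every generic sphere.
\end{itemize}
Surjectivity holds by definition of $\MM(T)^*$.

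\emph{Case 1: $K_n\to K_0<+\infty$ and $\lambda_n\to\lambda$.} Every $\Sigma(T,K,\lambda)$, including $\lambda\in\{0,1\}$, is glued from the same finite collection of bigons $B_e=B_{\alpha(e)}(K,K_1)$ indexed by the edges of $T$. The gluing pattern is fixed by the cyclic orders of $T$; only the marked points at distance $\lambda\ell$ from the top vertices move. We proceed in two steps.
\begin{enumerate}
\item \emph{Vary $K$ with $\lambda$ fixed.} Use the identity map in rectangular coordinates, rescaling $v\mapsto v\,\ell(K_n)/\ell(K_0)$. By Lemma \ref{lem:line_element_estimate} and the substitution $K=xK_0$, the warped functions $h_{K_n}$ converge to $h_{K_0}$ uniformly after this rescaling. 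So the induced map on each bigon is bi-Lipschitz with constants $a_n,b_n\to1$. The map respects the gluing, so Fact \ref{fac:Riemann_matric_estimate}, applied piecewise to the length metric on the glued space, gives $\dis\to0$.
\item \emph{Vary $\lambda$ with $K$ fixed.} Build a map on each bigon that shifts the cut point along each boundary meridian from $\lambda_n\ell$ to $\lambda\ell$, using a piecewise-linear reparametrization of $v$ near the boundary. Its metric distortion is controlled by $|\lambda_n-\lambda|\ell$ times a constant depending only on $T$. The glued surfaces are length spaces obtained by gluing finitely many pieces, so distances change by at most (number of pieces)$\cdot O(|\lambda_n-\lambda|)$.
\end{enumerate}
Combining the two steps through the triangle inequality for $d_{\rGH}$ gives the claim.

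\emph{Case 2: $K_n\to+\infty$.} By Fact \ref{fac:diameter_estimate} it suffices to show $\diam\Sigma(T,K_n,\lambda)\to0$. Any point lies on a meridian of some bigon, so it is within distance $\ell(K_n)$ of a top vertex. Top vertices are joined through the tree structure by paths made of at most $2(p+q)$ meridian segments and waist-line arcs. By Lemma \ref{lem:line_element_estimate} and Lemma \ref{lem:waist_estimate}, both kinds of pieces have length $O(K_n^{-1/2})$. Hence $\diam\to0$, uniformly in $\lambda$.

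The main obstacle is the $\lambda$-continuity in Case 1, especially at the endpoints $\lambda\in\{0,1\}$. There the topology changes from a sphere to a wedge of footballs, so no homeomorphism exists. There I would use a correspondence instead (Fact \ref{fac:correspondence_estimate}): map the bigons identically in rectangular coordinates, and check that the short segments of length $\lambda_n\ell$ that collapse contribute distortion at most $2\lambda_n\ell$.
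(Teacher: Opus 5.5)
\textbf{The gap.} The real gap is the endpoint case $\lambda\in\{0,1\}$. You rightly call it the crux, but you only assert it. With the identity-in-coordinates correspondence, a point of the critical graph can have two images at distance up to $2|\lambda_n-\lambda|\ell$ (this is Lemma \ref{lem:correspondence_difference}). So a shortest path can pick up an error of that size every time it passes from one bigon into another. A per-crossing bound proves nothing until you bound the number of crossings. ``Finitely many pieces'' does not give that bound, since a priori a shortest path could re-enter the same bigons many times. Your interior claim that distances change by ``(number of pieces)$\cdot O(|\lambda_n-\lambda|)$'' presupposes the same thing.

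The paper fills this with Lemma \ref{lem:inter_point_les_1} and Lemma \ref{lem:piecewise_segmentation}. Critical meridians are minimizing, because $v$ is a globally defined $1$-Lipschitz function. Hence a shortest path meets each critical meridian in a point or a single segment, so it has at most $2\alpha$ division points. This gives $\dis\Rf\le(2\alpha+2)(\eta+2|\lambda-\lambda'|\ell)$ uniformly for $\lambda\in[0,1]$. That single estimate covers interior and endpoint values together, so your PL construction becomes unnecessary.

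If you prefer to keep your split, there is a fix at $\lambda=0$ that needs no counting. The two gluings agree on $\{v>\lambda_n\ell\}$. The region $\{v\le\lambda_n\ell\}$ has diameter at most $4\lambda_n\ell$ in $\Sigma(T,K_0,\lambda_n)$: each of its points is within $\lambda_n\ell$ of a top vertex, and top vertices are joined through the saddle point by top segments of length $\lambda_n\ell$. In $\Sigma(T,K_0,0)$ the top vertices all coincide, so the same region has diameter at most $2\lambda_n\ell$. You can therefore reroute any shortest path through this region once, between its first entry and its last exit. This gives $\dis\Rf\le 4\lambda_n\ell$, and $\lambda=1$ is symmetric. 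Either argument has to be written out.

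\textbf{Smaller points.} Your injectivity argument and Case 2 are fine. They are more explicit than the paper, which only asserts that the boundary objects are mutually distinct and non-generic, and you correctly note uniformity in $\lambda$.

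In Step 1, uniform convergence of $h_{K_n}$ to $h_{K_0}$ would not by itself give bi-Lipschitz constants near the vertices, where both functions vanish; this is why Lemma \ref{lem:bigon_estimate_K} uses L'H\^opital. Your substitution $K=xK_0$ actually gives more. With $R=q/p$ fixed, $h_{K_0}(v)=K_0^{-1/2}H(\sqrt{K_0}\,v)$ for one fixed function $H$. So the canonical map is an exact homothety, and $\Sigma(T,K_0,\lambda)$ is $\Sigma(T,1,\lambda)$ with its metric scaled by $K_0^{-1/2}$. This settles Step 1 uniformly in $\lambda$, which your triangle inequality requires, and it also settles Case 2.

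Your interior PL map does work as a bi-Lipschitz homeomorphism, provided the $\phi$-interpolation satisfies $|\partial_\phi\tau|\lesssim|\lambda_n-\lambda|\min\{v,\ell-v\}$. That makes it dominated by $h(v)$, since $h'(0)=1$ and $h'(\ell)=-R$, and the Lipschitz bounds then pass to the glued metric without counting pieces. However, its constants are $1+O(|\lambda_n-\lambda|/\min\{\lambda,1-\lambda\})$, not a bound depending only on $T$. That is exactly why it cannot reach the endpoints.
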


Now we give a proof skeleton of the theorem. 

First, with the rectangular coordinate system $(v, \phi)$ in Definition \ref{defn:hcmu_bigon}, a canonical homeomorphism between any two bigons can be easily constructed. This helps us to show that \(B_\alpha(K_0, K_1)\) depends continuously on \(K_0, K_1, \alpha\) (Proposition \ref{prop:bigon_continuous}).

Secondly, since all surfaces in a fixed $\MM(T)$ are formed by gluing bigons in the same manner, a correspondence \(\Rf\) can be induced by the canonical homeomorphism between bigons (Definition \ref{def:correspondence_from_bigon}). Such $\Rf$ is usable even when $\lambda = 0,1$.  
To estimate its distortion, we will study the shortest path between any pair of points (Fact \ref{fac:min_piecewise}). By dividing a path into segments, each of whom lies in a single bigon (Lemma \ref{lem:piecewise_segmentation}), we can have a uniform estimate on the distortion of such path. 
This leads to the continuity of $\varTheta$ for finite $K_0$ (Proposition \ref{prop:Sigma_continuous}). 

Finally, the continuity when \(K_0 \to +\infty\) is a direct consequence of the limit behavior of diameter, and the G-H limit is the single-point space \(\Pt\) (Proposition \ref{prop:Kinfty_estimate}).

\bigskip

The project begins with the canonical homeomorphism between bigons. 
To simplify notations, in the below we always denote two involved bigons as
\begin{equation}\label{eq:two_bigons}
    B := B_{\alpha}(K_0, K_1), \quad B' := B_{\alpha'}(K_0', K_1')
\end{equation}
with $\alpha,\alpha'>0$ and $(K_0, K_1),\ (K_0', K_1')$ satisfying \eqref{eq:K0K1_extd}. The lengths of their character line elements are denoted by $\ell = \ell (K_0, K_1)$ and $\ell' := \ell(K_0', K_1')$. 
The bigon $B$ is usually given beforehand. 

\begin{definition} \label{def:homeo_bigon}
    For any two bigons as \eqref{eq:two_bigons}, there is a \textbf{canonical homeomorphism} induced by the rectangular coordinates defined as
    \begin{equation}
        f : B_\alpha(K_0, K_1) \to B_{\alpha'}(K_0', K_1') \ ,\ 
        (v,\ \phi) \mapsto  \left( \frac{\ell(K_0', K_1')}{\ell(K_0, K_1)} \cdot v ,\  \frac{\alpha'}{\alpha} \cdot \phi \right)
    \end{equation}
\end{definition}

To show the continuity, we consider two cases: varying only the parameters $K_0, K_1$, and varying only the parameter $\alpha$. 

\begin{lemma} \label{lem:bigon_estimate_K}
    Given bigon $B$ in \eqref{eq:two_bigons}. For any $\varepsilon > 0$, if $B'$ has the same top angle $2\pi\alpha$ as $B$, then there exists $\delta > 0$ such that whenever $|K_0 - K_0'|, |K_1 - K_1'| \leq \delta$,
    \[ d_{\rGH}\bigl( B,\, B' \bigr) 
    \leq  \frac12 \dis f  \leq \varepsilon \ .\] 
    Here $f$ is the canonical homeomorphism in Definition \ref{def:correspondence_from_bigon}. 
\end{lemma}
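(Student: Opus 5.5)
The plan is to use Fact \ref{fac:Riemann_matric_estimate}, applied to the canonical homeomorphism $f$. The first inequality $d_{\rGH}(B,B')\le \frac12\dis f$ is just Fact \ref{fac:correspondence_estimate}, so the work lies in bounding $\dis f$. Since $\alpha'=\alpha$, the map is
\[
f(v,\phi)=(c\,v,\ \phi), \qquad c:=\frac{\ell'}{\ell}.
\]
Pulling back the metric of $B'$ gives
\[
f^*\rho' = c^2\,\rmd v^2 + h'(cv)^2\,\rmd\phi^2 ,
\]
which we compare with $\rho=\rmd v^2+h(v)^2\rmd\phi^2$. We need constants $a<1<b$, close to $1$, with $a^2 f^*\rho'\le\rho\le b^2 f^*\rho'$. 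Then Fact \ref{fac:Riemann_matric_estimate} gives $\dis f\le \max\{1-a,b-1\}\diam B$, and $\diam B$ is bounded (say by $2\ell+\pi\alpha\max h$), which yields the $\varepsilon$-bound.

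The key steps, in order, are as follows.

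(i) Show that $\ell(K_0,K_1)$ is continuous on the admissible region \eqref{eq:K0K1_extd}. The proof of Lemma \ref{lem:line_element_estimate} writes $\ell$ as an integral with integrable endpoint singularities: the factors $(K_0-K)$ and $(K-K_1)$ are simple zeros, and the factor $K+K_0+K_1$ stays positive because $K_1>-K_0/2$. A change of variables $K=K_1+(K_0-K_1)t$ turns this into an integral over a fixed interval $[0,1]$, with integrand $t^{-1/2}(1-t)^{-1/2}$ times a continuous positive function of $(t,K_0,K_1)$. Dominated convergence then gives $c\to1$ as $\delta\to0$.

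(ii) Compare the warped functions. Using the same normalization as in Lemma \ref{lem:waist_estimate}, write $h(v)=\Psi(x;K_0,K_1)$ with $x=K(v)$, where
\[
\Psi = \frac{2\sqrt{(K_0-x)(x-K_1)(x+K_0+K_1)}}{(K_0-K_1)(2K_0+K_1)}\cdot\sqrt{3}.
\]
The key point is that $h(v)$ and $h'(cv)$ should be compared through the reparametrization $t=v/\ell$. Both $K(\ell t)$ and $K'(\ell' t)$ solve the same ODE, whose data depend continuously on the parameters. So their normalized versions converge uniformly on $[0,1]$, and the ratio $h'(cv)/h(v)$ should tend to $1$ uniformly.

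(iii) Feed the resulting constants $a,b\to1$ into Fact \ref{fac:Riemann_matric_estimate}.

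The main obstacle is the endpoints $v=0$ and $v=\ell$. There $h$ vanishes, so uniform convergence of $h'(cv)$ to $h(v)$ alone does not control the ratio. Near these points I would use the local expansion. The ODE \eqref{eq:curv_along_v} gives $K_0-K(v)\sim \gamma v^2$, so $h(v)\sim \kappa_0 v$ near $0$, where the cone angle fixes $h'(0)=1$. Similarly, near $\ell$ we have $h(v)\sim \kappa_1(\ell-v)$ with $\kappa_1=R$. So I would write
\[
\frac{h(\ell t)}{t(1-t)}
\]
and show it extends to a continuous, strictly positive function of $(t,K_0,K_1)$ on $[0,1]\times$(admissible region). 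Its endpoint values $\ell$ and $R\ell$ are continuous in the parameters. Given that, the ratio $h'(\ell' t)/h(\ell t)$ is continuous and equals $1$ at $(K_0',K_1')=(K_0,K_1)$. Compactness of $[0,1]$ then gives uniform closeness to $1$ for small $\delta$, and this closes the argument.
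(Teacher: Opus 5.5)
Your proposal is correct and follows essentially the paper's own route: Fact \ref{fac:Riemann_matric_estimate} applied to $f(v,\phi)=(\frac{\ell'}{\ell}v,\phi)$, continuity of $\ell$ from its integral formula, and uniform control of the ratio of warped functions, obtained by extending that ratio continuously to $v=0$ and $v=\ell$. Your normalization $h(\ell t)/(t(1-t))$, with endpoint values $\ell$ and $R\ell$ coming from the slopes $1$ and $-R$ of $h$ at the two vertices, is the same device as the paper's L'Hospital computation via $K''(0)=\ol{c}$ and $K''(\ell)=\frac16(K_0-K_1)(K_0+2K_1)$; in both versions, joint continuity in $(t,K_0,K_1)$ at the endpoints is the one step that still needs a short justification.
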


\begin{proof}

Let $(v, \phi)$ and $(w, \psi)$ be the rectangular coordinates of $B$ and $B'$ respectively. 
Denote by $K(v)$ and $K'(w)$ the curvature functions along the character line elements of $B$ and $B'$, and by $h(v)$ and $h'(w)$ their warped functions.
To avoid ambiguity, the $n$-th derivatives of curvature functions are temperately written as $K^{(n)}(v),\ K'^{(n)}(w)$. 

Then the metrics $g, g'$ on $B, B'$, and the pullback metric $f^* g'$ are given by
\begin{equation*}
    g = \rmd v^2 + h^2(v) \rmd \phi^2 \ ,\quad 
    g' = \rmd w^2 + h'^2(w) \rmd \psi^2 \ ,\quad
    f^* g' = \left(\frac{\ell'}{\ell}\right)^2 \rmd v^2 + h'^2\left(\frac{\ell'}{\ell} \cdot v\right) \rmd \phi^2 \ .
\end{equation*}
Take $\eta := \frac{2 \varepsilon}{\diam(B)}$. By Fact \ref{fac:Riemann_matric_estimate}, it remains to prove that we can have  
\begin{equation*}
    \abs{\frac{\ell'}{\ell} - 1} \leq \eta \quad \text{ and } \quad
    \abs{\frac{h'(\frac{\ell'}{\ell}v)}{h(v)} - 1} \leq \eta \quad (\ \forall\ v \in [0, \ell] \ )
\end{equation*}
when $(K_0', K_1')$ are sufficiently close to $(K_0, K_1)$.  

\medskip
First, by Lemma \ref{lem:line_element_estimate}, the length $\ell$ of character line element depends continuously on the parameters $K_0, K_1$. 
This directly yields $\abs{\ell' \,/\, \ell - 1} \leq \eta$ when $(K_0', K_1') \to (K_0, K_1)$. 

\medskip

Next, for any fixed pair $(K_0', K_1')$, consider the function 
\begin{align*}
   H(v) &:= \frac{ h'( \frac{\ell'}{\ell}v )} { h(v) }
= \frac{\ol{c}(K_0, K_1)}{\ol{c}(K_0', K_1')} \cdot \frac{K'^{(1)}( \frac{\ell'}{\ell}v )}{K^{(1)}(v)} \\
&= \frac{(K_0 - K_1) (2K_0 + K_1)}{(K_0' - K_1') (2K_0' + K_1')} \cdot \sqrt{\frac{(K_0' - K'(w)) (K'(w) - K_1') (K'(w) + K_0' + K_1')}{(K_0 - K(v)) (K(v) - K_1) (K(v) + K_0 + K_1)}} \ .
\end{align*}
Here $w = \frac{\ell'}{\ell}v$ for brevity. 
Since $K_1 < K(v) < K_0$ and $K_1' < K'(w) < K_0'$ when $0 < v < \ell$, 
$H(v)$ is well defined on the open interval $(0, \ell)$.

To extend $H(v)$ to the endpoints, we shall study the limit as $v \to 0^+, \ell^- $. 
The second derivative of the curvature function will be used later: 
\begin{equation*}
    K^{(2)}(v) = - \frac16 \big[ (K - K_0) (K - K_1) + (K - K_0) (K + K_0 + K_1) + (K - K_1) (K + K_0 + K_1) \big] \ .
\end{equation*}
This is obtained by differentiating both sides of equation \eqref{eq:curv_along_v}. Also see \cite[\S 2.2]{CCW05}. 
In particular, 
\begin{equation*}
    K^{(2)}(0) = - \frac16 (K_0 - K_1) (2K_0 + K_1) = \ol{c} \neq 0\ ,\quad 
    K^{(2)}(\ell) = \frac16 (K_0 - K_1) (K_0 + 2K_1) \neq 0\ .
\end{equation*}
Then by L'Hospital's Rule, 
\begin{align*}
   \lim_{v \to 0^+} H(v) 
&= \frac{\ol{c}(K_0, K_1)}{\ol{c}(K_0', K_1')} \cdot \lim_{v \to 0^+} \frac{K'^{(1)}(w)}{K^{(1)}(v)} \\
&= \frac{\ol{c}(K_0, K_1)}{\ol{c}(K_0', K_1')} \cdot \lim_{v \to 0^+} \frac{(\ell' \,/\, \ell) \cdot K'^{(2)}(w)}{K^{(2)}(v)} 
= \frac{\ol{c}(K_0, K_1)}{\ol{c}(K_0', K_1')} \cdot \frac{\ell'}{\ell} \cdot \frac{K'^{(2)}(0)}{K^{(2)}(0)} 
= \frac{\ell'}{\ell} \ .
\end{align*}
Similarly, 
\begin{equation*}
    \lim_{v \to \ell^-} H(v)   
    = \frac{\ol{c}(K_0, K_1)}{\ol{c}(K_0', K_1')} \cdot \frac{\ell'}{\ell} \cdot \frac{K'^{(2)}(\ell')}{K^{(2)}(\ell)}
    = \frac{(2K_0 + K_1) (K_0' + 2K_1') \cdot \ell'}{(2K_0' + K_1') (K_0 + 2 K_1) \cdot \ell} \ .
\end{equation*}
Thus, $H(v)$ is a well-defined function on the closed inerval $[0, \ell]$.

\medskip
The curvature function $K'(w)$ satisfies the differential equation \eqref{eq:curv_along_v}, and the equation depends continuously on the parameters $K_0', K_1'$. 
Consequently, $K'(w)$ is a continuous function of $K_0', K_1'$. 
So $H(v) = H(v; K_0', K_1')$ is continuous with respect to $v, K_0', K_1'$. 
When $K_0' = K_0, K_1' = K_1$, we have $H(v; K_0, K_1) \equiv 1$. 
Hence, for any $\eta > 0$, there exists $\delta > 0$ such that whenever $|K_0 - K_0'|, |K_1 - K_1'| \leq \delta$, the estimate holds:
\begin{equation*}
    \abs{H(v; K_0', K_1') - H(v; K_0, K_1)} = \abs{\frac{h'(\frac{\ell'}{\ell}v)}{h(v)} - 1} \leq \eta, \quad \forall\ v \in [0, \ell] \ .
    \tag*{\qedhere}
\end{equation*}
\end{proof}

\begin{lemma} \label{lem:bigon_estimate_alpha}
    Given bigon $B$ in \eqref{eq:two_bigons}. For any $\varepsilon > 0$, if $B'$ has the same extremal curvature $K_0, K_1$ as $B$, then there exists $\delta > 0$ such that whenever $\abs{\alpha - \alpha'} \leq \delta$, $d_{\rGH}\big( B,\, B' \big) \leq \varepsilon$.  
\end{lemma}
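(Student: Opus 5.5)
We compare $B = B_\alpha(K_0,K_1)$ and $B' = B_{\alpha'}(K_0,K_1)$ through the canonical homeomorphism $f$ of Definition \ref{def:homeo_bigon}. Because the extremal curvatures are the same, both bigons carry the same character line element. Hence $\ell'=\ell$, the warped functions coincide ($h'=h$), and $f(v,\phi)=(v,\tfrac{\alpha'}{\alpha}\phi)$. The pullback metric is
\[ f^*g' = \rmd v^2 + \Bigl(\frac{\alpha'}{\alpha}\Bigr)^2 h^2(v)\,\rmd\phi^2 . \]
Comparing it with $g=\rmd v^2+h^2(v)\rmd\phi^2$ gives, with $t:=\alpha'/\alpha$,
\[ \min\{1,t^2\}\, g \le f^*g' \le \max\{1,t^2\}\, g . \]
Therefore $a^2 f^*g'\le g\le b^2 f^*g'$ holds with $a=\min\{1,1/t\}$ and $b=\max\{1,1/t\}$. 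To obtain the strict inequalities $0<a<1<b$ required in Fact \ref{fac:Riemann_matric_estimate}, we may enlarge slightly and take $a = 1/(1+\eta)$, $b = 1+\eta$ with $\eta:=\abs{t-1}/\min\{1,t\}+\delta_0$ for a tiny $\delta_0>0$.

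Fact \ref{fac:Riemann_matric_estimate} then yields
\[ d_{\rGH}(B,B') \le \tfrac12 \max\{1-a,b-1\}\,\diam(B) \le \tfrac12\,\eta\,\diam(B). \]
Since $\diam(B)$ is a fixed finite number (at most $2\ell + 2\pi\alpha\max h$, say), we choose $\delta$ so that $\abs{\alpha-\alpha'}\le\delta$ forces $\eta\le 2\varepsilon/\diam(B)$. This is possible because $\eta\to0$ as $\alpha'\to\alpha$, after letting $\delta_0\to0$ appropriately.

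The only delicate point is regularity. The metric degenerates at the top and bottom vertices (where $h=0$), and $f$ is merely a homeomorphism there. Fact \ref{fac:Riemann_matric_estimate} is stated for diffeomorphisms of compact Riemannian manifolds, and the proof of Lemma \ref{lem:bigon_estimate_K} applies it in the same singular setting, so we may do likewise. If needed, we can argue directly instead. For any curve $\gamma$ in $B$, its image $f\circ\gamma$ has length between $a^{-1}$ and $b^{-1}$ times $L(\gamma)$, because the pointwise metric comparison holds on the interior and the two corner points contribute measure zero. Taking infima over curves gives the bi-Lipschitz distance comparison, and the distortion bound $\dis f\le \max\{1-a,b-1\}\diam(B)$ follows.
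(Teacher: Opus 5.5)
Your proof is correct, but it takes a different route from the paper's. The paper does not compare metrics here. Assuming $\alpha<\alpha'$, it embeds $B$ into $B'$ by the identity in rectangular coordinates, $i(v,\phi)=(v,\phi)$, and applies the triangle inequality $\abs{d_B(x,y)-d_{B'}(f(x),f(y))}\le d_{B'}(i(x),f(x))+d_{B'}(i(y),f(y))$. It then bounds each term by the waist-line arc from $(v,\phi)$ to $(v,\tfrac{\alpha'}{\alpha}\phi)$, whose length is at most $2\pi(\alpha'-\alpha)h(v)$; this gives $\dis f\le 4\pi(\alpha'-\alpha)\max h$. That route never has to look at the two vertices, where $f$ changes the cone angle and fails to be a diffeomorphism. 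However, it tacitly uses that $i$ is distance-preserving rather than merely a local isometry. This is true, because $(v,\phi)\mapsto(v,\min\{\phi,2\pi\alpha\})$ is a $1$-Lipschitz retraction of $B'$ onto $B$.

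Your route instead mirrors Lemma \ref{lem:bigon_estimate_K}. It is symmetric in $\alpha,\alpha'$ and essentially yields $\dis f\le \abs{\alpha'-\alpha}\,\diam(B)/\min\{\alpha,\alpha'\}$. It would even let one vary $K_0,K_1,\alpha$ simultaneously in a single comparison of $f^*g'$ with $g$. The price is the vertex issue, which your curve-length argument handles correctly.

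Incidentally, the last inequality of Fact \ref{fac:Riemann_matric_estimate} is slightly off as stated: with $\diam(A)$ on the right, the factor $1-a$ should be $a^{-1}-1$. With your symmetric choice $a=1/(1+\eta)$, $b=1+\eta$, one has $a^{-1}-1=b-1=\eta$, so your bound $\tfrac12\eta\,\diam(B)$ holds regardless.
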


\begin{proof}

Without loss of generality, assume $\alpha < \alpha'$.  
Using the rectangular parametrization, 
we can isometrically embed $B$  into $B'$:  
\begin{equation*}  
    i : B \hookrightarrow B',  \quad (v, \phi) \mapsto (v, \phi).  
\end{equation*}  
Note that this is different from the canonical homeomorphism $f : (v, \phi) \mapsto (v, \frac{\alpha'}{\alpha}\phi)$.

Now take two points $x = (v_1, \phi_1),\ y = (v_2, \phi_2)$ in $B$.  
The embedding $i$ maps them to points with the same coordinates in $B'$. 
Besides, $f(x) = (v_1, \frac{\alpha'}{\alpha} \phi_1)$ and $f(y) = (v_2, \frac{\alpha'}{\alpha} \phi_2)$. The distance between $(v,\phi)$ and $(v, \frac{\alpha'}{\alpha} \phi)$ in $B'$ is at most $2\pi(1-\frac{\alpha}{\alpha'})$ times the length of waist line. 
Then by the triangle inequality,  
\begin{equation*}  
    \abs{d_B(x, y) - d_{B'} \bigg( f(x), f(y) \bigg)} \leq d_{B'} {\bigg(} i(x), f(x) {\bigg)} + d_{B'} {\bigg(}i(y), f(y){\bigg)} \leq 2\pi(\alpha' - \alpha)(h(v_1) + h(v_2)).  
\end{equation*}  
Taking $\delta = \varepsilon / (2\pi \max_{v}\{h(v)\})$, we see $\abs{d_B(x, y) - d_{B'}(f(x), f(y))} \leq 2\varepsilon$ for any $x,y\in B$, whenever $\alpha' - \alpha \leq \delta$.  
Hence $d_{\rGH}(B, B') \leq \frac12 \dis f \leq \varepsilon$.
\end{proof}

\begin{proposition} \label{prop:bigon_continuous}
    In the sense of G-H topology, the bigon \(B_\alpha(K_0, K_1)\) depends continuously on the parameters \(K_0, K_1, \alpha\). 
    That is to say, if 
	\[  \lim_{n\to\infty} \alpha_n = \alpha \in \Rpos, \quad 
    \lim_{n\to\infty} K_0(n) = K_0 \in \Rpos, \quad
    \lim_{n\to\infty} K_1(n) = K_1 \in \Rpos, 
    \]
	then $B_{\alpha(n)}({K_0}(n),{K_1}(n)) \xrightarrow{\rGH} B_{\alpha}(K_0,K_1)$. 
    The same holds for footballs. 
\end{proposition}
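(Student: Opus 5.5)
The plan is to combine the two one-parameter estimates, Lemma \ref{lem:bigon_estimate_K} (vary $K_0,K_1$ with $\alpha$ fixed) and Lemma \ref{lem:bigon_estimate_alpha} (vary $\alpha$ with $K_0,K_1$ fixed), through the triangle inequality for $d_{\rGH}$. Given the sequence $B_n := B_{\alpha_n}(K_0(n),K_1(n))$, I introduce the intermediate bigon $B_n^\sharp := B_{\alpha_n}(K_0,K_1)$, which has the same top angle as $B_n$ and the same extremal curvatures as the limit $B := B_\alpha(K_0,K_1)$. Then
\[ d_{\rGH}(B_n, B) \leq d_{\rGH}(B_n, B_n^\sharp) + d_{\rGH}(B_n^\sharp, B) \ . \]
The second term tends to $0$ directly by Lemma \ref{lem:bigon_estimate_alpha}, since $\alpha_n \to \alpha$ and the curvatures agree.

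The first term is the delicate one. Lemma \ref{lem:bigon_estimate_K} is stated with a fixed reference bigon, while here the reference $B_n^\sharp$ varies with $n$ through $\alpha_n$. I will reuse its proof rather than the statement. In the canonical homeomorphism $f$, the comparison function $H(v)=h'(\frac{\ell'}{\ell}v)/h(v)$ and the ratio $\ell'/\ell$ do not depend on $\alpha$ at all, because the metric is $\rmd v^2 + h^2(v)\rmd\phi^2$ and $f$ only rescales $\phi$ by $\alpha'/\alpha = 1$. Hence the uniform bounds $\abs{\ell'/\ell-1}\le\eta$ and $\abs{H-1}\le\eta$ obtained there hold uniformly in the top angle. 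By Fact \ref{fac:Riemann_matric_estimate} this gives
\[ d_{\rGH}(B_n,B_n^\sharp)\le \tfrac12 \eta \cdot \diam(B_n^\sharp) \ . \]
It remains to bound $\diam(B_n^\sharp)$ uniformly. I will do this by $\ell + \pi\alpha_n \max_v h(v)$: go along a meridian to the top vertex and back, or bound it crudely by $2\ell$ plus a waist length. This stays bounded because $\alpha_n\to\alpha$. Therefore the first term also tends to $0$. This uniformity in $\alpha$ is the main, though mild, obstacle, and I would state it explicitly as a remark on the proof of Lemma \ref{lem:bigon_estimate_K}.

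For footballs, I apply the same maps. Both the canonical homeomorphism $f$ and the isometric embedding $i$ of Lemma \ref{lem:bigon_estimate_alpha} respect the $v$-parametrization of the two geodesic boundaries $\{\phi=0\}$ and $\{\phi=2\pi\alpha\}$. So they descend to the footballs obtained by gluing these boundaries. The metric comparison of Fact \ref{fac:Riemann_matric_estimate} is local, so it survives the gluing: distances in the glued space are infima of lengths of paths, and the bi-Lipschitz bounds on lengths persist. In the $\alpha$-variation, the embedding is replaced by the rotation-symmetric comparison $(v,\phi)\mapsto(v,\frac{\alpha'}{\alpha}\phi)$. The pointwise displacement is again at most $2\pi(\alpha'-\alpha)h(v)$ along a waist circle, so the same triangle-inequality estimate bounds the distortion. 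The conclusion for footballs then follows by the same two-step triangle inequality.
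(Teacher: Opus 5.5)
Your argument for bigons is the paper's: the same intermediate bigon $B_{\alpha_n}(K_0,K_1)$, the triangle inequality, then Lemmas~\ref{lem:bigon_estimate_K} and~\ref{lem:bigon_estimate_alpha}. Your uniformity remark improves on the paper, which applies Lemma~\ref{lem:bigon_estimate_K} to the moving reference bigon $B_{\alpha'}(K_0,K_1)$ without comment. The check is even easier than you make it: $H$ and $\ell'/\ell$ do not involve $\alpha$, and $\diam B_{\alpha}(K_0,K_1)=\ell(K_0,K_1)$ for every $\alpha$. To see this, join two points through the top vertex or the bottom vertex, whichever is shorter.

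The football paragraph, a case the paper only asserts, needs one repair. Your $K$-step is fine, since the canonical $f$ maps each seam to the corresponding seam. The embedding $i$, however, does not descend: it sends the seam $\{\phi=2\pi\alpha\}$ to an interior meridian of $B'$, not to $\{\phi=2\pi\alpha'\}$. Nor can it be replaced, because an isometric embedding $S_\alpha\hookrightarrow S_{\alpha'}$ would, by invariance of domain, be an isometry between spheres of different area. So your ``pointwise displacement'' step has nothing to anchor it; in Lemma~\ref{lem:bigon_estimate_alpha} that step relies on $d_B(x,y)=d_{B'}(i(x),i(y))$.

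Use instead the length comparison you already invoke. Take $\alpha<\alpha'$. The map $f(v,\phi)=(v,\frac{\alpha'}{\alpha}\phi)$ respects the seam, and since the warped function $h$ is the same, it satisfies $g\le f^*g'\le(\alpha'/\alpha)^2g$. Comparing lengths of paths across the seam gives $d_{S_\alpha}(x,y)\le d_{S_{\alpha'}}(f(x),f(y))\le\frac{\alpha'}{\alpha}\,d_{S_\alpha}(x,y)$. Hence $d_{\rGH}(S_\alpha,S_{\alpha'})\le\frac12\bigl(\frac{\alpha'}{\alpha}-1\bigr)\ell$, and your two-step triangle inequality then goes through for footballs as well.
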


\begin{proof}

By Lemma~\ref{lem:bigon_estimate_K} and Lemma~\ref{lem:bigon_estimate_alpha}, we can choose $\delta > 0$, such that when 
$|K_0 - K_0'|, |K_1 - K_1'|, |\alpha - \alpha'| \leq \delta$,
\begin{equation*}\begin{split}
    &\ d_{\rGH}\big(B_\alpha(K_0, K_1),\, B_{\alpha'}(K_0', K_1')\big) \\
    \leq& \
    d_{\rGH}\big(B_\alpha(K_0, K_1),\, B_{\alpha'}(K_0, K_1)\big) + d_{\rGH}\big(B_{\alpha'}(K_0, K_1),\, B_{\alpha'}(K_0', K_1')\big) \leq 2 \cdot \frac\varepsilon{2} = \varepsilon \ .
\end{split}
\tag*{\qedhere}
\end{equation*}
\end{proof}

\bigskip
Secondly, we show that in each $\MM(T)^*$, $\Sigma(T, K_0, \lambda)$ depends continuously on the parameters \(K_0 \in \Rpos\) and \(\lambda \in [0, 1]\). 
We need to extend canonical homeomorphisms between bigons to a correspondence between any two metric spaces in $\MM(T)^*$. 

Recall that in Definition \ref{def:critical_meridian}, the critical graph consists of all gluing image of boundaries of bigons. 
Since $\Sigma(T, K_0, 0), \Sigma(T, K_0, 1) \in \MM(T)^*$ can also be obtained by gluing bigons with the same manner, the critical graphs on such geometric objects are also defined.  

\begin{definition} \label{def:correspondence_from_bigon}
    Fix a space $\MM(T)^*$. 
    For any two metric spaces $\Sigma := \Sigma(T, K_0, \lambda),\ \Sigma' := \Sigma(T, K_0', \lambda')$ in $\MM(T)^*$, there is a \textbf{canonical correspondence} $\Rf$ defined as below. 
    \begin{enumerate}
    \item Each edge $e$ of the tree $T$ corresponds to a bigon $B_e$ in $\Sigma$ and a bigon $B_e'$ in $\Sigma'$. Let $f_e : B_e \to B_e'$ be the canonical homeomorphism in Definition \ref{def:homeo_bigon}.

    \item Each homeomorphism $f_e : B_e \to B_e'$ yields a set $\Rf_e := \{(x, y) \mid f_e(x) = y\} \subset \Sigma \times \Sigma'$.

    \item The canonical correspondence is defined as the union of above sets over all edges 
    $$ \Rf := \bigsqcup_{e \in E} \Rf_e \ . $$
    \end{enumerate}
\end{definition}

It is easy to verify that $\Rf$ is a correspondence, symmetric about the two metric spaces. 
Moreover, $\Rf$ induces a bijection between the complementary of critical graphs. 
To estimate $\dis \Rf$, we have to investigate the correspondence on critical graphs. 

Let $\Sigma := \Sigma(T, K_0, \lambda),\ \Sigma' := \Sigma(T, K_0', \lambda')$ be two fixed metric spaces in $\MM(T)^*$, and let $\Rf$ be the canonical correspondence between $\Sigma$ and $\Sigma'$. 
For simplicity, let $\ell(K_0)$ be the length of character line element of extremal curvature $K_0,\ K_1:=\frac{2q-p}{2p-q}K_0$.

\begin{lemma} \label{lem:correspondence_difference}
    If $x$ lies on the critical graph of $\Sigma$, then there exists at most two different points $x', x'' \in \Sigma'$ such that $(x, x'), (x, x'') \in \Rf$ and $x', x''$ also lie on the critical graph of $\Sigma'$. 
    Besides, $d_{\Sigma'}(x', x'') \leq 2 |\lambda - \lambda'| \cdot \ell(K_0')$.    
\end{lemma}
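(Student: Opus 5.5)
The plan is to analyse how a point $x$ on the critical graph of $\Sigma$ is covered by the bigons $B_e$, and then to track the images of its various preimages under the canonical homeomorphisms $f_e$. Since $\Sigma$ is glued from the bigons $B_e$ ($e\in E$) along boundary meridian segments, $x$ has finitely many preimages. These lie on geodesic boundaries of bigons, or are top or bottom vertices. By Definition \ref{def:correspondence_from_bigon}, the points $x'$ with $(x,x')\in\Rf$ are exactly the images $f_e(\tilde x)$ of these preimages $\tilde x\in B_e$, viewed in $\Sigma'$. The task is therefore to show that these images take at most two values, and that the two values are close.

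First I will check the boundary coordinates. A boundary point of $B_e$ has coordinates $(v,0)$ or $(v,2\pi\alpha(e))$. Writing $t:=v/\ell(K_0)\in[0,1]$, its image under $f_e$ is the boundary point with the same normalized parameter $t$ and the corresponding side $\phi'=0$ or $2\pi\alpha'(e)$. Moreover $\alpha'(e)=\alpha(e)=\wt_E(e)/q$, since both surfaces come from the same tree $T$.

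Next comes a case split according to $t$ versus $\lambda$ and $\lambda'$. In $\Sigma$, two boundary segments are glued along $[0,\lambda]$ (the part near the top vertex, paired by the cyclic order at the black vertex) or along $[\lambda,1]$ (the part near the bottom vertex, paired at the white vertex). In $\Sigma'$ the splitting parameter is $\lambda'$ instead. If $t$ lies on the same side of both $\lambda$ and $\lambda'$, then all preimages of $x$ are glued by the same rule in both surfaces. The images $f_e(\tilde x)$ therefore coincide in $\Sigma'$, giving a single point $x'$. The top and bottom vertices, where $t=0$ or $1$, are handled the same way, since the vertex gluing depends only on $T$.

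The main work is the remaining case, where $t$ lies between $\lambda$ and $\lambda'$. There the preimages of $x$ are identified in $\Sigma$ by, say, the black-vertex rule, but in $\Sigma'$ by the white-vertex rule. The images then fall into at most two classes: the points glued according to the black cyclic action and those glued according to the white one. I will take $x',x''$ to be the images $f_e(\tilde x)$ and $f_{e'}(\tilde x)$ for the two boundary sides meeting at $\tilde x$. When $t\in\{\lambda,\lambda'\}$, the point $x'$ or $x''$ is the saddle point, and the same description still applies.

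The main obstacle is verifying that only two classes arise, rather than a whole chain. A point $\tilde x$ in the interior of a boundary segment lies on exactly two bigon sides, namely its own side and its glued partner, so only the pair $(e,e')$ is involved. Points at $t=\lambda$, which map to the saddle point in $\Sigma$, need separate care. For the distance bound, I will connect $x'$ within $\Sigma'$ to the marked point at parameter $\lambda'$ on the same meridian, which is a path of length $|t-\lambda'|\ell(K_0')\le|\lambda-\lambda'|\ell(K_0')$. Since that marked point is the common saddle point, the same bound holds for $x''$, and the triangle inequality gives
\[
d_{\Sigma'}(x',x'')\le 2|\lambda-\lambda'|\,\ell(K_0').
\]
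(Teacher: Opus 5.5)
Your argument follows the paper's own route. You follow a point on a critical meridian through $f_{e_1}$ and $f_{e_2}$ for the two adjacent bigon sides. You observe that the images can separate only when the normalized parameter $t$ lies between $\lambda$ and $\lambda'$. You then bound each image's distance to the saddle point $z^*$ of $\Sigma'$ by $|t-\lambda'|\,\ell(K_0')\le|\lambda-\lambda'|\,\ell(K_0')$. For $x$ in the interior of a critical meridian this is complete.

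The gap is at the special points, which you flag but then dismiss.

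\emph{The saddle point.} At $t=\lambda$, i.e.\ $x=z$ the saddle point of $\Sigma$, the preimages are not a pair on two sides. They are the $2\alpha$ marked points, one on each geodesic side of each of the $\alpha=p+q-1$ bigons. Suppose $\lambda<\lambda'$. Then their images lie at parameter $\lambda$ on the top segments of $\Sigma'$, which are glued in pairs into $\alpha$ distinct top critical meridians. So $z$ has $\alpha$ distinct partners in $\Sigma'$, none equal to $z^*$; if $\lambda>\lambda'$ the same happens on the bottom meridians. Hence ``at most two'' is false once $\alpha\ge 3$, and your claim that ``the same description still applies'' fails.

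\emph{The degenerate surfaces.} In $\Sigma(T,K_0,0)$ all top vertices are identified with the junction point. Its partners in $\Sigma'$ with $\lambda'>0$ are the $p$ distinct maximal points. So the vertices are not ``handled the same way'' at the endpoint values of $\lambda$, which Proposition~\ref{prop:Sigma_continuous} also needs. The paper's proof treats only interior points of a meridian and has the same blind spot.

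\emph{The fix.} Replace the count by what is actually used later: whenever $x$ has more than one partner, every partner lies within $|\lambda-\lambda'|\,\ell(K_0')$ of $z^*$. Hence the set of all $x'$ with $(x,x')\in\Rf$ has diameter at most $2|\lambda-\lambda'|\,\ell(K_0')$. Your distance argument proves exactly this in every case, including the saddle point and the degenerate junction points.
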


\begin{proof}

Suppose $x$ lies on a critical meridian $m$ connecting a maximal point $a$ and a saddle point $z$, with the distance from $x$ to $a$ along $m$ being $v_x$.
Let $B_{e_1}, B_{e_2}$ be two bigons on two sides of $m$, possibly the same. 
Assume the rectangular coordinates of $x$ in $B_{e_1}$ and $B_{e_2}$ are $(v_x, \alpha), (v_x,0)$ respectively. 
Then $f_{e_1}(x)$ has coordinates $(v_x\cdot{\ell(K_0')} / {\ell(K_0)} , \alpha')$ in $B_{e_1}'$, and $f_{e_2}(x)$ has coordinates $(v_x\cdot{\ell(K_0')}/{\ell(K_0)}, 0)$ in $B_{e_2}'$.
Here $2\pi\alpha$ and $2\pi\alpha'$ are the top angles of $B_{e_1}$ and $B_{e_1}'$. 

Similarly, the saddle point $z$ has coordinates $(\lambda \cdot \ell(K_0), \alpha)$ in $B_{e_1}$. 
And $z' := f_{e_1} (z)$ has coordinates $(\lambda \cdot \ell(K_0'), \alpha')$ in $B_{e_1}' $. 
However, the saddle point $z^* \in \Sigma'$ has coordinates $(\lambda' \cdot \ell(K_0'), \alpha')$ in $B_{e_1}'$. So $z^*$ may not coincide with the image point $z' = f_{e_1} (z)$. 
Let $a' \in \Sigma'$ be the corresponding maximal point to $a$, and $m'$ be the critical meridian connecting $a'$ to $z^*$ on the boundary of $B_{e_1}'$ and $B_{e_2}'$. 

When $\lambda \leq \lambda'$, the image point $z'$ lies on the critical meridian $m'$, hence $f_{e_1}(m) \subset m'$. Similarly, $f_{e_2}(m) \subset m'$. 
Thus $f_{e_1}(x), f_{e_2}(x)$ glue to one point $x'$ on $m'$, and $(x,x') \in \Rf$. 

When $\lambda > \lambda'$, $\lambda \cdot \ell(K_0')$ exceeds the length of $m'$. So $f_{e_1}(x), f_{e_2}(x)$ glue to two different points $x', x''$ in the critical graph of $\Sigma'$, outside $m'$, i.e. $\lambda' \cdot \ell(K_0')$.  
But the distance from $x'$ or $x''$ to $z^*$ along $m'$ is 
$\lambda \cdot \ell(K_0') - v_x\cdot{\ell(K_0')} / {\ell(K_0)} \leq (\lambda - \lambda') \cdot \ell(K_0) $.  
Hence $d_{\Sigma'}(x', x'') \leq 2 |\lambda - \lambda'| \ell(K_0')$.
See Figure \ref{fig:shortest_path} later as an illustration, especially the point $x_1$. 

The same conclusion holds when the status of $\Sigma$ and $\Sigma'$ are exchanged. 
\end{proof}

\bigskip
Now we study the shortest path between any two points on $\Sigma \in \MM(T)^*$. 
Since $\Sigma$ is either a compact cone surface or a one-point union of compact cone surfaces, it is locally compact and metrically complete. 
According to \cite[\S 2.5.2]{BurIva01}, the following fact holds: 

\begin{fact} \label{fac:min_piecewise}
    For any two points $x$ and $y$ on $\Sigma$, there always exists a shortest path $\gamma$ realizing the distance $L(\gamma) = d(x, y)$. 
    Moreover, $\gamma$ is smooth and geodesic away from the cone point (when $\lambda \in (0, 1)$) or the unique gluing point (when $\lambda = 0, 1$). 
    \qed
\end{fact}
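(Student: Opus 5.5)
The plan is to separate the statement into an existence part, which is a general fact about length spaces, and a regularity part, which is local and follows from the bigon structure. For existence, I will first check that every $\Sigma \in \MM(T)^*$ with finite $K_0$ is a compact length space. In the construction of Example~\ref{eg:hcmu_31} and Definition~\ref{def:limit_sigma}, $\Sigma$ is obtained from finitely many compact bigons $B_e$. Each $B_e$ carries the length metric of $\rmd v^2 + h^2(v)\rmd\phi^2$, and the bigons are glued along boundary meridian segments by isometries. When $\lambda = 0, 1$, this is instead a one-point union of finitely many footballs.

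A quotient of a finite disjoint union of compact length spaces by isometric boundary identifications carries the induced quotient length metric (cf.\ \cite[\S 3.1]{BurIva01}). The same holds for a finite wedge at one point. The result is a compact, hence complete and locally compact, length space. The metric-geometry Hopf--Rinow theorem, equivalently Arzel\`a--Ascoli applied to a minimizing sequence of curves parametrized proportionally to arc length, then yields a shortest path $\gamma$ with $L(\gamma) = d(x,y)$ for any $x, y \in \Sigma$. This is exactly the reference \cite[\S 2.5.2]{BurIva01} quoted before the statement.

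For regularity, let $\Sigma^\circ$ denote $\Sigma$ minus the saddle point when $0 < \lambda < 1$, or minus the wedge point when $\lambda = 0, 1$. I will show that $\Sigma^\circ$ is a smooth Riemannian surface. In the interior of a bigon this is immediate from the warped product form. On a critical meridian away from its endpoints, the two adjacent bigons share the same function $h(v)$, so the coordinate $\phi$ simply continues across the gluing line and the metric is the same warped product on an open strip; hence it is smooth there.

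The remaining points of $\Sigma^\circ$ are the maximal and minimal points. By the angle count in Theorem~\ref{thm:hcmu_data} and Definition~\ref{def:limit_sigma}, all of these have total angle exactly $2\pi$, except the wedge point, which has been removed. Near such a point the metric is $\rmd v^2 + h^2(v)\rmd\phi^2$ with $\phi$ of total period $2\pi$. Since $K$ is smooth and $K^{(2)}(0) = \ol{c}$, we have $h(0) = 0$ and $h'(0) = 1$, and the analogous statement holds at $v = \ell$ after rescaling by the angle ratio. Using \eqref{eq:curv_along_v}, $h$ extends to an odd smooth function in $v$, so the polar-type metric is smooth at the vertex. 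I will cite \cite{CCW05} for this standard fact rather than redo it.

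Once $\Sigma^\circ$ is known to be smooth Riemannian, the conclusion follows quickly. Any subarc of $\gamma$ contained in $\Sigma^\circ$ is itself a shortest path between its endpoints. Covering it by small convex normal neighborhoods shows that it coincides locally with the unique minimizing Riemannian geodesic, so it is smooth and geodesic. The main obstacle I expect is the smoothness at the extremal points just described: one has to verify that the warped function has the correct parity and first-order behaviour at both ends of the character line element. In contrast, the existence part and the smoothness along the meridians are essentially formal.
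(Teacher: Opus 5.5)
Your proposal is correct and follows the same route as the paper. The paper only notes that $\Sigma$ is a compact cone surface (or a one-point union of such), hence complete and locally compact, cites \cite[\S 2.5.2]{BurIva01} for existence, and takes regularity away from the singular point for granted. Your check that the glued metric is smooth across critical meridians and at the angle-$2\pi$ extremal points fills in what the paper leaves implicit. One adjustment: derive the odd extension of $h$ at $v=0,\ell$ from the second-order equation for $K^{(2)}$ (as in the proof of Lemma~\ref{lem:bigon_estimate_K}), not from \eqref{eq:curv_along_v} itself, because the first-order equation does not determine $K$ uniquely from its data at the endpoints.
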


\begin{lemma} \label{lem:inter_point_les_1}
    Let $\gamma$ be a shortest path connecting two points in $\Sigma$, which intersects a critical meridian $m$. Then the intersection $\gamma \cap m$ is either a point, or a maximal segment of $m$ whose endpoints can only be endpoints of $\gamma$ or of $m$.  
\end{lemma}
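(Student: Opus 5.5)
The plan is to use the fact that a critical meridian $m$ is a geodesic segment, which means it is locally the unique shortest path between nearby points away from cone points. Let $\gamma$ be a shortest path, parameterized by arc length, and suppose $\gamma\cap m$ contains at least two distinct points $x_1, x_2$. The first step is to show that $\gamma$ contains the whole sub-segment $[x_1,x_2]\subset m$. Both the sub-arc of $\gamma$ between $x_1,x_2$ and the segment of $m$ between them are candidate paths. I would argue that along $m$ the distance is realized by $m$ itself: $m$ is a meridian of each adjacent bigon, and meridians realize distance within a bigon since the metric is $\rmd v^2+h^2\rmd\phi^2$ (the $v$-difference bounds length from below). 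The remaining task is to rule out a distinct geodesic of the same length joining $x_1,x_2$ through other bigons.

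Here I would use Fact \ref{fac:min_piecewise} to say that $\gamma$ is smooth and geodesic away from the cone point or the gluing point. Then I would look at the first point where $\gamma$ leaves $m$. Suppose $\gamma$ runs along $m$ and then departs at an interior point $y$ of $m$ that is not a cone point and not an endpoint of $\gamma$. At $y$ the surface is smooth, the geodesic is smooth, and the direction of $\gamma$ just before $y$ is tangent to $m$. By uniqueness of geodesics with given initial direction in a smooth Riemannian neighbourhood, $\gamma$ must continue along $m$. So a maximal common segment can only terminate at an endpoint of $\gamma$ or at an endpoint of $m$, where the endpoints of $m$ are the top or bottom vertices and the saddle point.

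Next I would handle the case where $\gamma$ meets $m$ transversally at two points $x_1\neq x_2$ without sharing a segment. Here $\gamma$ crosses $m$, travels through bigons, and returns to $m$. I would use the $v$-coordinate, which is a globally defined distance-to-maximal-point function on the surface and is $1$-Lipschitz. Along $m$ the length between $x_1,x_2$ equals $|v(x_1)-v(x_2)|$. Any path whose length equals this must have $v$ changing at unit speed, and hence $\phi$ must be constant on each bigon piece. Such a path is therefore a union of meridian pieces. Since $\gamma$ has length $d(x_1,x_2)\le |v(x_1)-v(x_2)|$, it must be of this type. It is then a meridian through $x_1$, which near a smooth point of $m$ coincides with $m$, contradicting transversality, unless the path passes through a vertex. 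A shortest path cannot pass through a top or bottom vertex between two points of the same meridian, because that would force $v$ to go to $0$ or $\ell$ and back, which is strictly longer.

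The main obstacle is the saddle point and, when $\lambda=0,1$, the gluing point. These are singular points with cone angle greater than $2\pi$, where geodesics can branch. The $v$-monotonicity argument should still control this, since passing through the saddle point keeps $v$ monotone only if the path continues along some meridian. A shortest path can legitimately switch meridians there, and that switch is exactly the allowed case in which the segment ends at an endpoint of $m$.
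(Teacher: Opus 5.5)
Your proposal is correct, up to a degenerate case that the paper also misses (see the second paragraph). Its core step takes a genuinely different route from the paper's.

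The paper takes for granted that $m$ is a minimizing geodesic. It replaces the arc of $\gamma$ between the two intersection points $x,y$ by the segment of $m$, obtaining another shortest path $\gamma'$. It then uses the fact that shortest paths cannot bend at smooth points to force $\gamma=\gamma'$. Its final step, that a common segment can only end at an endpoint of $\gamma$ or of $m$, is the same uniqueness-of-geodesics argument as your second paragraph.

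You instead use the global coordinate $v$ and $\rho\geq\rmd v^2$ on every bigon to see that $v$ is $1$-Lipschitz. The coordinate $v$ is well defined because all gluings match $v$-coordinates; equivalently, $v$ is a function of the curvature. This proves the minimality of $m$, which the paper only asserts. It also gives a rigidity statement: a path of length $\abs{v(x_1)-v(x_2)}$ is $v$-monotone at unit speed and is a concatenation of meridian pieces. Hence, starting from an interior point of $m$, such a path must run along $m$.

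The two routes trade off as follows. The paper's argument is shorter and uses nothing HCMU-specific beyond minimality of $m$. However, its non-bending step only has force when the smooth one of $x,y$ is an interior point of $\gamma$. If that point is an endpoint of $\gamma$ and the other point is the saddle point, nothing is forced. Your rigidity argument only needs one of $x_1,x_2$ to be interior to $m$, so it covers that case too.

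To close your last paragraph cleanly, note that the saddle point (or the gluing point when $\lambda=0,1$) carries the extreme value of $v$ on $m$. A $v$-monotone path between two points of $m$ can therefore meet it only at one of its ends, so no branching issue arises.

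The degenerate case is when $x_1,x_2$ are both endpoints of $m$. There neither your argument nor the paper's applies, and the statement in fact fails as literally written. In Example \ref{eg:hcmu_31}, three critical meridians join the saddle point to the minimal point (Figure \ref{fig:critical_graph}). Each has length $(1-\lambda)\ell$, which equals the $v$-difference of its endpoints. So one of them is a shortest path that meets another exactly in their two common endpoints, which is neither a point nor a segment. Reading $m$ as an open segment, or requiring one intersection point to be interior to $m$, removes this case, and then your argument proves the lemma.
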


\begin{proof}

Suppose $\gamma$ intersects $m$ at two points $x \neq y \in \Sigma$. Since $m$ is already a minimal smooth geodesic, the segment of $m$ between $x$ and $y$ is also a minimal smooth geodesic. Replacing the arc of $\gamma$ between $x$ and $y$ by this segment must also yields a shortest path $\gamma'$.

As a shortest path, $\gamma'$ can not bend except at saddle points by Fact \ref{fac:min_piecewise}. At least one of $x,y$ is a smooth point of the metric, so $\gamma'$ must be smooth at $x$ or $y$.  
Consequently, near that point, both $\gamma$ and $\gamma'$ follow $m$, which is already a geodesic. So $\gamma$ must contain the segment between $x$ and $y$, i.e., $\gamma = \gamma'$.

Since $\gamma$ cannot bend except at saddle points, the intersection of $\gamma$ with $m$ must be a maximal segment of $m$, hence its endpoints can only be endpoints of $\gamma$ or of $m$.
\end{proof}

\begin{lemma} \label{lem:piecewise_segmentation}
    Let $\gamma$ be a shortest path connecting $x \neq y \in \Sigma$. Then one can choose
    \[ x = x_0, x_1, \cdots, x_k, x_{k+1}=y \] 
    along $\gamma$, such that \\ 
    \textbullet\quad for $ 1 \leq i \leq k$, $x_i$ lies on the critical graph;\\
    \textbullet\quad for $ 1 \leq i \leq k+1$, the segment from $x_{i-1}$ to $x_i$ is \\
    \indent (a). either a geodesic segment connecting two different boundaries of one bigon \\
    \indent (b). or a maximal segment of a critical meridian. \\
    See the left of Figure \ref{fig:shortest_path} as an example. 
    
    Moreover, the number of points is bounded by $k \leq 2\alpha$, where $2 \pi \alpha$ is the cone angle.
\end{lemma}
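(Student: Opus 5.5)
The segmentation comes from cutting $\gamma$ wherever it meets the critical graph. The bound on $k$ comes from showing that $\gamma$ crosses each critical meridian at most once and that the critical graph has only about $2\alpha$ meridians.

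\textbf{Step 1: the cut points.} Let $\gamma$ be a shortest path from $x$ to $y$, as given by Fact \ref{fac:min_piecewise}. Consider the set $\gamma\cap G$, where $G$ is the critical graph. By Lemma \ref{lem:inter_point_les_1}, for each critical meridian $m$ the set $\gamma\cap m$ is empty, a single point, or one maximal segment of $m$. Since $G$ has finitely many meridians, $\gamma\cap G$ is a finite union of points and closed segments. Take the $x_i$ to be the isolated intersection points together with the endpoints of the segments, ordered along $\gamma$.

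\textbf{Step 2: the pieces between cut points.} Let $[x_{i-1},x_i]$ be one of the resulting pieces. Either it lies in a segment of some $\gamma\cap m$, which is case (b). Or its interior avoids $G$, so it lies in the interior of a single bigon $B_e$. In the second case the piece is a smooth geodesic of $B_e$ running between two boundary points (or $x$, $y$). The two endpoints must lie on different boundaries of $B_e$, or on the same boundary meridian seen from both sides after gluing. If both endpoints were on the same boundary meridian $m$, then $\gamma$ would meet $m$ twice, contradicting Lemma \ref{lem:inter_point_les_1}. So this piece is case (a).

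\textbf{Step 3: counting.} The bound then follows from two facts.
\begin{enumerate}
\item By Lemma \ref{lem:inter_point_les_1}, $\gamma$ meets each critical meridian in one connected set. That set contributes at most $2$ points $x_i$ (the two endpoints of a segment), and only $1$ if it is a point.
\item We count the critical meridians. The saddle point has cone angle $2\pi\alpha$, so it is glued from $2\alpha$ boundary half-angles. Each critical meridian emanates from the saddle, since it connects a max or min to the saddle. Hence there are exactly $2\alpha$ critical meridians. This matches Example \ref{eg:hcmu_31}: angle $6\pi$ gives $6$ meridians.
\end{enumerate}
These two facts alone give the weaker bound $k\le 4\alpha$. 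To reach $k\le 2\alpha$, I would use that all critical meridians pass through the single saddle point $z$. If $\gamma$ contains a whole segment of a meridian ending at $z$, then the only meridians $\gamma$ can meet further are those it leaves $z$ along. This is because a shortest path through $z$ meets the meridian star at $z$ only in the incoming and outgoing directions, and otherwise it crosses meridians transversally at single points. Assigning each $x_i$ to a distinct meridian then gives $k\le 2\alpha$.

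\textbf{Main obstacle.} The hardest part is this refined counting, which avoids double-counting segment endpoints. A point $x_i=z$ lies on every meridian. The degenerate cases $\lambda\in\{0,1\}$, where the saddle merges with the maxima or minima, also need care. For these I would treat a visit to $z$ as a single cut point shared by all meridians. I would then check that a geodesic entering and leaving $z$ uses at most two meridians as segments, and that it crosses every other meridian at most once transversally. If that fails, I would settle for the bound $k\le 4\alpha$. The later distortion estimate needs only some uniform bound on $k$, so this weaker bound would suffice there.
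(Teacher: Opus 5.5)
Your segmentation in Steps 1--2 is fine. Your count of $2\alpha$ critical meridians, one ray between each pair of adjacent half-discs among the $2\alpha$ glued at the saddle $z$, is correct and agrees with the paper's $2(p+q-1)$. The gap is the bound: what you actually prove is $k\le 4\alpha$, while the lemma asserts $k\le 2\alpha$. Your refinement does not close this gap. It only discusses paths through $z$, and there the reason you give (``otherwise it crosses meridians transversally at single points'') contradicts the conclusion you want, since a path through $z$ in fact cannot cross any meridian transversally. The assignment of the $x_i$ to distinct meridians is never justified, in particular when $\gamma$ avoids $z$. Falling back to $4\alpha$ would indeed suffice for Proposition~\ref{prop:Sigma_continuous}, but it proves a weaker statement than the one claimed.

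The missing step is to pin down exactly when a meridian can carry two cut points. With your choice of cut points, every $x_i$ lying on a meridian $m$ is an endpoint of the connected set $\gamma\cap m$. (A cut point that comes from another meridian and lies on $m$ is a common vertex, hence an endpoint of $m$.) So two cut points on $m$ means $\gamma\cap m$ is a segment whose endpoints are both interior points of $\gamma$. By Lemma~\ref{lem:inter_point_les_1} these are then the two endpoints of $m$, so $\gamma\supseteq m$ and $z$ is an interior point of $\gamma$.

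This splits into two cases.
\begin{itemize}
\item If $z$ is not interior to $\gamma$, each meridian carries at most one $x_i$. Sending each $x_i$ to some meridian through it is then injective, which gives $k\le 2\alpha$.
\item If $z$ is interior to $\gamma$, use that every critical meridian $m'$ contains $z$. By connectedness, $\gamma\cap m'$ is either $\{z\}$ or a segment ending at $z$ along which $\gamma$ enters or leaves $z$. Hence at most two meridians contribute segments, the cut points lie among $z$ and the far ends of those segments, and $k\le 3\le 2\alpha$.
\end{itemize}

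The paper organizes this differently. It first splits $\gamma$ at the cone point, so that $\gamma$ may be assumed smooth in its interior. It then takes division points to be exit times from a closed bigon, so a run of $\gamma$ along a meridian is absorbed into an adjacent piece and each meridian contains at most one division point.
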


\begin{proof}

Since $\Sigma$ has a unique cone point, by Fact \ref{fac:min_piecewise}, $\gamma$ passes the cone point at most once. In such case, we choose the cone point as one of $x_i$'s and divide $\gamma$ into two smooth geodesics. Each of them is a shortest path from the cone point. 
Therefore, we may assume $\gamma$ is smooth and avoids the cone point in its interior.  

Let $\gamma(t) : [0, 1] \to \Sigma$ be a parametrization of $\gamma$. 
Define the division points recursively by $x_i := \gamma(t_i)\ (i=1,\cdots,k)$, so that each segment lies in a single bigon (including its boundary): 
\begin{equation*}
    t_i := \sup \defset{ t \in (t_{i-1}, 1]}{  \gamma(u) \text{ lies in the same bigon for all } u \in [t_{i-1}, t] }.
\end{equation*}

By construction, division points can only lie on the boundaries of bigons, i.e., on critical meridians. 
Moreover, by Lemma \ref{lem:inter_point_les_1}, for a critical meridian $m$ intersecting $\gamma$, $m \cap \gamma$ is either one point or a maximal segment of $m$. 
In both case, there is at most one division point on $m$.
Hence the number of division points is at most the number of critical meridians. 

Since the whole boundary of each bigon is divided into 4 critical meridians by the extremal and saddle points, and each meridian is adjacent to 2 bigons, the number of edges in the critical graph is twice the number of bigons. 
Finally, the number of bigons equals to the number of edges in the LWBP-tree $T$, which is exactly $p+q-1=\alpha$.   
\end{proof}

\begin{proposition} \label{prop:Sigma_continuous}
    Fix a set $\MM(T)^*$ as Definition \ref{def:M_T_star}. Given $K_0 \in \Rpos$ and $\lambda \in [0,1]$, for every $\varepsilon > 0$, there exists $\delta > 0$ such that whenever  
    $|K_0 - K_0'|, |\lambda - \lambda'| \leq \delta$, 
    $$ d_{\rGH} \bigg( \Sigma(T,K_0, \lambda),\, \Sigma(T, K_0', \lambda') \bigg) \leq \frac12 \dis \Rf \leq \varepsilon \ ,$$
    where $\Rf$ is the canonical correspondence in Definition \ref{def:correspondence_from_bigon}. 

    In other words, the map $F$ in \eqref{eq:MT_para} is continuous on $\Rpos \times [0,1] \subset \varTheta$. 
\end{proposition}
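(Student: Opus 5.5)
To prove Proposition \ref{prop:Sigma_continuous}, I would bound $\dis\Rf$ directly, where $\Rf$ is the canonical correspondence. The plan is to compare the distances $d_\Sigma(x,y)$ and $d_{\Sigma'}(x',y')$ for $(x,x'),(y,y')\in\Rf$ through shortest paths, splitting each path into pieces that each lie in a single bigon. The two sources of error are controlled separately. The first is the bigon-wise distortion of the canonical homeomorphisms $f_e$; by Lemma \ref{lem:bigon_estimate_K} (with $\alpha$ fixed, since the top angle $2\pi\wt_E(e)/q$ depends only on $T$) this is small when $|K_0-K_0'|$ is small. The second is the mismatch on the critical graph caused by the shift of the saddle point; by Lemma \ref{lem:correspondence_difference} this is at most $2|\lambda-\lambda'|\ell(K_0')$. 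Since $\ell$ depends continuously on $K_0$ (Lemma \ref{lem:line_element_estimate}), both errors go to zero as $\delta\to0$.

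For the upper bound $d_{\Sigma'}(x',y')\le d_\Sigma(x,y)+\text{error}$, I would take a shortest path $\gamma$ from $x$ to $y$ in $\Sigma$ (Fact \ref{fac:min_piecewise}) and divide it, as in Lemma \ref{lem:piecewise_segmentation}, into at most $2\alpha+1$ pieces $[x_{i-1},x_i]$. Each piece lies in a single bigon $B_{e_i}$, which for $\lambda\in\{0,1\}$ is read in the glued model of Definition \ref{def:limit_sigma}. I would then map the division points through $f_{e_i}$. The subtlety is that $x_i$ lies on the critical graph, where $\Rf$ may assign two images $x_i',x_i''$ coming from the two adjacent bigons. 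Lemma \ref{lem:correspondence_difference} bounds $d_{\Sigma'}(x_i',x_i'')\le 2|\lambda-\lambda'|\ell(K_0')$. For each piece,
\[
d_{\Sigma'}\big(f_{e_i}(x_{i-1}),f_{e_i}(x_i)\big)\le d_{B'_{e_i}}\big(f_{e_i}(x_{i-1}),f_{e_i}(x_i)\big)\le d_{B_{e_i}}(x_{i-1},x_i)+\dis f_{e_i},
\]
and $d_{B_{e_i}}(x_{i-1},x_i)=d_\Sigma(x_{i-1},x_i)$ because the piece is a sub-arc of a shortest path lying in $B_{e_i}$. Summing over pieces and inserting the jumps at division points with the triangle inequality gives
\[
d_{\Sigma'}(x',y')\le d_\Sigma(x,y)+(2\alpha+1)\max_e\dis f_e+(2\alpha+2)\cdot 2|\lambda-\lambda'|\ell(K_0').
\]
The same term also absorbs the freedom in choosing $x'$ when $x$ itself lies on the critical graph. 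The reverse inequality follows by the same argument with $\Sigma,\Sigma'$ exchanged, since $\Rf$ is symmetric and Lemma \ref{lem:correspondence_difference} holds in both directions. A uniform bound on $\dis\Rf$ then follows, and Fact \ref{fac:correspondence_estimate} gives the G-H estimate.

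I expect the main obstacle to be the second kind of error, and specifically the segmentation of $\gamma$ when the gluing pattern changes. If $\lambda\ne\lambda'$, the bigons of $\Sigma$ and $\Sigma'$ are glued along meridian segments split at different heights, so a piece of type (b) in Lemma \ref{lem:piecewise_segmentation} (a maximal segment of a critical meridian) may map under $f_{e_i}$ onto a segment that is no longer a single meridian in $\Sigma'$. I would handle this by measuring every piece inside its own bigon $B'_{e_i}$, whose boundary is a genuine geodesic of $B'_{e_i}$ regardless of gluing. The only requirement is that consecutive bigons be reconnected through the points $x_i',x_i''$, and Lemma \ref{lem:correspondence_difference} controls the cost of that reconnection. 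The degenerate cases $\lambda=0$ or $1$ are handled identically: the bound $k\le2\alpha$ on the number of division points still holds, because the gluing point there plays the role of the single cone point.
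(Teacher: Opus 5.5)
Your proposal is correct and follows the paper's own argument. You segment a shortest path via Lemma~\ref{lem:piecewise_segmentation}, bound each piece by the distortion of $f_{e_i}$ (small by Lemma~\ref{lem:bigon_estimate_K}, since $T$ fixes the top angles), and bound the jumps $x_i'\leftrightarrow x_i''$ by Lemma~\ref{lem:correspondence_difference}. Summing with the triangle inequality and symmetrizing then gives essentially the paper's bound $(2\alpha+2)\bigl(\eta+2\abs{\lambda-\lambda'}\ell(K_0')\bigr)$.

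Your one-sided chain $d_{\Sigma'}\le d_{B'_{e_i}}\le d_{B_{e_i}}+\dis f_{e_i}$, combined with $d_{B_{e_i}}(x_{i-1},x_i)=d_\Sigma(x_{i-1},x_i)$, is actually a cleaner justification of the step the paper attributes to Proposition~\ref{prop:bigon_continuous}. That equality does need each piece to lift to a path in the bigon itself. So also cut $\gamma$ wherever it crosses a meridian along which a bigon is glued to itself, as happens at leaves of $T$; this still leaves at most one division point per critical meridian.
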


\begin{figure}
\centering
\begin{tikzpicture}[scale=1.2]  
    \begin{scope}[ ]
    \draw[line width = 1pt] (-2,1) to (-2,-1);
    \draw[line width = 1pt] (-1/2,1) to (-1/2,0);
    \draw[line width = 1pt] (-3/5,-1) to (-1/2,0) to (-2/5,-1);
    \draw[line width = 1pt] (1,-1) to (1,0);
    \draw[line width = 1pt] (4/5,1) to (1,0) to (6/5,1);
    \draw[line width = 1pt] (2,1) to (2,-1);
    \draw[dotted, line width = 1pt] (-2,1) to (4/5,1);
    \draw[dotted, line width = 1pt] (6/5,1) to (2,1);
    \draw[dotted, line width = 1pt] (-2,-1) to (-3/5,-1);
    \draw[dotted, line width = 1pt] (-2/5,-1) to (2,-1);
    \node[draw, circle, fill=black, inner sep=0pt, minimum size=4pt](x) at (-8/5,0) {};
    \node at ($(x)+(+3pt,+5pt)$) {$x = x_0$};
    \node[draw, circle, fill=black, inner sep=0pt, minimum size=4pt](y) at (2,-3/5) {};
    \node[crossmark](x1) at (-1/2,0) {};
    \node[crossmark]() at (1,0) {};
    \node[draw, circle, fill=black, inner sep=0pt, minimum size=4pt](x2) at (1,-0.36) {};
    \draw[line width = 1pt] (x) to (x1) to (x2) to (y) {}; 
    \node at ($(y)+(+15pt,-1pt)$) {$x_3 = y$};
    \node at ($(x1)+(+8pt,+4pt)$) {$x_1$};
    \node at ($(x2)+(-6pt,-4pt)$) {$x_2$};
    \node at (-6/5,6/5) {$B_{e_1}$};
    \node at (1/5,6/5) {$B_{e_2}$};
    \node at (8/5,6/5) {$B_{e_3}$};
    \end{scope}

    \draw[stealth-stealth] (2.5,0) -- (3.5,0) node[midway, above] {$\mathfrak{R}$};

    \begin{scope}[shift = {(6,0)}]
    \draw[line width = 1pt] (-2,1.4) to (-2,-1.4);
    \draw[line width = 1pt] (-0.6,1.4) to (-0.6,0.6);
    \draw[line width = 1pt] (-1,-1.4) to (-0.6,0.6) to (-0.2,-1.4);
    \draw[line width = 1pt] (1,-1.4) to (1,0.6);
    \draw[line width = 1pt] (0.8,1.4) to (1,0.6) to (1.2,1.4);
    \draw[line width = 1pt] (2,1.4) to (2,-1.4);
    \draw[dotted, line width = 1pt] (-2,1.4) to (0.8,1.4);
    \draw[dotted, line width = 1pt] (1.2,1.4) to (2.0,1.4);
    \draw[dotted, line width = 1pt] (-2,-1.4) to (-1,-1.4);
    \draw[dotted, line width = 1pt] (-0.2,-1.4) to (2,-1.4);
    \node[draw, circle, fill=black, inner sep=0pt, minimum size=4pt](x') at (-8/5,0) {};
    \node[draw, circle, fill=black, inner sep=0pt, minimum size=4pt](x1') at (-0.72,0) {};
    \node[draw, circle, fill=black, inner sep=0pt, minimum size=4pt](x1'') at (-0.48,0) {};
    \node[draw, circle, fill=black, inner sep=0pt, minimum size=4pt](x2') at (1,-0.48) {};
    \node[draw, circle, fill=black, inner sep=0pt, minimum size=4pt](x3') at (2,-0.8) {};
    \node[draw, circle, fill=black, inner sep=0pt, minimum size=4pt](y') at (2.2,-0.8) {};
    \draw[line width = 2pt, red, line join=round] (x') to (x1') to (-0.6,0.6) to (x1'') to (x2') to (x3') to (2,0.6) to (y') {}; 
    \node[crossmark] at (-0.6,0.6) {};
    \node[crossmark] at (1,0.6) {};
    \node[crossmark] at (2,0.6) {};
    \node at ($(x')+(0pt,+8pt)$) {$x_0'$};
    \node at ($(x1')+(-8pt,-6pt)$) {$x_1'$};
    \node at ($(x1'')+(+6pt,+6pt)$) {$x_1''$};
    \node at ($(x2')+(+4pt,+8pt)$) {$x_2' = x_2''$};
    \node at ($(x3')+(-5pt,-6pt)$) {$x_3'$};
    \node at ($(y')+(+5pt,-6pt)$) {$y'$};
    \node at (-1.3,6/5) {$B'_{e_1}$};
    \node at (1/5,6/5) {$B'_{e_2}$};
    \node at (8/5,6/5) {$B'_{e_3}$};
    \end{scope}

\end{tikzpicture}    
\caption{\small The canonical correspondence between three successive bigons. 
(1) The broken line $x \to x_1 \to y$ is a shortest path from $x$ to $y$. And $x_1, x_2, y$ lie on the critical graph. 
(2) The canonical homeomorphisms from $B_{e_1}$ and $B_{e_2}$ map $x_1$ to two different points: $f_{e_1}(x_1) = x_1' \neq x_1'' = f_{e_2}(x_1)$. 
(3) The bold red path on right is used for estimation in Proposition \ref{prop:Sigma_continuous}. }
\label{fig:shortest_path}
\end{figure}

\begin{proof}

Take $x \neq y \in \Sigma$, let $\gamma$ be a shortest path connecting them. 
Choose division points $x = x_0 \to x_1 \to \cdots \to x_{k+1} = y$ along $\gamma$ as Lemma \ref{lem:piecewise_segmentation}, where $k \leq 2\alpha$. 
Assume the segment $x_{i-1} \to x_i$ lies in the bigon $B_{e_i}$, and $f_{e_i} (x_{i-1}) := x_{i-1}'',\ f_{e_i} (x_{i}) = x_{i}'$ under the canonical homeomorphism, $i=1,\cdots, k+1$. 

Note that when $\MM(T)$ is fixed, top angles of the bigons are determined by the edge weights on $T$. Hence $B_{e_i}$ and $B_{e_i}'$ share the same top angle for any edge $e$ of $T$. 
By Proposition \ref{prop:bigon_continuous}, when $|K_0 - K_0'| \leq \delta$, 
$\abs{ d_{\Sigma}(x_{i-1}, x_i) - d_{\Sigma'}(x_{i-1}'', x_i')} \leq \eta$ for some $\eta>0$, $\forall i = 1,\cdots,k+1$. 

For $x_i$ on the critical graph ($i = 1,\cdots,k$), $f_{e_i} (x_{i}) = x_{i}'$ may not coincide with $f_{e_{i+1}} (x_{i}) = X_{i}''$. But by Lemma \ref{lem:correspondence_difference}, $d_{\Sigma'} (\ x_{i}',\ x_{i}'' \ ) \leq 2 |\lambda - \lambda'| \cdot \ell(K_0')$.

Let $(x,x'), (y,y') \in \Rf$ in the canonical correspondence between $\Sigma, \Sigma'$. Then the distance between $x', y'$ can be estimated as follows. ($\Sigma'$ in $d_{\Sigma'}$ is omitted)
\begin{align*}
    d(x', y')
    &\leq 2 |\lambda - \lambda'| \ell(K_0') 
    + d(x_0'', x_1') + d(x_1', x_1'')
    + d(x_1'', x_2') + d(x_2', x_2'') + \cdots \\
    & \hspace{100pt} \cdots + d(x_{k}'', x_{k+1}')
    + 2 |\lambda - \lambda'| \ell(K_0') \\
    &\leq \sum_{i=1}^{k+1} d(x_{i-1}'', x_i') 
    + (k+2) \cdot 2 \abs{\lambda - \lambda'} \ell(K_0')  \\
    &\leq \sum_{i=1}^{k+1} \bigg( d_\Sigma(x_{i-1}, x_i) + \eta \bigg) + (k+2) \cdot 2 \abs{\lambda - \lambda'} \ell(K_0') \\
    &= d_\Sigma(x, y) + (k+1) \cdot \eta + (k+2) \cdot  2\abs{\lambda - \lambda'} \ell(K_0') ) \\
    &\leq d_\Sigma(x, y) + (2\alpha+2) \cdot ( \eta + 2\abs{\lambda - \lambda'} \ell(K_0') ) \ .
\end{align*}
The two extra terms $2\abs{\lambda - \lambda'} \ell(K_0')$ in the first inequality is added because $x=x_0$ or $y=x_{k+1}$ may also lie on the critical graph and correspondence to two different points inside $\Rf$. 
See Figure \ref{fig:shortest_path} again as an illustration.

For the same reason, we have 
\begin{equation*}
    d_\Sigma(x, y) \leq d_{\Sigma'}(x', y') + (2\alpha + 2) \cdot (\eta + 2 \abs{\lambda - \lambda'} \ell(K_0) ) \ .
\end{equation*}

Thus, with $K_0$ given, by choosing $\delta$ appropriately, we obtain
\begin{equation*}
    \abs{d(x, y) - d(x', y')} \leq (2\alpha + 2) \cdot (\eta + 2 \abs{\lambda - \lambda'} \max\{\ell(K_0), \ell(K_0')\}) \leq \varepsilon \ .
\end{equation*}
\end{proof}

\bigskip
Finally, we prove that as $K_0 \to +\infty$, the space $\Sigma(T, K_0, \lambda)$ G-H converges to the single-point space $\Pt$. 

\begin{proposition} \label{prop:Kinfty_estimate}
    Fix a set $\MM(T)^*$ as Definition \ref{def:M_T_star}. 
    Then the diameter of the surface $\Sigma(T, K_0, \lambda)$ tend to $0$ when $K_0 \to +\infty$. Therefore, 
    $$ \Sigma(T, K_0, \lambda) \xrightarrow{\rGH} \Pt \quad (K_0 \to +\infty) .$$
\end{proposition}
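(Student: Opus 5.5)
By Fact \ref{fac:diameter_estimate}, $d_{\rGH}(\Sigma, \Pt) = \frac12 \diam \Sigma$. So it suffices to show that $\diam \Sigma(T,K_0,\lambda) \to 0$ as $K_0 \to +\infty$, uniformly in $\lambda \in [0,1]$. We bound the diameter by a path-length argument, using only the bigon structure together with Lemma \ref{lem:line_element_estimate} and Lemma \ref{lem:waist_estimate}. Throughout, $T$ (hence $p$, $q$, $R = q/p$, and the top angles $2\pi\alpha(e) = 2\pi\wt_E(e)/q$ of the bigons) is fixed. Only $K_0$ and $\lambda$ vary.

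\emph{Step 1: every point is close to a top vertex.} Let $x = (v,\phi)$ lie in a bigon $B_e$. The meridian $\{\phi = \text{const}\}$ through $x$ runs to the top vertex of $B_e$ and has length at most $\ell(K_0)$. Its image in $\Sigma$ is a path of the same length, because the gluing maps are isometries on the boundary segments and the quotient metric does not increase lengths. So $d_\Sigma(x, a_e) \le \ell(K_0)$, where $a_e$ is the maximal point of $\Sigma$ obtained from the top vertex of $B_e$. This holds for every $\lambda \in [0,1]$, including the degenerate spaces $\Sigma(T,K_0,0)$ and $\Sigma(T,K_0,1)$, since these are glued from the same bigons.

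\emph{Step 2: any two maximal points are close.} If two edges $e, e'$ of $T$ share a white vertex, the bottom vertices of $B_e$ and $B_{e'}$ are glued to the same minimal point $b$. Hence $d(a_e, a_{e'}) \le d(a_e,b) + d(b,a_{e'}) \le 2\ell(K_0)$. If they share a black vertex, then $a_e = a_{e'}$. Since $T$ is connected with $\alpha = p+q-1$ edges, any two edges are joined by a chain of at most $\alpha$ adjacent edges. This gives $d(a_e, a_{e'}) \le 2\alpha\,\ell(K_0)$. Combined with Step 1,
\[ \diam \Sigma(T,K_0,\lambda) \le (2\alpha + 2)\,\ell(K_0) = (2\alpha+2)\,\frac{I(R)}{\sqrt{K_0}} \]
by Lemma \ref{lem:line_element_estimate} with $R = q/p$ fixed. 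The right-hand side tends to $0$, independently of $\lambda$.

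\emph{Main obstacle.} The only delicate point is justifying that lengths of paths inside individual bigons bound distances in the glued space $\Sigma$, uniformly for $\lambda \in [0,1]$. For $\lambda \in (0,1)$, $\Sigma$ carries the induced length metric from gluing, so a concatenation of meridians is an admissible path. For $\lambda = 0,1$, we use the definition of $\Sigma(T,K_0,0)$ as a one-point union of footballs. Its metric is the gluing metric, and distances across the wedge point are sums of distances to it. Within each football $S_{p/q}$, the meridian bound $\ell(K_0)$ is immediate. Lemma \ref{lem:waist_estimate} is not needed, but it could alternatively be used to move along waist lines if one prefers a chain through the saddle point. Finally, apply Fact \ref{fac:diameter_estimate} to conclude $\Sigma(T,K_0,\lambda) \xrightarrow{\rGH} \Pt$.
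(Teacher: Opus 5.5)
Your proof is correct and follows the same route as the paper: both show $\diam \Sigma(T,K_0,\lambda)\to 0$ by combining two facts, namely that each bigon has diameter controlled by $\ell(K_0)=I(R)/\sqrt{K_0}$ and that only finitely many bigons are glued in a connected pattern, and then both apply Fact~\ref{fac:diameter_estimate}. Your version makes the gluing step explicit and quantitative via the tree structure, giving $\diam \Sigma \le (2\alpha+2)\ell(K_0)$ uniformly in $\lambda$, and you rightly note that the waist-line bound of Lemma~\ref{lem:waist_estimate}, which the paper cites, is unnecessary because meridians to the top vertex already bound each bigon's diameter by $2\ell(K_0)$.
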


\begin{proof}

By Lemma \ref{lem:line_element_estimate} and Lemma \ref{lem:waist_estimate}, as $K_0 \to +\infty$, the length $\ell(K_0)$ of character line element and length $L(\gamma_v)$ of waist line in each bigon $B_e$ tend to $0$. Hence $\operatorname{diam} B_e \to 0$.
Since the surface $\Sigma(T, K_0, \lambda)$ is glued from these bigons along boundaries, we have $\operatorname{diam} \Sigma(T, K_0, \lambda) \to 0$ as $K_0 \to +\infty$. 
By Fact \ref{fac:diameter_estimate}, $\mathrm{Pt}$ is the G-H limit of $\Sigma(T, K_0, \lambda)$.
\end{proof}

\begin{proof}[Proof of Theorem \ref{thm:map_bij_continuous}]

By Proposition \ref{prop:Sigma_continuous}, for $K_0 \in \Rpos$ and $\lambda \in [0, 1]$, the surface $\Sigma(T, K_0, \lambda)$ depends continuously on the parameters.
When $K_0 \to +\infty$, the surface $\Sigma(T, K_0, \lambda)$ converges to the single-point space $\Pt$ by Proposition \ref{prop:Kinfty_estimate}. So $F$ is continuous at the unique point at infinity of $\varTheta$.

From the data set representation, the restricted map $F|_{\Rpos \times (0, 1)} : \Rpos \times (0, 1) \to \MM(T)$ onto generic HCMU spheres is a bijection. 
The remaining spaces $\Sigma(T, K_0, 0)$, $\Sigma(T, K_0, 1)$, and $\Sigma(T, +\infty, \lambda) = \Pt$ are distinct from each other and are not generic HCMU spheres. 
Hence, the map $F : \varTheta \to \MM(T)^*$ remains bijective. 

\end{proof}

\subsection{Connected components of moduli space}
Now we are ready to show that $\varTheta$ is a parameterization of $\MM(T)^*$ with respect to G-H topology. 
Such result makes the introduction of this larger space reasonable, and leads to our classification of connected components. 

\begin{theorem}\label{thm:closure_and_para}
    Fix a subspace $\MM(T)$ as Definition \ref{def:M_T}, let $\MM(T)^*,\ \varTheta$ as Definition \ref{def:M_T_star}. 
    \begin{enumerate}
        \item 
        $\MM(T)^*$ is the closure of $\MM(T)$ in $\GH$. 
        \item $F : \varTheta \to \MM(T)^*$ in \eqref{eq:MT_para} is a homeomorphism. 
    \end{enumerate}
\end{theorem}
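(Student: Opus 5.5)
Both parts will rest on Theorem \ref{thm:map_bij_continuous}, which already gives a continuous bijection $F:\varTheta\to\MM(T)^*$. For part (2), the cleanest route would be compactness: a continuous bijection from a compact space onto a Hausdorff space is a homeomorphism. But $\varTheta$ is not compact, since the edge $K_0\to 0^+$ is missing. So I will prove instead that $F$ is a closed map, or equivalently that $F^{-1}$ is sequentially continuous. Concretely, suppose $F(\theta_n)\to F(\theta)$ in $\GH$, with $\theta_n=(K_n,\lambda_n)$. I want to show $\theta_n\to\theta$ in $\varTheta$, and I will argue by subsequences.

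First I rule out $K_n\to 0$. By Lemma \ref{lem:line_element_estimate}, $\ell(K_n)\to+\infty$. Every space $\Sigma(T,K_n,\lambda_n)$ contains a meridian from a maximal point to a minimal point, and I claim the distance between these two points is at least $\ell(K_n)$. The reason is that $K$ is monotone with unit-speed gradient along meridians, and any path between level sets $K_0$ and $K_1$ has length at least $\ell$; this follows from the warped form $\rmd v^2+h^2\rmd\phi^2$, since $v$ is 1-Lipschitz bigon by bigon. Hence $\diam\to\infty$, and by Fact \ref{fac:diameter_estimate} the sequence cannot converge in $\GH$.

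So after passing to a subsequence, $K_n\to K^*\in(0,+\infty]$ and $\lambda_n\to\lambda^*\in[0,1]$. By Theorem \ref{thm:map_bij_continuous}, $F(\theta_n)\to F(K^*,\lambda^*)$. Since G-H limits are unique up to isometry and $F$ is injective, $(K^*,\lambda^*)=\theta$ in $\varTheta$. Because every subsequence has a further subsequence converging to $\theta$, the whole sequence converges to $\theta$. Metrizability of both spaces then gives continuity of $F^{-1}$.

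The main subtle point is injectivity in the isometry sense. Distinct parameters must give non-isometric spaces; this is already asserted in the proof of Theorem \ref{thm:map_bij_continuous}. If needed, I would recover $K_0$ from the diameter of a bigon or the length $\ell$ (isometry invariants), and recover $\lambda$ from the distance between the saddle point and the maximal points. I would also check the degenerate endpoints $\lambda=0,1$: there the spaces are not manifolds, and they are distinguished by the number of football summands, $q$ versus $p$.

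For part (1), the inclusion $\MM(T)^*\subset\overline{\MM(T)}$ follows from Theorem \ref{thm:map_bij_continuous}, because every point of $\varTheta$ is a limit of points of $\Rpos\times(0,1)$. For the reverse inclusion, take any sequence $\Sigma(T,K_n,\lambda_n)\in\MM(T)$ converging in $\GH$ to some space $X$. The diameter argument above keeps $K_n$ bounded away from $0$. Extracting a subsequence as before, we get $X\cong F(K^*,\lambda^*)\in\MM(T)^*$. The only care needed is that $\GH$ is a metric space, so closures are sequential. The diameter lower bound at $K_0\to0$ is the step I expect to require the most care, because shortest paths may cross several bigons through the saddle point; the $v$-coordinate argument should handle this uniformly.
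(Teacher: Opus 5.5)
Your proposal is correct and follows essentially the same route as the paper. Both arguments combine the continuity and injectivity of $F$ from Theorem \ref{thm:map_bij_continuous} with the diameter blow-up as $K_0\to 0^+$ (Lemma \ref{lem:line_element_estimate} and Fact \ref{fac:diameter_estimate}) to handle sequences escaping through the missing edge of $\varTheta$. The differences are in presentation: the paper phrases this as properness of $F$ and then cites Fact \ref{fac:continuous_proper}, whereas you unpack that fact into a direct subsequence argument. You also justify the max--min distance lower bound via the globally defined $1$-Lipschitz coordinate $v$, a step the paper only asserts.
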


We will use the following facts about closed and proper maps. 

\begin{fact}[\cite{Pal70}] \label{fac:continuous_proper}
    Assume $f : A \to B$ is a continuous and proper map from a topological space $A$ to a metric space $B$. 
    \begin{enumerate}
        \item Then $f$ is a closed map, and the image set $f(A)$ is closed. 
        
        \item If $f$ is also injective, then $f$ is a homeomorphism onto its image $f(A)$. 
        \qed
    \end{enumerate}
\end{fact}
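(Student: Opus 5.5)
The plan is to prove part (1) with a sequential argument that uses only that $B$ is a metric space, hence first countable and Hausdorff. Part (2) then follows formally from part (1). Here ``proper'' means that the preimage of every compact subset of $B$ is compact in $A$. No separation axiom on $A$ should be needed, since compactness passes to closed subsets in any topological space.

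For part (1), let $C \subset A$ be closed; we show $f(C)$ is closed in $B$. Take $b \in \overline{f(C)}$. Because $B$ is metric, we can choose $c_n \in C$ with $f(c_n) \to b$. Set
\[ K := \{b\} \cup \{ f(c_n) \mid n \geq 1 \}. \]
This is compact, being a convergent sequence together with its limit. By properness $f^{-1}(K)$ is compact. Hence $C \cap f^{-1}(K)$, a closed subset of a compact space, is compact. Its image $f\big(C \cap f^{-1}(K)\big)$ is then compact by continuity, and so closed in the Hausdorff space $B$. This image contains every $f(c_n)$, so it contains their limit $b$. Therefore $b \in f(C)$, and $f(C)$ is closed. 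Taking $C = A$ shows that $f(A)$ is closed.

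For part (2), assume in addition that $f$ is injective. Then $f : A \to f(A)$ is a continuous bijection. For any closed $C \subset A$, the set $f(C)$ is closed in $B$ by part (1), hence closed in the subspace $f(A)$. This says exactly that $(f|^{f(A)})^{-1}$ is continuous, so $f$ is a homeomorphism onto $f(A)$.

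The only delicate point is the reduction to a sequence. This is where metrizability of the target is used, in place of the more general nets-or-compactly-generated argument. It is also why the statement asks for $B$ to be metric and puts no condition on $A$ beyond continuity and properness of $f$. I expect no further obstacles.
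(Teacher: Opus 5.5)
Your argument is correct. The paper gives no proof of this fact and only cites Palais, whose argument uses your key step: $f(C)\cap K = f(C\cap f^{-1}(K))$ is compact, hence closed, for every compact $K\subset B$, combined with the target being compactly generated Hausdorff. Your choice of $K$ as a convergent sequence together with its limit is exactly the specialization of that argument to metric targets, and part (2) follows from part (1) just as you say.
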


\begin{fact} \label{fac:sequence_escape}
    Assume $f : A \to B$ is a map between two first countable spaces. 
    Then $f$ is proper if and only if for any sequence $\{x_n\} \subset A$ that eventually leaves every compact set, the image sequence $\{f(x_n)\}$ also leaves every compact set of $B$.
    \qed
\end{fact}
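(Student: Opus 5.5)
The forward direction follows directly from the definition of properness, and the converse is proved by contraposition, building an escaping sequence inside a non-compact preimage. Throughout, ``$\{x_n\}$ eventually leaves every compact set'' means: for every compact $K \subset A$ there is $N$ with $x_n \notin K$ for all $n \ge N$. ``$f$ proper'' means $f^{-1}(C)$ is compact for every compact $C \subset B$.

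\emph{Proper $\Rightarrow$ escaping sequences map to escaping sequences.} Let $f$ be proper and let $\{x_n\}$ eventually leave every compact set of $A$. Suppose $\{f(x_n)\}$ does not eventually leave some compact $C \subset B$. Then $f(x_n) \in C$ for infinitely many $n$. For those $n$ we have $x_n \in f^{-1}(C)$, and $f^{-1}(C)$ is compact by properness. This contradicts the assumption that $x_n \notin f^{-1}(C)$ for all large $n$. First countability is not needed here.

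\emph{Converse.} Suppose $f$ is continuous, $B$ is Hausdorff (in our use $B = \GH$ is metric), and $f$ is not proper. Then there is a compact $C$ with $f^{-1}(C)$ not compact. Since $C$ is closed and $f$ is continuous, $f^{-1}(C)$ is closed in $A$. I want a sequence $x_n \in f^{-1}(C)$ that leaves every compact set. Its image then stays in the compact set $C$ forever, contradicting the hypothesis.

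The construction of this sequence is the main obstacle. First countability alone does not produce it: a non-compact closed set in a first countable space need not contain a sequence escaping all compacta. I will therefore use the extra structure that holds in our application. The space $A = \varTheta$ (or $\Rpos \times [0,1]$) is metrizable, locally compact and $\sigma$-compact, so it has a compact exhaustion
\[
K_1 \subset \operatorname{int} K_2 \subset K_2 \subset \operatorname{int} K_3 \subset \cdots, \qquad \bigcup_j K_j = A.
\]
A closed subset of $A$ that lies inside a single $K_j$ is compact. Hence $f^{-1}(C) \not\subset K_j$ for every $j$, and I can choose $x_j \in f^{-1}(C) \setminus K_j$. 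Any compact $K \subset A$ is covered by the open sets $\operatorname{int} K_j$, so $K \subset K_{j_0}$ for some $j_0$. Then $x_j \notin K$ for all $j \ge j_0$, and $\{x_j\}$ eventually leaves every compact set, as required.

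I will state these extra hypotheses explicitly, since the Fact is only applied to $\varTheta$ and its subsets, where they hold. As an alternative route, if $A$ is metrizable, one could instead use that $f^{-1}(C)$ is not sequentially compact. This gives a sequence in $f^{-1}(C)$ with no convergent subsequence, and such a sequence meets each compact set only finitely often.
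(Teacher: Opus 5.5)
The paper quotes this Fact without proof, so there is no argument of the paper's to compare with. Your proof is correct: the forward direction needs no hypotheses, exactly as you say.

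The extra hypotheses you impose on the converse are genuinely necessary, because the Fact as literally stated (an arbitrary map between first countable spaces) is false.

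\emph{A hypothesis on $A$ is needed.} Take $A=[0,\omega_1)$, the first uncountable ordinal with the order topology, $B=\RR$ and $f\equiv 0$. Both spaces are first countable and $f$ is continuous. Every sequence in $A$ is bounded by some countable ordinal $\alpha$, so it lies in the compact set $[0,\alpha]$. Hence no sequence escapes every compact set, and the sequential condition holds vacuously. Yet $f^{-1}(\{0\})=A$ is not compact, so $f$ is not proper.

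\emph{Continuity of $f$ and closedness of compact sets in $B$ are needed.} On the compact space $A=[0,1]$ the sequential condition is again vacuous. Yet the indicator function of $(0,1]$ is not proper in either of two readings. As a discontinuous map to $\RR$, it fails because $\{1\}$ has preimage $(0,1]$. As a continuous map to the first countable, non-Hausdorff Sierpi\'nski space, it fails for the same reason.

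Your remark that first countability alone cannot produce the escaping sequence is therefore exactly right. It would be worth including such an explicit counterexample rather than only asserting it.

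Both of your arguments for the converse are valid in the paper's only application, $F:\varTheta\to\GH$. There $F$ is continuous by Theorem~\ref{thm:map_bij_continuous} and $\GH$ is a metric space. Also, $\varTheta$ is a Euclidean triangle minus a closed edge, hence metrizable, locally compact and $\sigma$-compact.

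The two routes rest on different hypotheses:
\begin{itemize}
\item The compact-exhaustion argument uses local compactness and $\sigma$-compactness of $A$.
\item The sequential argument uses only metrizability of $A$. A non-compact closed set $f^{-1}(C)$ then contains a sequence with no convergent subsequence, and compact sets are sequentially compact, so that sequence visits each compact set only finitely often.
\end{itemize}
Neither set of hypotheses contains the other, and both hold for $\varTheta$. The sequential route is slightly more economical here, since it avoids constructing the exhaustion.
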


Notice that both the Euclidean triangle $\varTheta$ and metric space $\GH$ are first countable. 

\begin{proof}[Proof of Theorem \ref{thm:closure_and_para}]
$F : \varTheta \to \GH$ is shown to be continuous and injective in Theorem \ref{thm:map_bij_continuous}. We will use Fact \ref{fac:sequence_escape} to show $F$ is proper. 

As an Euclidean triangle deleting a closed edge, a sequence $\{ (K_0(n), \lambda(n)) \}$ leaves every compact set in $\varTheta$ if and only if some subsequence $K_0(n_k) \to 0$. We may assume $K_0(n) \to 0$. 
Then by Lemma \ref{lem:line_element_estimate}, the length of character line element $\ell(K_0(n)) \to +\infty$. This means the distance between a maximal and a minimal point tends to infinite. Hence $\diam \Sigma(T, K_0(n), \lambda(n)) \to +\infty$. 
By the estimate of G-H distance in Fact \ref{fac:diameter_estimate}, 
$d_{\rGH}(\Sigma(T, K_0(n), \lambda(n)), \Pt) = \frac12 \diam \Sigma(T, K_0(n), \lambda(n)) \to + \infty$. So the sequence $\{ \Sigma(K_0(n), \lambda(n)) \}$ leaves every compact set in $\GH$. 

\medskip
Now $F$ is shwon to be proper. Together with Theorem \ref{thm:map_bij_continuous}, $F$ is a homeomorphism onto its image $F(\varTheta) = \MM(T)^*$, which is also closed. 
Since $\varTheta$ is the closure of $\Rpos \times (0, 1)$ in $\varTheta$, then $\MM(T)^* = F(\varTheta)$ is the closure of $\MM(T) = F(\Rpos \times (0, 1))$ in $\GH$.  
\end{proof}

Finally we are prepared to study the decomposition and enumeration of connected components of $\Mhcmu_{0, 1}(\alpha)$. 

\begin{proposition}\label{prop:disjoint_closure}
    Fix a pair $p,q$ satisfying the conditions in Theorem \ref{thm:hcmu_data}. For each LWBP-tree $T \in \Tree(\pp_{p,q})$, let 
    \[  \ol{\MM(T)} := \MM(T)^* \cap \Mhcmu_{0, 1}(\alpha)  \]
    be the set of genuine HCMU surfaces contained in $\MM(T)^*$. 
    Then $\ol{\MM(T)}$ is the closure of $\MM(T)$ in $\Mhcmu_{0, 1}(\alpha)$. 
    
    For any two different trees $T_1, T_2 \in \Tree(\pp_{p,q})$, 
    $\ol{\MM(T_1)} \cap \ol{\MM(T_2)} = \varnothing$. 
\end{proposition}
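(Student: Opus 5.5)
Two claims need proof: $\ol{\MM(T)}$ is the closure of $\MM(T)$ in $\Mhcmu_{0,1}(\alpha)$, and the closures for distinct trees are disjoint.

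For the closure claim I would use Theorem \ref{thm:closure_and_para}. It says $\MM(T)^*$ is the closure of $\MM(T)$ in $\GH$. Now $\Mhcmu_{0,1}(\alpha)$ carries the subspace topology from $\GH$, and the closure of a subset inside a subspace is the ambient closure intersected with the subspace. Hence the closure of $\MM(T)$ in $\Mhcmu_{0,1}(\alpha)$ is $\MM(T)^* \cap \Mhcmu_{0,1}(\alpha) = \ol{\MM(T)}$, which settles the first claim.

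For disjointness, I would first identify $\ol{\MM(T)}$ explicitly from the parametrization $F:\varTheta\to\MM(T)^*$. The point $\Pt$ is not a surface. By Definition \ref{def:limit_sigma}, $\Sigma(T,K_0,0)$ and $\Sigma(T,K_0,1)$ are one-point unions of $q$ (resp. $p$) footballs. Such a union is a genuine sphere only when exactly one football occurs, that is, when $q=1$ for $\lambda=0$; the case $\lambda=1$ would need $p=1$, which is impossible since $p>q>0$. So:
\begin{itemize}
\item if $q>1$, then $\ol{\MM(T)}=\MM(T)$;
\item if $q=1$, then $T$ is the star-shape tree $T_0$ (it is the unique tree in $\Tree(\pp_{\alpha,1})$), and $\ol{\MM(T_0)}=\MM(T_0)\cup\{S_\alpha(K_0,\frac{2-\alpha}{2\alpha-1}K_0)\}_{K_0>0}$.
\end{itemize}
One point needs a check here. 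When $q=1$ the football for a white vertex is $S_{p}(\cdot)$ with top angle $2\pi p=2\pi\alpha$, since $p=\alpha$ when $q=1$. Also the resulting footballs must actually lie in $\Mhcmu_{0,1}(\alpha)$: they have a single cone point, because the bottom angle is $2\pi$.

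With this identification, take $T_1\neq T_2$ in $\Tree(\pp_{p,q})$. By the previous step, $\ol{\MM(T_i)}$ consists of $\MM(T_i)$ together with footballs, and the footballs occur only if $q=1$. If $q=1$, however, $\Tree(\pp_{p,1})$ has a single element, so $T_1\ne T_2$ forces $q>1$ and $\ol{\MM(T_i)}=\MM(T_i)$. Theorem \ref{thm:hcmu_data} gives a bijection between generic surfaces and data sets, hence $\MM(T_1)\cap\MM(T_2)=\varnothing$, and disjointness follows. (The statement fixes $p,q$, but the same argument separates trees coming from different pairs $(p,q)$: a football is non-generic while every surface of $\MM(T)$ is generic, and footballs arise only from star-shape trees.)

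The main obstacle is the geometric claim that $\Sigma(T,K_0,0)$ and $\Sigma(T,K_0,1)$ are not genuine HCMU spheres when $q>1$. Isometry classes in $\GH$ are metric spaces, so I would argue via a topological invariant. A wedge of $q\ge2$ spheres at one point is not homeomorphic to $S^2$: removing the wedge point disconnects it into $q$ components, whereas removing a point from $S^2$ leaves a connected space. This suffices, because every element of $\Mhcmu_{0,1}(\alpha)$ is a topological sphere.
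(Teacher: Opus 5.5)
Your proposal is correct and follows the paper's own proof. You obtain the closure statement from Theorem \ref{thm:closure_and_para} via the subspace topology. You note that $\ol{\MM(T)}=\MM(T)$ except for the star-shaped tree $T_0$, where the footballs $\Sigma(T_0,K_0,0)$ are added. You get disjointness from the data-set bijection together with the fact that footballs are non-generic. Your observation that distinct trees force $q>1$, and your cut-point argument that a one-point union of at least two footballs is not homeomorphic to $S^2$, fill in details the paper only asserts.
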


\begin{proof}

By Theorem \ref{thm:closure_and_para}, $\MM(T)^*$ is the closure of $\MM(T)$ in $\GH$.
Thus, in the subspace $\Mhcmu_{0, 1}(\alpha) \subset \GH$, $\ol{\MM(T)}$ is the closure of $\MM(T)$.

For any $T$ distinct from the star-shaped tree $T_0$, $\Sigma(T, K_0, 0)$ and $\Sigma(T, K_0, 1)$ are not topological spheres for all finite $K_0\in\Rpos$. 
Therefore $\ol{\MM(T)} = \MM(T)$.
For $T_0$, only $\Sigma(T_0, K_0, 0) = S_{\alpha}(K_0, \frac{2R-1}{2-R} K_0)$ becomes an HCMU football. Thus
\[
\ol{\MM(T_0)} = \MM(T_0) \sqcup \{S_{\alpha}(K_0, \tfrac{2R-1}{2-R} K_0)\}_{K_0 \in \Rpos}.
\]
Since HCMU footballs are not generic, and footballs only appear in $\ol{\MM(T_0)}$, it follows that for any two distinct trees $T_1, T_2 \in \Tree(\pp_{p,q})$, the closures are disjoint: 
$\ol{\MM(T_1)} \cap \ol{\MM(T_2)} = \varnothing$.
\end{proof}

\begin{theorem}\label{thm:cnntd_comp}
    Given $\alpha\in\ZZ_{>1}$, let $\mathcal{P}(\alpha)$ be the set of all pairs $(p,q)$ satisfying the conditions in Theorem \ref{thm:hcmu_data}. 
    Then the geometric moduli space of HCMU spheres with a single conical singularity of angle $2\pi\alpha$ can be split into connected components 
    \begin{equation*}
        \Mhcmu_{0, 1}(\alpha) = \bigsqcup_{(p,q)\in\mathcal{P}(\alpha)}\ \bigsqcup_{T \in \Tree(\pp_{p, q})} \ol{\MM(T)} \ .
    \end{equation*}
    
    In particular, the number of connected components is 
    \[ \sum_{(p,q)\in\mathcal{P}(\alpha)} \abs{\Tree(\pp_{p, q})} \ , \] 
    with each term $\abs{\Tree(\pp_{p, q})}$ computed previously in Section \ref{ssec:compute}.
\end{theorem}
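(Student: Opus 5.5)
The plan is to show three things. First, the sets $\ol{\MM(T)}$ exhaust $\Mhcmu_{0,1}(\alpha)$. Second, they are pairwise disjoint. Third, each one is connected, closed, and open in $\Mhcmu_{0,1}(\alpha)$ with the Gromov--Hausdorff topology. The count of components is then read off directly.

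\emph{Covering.} Take any $\Sigma\in\Mhcmu_{0,1}(\alpha)$. If it is generic, Theorem \ref{thm:hcmu_data} places it in exactly one $\MM(T)$, with $(p,q)\in\mathcal{P}(\alpha)$ and $T\in\Tree(\pp_{p,q})$. If it is not generic, Lemma \ref{lem:classify} (case 3, since $\alpha\in\ZZ_{>1}$) says it is a football $S_\alpha(K_0,\frac{2-\alpha}{2\alpha-1}K_0)$. This equals $\Sigma(T_0,K_0,0)$ for the star-shaped tree $T_0\in\Tree(\pp_{\alpha,1})$, with $(p,q)=(\alpha,1)\in\mathcal{P}(\alpha)$. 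Hence it lies in $\ol{\MM(T_0)}$.

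\emph{Disjointness.} For trees with the same $(p,q)$, this is Proposition \ref{prop:disjoint_closure}. For different pairs $(p,q)\neq(p',q')$, I separate by the number of maximal points. A generic surface in $\MM(T)$ has exactly $p$ maximal points, and the ratio $q/p$ is an isometry invariant through $K_1/K_0$. The extra footballs occur only for $q=1$. A football has a single maximum point and is not generic, so it cannot lie in any $\MM(T)$ with $(p,q)\neq(\alpha,1)$.

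\emph{Connectedness.} Each $\ol{\MM(T)}$ is connected because it is the closure of the connected set $\MM(T)$. Indeed, $\MM(T)=F(\Rpos\times(0,1))$ is the continuous image of a connected set by Theorem \ref{thm:map_bij_continuous}, and the closure of a connected set is connected.

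\emph{Open and closed.} By Proposition \ref{prop:disjoint_closure}, each $\ol{\MM(T)}$ is closed in $\Mhcmu_{0,1}(\alpha)$. There are only finitely many pairs $(p,q)$ and finitely many trees, so the union of the remaining sets is also closed. Therefore each $\ol{\MM(T)}$ is the complement of a closed set, hence open. A connected, clopen, nonempty subset is a connected component, which gives the decomposition and the stated count.

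The main obstacle is the finiteness and closedness bookkeeping. The key question is whether limits from one family could land in another family. This is excluded because the closure of $\MM(T)$ in $\GH$ is exactly $\MM(T)^*$ (Theorem \ref{thm:closure_and_para}), and the only genuine HCMU spheres in $\MM(T)^*$ are the ones already listed. Everything else is either a non-sphere bouquet or the single point $\Pt$.
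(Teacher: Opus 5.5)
Your proposal is correct and follows essentially the same route as the paper: each $\ol{\MM(T)}$ is a closed, connected subset of $\Mhcmu_{0,1}(\alpha)$, and the space is a finite disjoint union of these sets, so each one is clopen and hence a connected component. The differences are minor. You make explicit two steps the paper leaves implicit: the covering step (a non-generic sphere is a football, which equals $\Sigma(T_0,K_0,0)$) and the disjointness across different pairs $(p,q)$. You also get connectedness by taking the closure of the connected set $F(\Rpos\times(0,1))$, rather than from the paper's explicit identifications $\ol{\MM(T_0)}\cong\Rpos\times[0,1)$ and $\ol{\MM(T)}\cong\Rpos\times(0,1)$.
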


\begin{proof}

We only need to show that each $\ol{\MM(T)}$ is a connected component.
For the star-shaped tree $T_0$, $\ol{\MM(T_0)} \cong \Rpos \times [0, 1)$, as in the proof of Proposition \ref{prop:disjoint_closure}. For any other tree $T \neq T_0$, $\ol{\MM(T)} = \MM(T) \cong \Rpos \times (0, 1)$. Hence all of them are connected. 
Since $\Mhcmu_{0, 1}(\alpha)$ is a disjoint union of finitely many closed connected sets $\ol{\MM(T)}$, each $\ol{\MM(T)}$ is a connected component.
\end{proof}

\bibliography{sn-bibliography}

\begin{appendices}

\section{The Modified Kochetkov Formula}\label{sec:modified_YYK}

In this section, we will prove Theorem \ref{thm:number_of_TR_trivial_passport}. 
The proof consists of two steps: 
\begin{enumerate}
    \item First, reduce the rooted tree enumeration for a general passport to the tree enumeration for a full passport; 
    \item Then, replace the partition of the full passport $\ppf$ in Kochetkov's formula \ref{thm:passport_simple} with the partition of the trivial passport $\ppt$.
\end{enumerate}

For a tree of full passport, every vertex is labeled differently. So it has trivial automorphism group. Then we have

\begin{lemma} \label{lem:rooted_no_isom}
    \begin{equation}
        \abs{\TR(\ppf)} = (\abs{\ppf} - 1) \abs{\Tree(\ppf)} \ .
    \end{equation}
    \qed
\end{lemma}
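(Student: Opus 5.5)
The claim is that $\abs{\TR(\ppf)} = (\abs{\ppf}-1)\abs{\Tree(\ppf)}$ for a full passport. I will derive it from Proposition \ref{prop:rooted_tree_and_nonrooted_tree}, which gives
\[ \abs{\TR(\pp)} = (\abs{\pp}-1)\sum_{d\geq1} \frac{\abs{\Tree(\pp,d)}}{d} \]
for any passport. It then suffices to show that $\Tree(\ppf,d)=\varnothing$ for every $d>1$. Once this holds, the sum collapses to $\abs{\Tree(\ppf,1)}=\abs{\Tree(\ppf)}$, which is exactly the claimed identity.

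For the vanishing there are two routes; I would use the first and mention the second as a check.

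The first route is direct. Let $\phi$ be an automorphism of an LWBP-tree $T$ of full passport $\ppf$. By definition $\phi$ satisfies $\Lab\circ\phi=\Lab$. Since the passport is full, $\Lab$ is a bijection $V\to S$, so $\phi$ fixes every vertex. An automorphism of a tree that fixes all vertices fixes every edge, because an edge is determined by its endpoints in a tree. By Lemma \ref{lem:cyclic_auto}, a nontrivial automorphism of a bi-colored plane tree preserves no edge, so $\phi$ is trivial. Hence $\Aut(T)$ is trivial and $T\in\Tree(\ppf,1)$.

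The second route is a divisibility check. By Lemma \ref{lem:exist_Tree_Xi_d_then_exist_passport_Xi/d}, a $d$-fold symmetric tree forces $\ppf$ to admit division by $d$. That requires some $s_0$ with $d\mid \lambda(s_0)-1=0$ and $d\mid\lambda(s)=1$ for all $s\neq s_0$. For $d>1$ this fails as soon as $\abs{S}\geq2$, which always holds because the weights have both signs.

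I expect no real obstacle here. The only care point is that the isomorphisms in Proposition \ref{prop:rooted_tree_and_nonrooted_tree} are required to respect labels, which is what lets the full labeling kill all symmetry. As an alternative to invoking that proposition, one can argue directly: with $\Aut(T)$ trivial, the $\abs{\ppf}-1$ edges of $T$ give pairwise non-isomorphic rooted trees, and every rooted tree arises this way.
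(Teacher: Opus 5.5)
Your proposal is correct and follows the paper's reasoning: the paper simply observes that a full labeling makes every vertex distinguishable, so $\Aut(T)$ is trivial and the $\abs{\ppf}-1$ edges give pairwise distinct rooted trees (equivalently, only the $d=1$ term of Proposition \ref{prop:rooted_tree_and_nonrooted_tree} survives). Your second route, the divisibility check via Lemma \ref{lem:exist_Tree_Xi_d_then_exist_passport_Xi/d}, is also valid but not needed.
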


\begin{proposition} \label{prop:number_of_TR_from_trivialization}
    Let $\pp$ be a general passport and $\ppt = \triv(\pp)$ be its trivialization, then
    \begin{equation}
        \abs{\TR(\pp)} = \frac{(\ppt)!}{(\pp)!} \abs{\TR(\ppt)} \ .
    \end{equation}
\end{proposition}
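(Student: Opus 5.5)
We prove the identity by a counting argument comparing rooted trees of $\pp$ with rooted trees of $\ppt$. By Lemma \ref{lem:cyclic_auto}, a rooted tree has no nontrivial automorphism, so the rooted trees of any passport can be counted without worrying about symmetry. The plan is to relate both sides to the rooted trees of a full passport $\ppf$ refining both $\pp$ and $\ppt$. Set $S_F := \bigsqcup_{s\in S}\{s\}\times\{1,\dots,\lambda(s)\}$ with multiplicity $\one$ and weight $\wt(s,i):=\wt(s)$. There are natural forgetful maps
\[ \TR(\ppf) \longrightarrow \TR(\pp) \longrightarrow \TR(\ppt), \]
obtained by composing the labeling with the projections $S_F\to S$ and $S\to \wt(S)=S_T$. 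It suffices to show that each map is surjective with fibers of constant size: $(\pp)!$ for the first map and $(\ppt)!$ for the composite. Dividing then gives $\abs{\TR(\pp)} = \abs{\TR(\ppf)}/(\pp)! = \frac{(\ppt)!}{(\pp)!}\abs{\TR(\ppt)}$.

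For the fiber computation, first consider the group $G=\prod_{s\in S}\mathfrak{S}_{\lambda(s)}$ of order $(\pp)!$. It acts on $\TR(\ppf)$ by relabeling, $\Lab\mapsto g\circ\Lab$. The orbits are exactly the fibers of the forgetful map to $\TR(\pp)$:
\begin{itemize}
\item any two full labelings lifting the same $\pp$-labeling of the same rooted tree differ by an element of $G$;
\item conversely, every rooted tree of passport $\pp$ admits a lift, because each fiber $\Lab^{-1}(s)$ has exactly $\lambda(s)$ vertices, which can be enumerated arbitrarily.
\end{itemize}

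The key point, and the only place where care is needed, is that this action is free. Suppose $g\cdot(T;e)\cong(T;e)$. Then there is an isomorphism $I$ of rooted trees with $\Lab\circ I = g\circ\Lab$. Forgetting labels, $I$ is an automorphism of the underlying rooted WBP-tree preserving the root edge. By Lemma \ref{lem:cyclic_auto} (its proof applies to unlabeled bi-colored plane trees as well), $I$ is the identity. Hence $g\circ\Lab=\Lab$, and since $\Lab$ is a bijection onto $S_F$, we get $g=\mathrm{id}$.

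So every orbit has size $(\pp)!$, and $\abs{\TR(\ppf)} = (\pp)!\,\abs{\TR(\pp)}$. The same argument applied to $\ppt$, whose full refinement is isomorphic to $\ppf$ (both have one label per vertex with the same weights), with the group $\prod_{w}\mathfrak{S}_{\lambda_T(w)}$ of order $(\ppt)!$, gives $\abs{\TR(\ppf)} = (\ppt)!\,\abs{\TR(\ppt)}$. Equating the two expressions yields the proposition.

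The main obstacle is the freeness claim. One must check that trivial automorphism groups of rooted trees hold at the level of the underlying plane tree, not merely for labeled objects. Lemma \ref{lem:cyclic_auto} addresses exactly this, since a nontrivial rotation fixes no edge.
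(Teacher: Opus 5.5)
Your proof is correct, but it is organized differently from the paper's. The paper works directly with the forgetful map $\triv:\TR(\pp)\to\TR(\ppt)$ and computes each fiber as a balls-into-boxes count. For each weight $w^{(i)}$, the $\lambda^{(i)}$ vertices of that weight are distributed among the labels $w^{(i)}_1,\dots,w^{(i)}_{m_i}$ with prescribed occupancies $\lambda^{(i)}_j$. This gives $\prod_i \lambda^{(i)}!/\prod_j \lambda^{(i)}_j! = (\ppt)!/(\pp)!$ preimages.

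You instead factor through the full passport $\ppf$. You let the Young subgroups $\prod_s\mathfrak{S}_{\lambda(s)}\le\prod_w\mathfrak{S}_{\lambda_T(w)}$ act freely on $\TR(\ppf)$, and you read off $\abs{\TR(\ppf)}=(\pp)!\,\abs{\TR(\pp)}=(\ppt)!\,\abs{\TR(\ppt)}$ from orbit counting. The two arguments are the same count in different guise: the paper's multinomial fiber size is exactly the index of one Young subgroup in the other.

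What your version buys:
\begin{itemize}
\item It makes explicit the step the paper passes over with ``since rooted trees have no automorphisms, vertices are all distinct.'' That step is that two different labelings of a fixed rooted tree are never isomorphic, because a root-preserving automorphism of the underlying unlabeled WBP-tree is already trivial on vertices. You isolate this cleanly as freeness of the action.
\item It produces, as a by-product, the identity $\abs{\TR(\ppf)}=(\ppt)!\,\abs{\TR(\ppt)}$. The paper needs this separately in the proof of Theorem \ref{thm:number_of_TR_trivial_passport}, where it obtains it by applying the proposition to $\ppf$.
\end{itemize}

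What the paper's route buys: it describes each fiber explicitly, as a concrete set of relabelings, without introducing an auxiliary passport or group actions.
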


\begin{proof}

We construct a trivialization map between trees. 
This map only changes the labeling function $\Lab$ of an LWBP-tree $T = (V, E, \wt_E, \Lab)$ to its trivialization $\Lab_T := \wt \circ \Lab$. 
That is, 
\begin{equation}
\begin{split}
    \triv \quad : \quad\qquad \TR(\pp) \qquad\qquad &\xrightarrow{\qquad\ } \qquad\qquad\qquad  \TR(\ppt) \\ 
    T = (V, E, \wt_E, \Lab) \qquad &\xmapsto{\qquad\ } \qquad T_T = (V, E, \wt_E, \Lab_T := \wt \circ \Lab)
\end{split}
\ . 
\end{equation}

$\triv : \TR(\pp) \to \TR(\ppt)$ is clearly surjective. 
We only need to show that $\big| \triv^{-1}(T_T) \big| = (\ppt)! / (\pp)!$ for each tree $T_T \in \TR(\ppt)$ is .

Let $\pp = (S, \lambda, \wt)$ and $\ppt = (W(S), \lambda_T, \wt_T)$. 
In $\pp$, suppose $W(S) = \{w^{(1)}, \ldots, w^{(k)}\}$. 
For each $i$, assume $W^{-1}({w^{(i)}}) = \{ w_1^{(i)}, \cdots, w_{m_i}^{(i)} \}$ for some $m_i \in \Zpos$. Denote $\lambda(w_j^{(i)}) = \lambda_j^{(i)}$ for short.  
Then for all $w^{(i)} \in W(S)$, we should have 
$$ \lambda^{(i)} := \lambda_T(w^{(i)}) = \sum_{j=1}^{m_i} \lambda_j^{(i)} \ . $$

A preimage $T \in \TR(\pp)$ of $T_T \in \TR(\ppt)$ has the same graph structure. 
The labeling function $\Lab : V \to S$ must satisfy $\Lab_T = \wt \circ \Lab$. 
This is equivalent to a balls-into-boxes problem. 
The total number of such labeling functions $\Lab$ is the number of solutions to this problem.

Vertices $v \in V$ correspond to balls. 
Since rooted trees have no automorphisms, vertices (balls) are all distinct. 
Define the $i$-th type of balls as $\wt_V(v) = w^{(1)}$ or $v \in \Lab_T^{-1}(w^{(i)})$. 
There are $\big| \Lab_T^{-1}(w^{(i)}) \big| = \lambda^{(i)}$ such balls.
The labels (boxes) $w_j^{(i)}$ are also all distinct. 
Each box $w_j^{(i)}$ must receive $\lambda_j^{(i)}$ balls of the $i$-th type for each $i$.

Considering the $i$-th type of balls, each box $w_j^{(i)}$ must receive $\lambda_j^{(i)}$ balls of that type. 
Thus the number of ways to place the $i$-th type of balls is

\begin{equation*}
    \frac{\lambda^{(i)} !}{\prod_{j=1}^{m_i} \lambda_j^{(i)} ! } \ .
\end{equation*}

Therefore the total number of ways is
\begin{equation*}
    \prod_{i=1}^k \frac{\lambda^{(i)} !}{\prod_{j=1}^{m_i} \lambda_j^{(i)} ! }
    = \frac{\prod_{i=1}^k \lambda^{(i)} !}{\prod_{i=1}^k \prod_{j=1}^{m_i} \lambda_j^{(i)} ! }
    = \frac{(\ppt)!}{(\pp)!} \ .
\end{equation*}
\end{proof}

\begin{definition}
    Fix a full passport $\ppf$ and its trivialization $\ppt = \triv(\ppf)$. 
    We define the trivialization of subpassports and partitions of $\ppf$. 
    \begin{enumerate}
    \item The trivialization of subpassport $\pp'$ of $\ppf$ is just $\triv(\pp')$. 

    \item The trivialization $\triv (\pttp_F)$ of partition $\pttp_F = \{\pp_1^{n_1}\  \ldots\ \pp_k^{n_k} \} \in \Ptt(\ppf)$ 
    consists of the trivialization of each subpassport, counted with multiplicity, hence a partition of $\ppt$. 
    \end{enumerate}

\end{definition}

\begin{lemma} \label{lem:partition_full_trivial}
    Fix a full passport $\ppf$ and its trivialization $\ppt = \triv(\ppf)$. 

    The trivialization of partition $\triv : \Ptt(\ppf) \to \Ptt(\ppt)$ is a surjective map, preserving the length and $X$-function:
    \begin{equation*}
        \abs{\pttp_F} = \abs{\triv(\pttp_F)} \ ,\ X(\pttp_F) = X(\triv(\pttp_F)) \ .
    \end{equation*}

    Furthermore, for any partition $\pttp_T \in \Ptt(\ppt)$, the number of its preimages is
    \begin{equation}
        \big| \triv^{-1}(\pttp_T) \big| = \frac{(\ppt)!}{(\pttp_T)!} \ .
    \end{equation}
\end{lemma}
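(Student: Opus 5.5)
The plan is to describe $\Ptt(\ppf)$ concretely and reduce the count of preimages to counting set partitions. Since $\ppf$ is full, each $\lambda(s)=1$, so a subpassport of $\ppf$ is the same thing as a nonempty subset $S'\subset S$ with $\sum_{s\in S'}\wt(s)=0$. A partition $\pttp_F$ of $\ppf$ is then exactly a set partition of $S$ into zero-weight blocks. Because every element of $S$ is used once, all multiplicities $n_i$ equal $1$ and all blocks are distinct.

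Trivialization sends a block $S'$ to the trivial passport $\triv(S')$, which records how many elements of each weight $w$ lie in $S'$. This is a subpassport of $\ppt$, and the multiplicities add up to $\lambda_T$, so $\triv(\pttp_F)\in\Ptt(\ppt)$. Length is preserved because blocks go to blocks one-to-one, counted with multiplicity. The $X$-function is preserved because $|\triv(S')|=|S'|$ for every block.

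For surjectivity, fix $\pttp_T=\{\pp_1^{n_1}\cdots\pp_k^{n_k}\}$. For each weight $w$, the multiplicities of $w$ across the blocks of $\pttp_T$ sum to $\lambda_T(w)=|\wt^{-1}(w)|$. Distributing the elements of $\wt^{-1}(w)\subset S$ accordingly among the blocks gives a set partition whose trivialization is $\pttp_T$.

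The main step, and the only real obstacle, is counting preimages. I will use orbit-counting on ordered distributions. First list the $|\pttp_T|$ blocks in a fixed order, $\pp_i$ appearing $n_i$ times. An ordered assignment chooses, for each weight $w$, which elements of $\wt^{-1}(w)$ go into each listed block. By the multinomial count, the number of ordered assignments is
\[ \prod_{w}\frac{\lambda_T(w)!}{\prod_{i}\big(\lambda_i(w)!\big)^{n_i}} = \frac{(\ppt)!}{\prod_i [(\pp_i)!]^{n_i}} . \]
Every preimage set partition arises from exactly $\prod_i n_i!$ ordered assignments. These come from permuting the $n_i$ listed copies of each $\pp_i$, which is possible because the resulting blocks are distinct nonempty sets. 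Copies of different types cannot be interchanged, since their trivializations differ.

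Dividing by $\prod_i n_i!$ gives
\[ \big|\triv^{-1}(\pttp_T)\big| = \frac{(\ppt)!}{\prod_i n_i!\,[(\pp_i)!]^{n_i}} = \frac{(\ppt)!}{(\pttp_T)!} , \]
as claimed.

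The point to check carefully is that the permutation action is free. It is free because distinct listed copies receive disjoint nonempty sets of elements, so a nontrivial permutation of copies always changes the ordered assignment.
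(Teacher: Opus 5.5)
Your proof is correct and takes essentially the same route as the paper. Both count distributions of the distinct indices of $\ppf$ into distinguishable copies of the blocks of $\pttp_T$ as a product of multinomial coefficients, then divide by $\prod_i n_i!$ to forget the ordering of same-type copies. Your identification of $\Ptt(\ppf)$ with set partitions into zero-weight blocks, and your check that permuting copies acts freely, fill in details the paper leaves implicit.
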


\begin{proof}

The first two results are simple. We only prove the last formula.

Let $\ppf = (S, \one, \wt)$ and $\ppt = (\wt(S), \lambda, \wt_T)$.
Set $\wt(S) = \{w^{(1)}, w^{(2)}, \ldots, w^{(m)}\}$ be all the weights.
Write $\lambda^{(i)} := \lambda(w^{(i)})$ for short.
In $\ppf$, denote each index with weight $w^{(i)}$ by $w^{(i)}_{j}$, where $1 \leq j \leq \lambda^{(i)}$.

Now let $\pttp_T = \{\pp_1^{n_1} \cdots \pp_k^{n_k} \}$.
For each subpassport $\pp_j$, write its multiplicity function as $\lambda_j^{(i)} := \lambda_j(w^{(i)})$.
For any weight not appearing in a subpassport, define the corresponding multiplicity function as $0$.
Thus

\begin{equation*}
    \pp_j = \prod_{i=1}^m (w^{(i)})^{\lambda_j^{(i)}} \ ,\ \sum_{j=1}^k n_j \cdot \lambda_j^{(i)} = \lambda^{(i)} \ .
\end{equation*}

Now consider a partition $\pttp_F \in \triv^{-1}(\pttp_T)$ in the preimage of $\pttp_T$.
In $\pttp_F$, there will be $n_j$ subpassports $\pp_{j, x}$, $1 \leq x \leq n_j$, such that $\triv(\pp_{j, x}) = \pp_j$.

Because there are $\lambda_j^{(i)}$ indices of weight $w^{(i)}$ in $\pp_j$, 
then each $\pp_{j, x}$ also has exactly $\lambda_j^{(i)}$ indices of weight $w^{(i)}$. 
Thus we can match each preimage $\pttp_F$ with a solution to a balls-into-boxes problem: 

First, there are altogether $\sum_{i=1}^m \lambda^{(i)}$ balls with distinguishable types.
The $i$-th type has $\lambda^{(i)}$ balls, marked with distinct symbols $w^{(i)}_{j}$, $1 \leq j \leq \lambda^{(i)}$.

On the other side, there are $\sum_{j=1}^k n_j$ boxes with distinguishable types.
The $j$-th type has $n_j$ boxes, denoted by $\pp_{j, x}$, $1 \leq x \leq n_j$, all regarded as identical.
Moreover, each box of the $j$-th type must receive exactly $\lambda_j^{(i)}$ balls of the $i$-th type.

\bigskip
Since the number of preimages equals the number of solutions to this balls-into-boxes problem, we only need to find the latter.

In the original balls-into-boxes problem, boxes of the same type are regarded as identical. 
We will next regard all boxes $\pp_{j, x}$ of the same type are distinct. 
So the number of arrangements differs by a factor of $\prod_{j=1}^k n_j!$.

Fix attention on the $i$-th type of balls.
We know that each of the $n_j$ boxes of the $j$-th type must receive $\lambda_j^{(i)}$ such balls.
Therefore, the number of arrangements is given by a multinomial coefficient.

\begin{equation*}
    \frac{\lambda^{(i)} !}{\prod_{j=1}^m {\left( \lambda_j^{(i)} ! \right)^{n_j}}} \ .
\end{equation*}

Taking all types of balls into account, the total number of arrangements is the product of the above expressions, giving

\begin{equation*}
    \prod_{i=1}^k  \left( \frac{\lambda^{(i)} !}{\prod_{j=1}^m {\left( \lambda_j^{(i)} ! \right)^{n_j}}} \right) = 
    \frac{\prod_{i=1}^k \lambda^{(i)} !}{\prod_{j=1}^m \prod_{i=1}^k {\left( \lambda_j^{(i)} ! \right)^{n_j}}} = 
    \frac{(\ppt)!}{\prod_{j=1}^m [(\pp_j)!]^{n_j}}
    \ .
\end{equation*}

Finally, the number of arrangements for the original problem is

\begin{equation*}
    \frac{(\ppt)!}{\prod_{j=1}^k n_j ! \cdot \prod_{j=1}^m [(\pp_j)!]^{n_j}} 
    = \frac{(\ppt)!}{(\pttp_T)!} \ . 
    \tag*{\qedhere}
\end{equation*}
\end{proof}

To apply Kochetkov's formula (Theorem \ref{thm:passport_simple}), we can relate each trivial passport $\ppt = (S, \lambda, \wt)$ to a full passport $\ppf$, such that $\ppt = \triv(\ppf)$.
For each $s \in S$, the index set $S_F$ of $\ppf$ should contain $\lambda(s)$ different indices of the same weight $W(s)$. Hence we define   
\begin{equation}
    S_F := \defset{(s, k)}{s \in S, 1 \leq k \leq \lambda(s)} ,\quad \wt_F(s, k) := \wt(s) 
\end{equation}
as the full passport $\ppf = (S_F, \one, \wt_F)$ corresponding to $\ppt$. 

\begin{proof}[Proof of Theorem \ref{thm:number_of_TR_trivial_passport}]

Let $\ppt = \triv(\pp)$ be the trivialization and $\ppf$ is the full passport corresponding to $\ppt$. 

First, using Proposition \ref{prop:number_of_TR_from_trivialization} and Lemma \ref{lem:rooted_no_isom}, we reduce the rooted tree enumeration for a general passport to the tree enumeration for a full passport.

\begin{equation*}
      \abs{\TR(\pp)} 
    = \frac{(\ppt)!}{(\pp)!} \abs{\TR(\ppt)} 
    = \frac{(\ppt)!}{(\pp)!} \frac1{(\ppt)!} \abs{\TR(\ppf)} 
    = \frac1{(\pp)!} (\abs{\ppf} - 1) \abs{\Tree(\ppf)} \ .
\end{equation*}

Then, by Lemma \ref{lem:partition_full_trivial}, we replace the partition of the full passport $\ppf$ in Kochetkov's formula \ref{thm:passport_simple} with the partition of the trivial passport $\ppt$.

\begin{equation*}
\begin{split}
    \abs{\Tree(\ppf)} 
    &= \sum_{\pttp_F \in \Ptt(\ppf)} (-1)^{|\pttp_F|-1} (\absbig{\ppf}-1)^{|\pttp_F|-2} X(\pttp_F) \\
    &= \sum_{\pttp \in \Ptt(\ppt)} \sum_{\pttp_F \in \triv^{-1}(\pttp)} (-1)^{|\pttp_F|-1} (\absbig{\ppf}-1)^{|\pttp_F|-2} X(\pttp_F) \\
    &= \sum_{\pttp \in \Ptt(\ppt)} \big| \triv_p^{-1}(\pttp) \big| (-1)^{|\pttp|-1} (\absbig{\ppf}-1)^{|\pttp|-2} X(\pttp) \\
    &= (\ppt)! \sum_{\pttp \in \Ptt(\ppt)} (-1)^{\absbig{\pttp} - 1} (\absbig{\ppt} - 1)^{\absbig{\pttp} - 2} \frac{X(\pttp)}{(\pttp) !} \ .
\end{split}
\end{equation*}

Combining the above two equations, we obtain
\begin{equation*}
    \abs{\TR(\pp)} = \frac1{(\pp)!} (\abs{\ppf} - 1) \abs{\Tree(\ppf)} = \frac{(\ppt)!}{(\pp)!} \sum_{\pttp \in \Ptt(\ppt)} (-1)^{\absbig{\pttp} - 1} (\absbig{\ppt} - 1)^{\absbig{\pttp} - 2} \frac{X(\pttp)}{(\pttp) !} \ .
    \tag*{\qedhere}
\end{equation*}

\end{proof}

\end{appendices}

\newpage

\section*{Notation table}
\begin{table}[h]
\makebox[1.05\textwidth][c]{
\begin{minipage}[t]{0.5\textwidth} 
\centering\small
\begin{tabular}{>{\raggedright\arraybackslash}p{0.4\textwidth}
                >{\raggedright\arraybackslash}p{0.65\textwidth}||}
$\pi \vdash d$ & a partition of $d\in\Zpos$ \\ 
\quad $\pi=(1^{n_1} ...d^{n_d})$ & \quad power notation (Not.\ref{nota:partition}) \\ 
\quad $|\pi|$ & \quad length of $\sim$ \\ 
& \\
$T=(V, E, \wt_E, \Lab)$ & LWBP-tree (Def.\ref{def:labeled_tree}) \\
\quad $V^{+}\ (V^-)$ & \quad black (white) vertices \\
\quad $W_E$ & \quad weight function on edges \\
\quad $W_V$ & \quad weight function on vertices \\
\quad $\Lab$ & \quad labeling \\
\quad $E(v)$ & \quad edges from a vertex \\
\quad $\sigma_v$ & \quad cyclic action (Def.\ref{def:cyclic_action}) \\
\quad $\Aut(T)$ & \quad automorphism group \\
 & \\
$\pp = (S,\lambda,\wt)$ & passport (Def.\ref{def:passport})\\ 
\quad $S$ & \quad index set \\
\quad $\lambda$ & \quad multiplicity function \\
\quad $\wt$ & \quad weight function \\
\quad $\abs{\pp}$ & \quad number of vertices \\
\quad $\Vert\pp\Vert$ & \quad total weights \\
\quad $\ppf$ & \quad full $\sim$ (Def.\ref{def:types_passport}) \\
\quad $\ppt$ & \quad trivial $\sim$ (Def.\ref{def:types_passport}) \\
\quad $\triv(\pp)$ & \quad trivialization (Def.\ref{def:trivialization})\\
\quad $(\pp)!$ & \quad factorial (Def.\ref{def:factorial}) \\
 & \\
$\pp(S',\lambda')$ & subpassport (Def.\ref{def:subpp}) \\
$\pttp = \{\pp_1^{n_1}...\pp_k^{n_k}\}$ & partition (Def.\ref{def:partition}) \\
\quad $\abs{\pttp}$ & \quad length of $\sim$ \\
\quad $\Ptt(\pp)$ & \quad set of $\sim$ \\
\quad $X(\pttp)$ & \quad $X$-function \\
\quad $(\pttp)!$ & \quad factorial \\
\end{tabular}
\end{minipage}
\hfill 
\begin{minipage}[t]{0.51\textwidth}  
\centering\small
\begin{tabular}{>{\raggedright\arraybackslash}p{0.35\textwidth}
                >{\raggedright\arraybackslash}p{0.65\textwidth}}
$\Tree(\pp)$ & trees of given passport \\
\quad $\TR(\pp)$ & \quad rooted $\sim$ (Def.\ref{def:rooted_tree}) \\
\quad $\Tree(\pp,d)$ & \quad $d$-fold symmetric $\sim$ \\ 
\quad $\pp / d$ & \quad divided passport (Def.\ref{def:passport_divide}) \\ 
\quad $g(s)$ & \quad g.c.d at an index (Eq.\eqref{eq:def_gs})\\
\quad $D$ & \quad factor set (Eq.\eqref{eq:def_D}) \\
\quad $G(d)$ & \quad coefficient (Eq.\eqref{eq:def_G(d)}) \\
& \\
$\mathcal{D}$ & branch datum \\
\quad $f:Y \to X$ & \quad branched cover \\
\quad $g_Y , g_X$ & \quad genus of surface \\
\quad $H$ & \quad dessin d'enfant (\S\ref{ssec:d_d}) \\ 
 & \\
(Def.\ref{defn:line_element}) & character line element \\
\quad $K_0, K_1$ & \quad extremal curvature \\
\quad $\ell$ & \quad length of $\sim$ \\
\quad $h(v)$ & \quad warped function \\
\quad $R$ & \quad ratio \\
$B_{\alpha}(K_0, K_1)$ & HCMU bigon (Def.\ref{defn:hcmu_bigon})\\
$S_{\alpha}(K_0, K_1)$ & HCMU football (Def.\ref{defn:hcmu_bigon}) \\
\quad $2\pi\alpha$ & \quad top angle \\
\quad $2\pi\beta = 2\pi R\alpha$ & \quad bottom angle \\
 & \\
$\Mhcmugn$ & space of HCMU surfaces \\
$(p,q,T;K_0,L)$ & data set rep. (Thm.\ref{thm:hcmu_data}) \\
\quad $\Sigma(T,K_0,\lambda)$ & \quad generic surface (Def.\ref{def:M_T}) \\
\quad $\MM(T)$ & \quad subset in moduli (Def.\ref{def:M_T}) \\
\quad $\MM(T)^*$ & \quad closure in $\GH$ (Def.\ref{def:M_T_star})\\
\quad $\varTheta$ & \quad parameter space (Def.\ref{def:M_T_star})\\
\end{tabular}
\end{minipage}
}
\end{table}

\end{document}